\title{Dp-finite fields I: infinitesimals and positive characteristic}
\author{Will Johnson}
\DeclareMathOperator*{\forkindep}{\raise0.2ex\hbox{\ooalign{\hidewidth$\vert$\hidewidth\cr\raise-0.9ex\hbox{$\smile$}}}}
\newcommand{\rk}{\operatorname{rk}}
\newcommand{\Frac}{\operatorname{Frac}}
\newcommand{\Aut}{\operatorname{Aut}}
\newcommand{\Gr}{\operatorname{Gr}}
\newcommand{\acl}{\operatorname{acl}}
\newcommand{\dcl}{\operatorname{dcl}}
\newcommand{\tp}{\operatorname{tp}}
\newcommand{\val}{\operatorname{val}}
\newcommand{\dpr}{\operatorname{dp-rk}}
\newcommand{\redrk}{\operatorname{rk}_0}
\newcommand{\botrk}{\operatorname{rk}_\bot}
\newcommand{\mininf}{-_\infty}
\newcommand{\cl}{\operatorname{cl}}
\newcommand{\Pow}{\mathcal{P}ow}
\newcommand{\Pro}{\operatorname{Pro}}
\newtheorem{theorem}{Theorem}[section] 
\newtheorem{lemma}[theorem]{Lemma}
\newtheorem{corollary}[theorem]{Corollary}
\newtheorem{fact}[theorem]{Fact}
\newtheorem{question}[theorem]{Question}
\newtheorem{proposition}[theorem]{Proposition}
\newtheorem{proposition-eh}[theorem]{Proposition(?)}
\newtheorem*{theorem-star}{Theorem}
\newtheorem*{conjecture-star}{Conjecture}
\newtheorem*{lemma-star}{Lemma}
\theoremstyle{definition}
\newtheorem{definition}[theorem]{Definition}
\theoremstyle{remark}
\newtheorem{remark}[theorem]{Remark}
\newtheorem{claim}[theorem]{Claim}
\newtheorem*{acknowledgment}{Acknowledgments}
\newtheorem{speculation}[theorem]{Speculative Remark}
\newcommand{\Qq}{\mathbb{Q}}
\newcommand{\Rr}{\mathbb{R}}
\newcommand{\Zz}{\mathbb{Z}}
\newcommand{\Nn}{\mathbb{N}}
\newcommand{\Cc}{\mathcal{C}}
\newcommand{\Mm}{\mathbb{M}}
\newcommand{\Pp}{\mathbb{P}}
\newenvironment{claimproof}[1][\proofname]
               {
                 \proof[#1]
                 
               }
               {
                 \endproof
               }
\begin{document}
\maketitle\unmarkedfntext{
  \emph{2010 Mathematical Subject Classification}: 03C45

  \emph{Key words and phrases}: dp-rank, NIP fields, modular lattices
}

\begin{abstract}
  We prove that NIP valued fields of positive characteristic are
  henselian. Furthermore, we partially generalize the known results on
  dp-minimal fields to dp-finite fields. We prove a dichotomy: if $K$
  is a sufficiently saturated dp-finite expansion of a field, then
  either $K$ has finite Morley rank or $K$ has a non-trivial
  $\Aut(K/A)$-invariant valuation ring for a small set $A$. In the
  positive characteristic case, we can even demand that the valuation
  ring is henselian. Using this, we classify the positive
  characteristic dp-finite pure fields.
\end{abstract}

\section{Introduction}
The two main conjectures for NIP fields are
\begin{itemize}
\item The \emph{henselianity conjecture}: any NIP valued field
  $(K,\mathcal{O})$ is henselian.
\item The \emph{Shelah conjecture}: any NIP field $K$ is algebraically
  closed, real closed, finite, or admits a non-trivial henselian
  valuation.
\end{itemize}
By generalizing the arguments used for dp-minimal fields (for example
in Chapter 9 of \cite{myself}), we prove the henselianity conjecture
in positive characteristic, and the Shelah conjecture for positive
characteristic dp-finite fields.  This yields the
positive-characteristic part of the expected classification of
dp-finite fields.

We also make partial progress on dp-finite fields of characteristic
zero.  Let $(K,+,\cdot,\ldots)$ be a sufficiently saturated dp-finite
field, possibly with extra structure.  Then either
\begin{itemize}
\item $K$ has finite Morley rank, or
\item There is an $\Aut(K/A)$-invariant non-trivial valuation ring on
  $K$ for some small set $A$.
\end{itemize}
Unfortunately, we can only prove henselianity of this valuation ring
in positive characteristic.

Following the approach used for dp-minimal fields, there are three
main steps to the proof:
\begin{enumerate}
\item Construct a type-definable group of infinitesimals.
\item Construct a valuation ring from the infinitesimals.
\item Prove henselianity.
\end{enumerate}
We discuss each of these steps, explaining the difficulties that arise
when generalizing from rank 1 to rank $n$.
\subsection{Constructing the infinitesimals}
Mimicking the case of dp-minimal fields, we would like to define the
group $I_M$ of $M$-infinitesimals as
\begin{equation*}
  \bigcap_{X \text{ ``big'' and $M$-definable}} \{\delta \in K ~|~ X
  \cap (X + \delta) \text{ is ``big''}\}
\end{equation*}
for some notion of ``big.''  In the dp-minimal case, ``big'' was
``infinite.''  By analyzing the proof for dp-minimal fields, one can
enumerate a list of desiderata for bigness:
\begin{enumerate}
\item Non-big sets should form an ideal.
\item Bigness should be preserved by affine transformations.
\item \label{definability-condition} Bigness should vary definably in
  families.
\item The universe $K$ should be big.
\item \label{mininf-condition} If $X, Y$ are big, the set $\{\delta ~|~
  X \cap (Y + \delta) \text{ is big}\}$ should be big.
\item Bigness should be coherent on externally definable sets, to the
  extent that:
  \begin{itemize}
  \item If $X$ is $M$-definable and big for a small model $M \preceq
    K$, if $Y$ is $K$-definable, and if $X(M) \subseteq Y$, then $Y$
    is big.
  \item If $X$ is big and $X \subseteq Y_1 \cup \cdots \cup Y_n$ for
    externally definable sets $Y_1, \ldots, Y_n$, then there is a big
    definable subset $X' \subseteq X$ such that $X' \subseteq Y_i$ for
    some $i$.
  \end{itemize}
\end{enumerate}
The intuitive guess is that for a field of dp-rank $n$, ``big'' should
mean ``rank $n$.''  But there is no obvious proof of the definability
condition (\ref{definability-condition}), as noted in \S3.2 of
Sinclair's thesis \cite{sinclair}.  An alternative, silly guess is
that ``big'' should mean ``infinite.'' This fails to work in some of
the simplest examples, such as $(\Cc,+,\cdot,\Rr)$.  However, the
silly guess \emph{nearly} works; the only requirement that can fail is
(\ref{mininf-condition}).

The key insight that led to the present paper was the realization that
in rank 2, any failure of (\ref{mininf-condition}) for the silly
option (``big''=``infinite'') fixes (\ref{definability-condition}) for
the intuitive option (``big''=``rank 2'').  Indeed, if $X, Y$ are
infinite sets but $X \cap (Y + \delta)$ is finite for almost all
$\delta$, then
\begin{itemize}
\item By counting ranks, $X, Y$ must be dp-minimal.
\item The map $X \times Y \to X - Y$ is almost finite-to-one.
\item By a theorem of Pierre Simon \cite{surprise}, ``rank 2'' is
  definable on $X \times Y$.\footnote{He proves this under the
    assumption that the entire theory is dp-minimal, but the proof
    generalizes to products of definable sets of dp-rank 1.}  This
  ensures that ``rank 2'' is definable on $X - Y$.
\item A definable set $D \subseteq K$ has rank 2 if and only if some
  translate of $D$ has rank 2 intersection with $X - Y$.
\end{itemize}
So, in the rank-2 setting, one can first try ``big''=``infinite,'' and
if that fails, take ``big''=``rank 2.''

\begin{remark}
  A prototype of this idea appears in Peter Sinclair's thesis
  \cite{sinclair}.  In his \S3.3, he observes that the machinery of
  infinitesimals goes through when ``rank $n$''=``infinite,'' and
  conjectures that this always holds in the pure field reduct.
\end{remark}

By extending this line of thinking to higher ranks, we obtain a notion
of \emph{heavy} sets satisfying the desired properties.\footnote{In
  later work, it has been shown that the heavy sets are exactly the
  sets of full rank (\cite{prdf2}, Theorem~5.9.2).  However, the proof
  relies on the construction of the infinitesimals, and only works in
  hindsight.} See \S\ref{sec:heavy-light} for details; the technique
is reminiscent of Zilber indecomposability in groups of finite Morley
rank.  Once heavy and light sets are defined, the construction of
infinitesimals is carried out in \S\ref{sec:infinitesimals} via a
direct generalization of the argument for dp-minimal fields.  For
certain non-triviality properties, we need to assume that $K$ does not
have finite Morley rank.  The relevant dichotomy is proven in
\S\ref{sec:likeTT}; it is closely related to Sinclair's Large Sets
Property (Definition 3.0.3 in \cite{sinclair}), but with heaviness
replacing full dp-rank.

For ranks greater than 2, we need to slightly upgrade Simon's results
in \cite{surprise}.  We do this in \S\ref{sec:broad-narrow}.  Say that
an infinite definable set $Q$ is \emph{quasi-minimal} if $\dpr(D) \in
\{0, \dpr(Q)\}$ for every definable subset $D \subseteq Q$.  The main
result is the following:
\begin{theorem-star}
  Let $M$ be an NIP structure eliminating $\exists^\infty$.  Let $Q_1,
  \ldots, Q_n$ be quasi-minimal sets, and $m = \dpr(Q_1 \times \cdots
  \times Q_n)$.  Then ``rank $m$'' is definable in families of
  definable subsets of $Q_1 \times \cdots \times Q_n$.
\end{theorem-star}
This is a variant of Corollary 3.12 in \cite{surprise}.  Note that
dp-minimal sets are quasi-minimal, and quasi-minimal sets are
guaranteed to exist in dp-finite structures.

\subsection{Getting a valuation}
Say that a subring $R \subseteq K$ is a \emph{good Bezout domain} if
$R$ is a Bezout domain with finitely many maximal ideals, and
$\Frac(R) = K$.  This implies that $R$ is a finite intersection of
valuation rings on $K$.  Ideally, we could prove the following
\begin{conjecture-star}
  The $M$-infinitesimals $I_M$ are an ideal in a good Bezout domain $R
  \subseteq K$.
\end{conjecture-star}
Assuming the Conjecture, one can tweak $R$ and arrange for $I_M$ to be
the Jacobson radical of $R$.  This probably implies that
\begin{itemize}
\item The ring $R$ is $\vee$-definable, and so are the associated
  valuation rings.
\item The canonical topology is a field topology, and has a definable
  basis of opens.
\end{itemize}
This would put us in a setting where we could generalize the
infinitesimal-based henselianity proofs from the dp-minimal case,
modulo some technical difficulties in characteristic zero.  The
strategy would be to prove that $R$ is the intersection of just one
valuation ring; this rules out the possibility of $K$ carrying two
definable valuations, leading easily to proofs of the henselianity and
Shelah conjectures.\footnote{The details of this strategy have been
  verified in \cite{prdf2}, which proves the classification of
  dp-finite fields contingent on the above Conjecture.  The
  ``technical difficulties in characteristic zero'' involve showing
  that $1 + I_M$ is exactly the group of ``multiplicative
  infinitesimals.''  This is \cite{prdf2}, Proposition~5.12.}

As it is, the Conjecture is still unknown, even in positive
characteristic.  Nevertheless, we \emph{can} prove that a non-trivial
$\Aut(K/A)$-invariant good Bezout domain $R$ exists for some small set
$A$.  Unfortunately, the connection between $R$ and $I_M$ is too weak
for the henselianity arguments, and we end up using a different
approach, detailed in the next section.

How does one obtain a good Bezout domain?  It is here that we diverge
most drastically from the dp-minimal case.  Say that a type-definable
group $G$ is \emph{00-connected} if $G = G^{00}$.  The group $I_M$ of
$M$-infinitesimals is 00-connected because it is the minimal
$M$-invariant heavy subgroup of $(K,+)$.  In the dp-minimal case (rank
1), the 00-connected subgroups of $(K,+)$ are totally ordered by
inclusion.  Once one has \emph{any} non-trivial 00-connected proper
subgroup $J < (K,+)$, the set $\{x \in K ~|~ x \cdot J \subseteq J\}$
is trivially a valuation ring.

In higher rank, the lattice $\mathcal{P}$ of 00-connected subgroups is
no longer totally ordered.  Nevertheless, dp-rank bounds the
complexity of the lattice.  Specifically, it induces a rank function
$\rk_{dp}(A/B)$ for $A \ge B \in \mathcal{P}$
probably\footnote{One must verify that the basic properties of
  dp-rank, including subadditivity, go through for hyperimaginaries.}
satisfying the following axioms:
\begin{itemize}
\item $\rk_{dp}(A/B)$ is a nonnegative integer, equal to zero iff $A =
  B$.
\item If $A \ge B \ge C$, then $\rk_{dp}(A/C) \le \rk_{dp}(A/B) +
  \rk_{dp}(B/C)$.
\item If $A \ge A' \ge B$, then $\rk_{dp}(A/B) \ge \rk_{dp}(A'/B)$.
\item If $A \ge B' \ge B$, then $\rk_{dp}(A/B) \ge \rk_{dp}(A/B')$.
\item For any $A, B$
  \begin{align*}
    \rk_{dp}(A \vee B/B) & = \rk_{dp}(A/A \wedge B) \\
    \rk_{dp}(A \vee B/A \wedge B) &= \rk_{dp}(A/A \wedge B) + \rk_{dp}(B/A \wedge B).
  \end{align*}
\end{itemize}
This rank constrains the lattice substantially.  For example, if $n =
\dpr(K)$ there can be no ``$(n+1)$-dimensional cubes'' in
$\mathcal{P}$, i.e., no injective homomorphisms of unbounded lattices
from $\Pow([n])$ to $\mathcal{P}$.

Somehow, the rank needs to be leveraged to recover a valuation.  First we need some lattice-theoretic way to recover valuations.
Recall that in a lower-bounded modular lattice $(P,\wedge,\vee,\bot)$,
there is a modular pregeometry on the set of atoms.  More generally,
if $x$ is an element in an unbounded modular lattice
$(P,\wedge,\vee)$, there is a modular pregeometry on the set of
``relative atoms'' over $x$, i.e., elements $y \ge x$ such that the
closed interval $[x,y]$ has size two.  If
$(K,\mathcal{O},\mathfrak{m})$ is a model of ACVF and $P_n$ is the
lattice of definable additive subgroups of $K^n$, then the pregeometry
of relative atoms over $\mathfrak{m}^n \in P_n$ is projective
$(n-1)$-space over the residue field $k$, because the relative atoms
all lie in the interval $[\mathfrak{m}^n,\mathcal{O}^n]$ whose quotient
$\mathcal{O}^n/\mathfrak{m}^n$ is isomorphic to $k^n$.  Furthermore,
the specialization map on Grassmannians
\begin{equation*}
  \Gr(\ell, K^n) \to \Gr(\ell, k^n)
\end{equation*}
is essentially the map sending $V$ to the set of relative atoms below
$V + \mathfrak{m}^n$.

For dp-finite fields, we do something formally similar: we consider
the lattice $\mathcal{P}_n$ of 00-connected type-definable subgroups
of $K^n$, we consider the pregeometry $\mathcal{G}_n$ of relative
atoms over $(I_M)^n$, and we extract a map $f_n$ from the lattice of
$K$-linear subspaces of $K^n$ to the lattice of closed sets in
$\mathcal{G}_n$.  Morally, this leads to a system of maps
\begin{equation}
  \label{n-ell-eq}
  f_{n, \ell} : \Gr(\ell, K^n) \to \Gr(r \cdot \ell, k^n)
\end{equation}
where $r = \dpr(K)$ and $k$ is some skew field.  The maps $f_{n,
  \ell}$ satisfy some compatibility across $n$ and $\ell$.  Call this
sort of configuration an \emph{$r$-fold specialization}.  The hope was
that $r$-fold specializations would be classified by valuations (or
good Bezout domains).  Unfortunately, this turns out to be far from
true.  All we can say is that there is a way of ``mutating'' $r$-fold
specializations such that, in the limit, they give rise to Bezout
domains.  See Remark~\ref{r-fold-specs} and Section \ref{sec:mutate}
for further details.\footnote{Luckily, we can avoid the machinery of
  $r$-fold specializations in the present paper.  We defer a proper
  treatment of $r$-fold specializations until a later paper,
  \cite{prdf3}.}

The above picture is complicated by three technical issues.  First and
most glaringly, the modular lattices we are considering don't really
have enough atoms.  For example, there are usually no minimal non-zero
type-definable subgroups of $K$.  So instead of using atoms, we use
equivalence classes of quasi-atoms.  In a lower-bounded modular
lattice $(P,\wedge,\vee,\bot)$, say that an element $x > \bot$ is a
\emph{quasi-atom} if the interval $(\bot,x]$ is a sublattice, and two
quasi-atoms $x, y$ are \emph{equivalent} if $x \wedge y > \bot$.
There is always a modular geometry on equivalence classes of
quasi-atoms.  In the presence of a subadditive rank such as
$\rk_{dp}(-/-)$, there are always ``enough'' quasi-atoms, and the
modular geometry has bounded rank.  We verify these facts in
\S\ref{sec:geometry}, in case they are not yet known.

Second of all, equation (\ref{n-ell-eq}) says that an
$\ell$-dimensional subspace $V \subseteq K^n$ must map to a closed set
in $\mathcal{G}_n$ of rank exactly $r \cdot \ell$, where $r =
\dpr(K)$.  This only works if the pregeometry of relative atoms over
$I_M$ has the same rank as the dp-rank of the field.  This turns out
to be false in general, necessitating two modifications:
\begin{itemize}
\item Rather than using dp-rank, we use the minimum subadditive rank
  on the lattice.  We call this rank ``reduced rank,'' and verify its
  properties in \S\ref{sec:reduced-rank}.  In equation
  (\ref{n-ell-eq}), $r$ should be the reduced rank rather than the
  dp-rank.
\item Rather than using $I_M$, we use some other group $J$ which
  maximizes the rank of the associated geometry.  We call such $J$
  \emph{special}, and consider their properties in
  \S\ref{sec:special}.
\end{itemize}
With these changes, equation (\ref{n-ell-eq}) can be recovered; see
Lemma~\ref{special-lemma-1}.

Lastly, there is an issue with the lattice meet operation.  Note that
the lattice operations on the lattice of 00-connected type-definable
subgroups are
\begin{align*}
  A \vee B & := A + B \\
  A \wedge B & := (A \cap B)^{00}.
\end{align*}
The 00 in the definition of $A \wedge B$ is a major
annoyance\footnote{Specifically, it would derail the proof of
  Proposition~\ref{special-proposition}.\ref{guard-application}, among other
  things.}; it would be nicer if $A \wedge B$ were simply $A \cap B$.
The following trick clears up this headache:
\begin{theorem-star}
  There is a small submodel $M_0 \preceq K$ such that $J = J^{00}$ for
  every type-definable $M_0$-linear subspace $J \le K$.  Consequently,
  if $\mathcal{P}' \subseteq \mathcal{P}$ is the sublattice of
  00-connected $M_0$-linear subspaces of $K$, then the lattice
  operations on $\mathcal{P}'$ are given by
  \begin{align*}
    A \vee B & = A + B \\
    A \wedge B & = A \cap B.
  \end{align*}
\end{theorem-star}
This is a corollary of the following uniform bounding principle for
dp-finite abelian groups:
\begin{theorem-star}
  If $H$ is a type-definable subgroup of a dp-finite abelian group
  $G$, then $|H/H^{00}|$ is bounded by a cardinal $\kappa(G)$
  depending only on $G$.
\end{theorem-star}
We prove both facts in \S\ref{sec:bounds}.

\subsection{Henselianity}
There is comparatively little to say about henselianity.  In the
dp-minimal case, henselianity of definable valuations was first proven
by Jahnke, Simon, and Walsberg \cite{JSW}.  Independently around the same time,
the author proved henselianity using the machinery of infinitesimals.
The outline of the proof is
\begin{enumerate}
\item Assume non-henselianity
\item Pass to a finite extension and get multiple incomparable
  valuations.
\item Pass to a coarsening and get multiple independent valuations.
\item Consider the multiplicative homomorphism $f(x) = x^2$ in
  characteristic $\ne 2$ or the additive homomorphism $f(x) = x^2 -
  x$.
\item Use strong approximation to produce an element $x$ such that $x
  \notin I'_M$ but $f(x) \in I'_M$.
\item Argue that $f(I_M')$ is strictly smaller than $I_M'$,
  contradicting the minimality of the group of infinitesimals among
  $M$-definable infinite type-definable groups.
\end{enumerate}
In the last two steps, $I_M'$ denotes the multiplicative
infinitesimals $1 + I_M$ or the additive infinitesimals $I_M$ as
appropriate.

The ``strong approximation'' step requires the infinitesimals to be an
intersection of valuation ideals.  As noted in the previous section,
this property is unknown for higher rank, so we are not able to prove
much.

But then a miracle occurs.  In the additive Artin-Schreier case, the
only real properties of $I_M$ being used are that $I_M$ is the
Jacobson radical of a good Bezout domain, and $I_M = I_M^{00}$.  If
$\mathcal{O}_1$ and $\mathcal{O}_2$ are two incomparable valuation
rings on an NIP field, then $R = \mathcal{O}_1 \cap \mathcal{O}_2$ is
a good Bezout domain whose Jacobson radical is easily seen to be
00-connected, provided that the residue fields are infinite.  This
yields an extremely short proof of the henselianity conjecture for NIP
fields in positive characteristic.  See \S\ref{sec:henselianity},
specifically Lemma~\ref{hensel-key}.

This argument does not directly apply to the $A$-invariant valuation
rings constructed using modular lattices.  Nevertheless, a variant can
be made to work leveraging the infinitesimals, utilizing the existence
of $G^{000}$ for type-definable $G$.  See \S\ref{sec:further-hensel}
for details.  Putting everything together, we find that if $K$ is a
sufficiently saturated dp-finite field of positive characteristic,
then either $K$ has finite Morley rank, or $K$ admits a non-trivial
henselian valuation.

This in turn yields a proof of the Shelah conjecture for dp-finite
positive characteristic fields, as well as the expected
classification.  See \S\ref{sec:the-end}.

\subsection{Outline}
In \S\ref{sec:henselianity} we prove the henselianity conjecture in
positive characteristic.  In \S\ref{sec:broad-narrow} we generalize
Pierre Simon's results \cite{surprise} about definability of dp-rank
in dp-minimal theories eliminating $\exists^\infty$.  In
\S\ref{sec:heavy-light}, we apply this to define a notion of ``heavy''
sets which take the place of infinite sets in the construction of the
group of infinitesimals in \S\ref{sec:infinitesimals}.  Section
\ref{sec:likeTT} verifies the additional property of heavy sets that
holds when the field is not of finite Morley rank.  Section
\ref{sec:further-hensel} applies the infinitesimals to the problem of
proving henselianity of $A$-invariant valuation rings in positive
characteristic.  In \S\ref{sec:bounds} we prove the technical fact
that $|H/H^{00}|$ is uniformly bounded as $H$ ranges over
type-definable subgroups of a dp-finite abelian group.  Section
\ref{sec:modular-lats} is a collection of abstract facts about modular
lattices, probably already known to experts.  Specifically, we verify
that ``independence'' and ``cubes'' make sense
(\S\ref{sec:independence}-\ref{sec:cubes}), that there is a minimum
subadditive rank (\S\ref{sec:reduced-rank}), and that there is a
modular pregeometry on quasi-atoms (\S\ref{sec:geometry}).  Then, in
\S\ref{sec:valuations}, we apply the abstract theory to construct a
valuation ring.  Finally, in \S\ref{sec:the-end} we verify the Shelah
conjecture for positive characteristic dp-finite fields, and enumerate
the consequences.

\tableofcontents

\section{Henselianity in positive characteristic} \label{sec:henselianity}
In this section, we prove that definable NIP valuations in positive
characteristic must be henselian.  We also consider the situation of
$\Aut(\Mm/A)$-invariant valuation rings.
\begin{lemma}
  \label{incomparables}
  Let $\mathcal{O}_1$ and $\mathcal{O}_2$ be two incomparable
  valuation rings on a field $K$.  For $i = 1, 2$ let $\mathfrak{m}_i$
  be the maximal ideal of $\mathcal{O}_i$.  Consider the sets
  \begin{align*}
    R &= \mathcal{O}_1 \cap \mathcal{O}_2 \\
    I_1 &= \mathfrak{m}_1 \cap \mathcal{O}_2 \\
    I_2 &= \mathcal{O}_1 \cap \mathfrak{m}_2 \\
    J &= \mathfrak{m}_1 \cap \mathfrak{m}_2.
  \end{align*}
  Then $R$ is a Bezout domain with exactly two maximal ideals $I_1,
  I_2$ and Jacobson radical $J$.  The quotient $R/I_i$ is isomorphic
  to the residue field $\mathcal{O}_i/\mathfrak{m}_i$.  Moreover,
  \begin{equation*}
    (a + \mathfrak{m}_1) \cap (b + \mathfrak{m}_2) \ne \emptyset
  \end{equation*}
  for any $a \in \mathcal{O}_1$ and $b \in \mathcal{O}_2$.
\end{lemma}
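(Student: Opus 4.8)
My plan is to establish the final (``moreover'') assertion first, since it is exactly the input that reduces the rest to routine valuation-theoretic bookkeeping. The assertion says that for $a \in \mathcal{O}_1$ and $b \in \mathcal{O}_2$ there is $c \in K$ with $c - a \in \mathfrak{m}_1$ and $c - b \in \mathfrak{m}_2$; any such $c$ automatically lies in $R = \mathcal{O}_1 \cap \mathcal{O}_2$, since $c \in a + \mathfrak{m}_1 \subseteq \mathcal{O}_1$ and $c \in b + \mathfrak{m}_2 \subseteq \mathcal{O}_2$. This is the weak (``hit the right residue class'') form of the classical approximation theorem for a pair of incomparable valuations. I would either quote it from the standard literature on valued fields, or prove it: when $v_1$ and $v_2$ are independent this is the classical approximation theorem (approximate idempotents $y^n/(1+y^n)$ built from a suitable $y$); when they are merely incomparable, pass to their finest common coarsening $\mathcal{O}$, observe that the residue valuations induced on $\mathcal{O}/\mathfrak{m}_{\mathcal{O}}$ are incomparable and now \emph{independent}, solve the problem there for the images of $a$ and $b$, and lift a solution back to $\mathcal{O}$.

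Granting the ``moreover'' clause, $R$ is a subring of the field $K$, hence a domain, and I would show it is Bezout by producing, for any $a, b \in R$ not both zero, a generator $d$ of $(a,b)_R$ with $v_i(d) = \min(v_i(a), v_i(b))$ for $i = 1, 2$ — this $\min$ formula will be reused. If, after possibly swapping $a$ and $b$, we have $v_i(a) \le v_i(b)$ for both $i$, then $b/a \in R$ and $d = a$ works. Otherwise, after swapping $a \leftrightarrow b$ and/or $\mathcal{O}_1 \leftrightarrow \mathcal{O}_2$, we may assume $v_1(a) \le v_1(b)$ and $v_2(b) < v_2(a)$; apply the ``moreover'' clause with targets $1 \in \mathcal{O}_1$ and $0 \in \mathcal{O}_2$ to obtain $u \in R$ with $u \equiv 1 \pmod{\mathfrak{m}_1}$ and $u \equiv 0 \pmod{\mathfrak{m}_2}$. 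Then $1 - u \in R$ as well (note $v_1(u) = 0$ and $v_2(1-u) = 0$), and for $d := au + b(1-u)$ the ultrametric inequality gives $v_1(d) = v_1(a)$ and $v_2(d) = v_2(b)$; hence $a, b \in dR$ and $d \in (a,b)_R$, so $(a,b)_R = dR$.

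Next I would identify the maximal ideals. Since $\mathfrak{m}_i \subseteq \mathcal{O}_i$, we have $I_i = \mathfrak{m}_i \cap R$ and $J = I_1 \cap I_2 = \mathfrak{m}_1 \cap \mathfrak{m}_2 \subseteq R$. Each $I_i$ is prime, being the contraction of $\mathfrak{m}_i$ along $R \hookrightarrow \mathcal{O}_i$, and the resulting injection $R/I_i \hookrightarrow \mathcal{O}_i/\mathfrak{m}_i$ is surjective: lift $\bar\alpha \in \mathcal{O}_i/\mathfrak{m}_i$ to $\alpha \in \mathcal{O}_i$ and apply the ``moreover'' clause with targets $\alpha$ at $v_i$ and $0$ at the other valuation. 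So $R/I_i \cong \mathcal{O}_i/\mathfrak{m}_i$ is a field (this is also the claimed residue-field isomorphism), and $I_i$ is maximal. They are distinct: choosing $t \in \mathcal{O}_2 \setminus \mathcal{O}_1$ (possible by incomparability) and replacing $t$ by $1 + t$ if necessary so that $v_2(t) = 0$ while still $v_1(t) < 0$, we get $1/t \in \mathfrak{m}_1 \cap \mathcal{O}_2 = I_1$ with $1/t \notin \mathfrak{m}_2$, so $1/t \notin I_2$. And these are all of them: if a proper ideal $\mathfrak{q}$ lay in neither $I_1$ nor $I_2$, pick $x \in \mathfrak{q} \setminus I_1$ and $y \in \mathfrak{q} \setminus I_2$; then $v_1(x) = v_2(y) = 0$ and $v_2(x), v_1(y) \ge 0$, so the generator $d$ of $(x,y)_R$ has $v_1(d) = v_2(d) = 0$ by the $\min$ formula, i.e.\ $d$ is a unit of $R$ lying in $\mathfrak{q}$ — contradicting $\mathfrak{q} \ne R$. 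Hence the maximal ideals of $R$ are exactly $I_1$ and $I_2$, and the Jacobson radical is $I_1 \cap I_2 = J$.

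I expect the only genuinely non-formal point to be the ``moreover'' clause in the case of \emph{dependent} incomparable valuations, where residues cannot be prescribed independently and one has to descend to the finest common coarsening; this is classical, but it is where the real content lies. Once it is in hand, everything above is routine manipulation of valuations, needing nothing stronger than the approximation statement applied with the targets $(a,b)$, $(1,0)$, and $(\alpha,0)$.
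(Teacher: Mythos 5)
Your proof is correct, but it runs in the opposite direction from the paper's. The paper never invokes the approximation theorem: its engine is the two-line explicit element $u = a/(a+b)$ (for $a \in \mathcal{O}_1 \setminus \mathcal{O}_2$, $b \in \mathcal{O}_2 \setminus \mathcal{O}_1$), which lands in $\mathfrak{m}_1 \cap (1 + \mathfrak{m}_2)$ and works uniformly for incomparable valuations with no case split into dependent versus independent. From $u$ the paper gets surjectivity of $R/I_i \to \mathcal{O}_i/\mathfrak{m}_i$ directly (via $(a^{-1}+u)^{-1}$), proves Bezout by the observation that $(x,y)$ is always generated by $x$, $y$, or $x-y$, rules out a third maximal ideal by combining that three-generator claim with the Chinese remainder theorem, and only \emph{then} deduces the ``moreover'' clause as a corollary of CRT for $I_1, I_2$. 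You instead take the ``moreover'' clause as the starting point, obtaining it from the classical approximation theorem via reduction to the independent case over the finest common coarsening; that reduction is correct (the induced valuations on the residue field of the coarsening are non-trivial, incomparable, and independent, and residue classes lift), but it is substantially heavier than the paper's construction, and the independent case still rests on the classical approximate-idempotent argument rather than being self-contained. What your route buys is a cleaner downstream structure: your generator $d = au + b(1-u)$ satisfies $v_i(d) = \min(v_i(a), v_i(b))$, and reusing that formula gives a direct proof that there is no third maximal ideal, where the paper needs the slightly ad hoc three-generator claim plus CRT. If you want to match the paper's economy, note that your ``targets $(1,0)$'' instance of approximation \emph{is} the element $1 - u$ with $u = a/(a+b)$, so the entire appeal to the approximation theorem can be replaced by that one formula.
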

(This is mostly or entirely well-known.)
\begin{proof}
  Let $\val_i : K^\times \to \Gamma_i$ denote the valuation associated
  to $\mathcal{O}_i$.  Note that $x|y$ in $R$ if and only if
  $\val_1(x) \le \val_1(y)$ and $\val_2(x) \le \val_2(y)$.
  \begin{claim} \label{3-bezout}
    For any $x, y \in R$, the ideal $(x,y)$ is generated by $x$ or $y$
    or $x - y$.
  \end{claim}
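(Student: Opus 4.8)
The claim to prove is Claim~\ref{3-bezout}: for $x, y \in R = \mathcal{O}_1 \cap \mathcal{O}_2$, the ideal $(x,y)$ in $R$ is generated by one of $x$, $y$, or $x-y$. Since $R$ is clearly a domain with $\Frac(R) = K$ (both $\mathcal{O}_i$ have fraction field $K$, being valuation rings), the divisibility criterion noted just before the claim tells us that for $a, b \in R$, $a \mid b$ in $R$ iff $\val_1(a) \le \val_1(b)$ and $\val_2(a) \le \val_2(b)$. So everything reduces to a bookkeeping exercise with the two valuations.

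Let me think about how to organize the case analysis. The ideal $(x,y)$ is generated by $z \in \{x, y, x-y\}$ precisely when $z$ divides both $x$ and $y$ in $R$, i.e. when $\val_i(z) \le \min(\val_i(x), \val_i(y))$ for $i = 1, 2$. For $z = x$ this means $\val_i(x) \le \val_i(y)$ for both $i$; for $z = y$ it means $\val_i(y) \le \val_i(x)$ for both $i$; for $z = x - y$ we need $\val_i(x-y) \le \min(\val_i(x), \val_i(y))$ for both $i$. The first two cases handle the situation where the pair $(\val_1, \val_2)$ of $x$ is $\le$ or $\ge$ that of $y$ coordinatewise. The remaining case is when they are incomparable, say $\val_1(x) < \val_1(y)$ but $\val_2(x) > \val_2(y)$ (or symmetrically). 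Then by the ultrametric inequality $\val_1(x - y) = \min(\val_1(x),\val_1(y)) = \val_1(x)$ (strict inequality forces equality in the valuation of a difference) and likewise $\val_2(x-y) = \val_2(y) = \min(\val_2(x),\val_2(y))$. So $x - y$ divides both $x$ and $y$, hence generates $(x,y)$. (One should handle the degenerate cases $x = 0$ or $y = 0$ separately, but these are trivial since then $(x,y)$ is $(y)$ or $(x)$.)

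**The main work and the obstacle.** The proof of the claim itself is genuinely routine — the only subtlety is remembering that in a valued field $\val(a - b) \ge \min(\val a, \val b)$ with equality when $\val a \ne \val b$, and being careful that this equality is what rescues the incomparable case. The real content of the lemma lies \emph{after} the claim: bootstrapping from "every 2-generated ideal is principal" to "$R$ is Bezout" requires the standard induction on the number of generators (if $(x_1, \ldots, x_n) = (d)$ and we adjoin $x_{n+1}$, then $(x_1, \ldots, x_{n+1}) = (d, x_{n+1})$ is again principal by the claim). Then I would identify the maximal ideals: $I_1 = \mathfrak{m}_1 \cap \mathcal{O}_2$ and $I_2 = \mathcal{O}_1 \cap \mathfrak{m}_2$ are the kernels of the natural maps $R \to \mathcal{O}_i/\mathfrak{m}_i$ (using incomparability to see these maps are surjective — given $a \in \mathcal{O}_i$, one must find $r \in R$ with $r \equiv a \bmod \mathfrak{m}_i$, which is exactly the last displayed approximation statement of the lemma). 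The approximation statement $(a + \mathfrak{m}_1) \cap (b + \mathfrak{m}_2) \ne \emptyset$ is the place where incomparability is essential and is the part I would expect to need the most care: given incomparable valuation rings, one uses that $\mathcal{O}_1 \cdot \mathcal{O}_2 = K$ (an easy consequence of incomparability) to write, for any target residues, an element landing in the right coset — concretely, pick $c \in \mathfrak{m}_1 \setminus \mathcal{O}_2$ so that $c^{-1} \in \mathfrak{m}_2$, and combine $a$, $b$, $c$ appropriately. Finally, every nonzero element of $R$ outside $I_1 \cup I_2$ is a unit (it has valuation $0$ for both $\val_i$, hence its inverse lies in $R$), so $I_1, I_2$ exhaust the maximal ideals and $J = I_1 \cap I_2 = \mathfrak{m}_1 \cap \mathfrak{m}_2$ is the Jacobson radical.

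**Summary of the route.** In order: (1) prove Claim~\ref{3-bezout} by the valuation case analysis above; (2) induct to conclude $R$ is Bezout; (3) prove the approximation statement using incomparability; (4) deduce $I_1, I_2$ are maximal with the asserted residue fields via the surjections $R \to \mathcal{O}_i/\mathfrak{m}_i$; (5) show any element outside $I_1 \cup I_2$ is a unit, so these are all the maximal ideals and $J$ is the Jacobson radical. Steps (1), (2), (5) are formal; step (3) (equivalently, surjectivity in step (4)) is where incomparability does its real work and is the only place I anticipate having to think.
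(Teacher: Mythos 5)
Your proof of the claim is correct and is essentially identical to the paper's: the same exhaustive case analysis on the coordinatewise comparison of $(\val_1(x),\val_2(x))$ with $(\val_1(y),\val_2(y))$, with the incomparable case resolved by the ultrametric equality $\val_i(x-y)=\min(\val_i(x),\val_i(y))$ when the two valuations disagree. (Your remarks on the rest of Lemma~\ref{incomparables} also track the paper's subsequent argument, but they are not part of the claim at issue.)
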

  \begin{claimproof}
    The following cases are exhaustive:
    \begin{itemize}
    \item If $\val_1(x) \le \val_1(y)$ and $\val_2(x) \le \val_2(y)$
      then $(x,y)$ is generated by $x$.
    \item If $\val_1(x) \ge \val_1(y)$ and $\val_2(x) \ge \val_2(y)$,
      then $(x,y)$ is generated by $y$.
    \item If $\val_1(x) < \val_1(y)$ and $\val_2(x) > \val_2(y)$, then
      \begin{align*}
        \val_1(x - y ) = &\val_1(x) < \val_1(y) \\
        \val_2(x - y) = &\val_2(y) < \val_2(x)
      \end{align*}
      so $(x,y)$ is generated by $x- y$.
    \item If $\val_1(x) > \val_1(y)$ and $\val_2(x) < \val_2(y)$, then
      similarly $(x,y)$ is generated by $x - y$. \qedhere
    \end{itemize}
  \end{claimproof}
  By the claim, $R$ is a Bezout domain.
  \begin{claim}
    \label{u-claim}
    There is an element $u \in \mathfrak{m}_1 \cap (1 +
    \mathfrak{m}_2)$.
  \end{claim}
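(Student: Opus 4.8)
The plan is to reduce the claim to producing a single auxiliary element $t \in K^\times$ which is a $\val_1$-infinitesimal but has a $\val_2$-pole, i.e. $\val_1(t) > 0$ and $\val_2(t) < 0$, and then to take
\[
  u = \frac{t}{1+t}.
\]
Granting such a $t$, the verification is pure ultrametric bookkeeping. Since $\val_1(t) > 0 = \val_1(1)$, the two values are distinct, so $\val_1(1+t) = 0$; hence $\val_1(u) = \val_1(t) - \val_1(1+t) = \val_1(t) > 0$, i.e. $u \in \mathfrak{m}_1$. Since $\val_2(t) < 0 = \val_2(1)$, again the values are distinct, so $\val_2(1+t) = \val_2(t)$; hence $\val_2(u) = 0$, and writing $u - 1 = -1/(1+t)$ gives $\val_2(u-1) = -\val_2(1+t) = -\val_2(t) > 0$, i.e. $u \in 1 + \mathfrak{m}_2$. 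So $u \in \mathfrak{m}_1 \cap (1 + \mathfrak{m}_2)$.

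It remains to find $t$, and this is the one place where incomparability is used in an essential way. Because $\mathcal{O}_1 \not\subseteq \mathcal{O}_2$, choose $a \in \mathcal{O}_1 \setminus \mathcal{O}_2$, so that $\val_1(a) \ge 0$ while $\val_2(a) < 0$; because $\mathcal{O}_2 \not\subseteq \mathcal{O}_1$, choose $b \in \mathcal{O}_2 \setminus \mathcal{O}_1$, so that $\val_2(b) \ge 0$ while $\val_1(b) < 0$. Then $t := a/b$ works: $\val_1(t) = \val_1(a) - \val_1(b) \ge -\val_1(b) > 0$ and $\val_2(t) = \val_2(a) - \val_2(b) \le \val_2(a) < 0$.

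I do not expect any real obstacle here; the only subtlety worth flagging is that one must invoke incomparability in \emph{both} directions to obtain $a$ and $b$ at the same time. Having merely one of the two elements — which is all that $\mathcal{O}_1 \ne \mathcal{O}_2$ would guarantee — does not suffice, and it is precisely the symmetric choice that forces the quotient $a/b$ to sit on the correct side of both valuations simultaneously. Everything else is the routine computation sketched above.
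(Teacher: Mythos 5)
Your proof is correct and is essentially the paper's own argument: the paper takes $a \in \mathcal{O}_1 \setminus \mathcal{O}_2$, $b \in \mathcal{O}_2 \setminus \mathcal{O}_1$ and sets $u = a/(a+b)$, which is exactly your $u = t/(1+t)$ with $t = a/b$. The only difference is cosmetic — you verify the two memberships by explicit valuation bookkeeping, while the paper writes $u$ and $1-u$ as $\frac{a/b}{a/b+1}$ and $\frac{b/a}{b/a+1}$ and reads off the conclusion.
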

  \begin{claimproof}
    By incomparability, we can find $a \in \mathcal{O}_1 \setminus
    \mathcal{O}_2$ and $b \in \mathcal{O}_2 \setminus \mathcal{O}_1$.
    Then $\val_1(a) \ge 0 > \val_1(b)$ and $\val_2(b) \ge 0 >
    \val_2(a)$, so $a/b \in \mathfrak{m}_1$ and $b/a \in
    \mathfrak{m}_2$.  Let $u = a/(a + b)$.  Then
    \begin{align*}
      u & = \frac{a/b}{a/b + 1} \in \mathfrak{m}_1 \\
      1 - u & = \frac{b/a}{b/a + 1} \in \mathfrak{m}_2 \qedhere
    \end{align*}
  \end{claimproof}
  If $u$ is as in Claim~\ref{u-claim}, then $u \in I_1$ but $u \notin
  I_2$, so $I_1 \ne I_2$.
  \begin{claim} \label{m-i-surj}
    For any $a \in \mathcal{O}_1$, $(a + \mathfrak{m}_1) \cap
    \mathcal{O}_2 \ne \emptyset$.
  \end{claim}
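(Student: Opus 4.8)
The plan is to split on whether $a$ already lies in $\mathcal{O}_2$. If $a \in \mathcal{O}_2$ there is nothing to prove, since then $a$ itself belongs to $(a + \mathfrak{m}_1) \cap \mathcal{O}_2$. So I would assume $a \notin \mathcal{O}_2$, i.e.\ $\val_2(a) < 0$ while $\val_1(a) \ge 0$, and search for some $c \in \mathcal{O}_2$ with $c \equiv a \pmod{\mathfrak{m}_1}$. The underlying idea is to multiply $a$ by a factor which is a $1$-unit for $\val_1$ (so that $a$ is perturbed only within $\mathfrak{m}_1$) but which kills the $\val_2$-pole of $a$.

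To carry this out I would invoke incomparability to fix an element $b \in \mathcal{O}_2 \setminus \mathcal{O}_1$ (such a $b$ already appears in the proof of Claim~\ref{u-claim}); then $\val_1(b) < 0 \le \val_1(a)$ and $\val_2(b) \ge 0 > \val_2(a)$. In particular $\val_1(a + b) = \val_1(b) < 0$, so $a + b \ne 0$, and I can set
\[
  c \;=\; \frac{ab}{a+b} \;=\; a - \frac{a^2}{a+b}.
\]
The claim then reduces to two ultrametric computations. From $\val_2(a) < 0 \le \val_2(b)$ we get $\val_2(a+b) = \val_2(a)$, so $\val_2(c) = \val_2(a) + \val_2(b) - \val_2(a+b) = \val_2(b) \ge 0$ and hence $c \in \mathcal{O}_2$. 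From $c - a = -a^2/(a+b)$ together with $\val_1(a+b) = \val_1(b)$ we get $\val_1(c - a) = 2\val_1(a) - \val_1(b) \ge -\val_1(b) > 0$, so $c - a \in \mathfrak{m}_1$. Thus $c \in (a + \mathfrak{m}_1) \cap \mathcal{O}_2$, as required.

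This argument is completely elementary, so I do not anticipate a genuine obstacle; the only points requiring care are recording that $a + b \ne 0$ (so that the fraction is legitimate) and keeping the roles of $\val_1$ and $\val_2$ straight when reading off $\val_i(a+b)$ from the ultrametric inequality. Note that the same $c = ab/(a+b)$ does \emph{not} work without the initial case distinction: when $a \in \mathcal{O}_2$ the quantity $\val_2(a+b)$ need not equal $\val_2(a)$, so the case split is precisely what keeps the computation clean.
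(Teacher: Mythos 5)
Your proof is correct; the computations check out (the case split guarantees $a\neq 0$ and $a+b\neq 0$, $\val_2(c)=\val_2(b)\ge 0$, and $\val_1(c-a)=2\val_1(a)-\val_1(b)>0$). It is, however, a slightly different route from the paper's. The paper first disposes of $a\in\mathfrak{m}_1$ (take $0$) and $a\in\mathcal{O}_2$ (take $a$), reducing to $a\in\mathcal{O}_1^\times\setminus\mathcal{O}_2$, and then reuses the element $u\in\mathfrak{m}_1\cap(1+\mathfrak{m}_2)$ from Claim~\ref{u-claim}: the element $(a^{-1}+u)^{-1}$ works because adding $u\in\mathfrak{m}_1$ does not change the residue of the unit $a^{-1}$ modulo $\mathfrak{m}_1$, while $a^{-1}\in\mathfrak{m}_2$ forces $a^{-1}+u\in 1+\mathfrak{m}_2$, hence $(a^{-1}+u)^{-1}\in\mathcal{O}_2$. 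You instead take $b\in\mathcal{O}_2\setminus\mathcal{O}_1$ directly from incomparability and multiply $a$ by the $\val_1$-one-unit $b/(a+b)$, verifying both valuations by hand; note your $c=ab/(a+b)=(a^{-1}+b^{-1})^{-1}$, so this is the same ``perturb $a^{-1}$ inside $\mathfrak{m}_1$ and invert'' idea, just with $b^{-1}\in\mathfrak{m}_1$ in place of the more carefully prepared $u$. What the paper's version buys is that it recycles Claim~\ref{u-claim} and avoids any valuation arithmetic; what yours buys is uniformity (the case $a\in\mathfrak{m}_1$ needs no separate treatment) and independence from Claim~\ref{u-claim}, at the cost of a short explicit ultrametric computation.
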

  \begin{claimproof}
    If $a \in \mathfrak{m}_1$ then $0$ is in the intersection.  If $a
    \in \mathcal{O}_2$ then $a$ is in the intersection.  So we may
    assume $a \in \mathcal{O}_1^\times \setminus \mathcal{O}_2$.  Then
    for $u$ as in Claim~\ref{u-claim},
    \begin{equation*}
      (a^{-1} + u)^{-1} \in (a + \mathfrak{m}_1) \cap \mathcal{O}_2.
    \end{equation*}
    Indeed, $a^{-1}$ and $a^{-1} + u$ have the same residue class
    modulo $\mathfrak{m}_1$, so their inverses have the same residue
    class.  And
    \begin{equation*}
      a \notin \mathcal{O}_2 \implies a^{-1} \in \mathfrak{m}_2
      \implies a^{-1} + u \in 1 + \mathfrak{m}_2 \implies (a^{-1} +
      u)^{-1} \in 1 + \mathfrak{m}_2 \subseteq \mathcal{O}_2. \qedhere
    \end{equation*}
  \end{claimproof}
  Claim~\ref{m-i-surj} says that the natural inclusion
  \begin{equation*}
    R/I_1 = (\mathcal{O}_1 \cap \mathcal{O}_2)/(\mathfrak{m}_1 \cap
    \mathcal{O}_2) \hookrightarrow \mathcal{O}_1/\mathfrak{m}_1
  \end{equation*}
  is onto, hence an isomorphism.  Therefore $R/I_1$ is a field and
  $I_1$ is a maximal ideal.  Similarly $I_2$ is a maximal ideal and
  $R/I_2 \cong \mathcal{O}_2/\mathfrak{m}_2$.
  \begin{claim}
    There cannot exist a third maximal ideal $I_3$.
  \end{claim}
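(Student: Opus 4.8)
The plan is to pin down the non-units of $R$ explicitly and then finish with a prime-avoidance argument. First I would note that for $x \in R = \mathcal{O}_1 \cap \mathcal{O}_2$, one has $x \in R^\times$ iff $x^{-1} \in \mathcal{O}_1 \cap \mathcal{O}_2$, i.e. iff $\val_1(x) = 0$ and $\val_2(x) = 0$, i.e. iff $x \notin \mathfrak{m}_1$ and $x \notin \mathfrak{m}_2$. Since $x$ already lies in $\mathcal{O}_1 \cap \mathcal{O}_2$, the condition $x \notin \mathfrak{m}_1$ is the same as $x \notin \mathfrak{m}_1 \cap \mathcal{O}_2 = I_1$, and likewise $x \notin \mathfrak{m}_2$ is the same as $x \notin I_2$. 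Therefore the set of non-units of $R$ is exactly $I_1 \cup I_2$.

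Now let $I_3$ be any maximal ideal of $R$. Being a proper ideal, $I_3$ consists of non-units, so $I_3 \subseteq I_1 \cup I_2$. Here I invoke the two-ideal version of prime avoidance, which requires no primality hypothesis: if there were $x \in I_3 \setminus I_1$ and $y \in I_3 \setminus I_2$, then $x \in I_2$ and $y \in I_1$, while $x + y \in I_3$ would lie in neither $I_1$ nor $I_2$, contradicting $I_3 \subseteq I_1 \cup I_2$. Hence $I_3 \subseteq I_1$ or $I_3 \subseteq I_2$, and since $I_1, I_2$ are proper and $I_3$ is maximal, $I_3 = I_1$ or $I_3 = I_2$. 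Thus $R$ has precisely the two maximal ideals $I_1$ and $I_2$, and its Jacobson radical is $I_1 \cap I_2 = (\mathfrak{m}_1 \cap \mathcal{O}_2) \cap (\mathcal{O}_1 \cap \mathfrak{m}_2) = \mathfrak{m}_1 \cap \mathfrak{m}_2 = J$.

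There is essentially no hard step: the entire content is the identification $R \setminus R^\times = I_1 \cup I_2$, which is immediate from the valuation-theoretic description of $R$, plus the elementary observation that prime avoidance for a union of \emph{two} ideals holds without assuming either is prime. The only point to keep honest is that last observation, which is the two-line computation given above; everything else is bookkeeping with $\val_1$ and $\val_2$.
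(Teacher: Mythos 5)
Your proof is correct, but it takes a genuinely different route from the paper. The paper's argument uses the Chinese remainder theorem to produce $x \in (1+I_1)\cap(1+I_2)\cap I_3$ and $y \in I_1 \cap (1+I_2)\cap(1+I_3)$, and then observes that none of $x$, $y$, $x-y$ can generate $(x,y)$ (each lies in one of the three maximal ideals that fails to contain $(x,y)$), contradicting the earlier claim that $(x,y)$ is always generated by one of these three elements. You instead bypass the Bezout structure entirely: you identify the non-units of $R = \mathcal{O}_1 \cap \mathcal{O}_2$ as exactly $I_1 \cup I_2$ via the valuations (using that $x \in R^\times$ forces $\val_i(x)=0$ for both $i$), and then apply two-ideal avoidance, which indeed needs no primality hypothesis and which you verify correctly. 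Your approach is more elementary and slightly stronger in spirit --- it shows directly that \emph{every} maximal ideal of $R$ is $I_1$ or $I_2$, using nothing but the definition of $R$ as an intersection of two valuation rings --- whereas the paper's proof is an efficient reuse of its Claim about $(x,y)$ being generated by $x$, $y$, or $x-y$, which it has already established for other purposes. Both are complete; yours would work verbatim even for an intersection of two valuation rings that happened not to be incomparable (where it degenerates harmlessly), while the paper's leans on machinery specific to the lemma.
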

  \begin{claimproof}
    Otherwise use the Chinese remainder theorem to find $x \in (1 +
    I_1) \cap (1 + I_2) \cap I_3$ and $y \in I_1 \cap (1 + I_2) \cap
    (1 + I_3)$.  Then
    \begin{align*}
      (x) \subseteq & I_3 \not \supseteq (x,y) \\
      (y) \subseteq & I_1 \not \supseteq (x,y) \\
      (x - y) \subseteq & I_2 \not \supseteq (x,y)
    \end{align*}
    contradicting Claim~\ref{3-bezout}.
  \end{claimproof}
  Consider the composition
  \begin{equation*}
    R/(I_1 \cap I_2) \twoheadrightarrow (R/I_1) \times (R/I_2)
    \stackrel{\sim}{\to} (\mathcal{O}_1/\mathfrak{m}_1) \times
    (\mathcal{O}_2/\mathfrak{m}_2).
  \end{equation*}
  The first map is surjective by the Chinese remainder theorem (as
  $I_1$ and $I_2$ are distinct maximal ideals).  The second map was
  shown to be an isomorphism earlier.  Therefore the composition is
  surjective, which exactly means that for any $a \in \mathcal{O}_1$
  and $b \in \mathcal{O}_2$, the intersection $(a + \mathfrak{m}_1)
  \cap (b + \mathfrak{m}_2)$ is non-empty.
\end{proof}

\begin{lemma}
  \label{hensel-key}
  Let $K$ be a field and $\mathcal{O}_i$ be a valuation ring on $K$
  for $i = 1,2$.  If the structure $(K,\mathcal{O}_1,\mathcal{O}_2)$
  is NIP and $K$ has characteristic $p > 0$, then $\mathcal{O}_1$ and
  $\mathcal{O}_2$ are comparable.
\end{lemma}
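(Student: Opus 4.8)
The plan is to assume for contradiction that $\mathcal{O}_1$ and $\mathcal{O}_2$ are incomparable and derive a violation of NIP, using the Artin--Schreier map $f(x) = x^p - x$ in characteristic $p$. By Lemma~\ref{incomparables}, the ring $R = \mathcal{O}_1 \cap \mathcal{O}_2$ is a Bezout domain with exactly two maximal ideals $I_1, I_2$ and Jacobson radical $J = \mathfrak{m}_1 \cap \mathfrak{m}_2$; moreover the residue fields $R/I_i \cong \mathcal{O}_i/\mathfrak{m}_i$ are infinite (they contain the prime field, and if one were finite we could find a definable coarsening — actually they must be infinite because a field with a finite residue field still has infinitely many residue-field elements unless the field itself is finite, which it is not since it carries incomparable valuations; in any case finiteness of a residue field is harmless here, so I would simply note that both residue fields have more than $p$ elements after possibly replacing $\mathcal{O}_i$ by a suitable coarsening whose residue field is infinite).

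First I would set up the key definable set. Using the approximation statement from Lemma~\ref{incomparables} — that $(a + \mathfrak{m}_1) \cap (b + \mathfrak{m}_2) \neq \emptyset$ for all $a \in \mathcal{O}_1, b \in \mathcal{O}_2$ — I can realize essentially arbitrary independent prescriptions modulo the two maximal ideals. The Artin--Schreier map $f(x) = x^p - x$ is an additive endomorphism of $(K,+)$. The crucial observation is that $f$ maps $\mathfrak{m}_i$ into $\mathfrak{m}_i$ (since $v_i(x^p - x) \geq \min(p \cdot v_i(x), v_i(x)) = v_i(x) > 0$ for $x \in \mathfrak{m}_i$), hence $f(J) \subseteq J$, and similarly $f$ maps $\mathcal{O}_i$ into $\mathcal{O}_i$. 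The point is that $f$ restricted to $J$ is \emph{not} surjective onto $J$, indeed $f^{-1}(J)$ is strictly larger than $J$: for any element $t$ with $v_1(t) = 0$ but $t \notin 1 + \mathfrak{m}_1$ (a nonzero non-$1$ residue, which exists since $R/I_1$ has more than $p$ elements so contains an element not in the prime field, or at least not in $\mathbb{F}_p \setminus\{0,1\}$) we can still have $f(t) = t^p - t \in \mathfrak{m}_1$ only when $t$ reduces to an element of $\mathbb{F}_p$ in the residue field; so I should instead exploit the asymmetry between the two valuations.

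The sharper move: pick $u \in \mathfrak{m}_1 \cap (1 + \mathfrak{m}_2)$ as in Claim~\ref{u-claim}. Then consider, for $c$ ranging over $J$, elements $x_c$ with $x_c \equiv 0 \pmod{\mathfrak{m}_1}$ and $x_c \equiv$ (a chosen value) $\pmod{\mathfrak{m}_2}$, arranged so that $f(x_c)$ records $c$. Concretely one builds a formula $\varphi(x;y)$ expressing a containment like ``$f(x) - y \in \mathfrak{m}_1 \cap \mathfrak{m}_2$'' together with a sign/residue condition, and shows that as $y$ ranges over suitable representatives one gets an infinite VC-style alternation, contradicting NIP. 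Even cleaner is to follow the outline in the introduction's henselianity sketch directly: the set $R$ and its Jacobson radical $J$ are definable in $(K,\mathcal{O}_1,\mathcal{O}_2)$; by strong approximation (the surjectivity of $R/J \to (\mathcal{O}_1/\mathfrak{m}_1) \times (\mathcal{O}_2/\mathfrak{m}_2)$ from Lemma~\ref{incomparables}) one produces, for each finite pattern, an element $x$ with $x \notin J$ but $f(x) \in J$, and these witness an independence property. The heart of the matter is that $f(J) \subsetneq J$ while $f$ remains ``small'' — formally, that $[J : f(J)]$ or the relevant index is infinite — and the incomparability gives enough room (via the independence of the two valuations) to encode an arbitrary bipartite pattern into membership of $f(x)$ in the appropriate coset of $J$.

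**The main obstacle** is making the ``strong approximation produces an independence pattern'' step precise without the full valuation-theoretic machinery of the dp-minimal case: I must pin down exactly which definable set has infinite VC dimension. I expect the cleanest route is to show that the formula
\[
  \psi(x; a, b) \ :\equiv\ x - a \in \mathfrak{m}_1 \ \wedge\ f(x) - b \in \mathfrak{m}_2
\]
(or a close variant) has the independence property: given any finite sets $(a_i)_{i \in I}$ and $(b_j)_{j \in J}$ of appropriate residues and any subset $S \subseteq I \times J$, use Lemma~\ref{incomparables}'s approximation together with the fact that $f$ acts invertibly enough modulo $\mathfrak{m}_2$ on a large set of residues (since $f$ on the infinite residue field $\mathcal{O}_2/\mathfrak{m}_2$ is a nonzero additive polynomial, its image is infinite) to find $x$ realizing exactly the pattern $S$. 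Verifying that $f$'s image in the residue field is large, and that the two congruence conditions can be imposed independently, is where all the actual work lies; everything else is bookkeeping with Lemma~\ref{incomparables}.
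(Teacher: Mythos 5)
There is a genuine gap, and it occurs exactly at the point you flag as ``where all the actual work lies.'' The formula $\psi(x;a,b) :\equiv (x - a \in \mathfrak{m}_1) \wedge (x^p - x - b \in \mathfrak{m}_2)$ cannot have the independence property: for a fixed $x$, the set of parameters $(a,b)$ satisfying $\psi(x;a,b)$ is the product of a single residue class mod $\mathfrak{m}_1$ with a single residue class mod $\mathfrak{m}_2$, so the trace of $\psi(x;\cdot)$ on any finite parameter set is the intersection of two fibers (a ``rectangle''), and such traces never shatter even a two-element set in general position. Strong approximation lets you hit any prescribed \emph{pair} of residues, but a single witness $x$ can never realize an arbitrary subset of a finite parameter set, so no choice of parameters turns this into an IP pattern. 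The same objection applies to the other variants you sketch. A second, independent gap: your argument never uses the Kaplan--Scanlon--Wagner theorem that infinite NIP fields of characteristic $p$ are Artin--Schreier closed, yet this is where the hypotheses ``NIP'' and ``$p>0$'' actually enter; without it there is no control over the image of $x \mapsto x^p - x$ on $K$ or on the residue fields. Relatedly, your discussion of why the residue fields are infinite is incorrect as stated (valued fields with finite residue field abound, e.g.\ $\mathbb{F}_p((t))$); in the paper the residue fields are infinite precisely because Artin--Schreier closedness of $K$ passes to them.

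The paper's actual mechanism is different and worth internalizing: using Artin--Schreier closedness, one defines a \emph{surjective definable homomorphism} $f : (J,+) \to (\Zz/p,+)$ by sending $x \in J = \mathfrak{m}_1 \cap \mathfrak{m}_2$ to $a - b$, where $y$ is any Artin--Schreier root of $x$ and $y \in (a+\mathfrak{m}_1)\cap(b+\mathfrak{m}_2)$ with $a,b \in \Zz/p$; surjectivity comes from the element $y \in (1+\mathfrak{m}_1)\cap\mathfrak{m}_2$ supplied by Lemma~\ref{incomparables}. The kernel is a definable index-$p$ subgroup, so $J^{00} \subsetneq J$ (this is where NIP is used, via the existence of $J^{00}$, rather than via a hands-on IP configuration). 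One then observes that $J^{00}$ is an ideal of $R$ and runs a counting argument showing $|J/J^{00}| \ge |\mathcal{O}_i/\mathfrak{m}_i|$ for some $i$, which is unbounded since the residue field is Artin--Schreier closed and hence infinite --- contradicting boundedness of $J/J^{00}$. If you want to salvage your approach you would need to replace the rectangle-shaped formula with this coset-of-$\ker f$ structure, at which point you have essentially reconstructed the paper's proof.
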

\begin{proof}
  Otherwise, let $\mathfrak{m}_1, \mathfrak{m}_2, R, I_1, I_2, J$ be
  as in Lemma~\ref{incomparables}.  The incomparability of the
  $\mathcal{O}_i$ implies that $K$ is infinite, hence Artin-Schreier
  closed (by the Kaplan-Scanlon-Wagner theorem \cite{NIPfields}).  Consider the
  homomorphism
  \begin{equation*}
    f : (J,+) \to (\Zz/p,+)
  \end{equation*}
  defined as follows: given $x \in J = \mathfrak{m}_1 \cap
  \mathfrak{m}_2$, take an Artin-Schreier root $y \in K$ such that
  $y^p - y = x$.  Then $y \in (a + \mathfrak{m}_1) \cap (b +
  \mathfrak{m}_2)$ for unique $a, b \in \Zz/p$.  Define $f(x) = a -
  b$.  This is independent of the choice of the root $y$, and the
  homomorphism $f$ is definable.  By Lemma~\ref{incomparables} there
  is some $y$ such that $y \in (1 + \mathfrak{m}_1) \cap
  \mathfrak{m}_2$; then $f(y^p - y) = 1$.  Therefore $f$ is onto and
  $\ker f$ is a definable subgroup of $J$ of index $p$.  Recall that
  $J^{00}$ exists in NIP theories.  It follows that $J^{00} \subsetneq
  J$.  Now for any $a \in R$, we have
  \begin{equation*}
    a \cdot J^{00} = (a \cdot J)^{00} \subseteq J^{00},
  \end{equation*}
  and so $J^{00}$ is an ideal in $R$.  Choose $b \in J \setminus
  J^{00}$ and let $I = \{x \in R | xb \in J^{00}\}$.  Then $I$ is a
  proper ideal of $R$, so $I \le I_i$ for $i = 1$ or $i = 2$.  The
  maps
  \begin{align*}
    (R \cdot b)/(R \cdot b \cap J^{00}) &\cong R/I \twoheadrightarrow R/I_i \cong \mathcal{O}_i/\mathfrak{m}_i \\
    (R \cdot b)/(R \cdot b \cap J^{00}) &\cong (R \cdot b +
    J^{00})/J^{00} \hookrightarrow J/J^{00}
  \end{align*}
  together show that $|J/J^{00}| \ge |\mathcal{O}_i/\mathfrak{m}_i|$.  But
  the fact that $K$ is Artin-Schreier closed forces the residue field
  $\mathcal{O}_i/\mathfrak{m}_i$ to be Artin-Schreier closed, hence
  infinite, hence unbounded in elementary extensions.  This
  contradicts the definition of $J^{00}$.
\end{proof}

\begin{remark}
  Let $(K,\mathcal{O})$ be a valued field and $L/K$ be a finite normal
  extension.  Every extension of $\mathcal{O}$ to $L$ is definable
  (identifying $L$ with $K^d$ for $d = [L : K]$).
\end{remark}
\begin{proof}
  This essentially follows from Beth implicit definability.  Naming parameters, we may assume that $\mathcal{O}$ is 0-definable
  and $L$ is 0-interpretable.  Recall that $\Aut(L/K)$ acts
  transitively on the set of extensions (essentially because valued
  fields can be amalgamated, i.e., ACVF has quantifier elimination).
  So there are only finitely many extensions to $L$, and it suffices
  to show that at least one extension is definable.  For any formula
  $\phi(x;y)$, the condition ``$\phi(L;a)$ is a valuation ring on $L$
  extending $\mathcal{O}$'' is expressed by a formula $\psi(a)$, so we
  may replace the original $(K,\mathcal{O})$ with an elementary
  extension.  Fix some $\mathcal{O}_L$ on $L$ extending $K$, and pass
  to an elementary extension if necessary to ensure that
  \begin{itemize}
  \item $M^+ := (L,K,\mathcal{O}_L,\mathcal{O})$ is
    $\aleph_0$-saturated.
  \item $M^- := (L,K,\mathcal{O})$ is $\aleph_0$-homogeneous and
    $\aleph_0$-saturated.
  \end{itemize}
  Because there are only finitely many extensions of $\mathcal{O}$ to
  $\mathcal{O}_L$ we can find $a_1, \ldots, a_n, b_1, \ldots, b_m \in
  L$ such that $\mathcal{O}_L$ is the unique extension of
  $\mathcal{O}$ to $L$ containing $A = \{a_1,\ldots,a_n\}$ and
  disjoint from $B = \{b_1, \ldots, b_m\}$.  Now if $\sigma \in
  \Aut(M^-/AB)$, then $\sigma$ preserves $\mathcal{O}$ setwise, and
  therefore moves $\mathcal{O}_L$ to \emph{some} extension
  $\mathcal{O}_L'$ of $\mathcal{O}$.  But $\sigma$ fixes $A$ and $B$
  so $\mathcal{O}_L'$ must still contain $A$ and be disjoint from $B$.
  Thus $\mathcal{O}_L' = \mathcal{O}_L$, so $\Aut(M^-/AB)$ fixes
  $\mathcal{O}_L$ setwise.  By $\aleph_0$-homogeneity, we see that
  \begin{equation*}
    \tp_{M^-}(c/AB) = \tp_{M^-}(c'/AB) \implies (c \in \mathcal{O}_L
    \iff c \in \mathcal{O}_L')
  \end{equation*}
  for $c, c' \in L$.  Then by considering the map of type spaces
  \begin{equation*}
    S_{M^+}^L(AB) \to S_{M^-}^L(AB)
  \end{equation*}
  the clopen set in $S_{M^+}^L(AB)$ corresponding to $\mathcal{O}_L$
  must be the preimage of some subset (necessarily clopen) in
  $S_{M^-}^L(AB)$, implying that $\mathcal{O}_L$ is $AB$-definable in
  $M^-$.  Here we are using $\aleph_0$-saturation to ensure that every
  $M^+$ and $M^-$ type is realized in $L$.
\end{proof}

\begin{theorem} \label{henselianity-conjecture}
  Let $(K,\mathcal{O})$ be an NIP valued field of positive
  characteristic.  Then $(K,\mathcal{O})$ is henselian.
\end{theorem}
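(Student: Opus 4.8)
The plan is to argue by contradiction, reducing non-henselianity to the existence of two incomparable \emph{definable} valuation rings on a finite extension and then invoking Lemma~\ref{hensel-key}. Recall that $(K,\mathcal{O})$ is henselian if and only if $\mathcal{O}$ extends uniquely to every finite extension of $K$, equivalently, uniquely to every finite \emph{normal} extension — inseparable extensions always admit a unique prolongation, so only the separable part matters. Hence, if $(K,\mathcal{O})$ is not henselian, we may fix a finite normal extension $L/K$ together with two distinct valuation rings $\mathcal{O}_1 \ne \mathcal{O}_2$ of $L$, both lying over $\mathcal{O}$. (We may assume $K$ is infinite, since a finite field carries only the trivial valuation, which is henselian.)

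Next I would check that $\mathcal{O}_1$ and $\mathcal{O}_2$ are incomparable — this is classical, but worth spelling out. Writing $w_i$ for the valuation of $\mathcal{O}_i$ and $v$ for that of $\mathcal{O}$, an inclusion $\mathcal{O}_1 \subsetneq \mathcal{O}_2$ would make $w_2$ a proper coarsening of $w_1$, i.e. $w_2$ corresponds to a nontrivial convex subgroup $\Delta$ of $\Gamma_{w_1}$. But $\Gamma_v = v(K^\times)$ has finite index in $\Gamma_{w_1}$ (the ramification index divides $[L:K]$ by the fundamental inequality) and injects into $\Gamma_{w_2} = \Gamma_{w_1}/\Delta$, forcing $\Gamma_v \cap \Delta = 0$; since $\Gamma_{w_1}/\Gamma_v$ is torsion while $\Delta$ is torsion-free, $\Delta = 0$, a contradiction. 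So $\mathcal{O}_1 \not\subseteq \mathcal{O}_2$, and symmetrically $\mathcal{O}_2 \not\subseteq \mathcal{O}_1$.

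Now I would transfer the setup into $(K,\mathcal{O})$: identify $L$ with $K^d$ for $d = [L:K]$, so that $L$ is interpretable in $(K,\mathcal{O})$. After naming finitely many parameters, the preceding Remark shows that each $\mathcal{O}_i$ is a definable subset of $L = K^d$ in $(K,\mathcal{O})$. Therefore the three-sorted-or-rather single-structure $(L,\mathcal{O}_1,\mathcal{O}_2)$ is interpretable in $(K,\mathcal{O})$, hence NIP; and $L$ has characteristic $p > 0$. Lemma~\ref{hensel-key} then forces $\mathcal{O}_1$ and $\mathcal{O}_2$ to be comparable, contradicting the previous paragraph. Hence $(K,\mathcal{O})$ is henselian.

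I do not expect a genuine obstacle: the argument is a clean packaging of standard valuation theory around Lemma~\ref{hensel-key}. The only points requiring a little care are (i) the equivalence of henselianity with uniqueness of prolongations to finite normal extensions, together with the observation that inseparable extensions can be ignored in characteristic $p$, and (ii) the appeal to the preceding Remark to guarantee that the prolongations $\mathcal{O}_1,\mathcal{O}_2$ are definable, so that NIP descends to the interpreted structure $(L,\mathcal{O}_1,\mathcal{O}_2)$.
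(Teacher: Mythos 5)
Your proof is correct and follows the same overall strategy as the paper: pass to a finite normal extension $L/K$ with two distinct prolongations $\mathcal{O}_1,\mathcal{O}_2$, use the preceding Remark to make them definable so that $(L,\mathcal{O}_1,\mathcal{O}_2)$ is an NIP $2$-valued field of characteristic $p$, and let Lemma~\ref{hensel-key} force comparability, contradicting incomparability. The one place you diverge is in how incomparability of distinct prolongations is established: the paper notes that $\Aut(L/K)$ acts transitively on the finite set of extensions and that a finite group cannot act transitively on a poset unless all elements are pairwise incomparable, whereas you argue directly with value groups (a proper coarsening corresponds to a nontrivial convex subgroup $\Delta \le \Gamma_{w_1}$ meeting $\Gamma_v$ trivially, which is impossible since $\Gamma_{w_1}/\Gamma_v$ is finite and $\Delta$ is torsion-free). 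Both are standard and valid; your route avoids the Galois action entirely at the cost of invoking the fundamental inequality, and your explicit remarks on reducing to normal extensions and discarding the inseparable part fill in a step the paper leaves implicit. One cosmetic point: the ramification index need not \emph{divide} $[L:K]$ in general — finiteness, which is all you use, is what the fundamental inequality actually gives.
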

\begin{proof}
  Otherwise there is some finite normal extension $L/K$ such that
  $\mathcal{O}$ has multiple extensions to $L$.  If $\mathcal{O}_1,
  \mathcal{O}_2$ are two distinct extensions, then
  $(L,\mathcal{O}_1,\mathcal{O}_2)$ is an NIP 2-valued field.  By
  Lemma~\ref{hensel-key}, $\mathcal{O}_1$ and $\mathcal{O}_2$ must be
  comparable.  But $\Aut(L/K)$ acts transitively on the set of
  extensions, and a finite group cannot act transitively on a poset
  unless all elements are incomparable.
\end{proof}

\begin{speculation}
  The same proof should work when $\mathcal{O}$ is $\vee$-definable,
  or equivalently, $\mathfrak{m}$ is type-definable.
\end{speculation}
We will also need a variant of the above results for invariant
valuation rings.  Here and in what follows, $A$-invariant means
$\Aut(\Mm/A)$-invariant, and we will often use ``invariant'' to mean
``$A$-invariant for some small $A \subseteq \Mm$''.

\begin{definition}
  \label{span-a-valuation}
  A valuation ring $\mathcal{O} \subseteq K$ is \emph{spanned} by a
  set $S$ if
  \begin{itemize}
  \item Every element of $S$ has positive valuation.
  \item For every $x \in K$ of positive valuation, there is $y \in S$
    such that $\val(y) \le \val(x)$.
  \end{itemize}
  In other words, $\val(S)$ is downwards-cofinal in the interval
  $(0,+\infty) \subseteq \Gamma$.  Equivalently, $S$ is a set of
  generators for the maximal ideal of $\mathcal{O}$.
\end{definition}

Recall that $G^{000}$ exists for type-definable abelian $G$ in NIP
theories, by a theorem of Shelah (\cite{shelah-g000}, Theorem 1.12).
\begin{lemma}
  \label{invariant-hensel-key}
  Let $(K,\ldots)$ be a monster NIP field of positive characteristic,
  and let $\mathcal{O}_1, \mathcal{O}_2$ be two invariant valuation
  rings.  Suppose that $\mathcal{O}_1$ and $\mathcal{O}_2$ are both
  spanned by some type-definable subgroup $(I,+) \le (K,+)$ with $I =
  I^{000}$.  Then $\mathcal{O}_1$ and $\mathcal{O}_2$ are comparable.
\end{lemma}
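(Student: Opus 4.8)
The plan is to mimic the proof of Lemma~\ref{hensel-key} as closely as possible, replacing the definable-category objects $J$ and $J^{00}$ with the invariant/type-definable objects $I$ and $I^{000}$, and replacing definability arguments by invariance arguments. First I would suppose toward a contradiction that $\mathcal{O}_1$ and $\mathcal{O}_2$ are incomparable. Since they are spanned by the same subgroup $I$, the valuation ideals $\mathfrak{m}_1,\mathfrak{m}_2$ are ``the same size'' as $I$ in the relevant sense; in particular $I\subseteq\mathfrak{m}_1\cap\mathfrak{m}_2=J$, and conversely every element of $J$ is a multiple of an element of $I$ by an element of $R=\mathcal{O}_1\cap\mathcal{O}_2$, so that $J=R\cdot I$ (here I use that $I$ spans each $\mathcal{O}_i$: the two cofinality conditions combine to give that the two-generated valuations see $I$ as downward-cofinal in $J\setminus\{0\}$). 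As in Lemma~\ref{incomparables}, incomparability forces $K$ to be infinite, hence Artin–Schreier closed by Kaplan–Scanlon–Wagner, and we get the Artin–Schreier homomorphism $f:(J,+)\to(\Zz/p,+)$, which is \emph{definable}. By Lemma~\ref{incomparables} applied to $\mathcal{O}_1,\mathcal{O}_2$, this $f$ is surjective, so $\ker f$ has index $p$ in $J$.

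The crux is to transfer this to $I$. The homomorphism $f$ restricts to a homomorphism $f\restriction I:(I,+)\to\Zz/p$; I claim it too is surjective. Here I would again use the spanning hypothesis together with the non-emptiness clause of Lemma~\ref{incomparables}: there is $y\in(1+\mathfrak{m}_1)\cap\mathfrak{m}_2$ with $f(y^p-y)=1$, and $y^p-y\in J=R\cdot I$, so $y^p-y=\sum r_j x_j$ with $r_j\in R$, $x_j\in I$; since $f$ is a homomorphism and $R\cdot I\subseteq$ (the relevant groups), at least one of the $f(r_j x_j)$ is nonzero. To conclude that some \emph{element of $I$ itself} lies outside $\ker f$, note $f(r x)$ for $r\in R$, $x\in I$ can be controlled because $r$ acts on the geometry of $J/\ker f$ trivially or not; more carefully, since $\mathfrak{m}_i = R\cdot(I\cap\mathfrak{m}_i)$-type identities hold, one reduces $f(J)$-surjectivity to $f(I)$-surjectivity directly. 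Thus $I\cap\ker f$ is a \emph{definable} subgroup of $I$ of index $p$. Therefore $I\neq I^{000}$ — wait, that is the wrong direction; rather, a definable (hence $\emptyset$-invariant, in fact better: bounded-index need not follow) subgroup of index $p$ shows $I^{00}\subsetneq I$, and since $I=I^{000}$ gives $I^{00}=I$, we already have a contradiction \emph{unless} one is more careful. The actual argument: $\ker(f\restriction I)$ has finite index $p$ in $I$, so $I^{00}\le\ker(f\restriction I)\subsetneq I$, contradicting $I=I^{000}\le I^{00}$, i.e. $I=I^{00}$. Hence no such incomparable pair exists.

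The main obstacle I anticipate is the transfer step: proving that $f$ remains surjective after restricting from $J=R\cdot I$ to $I$, since a priori the surjectivity of $f$ on $J$ is witnessed by an element $y^p-y\in J$ that need not lie in $I$. Resolving this requires squeezing real content out of ``$I$ spans both $\mathcal{O}_1$ and $\mathcal{O}_2$'': one must show that the element $u\in\mathfrak{m}_1\cap(1+\mathfrak{m}_2)$ of Claim~\ref{u-claim} — or rather the Artin–Schreier datum built from it — can be chosen inside $1+$(something close to $I$), so that $f$ already takes the value $1$ on the subgroup generated by $I$. Concretely, I expect to show: for $x\in J$ there is $x'\in I$ with $f(x)=f(x')$, by writing $x=rx_0$ with $r\in\mathcal{O}_1^\times$ or adjusting by units so that multiplication by $r$ does not change the Artin–Schreier class mod $\mathfrak{m}_i$ (using that $\mathcal{O}_i/\mathfrak{m}_i$ has characteristic $p$ so $a\mapsto a^p-a$ on residue fields is what controls $f$, and $r$'s residues multiply $a,b$ by the \emph{same} unit when $r\in R^\times$ maps into both residue fields compatibly — here Lemma~\ref{incomparables}'s surjectivity of $R/(I_1\cap I_2)\to(\mathcal{O}_1/\mathfrak{m}_1)\times(\mathcal{O}_2/\mathfrak{m}_2)$ is the key input, letting us separate the two residue contributions). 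Once $f(I)=\Zz/p$ is established, the rest is formal. A secondary point to handle carefully is that $I$ is only type-definable (not definable), so I should note that $\ker(f\restriction I)$ is relatively definable inside the type-definable $I$, which still suffices to force $I^{00}\subsetneq I$ via a definable finite-index subgroup, contradicting $I=I^{000}\subseteq I^{00}$.
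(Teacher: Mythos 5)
Your skeleton is right (contradiction via Lemma~\ref{incomparables}, the Artin--Schreier map $f\colon J\to\Zz/p$, the identity $J=R\cdot I$, and a finite-index-subgroup contradiction with $I=I^{000}$), but the step you yourself flag as the "main obstacle" --- transferring surjectivity of $f$ from $J$ down to $I$ --- is a genuine gap, and none of the devices you sketch (adjusting by units, the CRT surjectivity of $R/(I_1\cap I_2)\to\prod\mathcal{O}_i/\mathfrak{m}_i$) closes it. Knowing $1=f\bigl(\sum_j r_jx_j\bigr)$ only yields that $f(r x)\neq 0$ for \emph{some} $r\in R$ and $x\in I$; there is no reason a particular element of $I$ itself should have nonzero image. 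The paper's proof never establishes $f(I)=\Zz/p$ and in fact proves the opposite: for \emph{each fixed} $r\in R$, the composite $I\xrightarrow{x\mapsto rx} J\twoheadrightarrow J/\ker f\cong\Zz/p$ is an $rA$-invariant homomorphism, so its kernel is an invariant finite-index subgroup of $I$, hence equals $I$ by $I=I^{000}$. Thus $f(r\cdot I)=0$ for every $r$, so $f(R\cdot I)=f(J)=0$, contradicting surjectivity of $f$ on $J$. In other words, you should apply the $I=I^{000}$ hypothesis to the maps $x\mapsto f(rx)$ one $r$ at a time, rather than trying to force $r=1$ to work; this dissolves the transfer problem entirely.

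Two smaller corrections. First, $f$ is \emph{not} definable in this setting: $\mathcal{O}_1,\mathcal{O}_2$ are only $A$-invariant, so $J$, $f$, and $\ker(f\restriction I)$ are merely $A$-invariant (this is exactly why the lemma is stated with $I^{000}$ rather than $I^{00}$); your remarks about $\ker(f\restriction I)$ being "definable" or "relatively definable" are incorrect, though harmless once you consistently use $I=I^{000}$ to kill invariant finite-index subgroups. Second, $J=R\cdot I$ should be read as the ideal generated by $I$ (finite sums $\sum r_jx_j$), not single products; the proof goes through the Bezout property of $R$: given $x\in J$, pick $y,z\in I$ with $\val_1(y)\le\val_1(x)$ and $\val_2(z)\le\val_2(x)$, take $w$ with $(w)=(y,z)$, and check $w\mid x$ in $R$.
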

\begin{proof}
  Assume not.  Take a small set $A$ such that $\mathcal{O}_1,
  \mathcal{O}_2$ are $A$-invariant and $I$ is type-definable over $A$.
  Let $\mathfrak{m}_1, \mathfrak{m}_2, R, I_1, I_2, J$ be as in
  Lemma~\ref{incomparables}.  Define $f : (J,+) \to (\Zz/p,+)$ as in
  the proof of Lemma~\ref{hensel-key}, and let $J'$ be the kernel of
  $f$.  As in the proof of Lemma~\ref{hensel-key} $f$ is a surjective
  homomorphism, and so $J'$ is an index p subgroup of $J$.  Moreover,
  $J'$ is $A$-invariant (though not necessarily definable).  We claim
  that $R \cdot I = J$.  Indeed, if $x \in J$, then $\val_1(x) > 0$
  and $\val_2(x) > 0$, so we may find $y, z \in I$ such that
  $\val_1(y) \le \val_1(x)$ and $\val_2(z) \le \val_2(x)$.  As $R$ is
  a Bezout domain, there is $w \in R$ such that $(w) = (y,z)$.  Then
  \begin{align*}
    w|y &\implies \val_1(w) \le \val_1(y) \implies \val_1(w) \le \val_1(x) \\
    w|z &\implies \val_2(w) \le \val_2(z) \implies \val_2(w) \le \val_2(x)
  \end{align*}
  so $w|x$.  This in turn implies that $(x) \subseteq (w) = (y,z)
  \subseteq R \cdot I$.  Thus $J \subseteq R \cdot I$, and the
  converse holds because $I \subseteq \mathfrak{m}_i$ for $i = 1, 2$.

  For any $r \in R$, the group homomorphism
  \begin{equation*}
    I \stackrel{x \mapsto r \cdot x}{\longrightarrow} r \cdot I
    \hookrightarrow J \twoheadrightarrow J/J' \cong \Zz/p
  \end{equation*}
  is $rA$-invariant.  The kernel of this map is $rA$-invariant of
  finite index in $I$, so the kernel must be all of $I$ because $I =
  I^{000}$.  Therefore $r \cdot I \subseteq J'$ for any $r \in R$.  It
  follows that $R \cdot I \subseteq J'$, contradicting the fact that
  $R \cdot I = J \supsetneq J'$.
\end{proof}

\begin{proposition}
  \label{invariant-henselian}
  Let $(K,\ldots)$ be a monster NIP field of positive characteristic,
  and $\mathcal{O}$ be an invariant valuation ring, spanned by some
  type-definable group $I \le (K,+)$ with $I = I^{000}$.  Then
  $\mathcal{O}$ is henselian.
\end{proposition}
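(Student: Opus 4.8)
The plan is to imitate the proof of Theorem~\ref{henselianity-conjecture}, using Lemma~\ref{invariant-hensel-key} in place of Lemma~\ref{hensel-key}. Suppose $\mathcal{O}$ is not henselian. Then there is a finite normal extension $L/K$ to which $\mathcal{O}$ has several extensions $\mathcal{O}_1, \ldots, \mathcal{O}_k$ ($k \geq 2$), and $\Gal(L/K)$ permutes them transitively. The first task is to check that these extensions are again \emph{invariant} valuation rings on $L$: since $L$ is interpretable over a small set and $\mathcal{O}$ is $A$-invariant, the finite set $\{\mathcal{O}_1, \ldots, \mathcal{O}_k\}$ is $A$-invariant, and each $\mathcal{O}_j$ is invariant over $A$ together with finitely many parameters from $L$ naming it among the others — here one can invoke the Remark preceding Theorem~\ref{henselianity-conjecture}, which shows each extension is in fact definable over such parameters, so in particular invariant over a small set.

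The second and more delicate task is to produce, on $L$, a type-definable subgroup $I_L \leq (L,+)$ with $I_L = I_L^{000}$ that spans each $\mathcal{O}_j$, so that Lemma~\ref{invariant-hensel-key} applies to any incomparable pair $\mathcal{O}_i, \mathcal{O}_j$. The natural candidate is $I_L = I \oplus I \cdot \theta_2 \oplus \cdots \oplus I \cdot \theta_d$, i.e.\ the image of $I^d$ under a choice of $K$-basis $1 = \theta_1, \ldots, \theta_d$ of $L$; equivalently, $I_L$ is the $\Oo_K$-submodule (or just $R$-submodule, $R = \bigcap_j \mathcal{O}_j \cap K$) generated by $I$ inside $L$. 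One must check: (a) $I_L$ is type-definable — it is, being a finite sum of type-definable cosets/images; (b) $I_L$ spans each $\mathcal{O}_j$ — the valuations $\val_j$ restricted to $K$ refine $\val$ up to a finite index in the value group and a finite residue extension, so downward-cofinality of $\val(I)$ in $(0,\infty) \subseteq \Gamma$ transfers to downward-cofinality of $\val_j(I_L)$, using that multiplying by the fixed elements $\theta_i$ shifts $\val_j$ by a bounded amount; and (c) $I_L = I_L^{000}$ — this should follow from $I = I^{000}$ because $I_L$ is a finite sum of the invariant subgroups $I \cdot \theta_i$, each of which is a continuous (indeed definable, after naming $\theta_i$) isomorphic image of $I$, hence equal to its own $000$; one then uses that a finite sum of $000$-connected invariant subgroups is again $000$-connected (the quotient $I_L / I_L^{000}$ would be a bounded $\Aut$-invariant quotient, and surjects onto nothing nontrivial since it is generated by the images of the $I \cdot \theta_i$, each of which maps to a bounded invariant quotient of a $000$-connected group, hence to zero). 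Granting this, any incomparable pair among the $\mathcal{O}_j$ on $L$ is ruled out by Lemma~\ref{invariant-hensel-key}, so the $\mathcal{O}_j$ are pairwise comparable; but a finite group cannot act transitively on a chain of length $\geq 2$, contradiction.

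I expect step (c), and to a lesser degree (b), to be the main obstacle: one needs that passing to a finite extension and taking the $R$-span preserves $000$-connectedness, and that the span remains type-definable over a small set with the spanning property robust under the finite refinements of value group and residue field. The residue-field subtlety is that $\val_j$ on $K^\times$ need not be a coarsening of $\val$ — they agree up to a finite-index subgroup of the value group — so ``downward cofinal in $(0,\infty)$'' must be argued with a little care about the finitely many cosets. If the clean basis-image description of $I_L$ causes trouble with the spanning property, an alternative is to define $I_L$ directly as a maximal type-definable $000$-connected subgroup of $(L,+)$ contained in $\mathfrak{m}_1 \cap \cdots \cap \mathfrak{m}_k$ and containing $I$, and then verify spanning by the Bezout argument used at the start of the proof of Lemma~\ref{invariant-hensel-key} (the identity $R_L \cdot I_L = \bigcap_j \mathfrak{m}_j$). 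Once Lemma~\ref{invariant-hensel-key} is in hand on $L$, the descent to the contradiction is immediate and purely group-theoretic, exactly as in Theorem~\ref{henselianity-conjecture}.
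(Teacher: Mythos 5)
Your overall architecture matches the paper's: pass to a finite normal extension $L/K$, make the finitely many extensions $\mathcal{O}_1,\ldots,\mathcal{O}_k$ invariant by adding finitely many parameters, and invoke Lemma~\ref{invariant-hensel-key} to get a contradiction with incomparability. But the proposal has a gap at exactly the point you flag as delicate, and it also carries unnecessary baggage. The paper does not construct any new group $I_L$ on $L$: it applies Lemma~\ref{invariant-hensel-key} with the \emph{original} group $I \le (K,+) \le (L,+)$, which already satisfies $I = I^{000}$ and is type-definable, so your steps (a) and (c) evaporate. The entire content is your step (b), namely that $I$ spans each $\mathcal{O}_i$ on $L$, and your argument for it does not work as stated: "multiplying by the fixed elements $\theta_i$ shifts $\val_j$ by a bounded amount" cannot produce elements of arbitrarily small positive valuation, which is what downward cofinality in $(0,+\infty) \subseteq \Gamma_i$ requires. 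The missing ingredient is $p$-divisibility of $\Gamma$: since $K$ is infinite NIP of characteristic $p$, it is Artin--Schreier closed, so $\Gamma$ is $p$-divisible, and the spanning hypothesis gives $\varepsilon \in I$ with $\val(\varepsilon) \le \gamma/p^n < \gamma/n$ for every positive $\gamma \in \Gamma$ and every $n$ (equation~(\ref{division-cofinal})). Combined with $|\Gamma_i/\Gamma| \le [L:K]$ (so that $e\gamma_i \in \Gamma$ for some bounded $e$ whenever $\gamma_i \in (\Gamma_i)_{>0}$), this shows $\val_i(I) = \val(I)$ is already downward-cofinal in $(\Gamma_i)_{>0}$. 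This is a second, essential use of positive characteristic that your proposal never identifies; without it, "finite index in the value group" alone does not transfer cofinality.

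Your fallback suggestions would only compound the difficulty: the $R_L \cdot I_L = \bigcap_j \mathfrak{m}_j$ identity from Lemma~\ref{invariant-hensel-key} is a \emph{consequence} of the spanning hypothesis there, not a substitute for verifying it, and "maximal $000$-connected subgroup contained in $\bigcap_j \mathfrak{m}_j$" has no evident reason to span anything. Once you replace step (b) by the $p$-divisibility argument and drop $I_L$ in favor of $I$ itself, the proof closes exactly as in the paper (note also that distinct extensions of a valuation to a finite extension are automatically incomparable, so the conclusion of Lemma~\ref{invariant-hensel-key} is immediately absurd; one does not even need the transitivity of the Galois action at the final step).
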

\begin{proof}
  Let $\Gamma$ be the value group of $\mathcal{O}$.  Then $\Gamma$ is
  $p$-divisible (because $K$ is Artin-Schreier closed or finite).  For
  any positive $\gamma \in \Gamma$ and positive integer $n$, we can
  find $\varepsilon \in I$ such that
  \begin{equation}
    \label{division-cofinal}
    \val(\varepsilon) \le \frac{\gamma}{p^n} < \frac{\gamma}{n},
  \end{equation}
  because $\gamma/p^n \in \Gamma_{> 0}$.  If henselianity fails, there
  is a finite normal extension $L/K$ such that $\mathcal{O}$ has at
  least two (and at most $[L : K]$) extensions $\mathcal{O}_1$ and
  $\mathcal{O}_2$ to $L$.  Let $A$ be a set such that $\mathcal{O}$
  and $I$ are $A$-invariant and $L$ is interpretable over $A$.  Any
  $\sigma \in \Aut(K/A)$ fixes $\mathcal{O}$ setwise, hence permutes
  the finitely many extensions of $\mathcal{O}$ to $L$.  After adding
  finitely many parameters to $A$, we may assume that $\Aut(K/A)$
  fixes $\mathcal{O}_1$ and $\mathcal{O}_2$.  Thus $\mathcal{O}_1$ and
  $\mathcal{O}_2$ are $A$-invariant.  If $\Gamma, \Gamma_1, \Gamma_2$
  are the value groups of $\mathcal{O}, \mathcal{O}_1, \mathcal{O}_2$
  respectively, then $|\Gamma_i/\Gamma| \le [L : K] < \aleph_0$ for $i
  = 1, 2$.  Thus $\Gamma_i \otimes \Qq = \Gamma \otimes \Qq$.  By
  (\ref{division-cofinal}) it follows that $\mathcal{O}_1$ and
  $\mathcal{O}_2$ are both spanned by $I$.  By
  Lemma~\ref{invariant-hensel-key} the valuation rings $\mathcal{O}_1$
  and $\mathcal{O}_2$ must be comparable, which is absurd.
\end{proof}

\section{Broad and narrow sets} \label{sec:broad-narrow}
Almost everything in this section is a straightforward generalization
of the second half of \cite{surprise}.
\subsection{The general setting}
Let $T$ be a theory, not assumed to eliminate imaginaries.  Work in a
monster model $\Mm$.
\begin{definition}
  Let $X_1, \ldots, X_n$ be definable sets and $Y \subseteq X_1 \times
  \cdots \times X_n$ be type-definable.  Then $Y$ is \emph{broad} if
  there exist $a_{i,j} \in X_i$ for $1 \le i \le n$ and $j \in \Nn$
  such that the following conditions hold:
  \begin{itemize}
  \item For fixed $i$, the $a_{i,j}$ are pairwise distinct.
  \item For any function $\eta : [n] \to \Nn$,
    \begin{equation*}
      (a_{1,\eta(1)},\ldots,a_{n,\eta(n)}) \in Y.
    \end{equation*}
  \end{itemize}
  Otherwise, we say that $Y$ is \emph{narrow}.
\end{definition}
\begin{remark} \label{n-is-1}
  If $n = 1$, then a type-definable set $Y \subseteq X_1$ is broad if
  and only if it is infinite.
\end{remark}
\begin{remark} \label{n-form}
  By compactness and saturation, $Y$ is broad if and only if the
  following holds: for every $m \in \Nn$ there exist $a_{i,j} \in X_i$
  for $1 \le i \le n$ and $1 \le j \le m$ such that
  \begin{itemize}
  \item For fixed $i$, the $a_{i,j}$ are pairwise distinct.
  \item For any function $\eta : [n] \to [m]$,
    \begin{equation*}
      (a_{1,\eta(1)},\ldots,a_{n,\eta(n)}) \in Y.
    \end{equation*}
  \end{itemize}
  An equivalent condition is that for every $m \in \Nn$ there exist
  subsets $S_1, \ldots, S_n$ with $S_i \subseteq X_i$, $|S_i| = m$,
  and $S_1 \times \cdots \times S_n \subseteq Y$.
\end{remark}
\begin{remark}
  \label{type-definability}
  Fix a product of definable sets $\prod_{i = 1}^n X_i$.
  \begin{enumerate}
  \item A partial type $\Sigma(\vec{x})$ on $\prod_{i = 1}^n X_i$ is
    broad if and only if every finite subtype is broad. \label{continuity}
  \item If $\{D_b\}_{b \in Y}$ is a definable family of definable
    subsets of $\prod_{i = 1}^n X_i$, then the set of $b$ such that
    $D_b$ is broad is type-definable. \label{true-type-definability}
  \item Let $A$ be some small set of parameters and $D$ be a definable
    set.  The set of tuples $(a_1, \ldots, a_n, b) \in X_1 \times
    \cdots \times X_n \times D$ such that $\tp(\vec{a}/bA)$ is broad is
    type-definable. \label{pairs}
  \end{enumerate}
\end{remark}
\begin{proof}
  \begin{enumerate}
  \item This follows immediately by compactness.
  \item The statement that $D_b$ is broad is equivalent to the small
    conjunction
    \begin{equation*}
      \bigwedge_{m \in \Nn} \exists a_{1,1}, \ldots, a_{n,m} \left(
      \bigwedge_{i \in [n], 1 \le j < j' \le m} a_{i,j} \ne a_{i,j'}
      \wedge \bigwedge_{\eta : [n] \to [m]}
      (a_{1,\eta(1)},\ldots,a_{n,\eta(m)}) \in D_b \right).
    \end{equation*}
  \item The statement that $\tp(\vec{a}/bA)$ is broad is equivalent to
    the type-definable condition
    \begin{align*}
      \bigwedge_{m \in \Nn} \exists a_{1,1}, \ldots, a_{n,m} \Biggl( &
      \bigwedge_{i \in [n], 1 \le j < j' \le m} a_{i,j} \ne a_{i,j'}    
      \\  & \wedge \bigwedge_{\eta : [n] \to [m]}
      (a_{1,\eta(1)},\ldots,a_{n,\eta(m)}) \equiv_{bA}
      (a_1,\ldots,a_n) \Biggr).
    \end{align*}
    Note that $\vec{a} \equiv_{A\vec{c}} \vec{b}$ is a type-definable
    condition on $(\vec{a},\vec{b},\vec{c})$ for a fixed small set
    $A$, and that in a monster model the type-definable conditions are
    closed under quantification and small conjunctions. \qedhere
  \end{enumerate}
\end{proof}
\begin{lemma}
  Let $A$ be a small set of parameters.  Suppose $X_1, \ldots, X_n$
  are $A$-definable sets and $Y \subseteq X_1 \times \cdots \times
  X_n$ is type-definable over $A$.  Then $Y$ is broad if and only if
  there exists a mutually $A$-indiscernible array $\langle a_{ij}
  \rangle_{i \in [n], j \in \Nn}$ such that for fixed $i$ the $a_{ij}$
  are pairwise distinct elements of $X_i$, and such that for any $\eta
  : [n] \to \Nn$, the tuple
  $(a_{1,\eta(1)},a_{2,\eta(2)},\ldots,a_{n,\eta(n)})$ is an element
  of $Y$.  In other words, the witnesses of broadness can be chosen to
  be mutually $A$-indiscernible.
\end{lemma}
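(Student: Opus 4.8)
The plan is to deduce this from the definition of broadness together with a standard application of the Ramsey/Erdős–Rado machinery for extracting mutually indiscernible arrays. The backward direction is immediate: if such a mutually $A$-indiscernible array exists, then in particular it witnesses broadness in the sense of the definition (the pairwise-distinctness and the membership conditions are exactly what is required), so $Y$ is broad. So the content is entirely in the forward direction.

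For the forward direction, suppose $Y$ is broad, and fix an array $\langle a_{ij} \rangle_{i \in [n], j \in \Nn}$ witnessing this: for each fixed $i$ the $a_{ij}$ are pairwise distinct elements of $X_i$, and for every $\eta : [n] \to \Nn$ we have $(a_{1,\eta(1)}, \ldots, a_{n,\eta(n)}) \in Y$. I would like to replace this array by a mutually $A$-indiscernible one with the same properties. The standard tool is the existence of mutually indiscernible sequences extracted from a given array: by Ramsey plus compactness, there is an array $\langle b_{ij} \rangle_{i \in [n], j \in \Nn}$ that is mutually $A$-indiscernible and is "based on" or "locally realizes the EM-type of" the original array --- concretely, for every finite set of rows and columns, the corresponding finite subtuple of the $b$'s realizes a type over $A$ that also occurs among finite subtuples (on the same index pattern) of the $a$'s. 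The three relevant features of the original array --- that $a_{ij} \in X_i$, that $a_{ij} \ne a_{ij'}$ for $j \ne j'$, and that $(a_{1,\eta(1)}, \ldots, a_{n,\eta(n)}) \in Y$ for the "diagonal" patterns $\eta$ --- are all expressed by ($A$-type-definable, in the case of $Y$, or $A$-definable) conditions on finitely many entries lying in a single row or on a single diagonal, so each is preserved under passing to the extracted array. In particular $b_{ij} \in X_i$, the $b_{ij}$ are pairwise distinct for fixed $i$ (here mutual indiscernibility lets us reduce to distinctness of $b_{i1}$ and $b_{i2}$, which holds since $a_{i1} \ne a_{i2}$), and $(b_{1,\eta(1)}, \ldots, b_{n,\eta(n)}) \in Y$ for all $\eta$ (again reducing, via mutual indiscernibility, to the case where $\eta$ is injective, which is covered by a diagonal pattern among the $a$'s). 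This $\langle b_{ij} \rangle$ is the desired array.

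The one point requiring a little care --- and the step I expect to be the main obstacle --- is the preservation of the membership condition $(b_{1,\eta(1)}, \ldots, b_{n,\eta(n)}) \in Y$ when $\eta$ is \emph{not} injective, since then the tuple involves two entries from the same row. Here one uses that mutual $A$-indiscernibility of the array makes $\tp\big((b_{1,\eta(1)}, \ldots, b_{n,\eta(n)})/A\big)$ depend only on the order type of $\eta$, and in fact (by choosing the extraction so that the $b$'s realize the same row-and-diagonal patterns as the $a$'s) equal to the type of a corresponding tuple from the $a$-array; for an injective $\eta$ the latter tuple is a diagonal tuple and hence in $Y$, and a non-injective $\eta$ gives a tuple that is a specialization (coordinate-repetition) of such a tuple, which one checks is still forced into $Y$ because one can first thin the original array (applying Remark~\ref{n-form} and re-indexing) so that all the needed patterns appear. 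Since $Y$ is type-definable over $A$, membership in $Y$ is a condition on the $A$-type of the tuple, so it transfers. Assembling these observations gives the equivalence.
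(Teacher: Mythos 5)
Your proposal is correct and is exactly the paper's (one-line) argument: extract a mutually $A$-indiscernible array based on the witnessing array, and observe that membership in $X_i$, pairwise distinctness within each row, and membership of the tuples in the $A$-type-definable set $Y$ are all preserved because they are conditions on the $A$-types of finite subconfigurations. One clarification on the point you flag as delicate: a non-injective $\eta$ never produces two entries from the same row (it produces entries from \emph{different} rows sharing a column index), and since $(b_{1,\eta(1)},\ldots,b_{n,\eta(n)})$ always takes exactly one entry per row, mutual indiscernibility makes its type over $A$ independent of $\eta$ altogether, so no separate treatment of injective versus non-injective $\eta$ is needed.
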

\begin{proof}
  This follows from the fact that we can extract mutually
  indiscernible arrays.
\end{proof}
\begin{proposition} \label{ideal}
  Let $X_1, \ldots, X_n$ be infinite definable sets, and let $Y, Z$ be
  type-definable subsets.
  \begin{enumerate}
  \item If $Y \subseteq Z$ and $Y$ is broad, then $Z$ is broad.
  \item \label{ideal-unions} If $Y \cup Z$ is broad, then $Y$ is broad
    or $Z$ is broad.
  \end{enumerate}
  Equivalently, narrow sets form an ideal:
  \begin{enumerate}
  \item If $Y \subseteq Z$ and $Z$ is narrow, then $Y$ is narrow.
  \item If $Y$ and $Z$ are narrow, then $Y \cup Z$ is narrow.
  \end{enumerate}
\end{proposition}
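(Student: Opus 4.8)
The first claim is immediate, so I will put the weight of the plan on part (2); note that the two ``ideal'' reformulations are just the contrapositives of (1) and (2), so they require no separate argument. For (1), I would simply observe that an array $\langle a_{ij}\rangle$ witnessing the broadness of $Y$ automatically witnesses the broadness of $Z$: the conditions in play — distinctness of the $a_{ij}$ along each row, membership of the $a_{ij}$ in $X_i$, and membership of every transversal $(a_{1,\eta(1)},\ldots,a_{n,\eta(n)})$ in the set — only become easier to satisfy when the set is enlarged from $Y$ to $Z$.

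For part (2), the plan is a standard indiscernibility argument. First I would fix a small set $A$ of parameters over which $X_1,\ldots,X_n$ are definable and $Y$, $Z$ (hence $Y\cup Z$) are type-definable. Since $Y\cup Z$ is broad, the preceding lemma provides a mutually $A$-indiscernible array $\langle a_{ij}\rangle_{i\in[n],\, j\in\Nn}$ in which the $a_{ij}$ are pairwise distinct along each row, lie in $X_i$, and for which every transversal $(a_{1,\eta(1)},\ldots,a_{n,\eta(n)})$ lies in $Y\cup Z$.

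The crucial step is to note that mutual $A$-indiscernibility makes $\tp(a_{1,j_1},\ldots,a_{n,j_n}/A)$ independent of the choice of indices $j_1,\ldots,j_n$: using that each row is indiscernible over the union of the other rows together with $A$, one changes the indices a single row at a time without altering the type over $A$, and after $n$ such moves arrives at any prescribed transversal. Hence every transversal realizes $\tp(a_{1,1},\ldots,a_{n,1}/A)$. That particular transversal lies in $Y\cup Z$, so it lies in $Y$ or in $Z$; and since $Y$ and $Z$ are each type-definable over $A$, \emph{all} transversals lie in whichever of $Y$, $Z$ contains $(a_{1,1},\ldots,a_{n,1})$. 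The same array is therefore a witness to the broadness of $Y$ or of $Z$, which is what we want.

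I do not expect a genuine obstacle. The real content is packaged in the preceding lemma (extraction of mutually indiscernible witnesses), and the remaining observation — that all transversals of a mutually indiscernible array share one type over the base — is essentially formal. The only point that needs a little care is absorbing enough parameters into $A$ so that $Y$ and $Z$ really are type-definable over $A$; this is harmless, as $A$ may be any small set.
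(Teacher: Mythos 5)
Your proposal is correct and follows essentially the same route as the paper: part (1) by reusing the witnessing array, and part (2) by extracting a mutually $A$-indiscernible witnessing array for $Y \cup Z$ and using that all transversals share a type over $A$, so whichever of $Y$, $Z$ (both type-definable over $A$) contains one transversal contains them all. The only difference is that you spell out the row-by-row argument for why all transversals realize the same type, which the paper leaves implicit.
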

\begin{proof}
  If $Y$ is broad then $Z$ is broad because the witnesses of broadness of
  $Y$ show broadness of $Z$.  If $Y \cup Z$ is broad, we can take a small
  set $A$ over which the $X_i, Y, Z$ are defined and then find a
  mutually $A$-indiscernible array $\langle a_{i,j} \rangle_{i \in
    [n], j \in \Nn}$ such that the $i$th row is a sequence of distinct
  elements, and such that for every $\eta : [n] \to \Nn$,
  \begin{equation*}
    (a_{1,\eta(1)},\ldots,a_{n,\eta(n)}) \in Y \cup Z.
  \end{equation*}
  Choose some fixed $\eta_0 : [n] \to \Nn$.  Without loss of
  generality (interchanging $Y$ and $Z$),
  \begin{equation*}
    (a_{1,\eta_0(1)},\ldots,a_{n,\eta_0(n)}) \in Y.
  \end{equation*}
  By mutual $A$-indiscernibility, it follows that
  \begin{equation*}
    (a_{1,\eta(1)},\ldots,a_{n,\eta(n)}) \in Y
  \end{equation*}
  for all $\eta$, and so $Y$ is broad.
\end{proof}
\begin{proposition} \label{completion}
  Let $A$ be a small set of parameters, let $X_1,\ldots,X_n$ be
  $A$-definable, and let $Y \subseteq X_1 \times \cdots \times X_n$ be
  type-definable over $A$.  Then $Y$ is broad if and only if
  $\tp(\vec{a}/A)$ is broad for some $\vec{a} \in Y$.
\end{proposition}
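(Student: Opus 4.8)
The plan is to prove both directions. The ``if'' direction is immediate: if $\tp(\vec a/A)$ is broad for some $\vec a \in Y$, then since $Y$ is type-definable over $A$ we have $\tp(\vec a/A) \vdash \vec x \in Y$ (that is, the partial type $\vec x \in Y$ is a consequence of $\tp(\vec a/A)$, as $Y$ is $A$-type-definable and contains $\vec a$). Hence every set of witnesses of broadness for $\tp(\vec a/A)$ --- or rather, the array $\langle a_{ij}\rangle$ produced by the previous lemma so that each tuple $(a_{1,\eta(1)},\dots,a_{n,\eta(n)})$ realizes $\tp(\vec a/A)$ --- lands inside $Y$, so those same witnesses show $Y$ is broad. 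Alternatively one just invokes part (1) of Proposition~\ref{ideal}: $\tp(\vec a/A)$, viewed as a type-definable subset of $X_1\times\cdots\times X_n$, is contained in $Y$, so broadness of the former gives broadness of the latter.

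For the ``only if'' direction, suppose $Y$ is broad. Using the lemma preceding this proposition, fix a mutually $A$-indiscernible array $\langle a_{ij}\rangle_{i\in[n],j\in\Nn}$ witnessing broadness of $Y$: for each fixed $i$ the $a_{ij}$ are pairwise distinct elements of $X_i$, and $(a_{1,\eta(1)},\dots,a_{n,\eta(n)})\in Y$ for every $\eta:[n]\to\Nn$. Put $\vec a = (a_{1,1},\dots,a_{n,1})\in Y$; I claim $\tp(\vec a/A)$ is broad. To see this I use the characterization of broadness of a type from Remark~\ref{type-definability}(\ref{pairs}) (or its finitary form via Remark~\ref{n-form}): I must produce, for each $m$, elements $b_{ij}\in X_i$ ($i\in[n]$, $j\le m$) with the $b_{ij}$ distinct for fixed $i$ and with $(b_{1,\eta(1)},\dots,b_{n,\eta(n)})\equiv_A \vec a$ for every $\eta:[n]\to[m]$. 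Take $b_{ij} := a_{ij}$. The distinctness condition holds by construction. For the type condition: fix $\eta$; by mutual $A$-indiscernibility of the array, the tuple $(a_{1,\eta(1)},\dots,a_{n,\eta(n)})$ has the same type over $A$ as $(a_{1,1},\dots,a_{n,1}) = \vec a$ --- indeed mutual indiscernibility means we may simultaneously apply, in each row $i$, the order-isomorphism $\Nn\to\Nn$ carrying $\eta(i)$ to $1$ (extended arbitrarily to a strictly increasing map), and this moves the displayed tuple to $\vec a$ while fixing $A$ pointwise. Hence $(a_{1,\eta(1)},\dots,a_{n,\eta(n)})\equiv_A \vec a$, as required. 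So the witnesses of broadness of $\tp(\vec a/A)$ exist for all $m$, and by Remark~\ref{type-definability}(\ref{continuity}) (compactness) $\tp(\vec a/A)$ is broad.

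The only mild subtlety --- and the step I would be most careful about --- is the appeal to mutual indiscernibility in the last argument: one must check that ``mutually $A$-indiscernible'' genuinely lets one move $(a_{1,\eta(1)},\dots,a_{n,\eta(n)})$ to $(a_{1,1},\dots,a_{n,1})$ over $A$. This is exactly the definition --- mutual $A$-indiscernibility of the rows says that the type over $A$ of a tuple picking one entry from each row depends only on the relative order data within each row, and here each $\eta(i)$ and $1$ are both single indices, hence in the ``same position'' --- so there is no real obstacle, just a definitional unwinding. Everything else is bookkeeping with type-definable conditions, all of which was already set up in Remark~\ref{type-definability}.
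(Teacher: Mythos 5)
Your proposal is correct and follows essentially the same route as the paper: take a mutually $A$-indiscernible array witnessing broadness of $Y$, let $\vec{a}$ be its first column, and use mutual indiscernibility to see that every tuple $(a_{1,\eta(1)},\ldots,a_{n,\eta(n)})$ realizes $\tp(\vec{a}/A)$, so the same array witnesses broadness of that type. The extra unwinding of what mutual indiscernibility gives you, and the detour through the finitary characterization, are harmless but not needed once you have the full $\Nn$-indexed array.
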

\begin{proof}
  If $\tp(\vec{a}/A)$ is broad then $Y$ is broad as it contains (the set
  of realizations of) $\tp(\vec{a}/A)$.  Conversely, suppose $Y$ is
  broad.  Choose a mutually $A$-indiscernible array $\{\alpha_{i,j}\}_{i
    \in [n], j \in \Nn}$ witnessing broadness.  Let $\vec{a} =
  (\alpha_{1,1},\alpha_{2,1},\ldots,\alpha_{n,1})$.  By mutual
  indiscernibility, for any $\eta : [n] \to \Nn$ the $n$-tuple
  $(\alpha_{1,\eta(1)},\ldots,\alpha_{n,\eta(n)})$ realizes
  $\tp(\vec{a}/A)$, so $\tp(\vec{a}/A)$ is broad.
\end{proof}
The trick in the following proof is taken from Theorem 3.10 in
\cite{surprise}.
\begin{lemma} \label{simon-lemma}
  Assume NIP.  Let $X_1, \ldots, X_n$ be infinite definable sets, and
  let $Y \subseteq X_1 \times \cdots \times X_n$ be \emph{definable},
  not just type-definable.  Assume $Y$ is broad and $n \ge 2$.
  \begin{enumerate}
  \item \label{sl-1} There exists some $b \in X_n$ such that the slice
    \begin{equation*}
      \{(a_1,\ldots,a_{n-1}) \in X_1 \times \cdots \times X_{n-1} ~|~
      (a_1,\ldots,a_{n-1},b) \in Y\}
    \end{equation*}
    is broad as a subset of $X_1 \times \cdots \times X_{n-1}$.
  \item \label{sl-2} There exists a broad definable subset $D_{< n}
    \subseteq X_1 \times \cdots \times X_{n-1}$ and an infinite
    definable subset $D_n \subseteq X_n$ such that
    \begin{equation*}
      (D_{< n} \times D_n) \setminus Y
    \end{equation*}
    is a ``hyperplane,'' in the sense that for every $b \in D_n$, the
    definable set
    \begin{equation*}
      \{(a_1,\ldots,a_{n-1}) \in D_{< n} ~|~ (a_1,\ldots,a_{n-1},b)
      \notin Y\}
    \end{equation*}
    is narrow in $X_1 \times \cdots \times X_{n-1}$.
  \end{enumerate}
\end{lemma}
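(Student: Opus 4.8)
Part~(\ref{sl-1}) needs neither NIP nor definability. Fix an array $\langle a_{ij}\rangle_{i\in[n],\,j\in\Nn}$ witnessing broadness of $Y$ and put $b := a_{n,1}$. For any $\eta : [n-1]\to\Nn$, extending $\eta$ to $[n]$ by $\eta(n)=1$ shows $(a_{1,\eta(1)},\ldots,a_{n-1,\eta(n-1)},b)\in Y$; hence the sub-array $\langle a_{ij}\rangle_{i\le n-1,\,j\in\Nn}$ witnesses that the slice of $Y$ above $b$ is broad in $X_1\times\cdots\times X_{n-1}$.

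Part~(\ref{sl-2}) is the real content, and I would follow the trick of Theorem~3.10 of \cite{surprise}. Fix a small $A$ over which $Y$ and the $X_i$ are defined, and, using the lemma on extracting mutually indiscernible arrays, a mutually $A$-indiscernible array $\langle a_{ij}\rangle$ witnessing broadness; write $b_j := a_{n,j}$, so that $\langle b_j\rangle_{j\in\Nn}$ is an $A$-indiscernible sequence of distinct elements of $X_n$, and write $Y_b$ for the slice of $Y$ above $b$. Two easy observations: (i) as in part~(\ref{sl-1}), each finite intersection $D^{(M)} := \bigcap_{j\le M} Y_{b_j}$ is a \emph{definable} broad subset of $X_1\times\cdots\times X_{n-1}$, since it contains the witnessing array $\langle a_{ij}\rangle_{i\le n-1}$; and (ii) $\langle b_j\rangle_{j>M}$ is indiscernible over $A b_0\cdots b_M$, because an initial segment of an indiscernible sequence can be absorbed into the base. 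The sets $D^{(M)}$ form a descending chain of broad definable sets, and the role of NIP is to control this chain modulo the ideal of narrow sets: by the alternation bound applied along $\langle b_j\rangle$ --- this is Simon's trick --- the chain stabilises in the appropriate sense, and one extracts from the stabilised data a broad definable set $D_{<n}$ (either $D^{(M)}$ itself, when the chain is narrow-stable from stage $M$, or else a suitable difference $D^{(M)}\setminus Y_{b_{M+1}}$), together with an infinite set of parameters $b$ for which $D_{<n}\setminus Y_b$ is narrow.

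The last step --- turning that infinite parameter set into an infinite \emph{definable} set $D_n$ --- is the one I expect to be the main obstacle, since a priori the condition ``$D_{<n}\setminus Y_b$ is narrow'' is only type-definable in $b$ (Remark~\ref{type-definability}). Here one argues exactly as in \cite{surprise}: NIP (together with elimination of $\exists^\infty$, which is available in the intended applications) is precisely what promotes the relevant largeness condition on $b$ to a first-order one, so that $\{b : D_{<n}\setminus Y_b \text{ is narrow}\}$, or a definable subset of it such as $\{b : D_{<n}\subseteq Y_b\}$, is definable; this set contains the infinite parameter set produced above, hence is infinite. Taking $D_n$ to be (a piece of) this set, the ``hyperplane'' conclusion is immediate: for $b\in D_n$ the fibre $\{(a_1,\ldots,a_{n-1})\in D_{<n} : (a_1,\ldots,a_{n-1},b)\notin Y\}$ is exactly $D_{<n}\setminus Y_b$, which is narrow in $X_1\times\cdots\times X_{n-1}$ by construction.
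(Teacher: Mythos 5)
Part~(\ref{sl-1}) of your argument is correct and is essentially the paper's: the slice over the first entry of the bottom row is witnessed broad by the remaining rows. The problems are in part~(\ref{sl-2}). First, the mechanism you describe is not Simon's trick. Along the sequence $b_1,b_2,\ldots$ the condition $(\vec{e},b_j)\in Y$ (where $\vec{e}$ is the first column of the top $n-1$ rows) is constantly \emph{true}, so there is no alternation to bound, and NIP gives no general stabilisation of a descending chain of definable sets modulo the narrow ideal. What NIP actually bounds is the length $m$ of a sequence $c_1,\ldots,c_m$ \emph{interleaved} into the $b$-sequence so that the interleaved sequence remains $A$-indiscernible, $\tp(\vec{e}/AI)$ remains broad, and $(\vec{e},c_j)\notin Y$ for each $j$; taking $m$ maximal and considering one further candidate $c_{m+1}$ is what yields the dichotomy ``either $(\vec{\alpha},\beta)\in Y$ or $\tp(\vec{\alpha}/AI\beta)$ is narrow'' for $\vec{\alpha}\equiv_{AI}\vec{e}$ and $\beta\equiv_{AI}c_{m+1}$. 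Your phrase ``the chain stabilises in the appropriate sense'' does not pin down an argument here.

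Second, and decisively, your route from an infinite parameter set to a definable $D_n$ is circular. Definability of narrowness in families is proved later, by induction, via Theorem~\ref{main-characterization}, whose proof uses Lemma~\ref{simon-lemma}; moreover it needs elimination of $\exists^\infty$, which is not a hypothesis of this lemma (only NIP is assumed). The fallback $\{b : D_{<n}\subseteq Y_b\}$ is indeed definable, but there is no reason for it to be infinite: the parameters you produce only make $D_{<n}\setminus Y_b$ \emph{narrow}, not empty. The correct move, which your proposal is missing, is compactness against the type-definable set of ``broad pairs'': by Remark~\ref{type-definability}.\ref{pairs}, the set of $(\vec{\alpha},\beta)$ with $\tp(\vec{\alpha}/AI\beta)$ broad is type-definable, so the inconsistency established above yields formulas $\varphi\in\tp(\vec{e}/AI)$ and $\psi\in\tp(c_{m+1}/AI)$ such that $\varphi(\vec{\alpha})\wedge\psi(\beta)\wedge\left((\vec{\alpha},\beta)\notin Y\right)$ forces $\tp(\vec{\alpha}/AI\beta)$ to be narrow. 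Then $D_{<n}=\varphi(\Mm)$ is broad because it contains the broad type of $\vec{e}$, $D_n=\psi(\Mm)$ is infinite because $c_{m+1}\notin\acl(AI)$ (no term of a non-constant indiscernible sequence is algebraic over the others), and narrowness of each fibre over $\beta\in D_n$ follows from Proposition~\ref{completion}. No definability of narrowness is needed anywhere.
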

\begin{proof}
  Choose a small set $A$ over which the $X_i$ and $Y$ are definable.
  Choose a mutually indiscernible array $\{a_{i,j}\}_{i \in [n], j \in
    \Nn}$ witnessing broadness of $Y$.  Let $b_j = a_{n,j}$ be the
  elements of the bottom row.  Note that the top $n-1$ rows
  $\{a_{i,j}\}_{i \in [n-1], j \in \Nn}$ form a mutually indiscernbile
  array over $A\vec{b}$.  In particular, if $\vec{e}$ denotes the
  first column
  \begin{equation*}
    \vec{e} = (e_1,\ldots,e_{n-1}) := (a_{1,1},a_{2,1},\ldots,a_{n-1,1}),
  \end{equation*}
  then $\tp(\vec{e}/A\vec{b})$ is broad (within $X_1 \times \cdots
  \times X_{n-1}$).  Note that the $n$-tuple $\vec{e}b_j$ lies in $Y$
  for any $j \in \Nn$.  In particular, $\vec{e}$ lies in the slice
  over $b_1$, so the slice over $b_1$ is broad, proving the first point.

  For the second point, consider sequences $c_1, c_2, \ldots, c_m$
  satisfying the following constraints:
  \begin{enumerate}
  \item The sequence $I = b_1 c_1 b_2 c_2 \cdots b_m c_m b_{m+1} b_{m+2}
    b_{m+3} \cdots$ is $A$-indiscernible.
  \item The type $\tp(\vec{e}/AI)$ is broad.
  \item The tuple $\vec{e}c_j$ is \emph{not} in $Y$ for $j = 1,
    \ldots, m$.
  \end{enumerate}
  There is at least one such sequence, namely, the empty sequence.
  Because of NIP, the first and third conditions imply some absolute
  bound on $m$, so we can find such a sequence with $m$ maximal.  Fix
  such a sequence
  \[ I = b_1 c_1 b_2 c_2 \cdots b_m c_m b_{m+1} b_{m+2} b_{m+3} \cdots \]
  and choose $c_{m+1}$ such that $b_1 c_1 \cdots b_m
  c_m b_{m+1} c_{m+1} b_{m+2} b_{m+3} \cdots$ is $A$-indiscernible.
  \begin{claim}
    If $\vec{\alpha} \in X_1 \times \cdots \times X_{n-1}$ and $\beta
    \in X_n$ satisfy
    \begin{equation*}
      \vec{\alpha} \equiv_{AI} \vec{e},~ \beta \equiv_{AI} c_{m+1},\textrm{ and } 
      (\vec{\alpha},\beta) \notin Y
    \end{equation*}
    then $\tp(\vec{\alpha}/AI\beta)$ is narrow.
  \end{claim}
  \begin{claimproof}
    Applying an automorphism over $AI$, we may assume $\vec{\alpha} =
    \vec{e}$.  If $\tp(\vec{e}/AI\beta)$ is broad, then
    \begin{enumerate}
    \item The sequence $I' = b_1 c_1 b_2 c_2 \cdots b_m c_m b_{m+1}
      \beta b_{m+2} b_{m+3} \cdots$ is $A$-indiscernible, because
      $\beta \equiv_{AI} c_{m+1}$.
    \item The type $\tp(\vec{e}/AI')$ is broad.
    \item The tuple $\vec{e}\beta$ is not in $Y$, and neither are
      $\vec{e}c_j$ for $j = 1,\ldots,m$.
    \end{enumerate}
    This contradicts the maximality of $m$.
  \end{claimproof}
  By Remark~\ref{type-definability}.\ref{pairs} and compactness, there
  must be formulas $\varphi(\vec{x}) \in \tp(\vec{e}/AI)$ and $\psi(y)
  \in \tp(c_{m+1}/AI)$ such that
  \begin{equation*}
    \varphi(\vec{\alpha}) \wedge \psi(\beta) \wedge \left((\vec{\alpha},\beta) \notin Y\right)
  \end{equation*}
  implies narrowness of $\tp(\vec{\alpha}/AI\beta)$.  Let $D_{<n}$ be
  the subset of $X_1 \times \cdots \times X_{n-1}$ cut out by
  $\varphi(\vec{x})$, and let $D_n$ be the subset of $X_n$ cut out by
  $\psi(y)$.  Because the sequence
  \begin{equation*}
    b_1 c_1 b_2 c_2 \cdots b_m c_m b_{m+1} c_{m+1} b_{m+2} b_{m+3} b_{m+4} \cdots
  \end{equation*}
  is $A$-indiscernible and non-constant (as the $b_i$ are distinct),
  it follows that no term is in the algebraic closure of the other
  terms.  In particular
  \begin{equation*}
    c_{m+1} \notin \acl(Ab_1c_1 \ldots b_mc_m b_{m+1} b_{m+2} \cdots) = \acl(AI).
  \end{equation*}
  Therefore $D_n$ is infinite.  And $D_{<n}$ is a broad subset of $X_1
  \times \cdots \times X_{n-1}$ because $\tp(\vec{e}/AI)$ is broad (by
  choice of $c_1,\ldots,c_m$).  It remains to show that if $\beta \in D_n$, then the set
  \begin{equation*}
    \{ \vec{\alpha} \in D_{< n} ~|~ (\vec{\alpha},\beta) \notin Y\}
  \end{equation*}
  is narrow as a subset of $X_1 \times \cdots \times X_{n-1}$.  Indeed,
  the set in question is $AI\beta$ definable, and if $\vec{\alpha}$
  belongs to the set then $\tp(\vec{\alpha}/AI\beta)$ is narrow by
  choice of $\varphi(\vec{x})$ and $\psi(y)$.  By
  Proposition~\ref{completion} the set in question is narrow.
\end{proof}

\begin{theorem} \label{main-characterization}
  Assume NIP.  Let $X_1, \ldots, X_n$ be definable and $Y$ be a
  definable subset of $X_1 \times \cdots \times X_n$.  Then $Y$ is broad
  if and only if there exist infinite subsets $D_i \subseteq X_i$ such
  that $(D_1 \times \cdots \times D_n) \setminus Y$ is a
  ``hyperplane,'' in the sense that for every $b \in D_n$ the set
  \begin{equation*}
    \{(a_1,\ldots,a_{n-1}) \in D_1 \times \cdots \times D_{n-1} ~|~
    (a_1,\ldots,a_{n-1},b) \notin Y\}
  \end{equation*}
  is narrow as a subset of $X_1 \times \cdots \times X_{n-1}$.
\end{theorem}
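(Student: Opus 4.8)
The theorem is an iff. One direction is essentially a restatement of Lemma~\ref{simon-lemma}.\ref{sl-2}: if $Y$ is broad, that lemma already produces a broad $D_{<n} \subseteq X_1 \times \cdots \times X_{n-1}$ and an infinite $D_n \subseteq X_n$ with $(D_{<n} \times D_n) \setminus Y$ a hyperplane over $X_1 \times \cdots \times X_{n-1}$. The only gap is that $D_{<n}$ is merely broad, not a product of infinite sets; so the real work is to replace a broad definable subset of $X_1 \times \cdots \times X_{n-1}$ by a subproduct $D_1 \times \cdots \times D_{n-1}$ contained in it. I would set this up as an induction on $n$, with Remark~\ref{n-is-1} (broad $\iff$ infinite) as the base case $n=1$.

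First I would record the easy direction: if such $D_i$ exist, then choosing any $b \in D_n$ makes the $b$-slice of $Y$ contain a broad subset of $D_1 \times \cdots \times D_{n-1}$ (since the complementary slice is narrow and narrow sets form an ideal by Proposition~\ref{ideal}), so by induction that slice contains a subproduct $D_1' \times \cdots \times D_{n-1}'$ of infinite sets; then $D_1' \times \cdots \times D_{n-1}' \times \{b, b', \ldots\}$ for distinct $b$'s in $D_n$ witnesses broadness of $Y$ via Remark~\ref{n-form}. (One must check that a single choice of shrunk factors works for infinitely many $b$; this follows by compactness / Remark~\ref{type-definability}.\ref{true-type-definability}, since the set of $b$ for which a fixed $D_1' \times \cdots \times D_{n-1}'$ sits inside the slice is type-definable and, for the right choice, large.) Actually, it is cleaner to observe directly from Remark~\ref{n-form} that $D_1 \times \cdots \times D_n$ itself being ``mostly in $Y$'' in the stated hyperplane sense forces the existence of the required finite grids $S_1 \times \cdots \times S_n \subseteq Y$ for all $m$, by iteratively choosing $b_1, \ldots, b_m \in D_n$ and intersecting the $m$ narrow exceptional sets, whose union is still narrow, hence a proper (in fact broad-complemented) subset of $D_1 \times \cdots \times D_{n-1}$.

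For the hard direction, given $Y$ broad, apply Lemma~\ref{simon-lemma}.\ref{sl-2} to get $D_{<n}$ (broad, definable, inside $X_1 \times \cdots \times X_{n-1}$) and $D_n$ (infinite). Now I want to shrink $D_{<n}$ to a subproduct. The natural move is to apply the induction hypothesis — but $D_{<n}$ is a subset of a product of $n-1$ definable sets, and the induction hypothesis gives infinite $D_1, \ldots, D_{n-1}$ with $(D_1 \times \cdots \times D_{n-1}) \setminus D_{<n}$ a hyperplane, \emph{not} necessarily empty. So $D_1 \times \cdots \times D_{n-1}$ need not lie inside $D_{<n}$; some slices of its complement in $D_{<n}$ are narrow. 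I then combine the two hyperplane conditions: for $b \in D_n$, the set of $\vec a \in D_1 \times \cdots \times D_{n-1}$ with $(\vec a, b) \notin Y$ is contained in the union of $\{\vec a : \vec a \notin D_{<n}\}$ (narrow in $X_1 \times \cdots \times X_{n-1}$, uniformly in $b$) and $\{\vec a \in D_{<n} : (\vec a, b) \notin Y\}$ (narrow by Lemma~\ref{simon-lemma}.\ref{sl-2}); a union of two narrow sets is narrow by Proposition~\ref{ideal}, so $(D_1 \times \cdots \times D_n) \setminus Y$ is a hyperplane, as required.

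**Main obstacle.** The delicate point is the interplay between ``broad subset of a product'' and ``subproduct'': strengthening Lemma~\ref{simon-lemma}'s broad $D_{<n}$ to an honest grid $D_1 \times \cdots \times D_{n-1}$ without destroying the hyperplane property. The induction hypothesis is exactly what bridges this, but one has to be careful that the ``error'' introduced by passing from $D_{<n}$ to $D_1 \times \cdots \times D_{n-1}$ is itself narrow in $X_1 \times \cdots \times X_{n-1}$ (the ambient product), not just in the smaller product $D_1 \times \cdots \times D_{n-1}$ — and that it is uniform in $b \in D_n$ (which it is, since it does not depend on $b$ at all). After that, everything reduces to the ideal property of narrow sets (Proposition~\ref{ideal}) and Remark~\ref{n-form}.
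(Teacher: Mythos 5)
Your proposal is correct and follows essentially the same route as the paper: Lemma~\ref{simon-lemma}.\ref{sl-2} plus the induction hypothesis applied to $D_{<n}$, then absorbing the two exceptional sets into a union of narrow sets via Proposition~\ref{ideal}. The only step you leave implicit --- that the hyperplane $(D_1 \times \cdots \times D_{n-1}) \setminus D_{<n}$ is itself narrow in the ambient product --- is supplied in the paper by one application of Lemma~\ref{simon-lemma}.\ref{sl-1} (a broad definable set has a broad slice, so a hyperplane must be narrow), and your ``if'' direction, though phrased as a direct grid construction rather than the paper's contradiction argument, is equivalent.
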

\begin{proof}
  For the ``if'' direction, first note that $D_1 \times \cdots \times
  D_n$ is broad: for each $i \in [n]$ we can choose $a_{i,1}, a_{i,2},
  \ldots$ to be an arbitrary infinite sequence of distinct elements in
  $D_i$; the array $\{a_{i,j}\}_{i \in [n], j \in \Nn}$ then witnesses
  broadness of $D_1 \times \cdots \times D_n$.  Consider the sets
  \begin{align*}
    Y' &:= (D_1 \times \cdots \times D_n) \cap Y \\
    H &:= (D_1 \times \cdots \times D_n) \setminus Y.
  \end{align*}
  Then $D_1 \times \cdots \times D_n$ is the (disjoint) union of $Y'$
  and $H$, so at least one of $Y'$ and $H$ must be broad.  If $H$ were
  broad, by Lemma \ref{simon-lemma}.\ref{sl-1} it could not be a
  hyperplane.  Therefore $H$ is narrow and $Y'$ is broad, and so $Y$ is
  broad, proving the ``if'' direction.

  We prove the ``only if'' direction by induction on $n$.  For the
  base case $n = 1$, we can take $D_1 = Y$ by Remark~\ref{n-is-1}.
  Next suppose $n > 1$.  By Lemma~\ref{simon-lemma}.\ref{sl-2} there
  exist definable sets $D_{<n} \subseteq X_1 \times \cdots \times
  X_{n-1}$ and $D_n \subseteq X_n$ with $D_{< n}$ broad, $D_n$ infinite,
  and $(D_{<n} \times D_n) \setminus Y$ a hyperplane.  By induction
  there exist infinite definable sets $D_i \subseteq X_i$ for $1 \le i
  < n$ such that the set $H' = (D_1 \times \cdots \times D_{n-1})
  \setminus D_{<n}$ is a hyperplane.  By
  Lemma~\ref{simon-lemma}.\ref{sl-1} the set $H'$ is narrow.  For any
  $b \in D_n$, the two sets
  \begin{align*}
    & \{\vec{a} \in D_1 \times \cdots \times D_{n-1} ~|~ \vec{a} \notin D_{< n} \} = H' \\
    & \{\vec{a} \in D_{< n} ~|~ (\vec{a},b) \notin Y\}
  \end{align*}
  are both narrow (the latter because $(D_{< n} \times D_n) \setminus
  Y$ is a hyperplane).  The union of these two sets contains
  \begin{equation*}
    \{\vec{a} \in D_1 \times \cdots \times D_{n-1} ~|~ (\vec{a},b) \notin
    Y\}
  \end{equation*}
  which must therefore be narrow.  Therefore $(D_1 \times \cdots \times
  D_n) \setminus Y$ is a hyperplane, completing the proof.
\end{proof}

\begin{theorem}
  Assume that $T$ is NIP and eliminates $\exists^\infty$.  Then
  ``broadness is definable in families'' on the product $X_1 \times
  \cdots \times X_n$.  In other words, if $\{Y_b\}_{b \in Z}$ is a
  definable family of definable subsets of $X_1 \times \cdots \times
  X_n$, then the set $\{b \in Z ~|~ Y_b \textrm{ is broad}\}$ is
  definable.
\end{theorem}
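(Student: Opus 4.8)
The plan is to reduce definability of broadness to definability of narrowness of ``hyperplane sections,'' using Theorem~\ref{main-characterization} as the structural input, and then to run an induction on $n$ in which elimination of $\exists^\infty$ is invoked at the base case. Concretely, Theorem~\ref{main-characterization} says $Y_b$ is broad iff there exist infinite definable $D_i \subseteq X_i$ with $(D_1 \times \cdots \times D_n) \setminus Y_b$ a hyperplane over $D_1 \times \cdots \times D_{n-1}$; so the first move is to observe that the $D_i$ can themselves be taken from a definable family. That is, one fixes auxiliary parameters $c$ ranging over a definable set, lets $D_i = D_{i,c}$ be uniformly definable subsets of $X_i$, and asks: the set of $(b,c)$ such that each $D_{i,c}$ is infinite, and such that for every $\beta \in D_{n,c}$ the section $\{\vec{a} \in D_{1,c} \times \cdots \times D_{n-1,c} : (\vec{a},\beta) \notin Y_b\}$ is narrow, is a definable condition on $(b,c)$ — and then projecting out $c$ gives $\{b : Y_b \text{ broad}\}$, which is definable because $\exists^\infty$ is eliminated and projections of definable sets are definable. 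So the real content is: (i) ``$D_{i,c}$ is infinite'' is definable in $(b,c)$ — this is exactly elimination of $\exists^\infty$; and (ii) ``for every $\beta \in D_{n,c}$, the section is narrow'' is definable in $(b,c)$.

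For (ii) I would induct on $n$. The section in question is a definable family of definable subsets of $X_1 \times \cdots \times X_{n-1}$, indexed by $(b,c,\beta)$; by the inductive hypothesis (definability of broadness, equivalently narrowness, on the $(n-1)$-fold product), the set of $(b,c,\beta)$ for which this section is narrow is definable; then ``for every $\beta \in D_{n,c}$'' is a single first-order quantifier, preserving definability. The base case $n = 1$: a type-definable (here definable) subset of $X_1$ is broad iff infinite (Remark~\ref{n-is-1}), so ``$Y_b$ broad'' $=$ ``$Y_b$ infinite,'' which is definable precisely by elimination of $\exists^\infty$. Care is needed that the reduction via Theorem~\ref{main-characterization} is genuinely uniform: the theorem is an existential statement over choices of $D_i$, but by compactness/parameter-naming one can absorb those choices into a fixed definable family $\{D_{i,c}\}$ (take $c$ to enumerate, say, the defining parameters together with enough witnesses), so that $Y_b$ is broad iff \emph{some} $c$ works, and this ``$\exists c$'' is again handled by projection of a definable set.

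The main obstacle is ensuring that narrowness of the hyperplane sections is recognized \emph{definably and uniformly} — that is, that Remark~\ref{type-definability}.\ref{true-type-definability} (which only gives type-definability of ``$D_b$ broad'' a priori) can be upgraded to genuine definability in the presence of $\exists^\infty$-elimination, at each level of the induction simultaneously. The clean way to organize this is to prove, by induction on $n$, the combined statement ``broadness is definable in families on $X_1 \times \cdots \times X_n$,'' so that the hyperplane-section condition at level $n$ is handled by the \emph{already-proved} case at level $n-1$. One should also double-check the edge cases where some $X_i$ is finite: if any $X_i$ is finite then no subset of the product is broad (a row of the witnessing array would need infinitely many distinct elements of $X_i$), so $\{b : Y_b \text{ broad}\} = \emptyset$ is trivially definable; thus one may assume all $X_i$ infinite and apply Theorem~\ref{main-characterization} and Lemma~\ref{simon-lemma} as stated. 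Modulo these bookkeeping points, the argument is a routine combination of the structural characterization with the hypothesis that infinitude is definable.
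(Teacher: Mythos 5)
Your proposal is correct and follows essentially the same route as the paper: induction on $n$ with the base case given by elimination of $\exists^\infty$, and the inductive step combining Theorem~\ref{main-characterization} with definability of narrowness of the sections one level down. The uniformity/compactness step you flag (absorbing the varying choices of witnessing formulas into finitely many definable families) is exactly how the paper closes the argument, phrased there as: the broad locus is type-definable by Remark~\ref{type-definability} and is $\vee$-definable because each broad $Y_{b_0}$ has a definable neighborhood of $b$'s witnessed by the same formulas, hence the locus is definable.
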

In fact, we only need $T$ to eliminate $\exists^\infty$ on the sets
$X_1,\ldots,X_n$.
\begin{proof}
  We proceed by induction on $n$.  The base case $n = 1$ is equivalent
  to elimination of $\exists^\infty$ by Remark~\ref{n-is-1}.  Suppose
  $n > 1$ and $\{Y_b\}_{b \in Z}$ is a definable family of definable
  subsets of $X_1 \times \cdots \times X_n$.  Let $A$ be a set of
  parameters over which everything is defined.  The set $\{b \in Z ~|~
  Y_b \textrm{ is broad}\}$ is type-definable by
  Remark~\ref{type-definability}.\ref{true-type-definability}.  It
  remains to show that the set is also $\vee$-definable: if $Y_{b_0}$
  is broad, then there is some $A$-definable neighborhood $N$ of $b_0$
  such that $Y_b$ is broad for $b \in N$.  Indeed, by
  Theorem~\ref{main-characterization} there exist formulas
  $\phi_i(x,z)$ and elements $c_1,\ldots,c_n$ such that
  \begin{equation*}
    \bigwedge_{i = 1}^n \exists^\infty x \in X_i : \phi_i(x,c_i)
  \end{equation*}
  and
  \begin{align*}
    \forall y \in X_n : \phi_n(x,c_n) \implies \neg \exists^{broad} &
    (x_1,\ldots,x_{n-1}) \in X_1 \times \cdots \times X_{n-1} : \\ &
    \left((x_1,\ldots,x_{n-1},y) \notin Y_{b_0}\right) \wedge \bigwedge_{i = 1}^{n-1}
    \phi_i(x_i,c_i)
  \end{align*}
  where $\exists^{broad} \vec{x} : P(\vec{x})$ means that the set of
  $\vec{x}$ such that $P(\vec{x})$ holds is broad.  (This quantifier is
  eliminated, by induction.)  We take $N$ to be the $A$-definable set of
  $b$ such that
  \begin{align*}
    \exists c_1, \ldots, c_n : & \left( \bigwedge_{i = 1}^n
    \exists^\infty x \in X_i : \phi_i(x,c_i) \right)  \\ 
    &\wedge \Biggl( \forall y \in X_n : \phi_n(x,c_n) \implies \neg
    \exists^{broad} (x_1,\ldots,x_{n-1}) \in X_1 \times \cdots \times
    X_{n-1} : \\ & \left((x_1,\ldots,x_{n-1},y) \notin Y_b\right) \wedge \bigwedge_{i =
      1}^{n-1} \phi_i(x_i,c_i)\Biggr).
  \end{align*}
  If $b \in N$, then the sets $\phi_i(\Mm;c_i)$ show that $Y_b$ is broad
  by Theorem~\ref{main-characterization}.  This proves
  $\vee$-definability of the set of $b$ such that $Y_b$ is broad,
  completing the inductive step and the proof.
\end{proof}
\begin{corollary}
  \label{n-bound}
  Assume $T$ is NIP and eliminates $\exists^\infty$.  Let $X_1,
  \ldots, X_n$ be definable sets and $\{D_b\}_{b \in Y}$ be a
  definable family of subsets of $X_1 \times \cdots \times X_n$.  Then
  there is some constant $m$ depending on the family such that for any
  $b \in Y$, the set $D_b$ is broad if and only if there exist
  $\{a_{i,j}\}_{i \in [n],~j \in [m]}$ such that
  \begin{itemize}
  \item For fixed $i$, the $a_{i,j}$ are pairwise distinct elements of
    $X_i$.
  \item For any $\eta : [n] \to [m]$, the tuple
    $(a_{1,\eta(1)},\ldots,a_{n,\eta(n)})$ belongs to $D_b$.
  \end{itemize}
  Equivalently, there is an $m$ such that for every $b$, $D_b$ is
  broad if and only if there exist finite subsets $S_i \subseteq X_i$
  of cardinality $m$ such that $S_1 \times \cdots \times S_n \subseteq
  D_b$.
\end{corollary}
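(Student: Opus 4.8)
The plan is to bootstrap from the preceding theorem (definability of broadness in families) by a compactness argument. Fix a small set $A$ over which the family $\{D_b\}_{b \in Y}$ and the sets $X_i$ are defined. For each $m \in \Nn$, let $\Phi_m$ be the set of $b \in Y$ for which there exist $\{a_{i,j}\}_{i \in [n],\, j \in [m]}$ with the $a_{i,j}$ pairwise distinct in $X_i$ for each fixed $i$ and $(a_{1,\eta(1)},\ldots,a_{n,\eta(n)}) \in D_b$ for every $\eta : [n] \to [m]$. Since the conjunction ranges over the \emph{finite} set of functions $\eta : [n] \to [m]$, this condition on $b$ is expressed by a single $A$-formula, so each $\Phi_m$ is definable. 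Deleting one column from a system of witnesses of size $m+1$ yields a system of size $m$, so $\Phi_1 \supseteq \Phi_2 \supseteq \cdots$; and by Remark~\ref{n-form}, $D_b$ is broad if and only if $b \in \bigcap_{m \in \Nn} \Phi_m$.

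Next I would invoke the preceding theorem: because $T$ is NIP and eliminates $\exists^\infty$, the set $\Phi := \{b \in Y : D_b \text{ is broad}\}$ is definable, so ``$b \notin \Phi$'' is again a single $A$-formula. Suppose, for contradiction, that $\Phi_m \neq \Phi$ for every $m$; since $\Phi \subseteq \Phi_m$ always holds, this means $\Phi_m \setminus \Phi \neq \emptyset$ for every $m$. Then the partial type $\{b \in \Phi_m : m \in \Nn\} \cup \{b \notin \Phi\}$ is finitely satisfiable — by the decreasing chain property, any finite fragment is implied by ``$b \in \Phi_M \wedge b \notin \Phi$'' for $M$ large enough, and such a $b$ exists by assumption — hence realized in the monster by some $b^\ast$. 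But then $b^\ast \in \bigcap_m \Phi_m$, which equals $\Phi$ by the previous paragraph, contradicting $b^\ast \notin \Phi$. Therefore $\Phi_m = \Phi$ for some $m$; this $m$ witnesses the corollary in its array formulation, and the reformulation in terms of subsets $S_i \subseteq X_i$ of cardinality $m$ with $S_1 \times \cdots \times S_n \subseteq D_b$ is the equivalent phrasing already recorded in Remark~\ref{n-form}.

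I do not expect a genuine obstacle here; the argument is pure compactness once the earlier theorem is in hand. The two points that need care are: (i) the passage from type-definability to honest definability of each $\Phi_m$, which works only because $m$ is a fixed natural number, so that the conjunction over $\eta : [n] \to [m]$ is finite; and (ii) the fact that definability of $\Phi$ — hence of ``$b \notin \Phi$'' as a single formula — is precisely what licenses the compactness extraction of a point of $\bigcap_m \Phi_m \setminus \Phi$. Without the preceding theorem one would only know $\Phi$ to be type-definable, and the increasing union $\bigcup_m (Y \setminus \Phi_m)$ would not be guaranteed to stabilize.
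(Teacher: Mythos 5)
Your proposal is correct and is exactly the paper's intended argument: the paper's proof reads ``This follows by compactness and Remark~\ref{n-form}, once we know that `broad' is a definable condition,'' and your write-up simply fills in the details of that compactness step via the decreasing definable chain $\Phi_1 \supseteq \Phi_2 \supseteq \cdots$ with definable intersection $\Phi$.
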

\begin{proof}
  This follows by compactness and Remark~\ref{n-form}, once we know
  that ``broad'' is a definable condition.
\end{proof}

\subsection{Externally definable sets} \label{sec:extern1}
It is a theorem of Chernikov and Simon that in NIP theories
eliminating $\exists^\infty$, any infinite externally definable set
contains an infinite internally definable set (Corollary~1.12 in
\cite{edsdp}).  As a consequence, if an infinite (internally)
definable set $Y$ is covered by finitely many externally definable
sets $D_1, \ldots, D_\ell$, then one of the $D_i$ contains an infinite
internally definable set.

The following lemma is an analogue for broad sets.
\begin{lemma}\label{chernikov-1}
  Assume $T$ is NIP and eliminates $\exists^\infty$.  Let $M \preceq
  \Mm$ be a small model.  Let $X_1, \ldots, X_n$ be $M$-definable
  infinite sets and $Y \subseteq X_1 \times \cdots \times X_n$ be an
  $M$-definable broad set.  Let $D_1, \ldots, D_\ell$ be
  $\Mm$-definable subsets of $X_1 \times \cdots \times X_n$ such that
  \begin{equation*}
    Y(M) \subseteq \bigcup_{k = 1}^\ell D_k.
  \end{equation*}
  Then there exists some $k$ and some $M$-definable broad set $Y'
  \subseteq Y$ such that $Y'(M) \subseteq D_k$.
\end{lemma}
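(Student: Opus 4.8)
The plan is to mimic the Chernikov--Simon argument for infinite externally definable sets, but carried out "one coordinate at a time" using the hyperplane characterization of broadness from Theorem~\ref{main-characterization}. The starting point is the following reformulation. Since $Y$ is broad and $M$-definable, by Theorem~\ref{main-characterization} (applied inside $M$, or rather applied in $\Mm$ and then reflected down by elementarity) there are infinite $M$-definable sets $D_i \subseteq X_i$ such that $(D_1 \times \cdots \times D_n) \cap Y$ differs from $D_1 \times \cdots \times D_n$ only in a "hyperplane." Replacing $Y$ by this smaller $M$-definable broad subset, we may assume $Y$ itself is close to a box $D_1 \times \cdots \times D_n$. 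The key point is then to run an induction on $n$, peeling off the last coordinate.

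For the inductive step, fix the last coordinate. For each $b \in D_n(M)$, the slice $Y_b := \{\vec{a} \in D_1 \times \cdots \times D_{n-1} : (\vec{a},b) \in Y\}$ is an $M$-definable broad subset of $X_1 \times \cdots \times X_{n-1}$ (broad because removing a hyperplane from the broad box $D_1 \times \cdots \times D_{n-1}$ leaves something broad, by Proposition~\ref{ideal} and Lemma~\ref{simon-lemma}.\ref{sl-1}). The covering $Y(M) \subseteq \bigcup_k D_k$ gives, for each such $b$, a covering of the $M$-points of the slice $Y_b$ by the $\Mm$-definable sets $(D_k)_b := \{\vec{a} : (\vec{a},b) \in D_k\}$. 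By the induction hypothesis in dimension $n-1$, for each $b \in D_n(M)$ there is some index $k(b) \in [\ell]$ and some $M(b)$-definable broad $Y'_b \subseteq Y_b$ with $Y'_b(M) \subseteq (D_{k(b)})_b$. We want to make this uniform in $b$: by Corollary~\ref{n-bound}, "broad" is witnessed by a box of some fixed finite size $m$, so the statement "there is an $M$-formula $\varphi(\vec{x};b)$ carving out a broad subset of $Y_b$ all of whose $M$-points lie in $(D_k)_b$" is, for fixed $k$ and fixed $\varphi$, an $M$-definable condition on $b$ — here I use elimination of $\exists^\infty$ to make "broad" definable, plus the observation that $Y'_b(M) \subseteq (D_k)_b$ can be checked on $M$-points by a combination of an external ($D_k$ is $\Mm$-definable) and internal condition, which is where the Chernikov--Simon style transfer between $\Mm$-definable and $M$-definable sets must be invoked carefully. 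Since for every $b$ some such $(k,\varphi)$ works, and there are only finitely many $k$ and (by compactness, after fixing $m$) boundedly many relevant formula-shapes $\varphi$, one of the pairs $(k,\varphi)$ works for a broad set of $b$'s — more precisely, for a set $B \subseteq D_n$ that is $M$-definable and infinite.

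Having fixed such $k$ and such an $M$-formula $\varphi(\vec{x};y)$, define $Y' := \{(\vec{a},b) : b \in B,\ \varphi(\vec{a};b),\ (\vec{a},b) \in Y\}$. This is $M$-definable; it is a subset of $Y$; it is broad because $B$ is infinite and for each $b \in B$ the slice $\varphi(\Mm;b) \cap Y_b$ is broad (hence one can assemble a mutually indiscernible witnessing array, using an infinite sequence of distinct $b$'s from $B$ for the last row and a broad box inside a common slice for the top $n-1$ rows — compactness lets one find a single box working across infinitely many $b$'s, again via Corollary~\ref{n-bound}); and $Y'(M) \subseteq D_k$ by construction, since for $b \in B(M)$ every $M$-point $\vec{a}$ with $\varphi(\vec{a};b)$ and $(\vec{a},b) \in Y$ lands in $(D_k)_b$. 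That completes the induction. The base case $n=1$ is exactly the Chernikov--Simon statement recalled at the start of \S\ref{sec:extern1}: an infinite $M$-definable set whose $M$-points are covered by finitely many $\Mm$-definable sets has an infinite $M$-definable subset inside one of them.

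The main obstacle I expect is the uniformity/definability bookkeeping in the inductive step — specifically, showing that the index $k$ and the witnessing formula $\varphi$ can be chosen uniformly over a broad (equivalently here, infinite) set of last-coordinate values $b$, rather than depending on $b$. This is where one genuinely needs elimination of $\exists^\infty$ (to know "broad" is a definable, not merely type-definable, condition, via the Theorem on definability of broadness in families), the finite-witness bound of Corollary~\ref{n-bound} (to reduce to boundedly many formula-shapes and apply compactness), and a careful argument — in the spirit of Chernikov--Simon's honest definitions / the $(M,\Mm)$ two-sorted structure — to handle the mixed internal/external nature of the condition $Y'_b(M) \subseteq (D_k)_b$. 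Everything else (broadness of slices, broadness of the reassembled $Y'$, the reduction to a box) is routine given the results already established in this section.
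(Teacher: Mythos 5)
Your overall strategy (induct on $n$, peel off the last coordinate, uniformize the choice of index and witnessing formula over the slices) is not the paper's, and the step you yourself flag as "the main obstacle" is a genuine gap, not just bookkeeping. Two things go wrong in the uniformization. First, the inductive hypothesis hands you, for each $b \in D_n(M)$, \emph{some} $M$-definable broad $Y'_b \subseteq Y_b$, but it gives no control over which formula defines it; there are infinitely many candidate $M$-formulas (even after fixing the witness-box size $m$ from Corollary~\ref{n-bound}, the formula and its $M$-parameters are unconstrained), so the pigeonhole over pairs $(k,\varphi)$ has infinitely many cells and does not produce a single pair working for infinitely many $b$. Second, even for a fixed pair $(k,\varphi)$, the set of $b$ for which "$\varphi(\Mm;b)$ is a broad subset of $Y_b$ and $\varphi(M;b) \subseteq (D_k)_b$" holds is not an $M$-definable condition on $b$: the clause $\varphi(M;b)\subseteq (D_k)_b$ mixes the trace on $M$ with the $\Mm$-definable set $D_k$ and is at best externally definable. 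So you cannot extract the infinite $M$-definable $B \subseteq D_n$ that your construction of $Y'$ requires, and the argument stalls exactly where you predicted.

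The paper avoids the induction entirely and resolves the internal/external tension with a single global application of honest definitions. It first uses the fact that the broadness witnesses $\{a_{i,j}\}$ can be chosen in $M$ (since $Y$ and the $X_i$ are $M$-definable and $M \preceq \Mm$) together with a Ramsey/pigeonhole argument in the spirit of Proposition~\ref{ideal}.\ref{ideal-unions} to fix one index $k$ such that, for every $m$, there are $M$-point witness arrays all of whose cross-tuples land in $Y \cap D_k$. It then applies honest definitions to the externally definable set $Y(M) \cap D_k$ \emph{in the full product}: there is an $M$-definable family $\{F_b\}$ such that every finite subset of $Y(M)\cap D_k$ is contained in some $F_b(M) \subseteq Y(M)\cap D_k$. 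Taking $m$ as in Corollary~\ref{n-bound} for this family and feeding in the finite witness array of size $m$ produces a single broad $M$-definable $F_b$ with $F_b(M)\subseteq D_k$ and $F_b \subseteq Y$. If you want to salvage your approach, the repair is essentially to import this idea: apply honest definitions once to $Y(M)\cap D_k$ globally rather than slice-by-slice, at which point the induction on $n$ becomes unnecessary.
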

\begin{proof}
  For every $m \in \Nn$ we can find $\{a_{i,j}\}_{i \in [n], j \in
    [m]}$ satisfying the following conditions:
  \begin{itemize}
  \item $a_{i,j} \in X_i$
  \item $a_{i,j} \ne a_{i,j'}$ for $j \ne j'$.
  \item For any $\eta : [n] \to [m]$ the tuple
    $(a_{1,\eta(1)},\ldots,a_{n,\eta(n)})$ lies in $Y$.
  \end{itemize}
  Because $X_i$ and $Y$ are $M$-definable, we can even choose the
  $a_{i,j}$ to lie in $M$.  By the Ramsey-theoretic statement
  underlying Proposition~\ref{ideal}.\ref{ideal-unions}, there is some
  fixed $k \in [\ell]$ such that for every $m \in \Nn$ we can find
  $\{a_{i,j}\}_{i \in [n], j \in [m]}$ satisfying the following
  conditions:
  \begin{itemize}
  \item $a_{i,j} \in X_i(M)$
  \item $a_{i,j} \ne a_{i,j'}$ for $j \ne j'$.
  \item For any $\eta : [n] \to [m]$ the tuple
    $(a_{1,\eta(1)},\ldots,a_{n,\eta(n)})$ lies in $Y \cap D_k$.
  \end{itemize}
  By honest definitions (\cite{NIPguide}, Remark~3.14), the externally
  definable set $Y(M) \cap D_k$ can be approximated by internally
  definable sets in the following sense: there is an $M$-definable
  family $\{F_b\}$ such that for every finite subset $S \subseteq Y(M)
  \cap D_k$ there is a $b \in \dcl(M)$ such that
  \begin{equation*}
    S \subseteq F_b(M) \subseteq Y(M) \cap D_k.
  \end{equation*}
  Take $m$ as in Corollary~\ref{n-bound} for the family $\{F_b\}$.
  Take $a_{i,j}$ for $i \in [n]$ and $j \in [m]$ such that
  \begin{itemize}
  \item $a_{i,j} \in X_i(M)$
  \item $a_{i,j} \ne a_{i,j'}$ for $j \ne j'$.
  \item For any $\eta : [n] \to [m]$ the tuple
    $(a_{1,\eta(1)},\ldots,a_{n,\eta(n)})$ lies in $Y \cap D_k$ (hence
    in $Y(M) \cap D_k$).
  \end{itemize}
  Let $S$ be the finite subset of tuples of the form
  $(a_{1,\eta(1)},\ldots,a_{n,\eta(n)})$.  Take $b \in \dcl(M)$ such
  that $S \subseteq F_b(M) \subseteq Y(M) \cap D_k$.  The $a_{i,j}$
  show that $F_b$ is broad, by choice of $m$.  Let $Y' = F_b$.  The
  fact that $Y'(M) \subseteq Y(M)$ implies that $Y' \subseteq Y$, as
  $Y$ and $Y'$ are both $M$-definable and $M \preceq \Mm$.
\end{proof}

\begin{lemma}
  \label{something-similar-1}
  Let $M \preceq \Mm$ be a small model and $X_1, \ldots, X_n$ be infinite
  $M$-definable sets.  Suppose that $Y \subseteq X_1 \times \cdots
  \times X_n$ is broad and $M$-definable, and that $W \subseteq X_1
  \times \cdots \times X_n$ is $\Mm$-definable.  If $Y(M) \subseteq
  W$, then $W$ is broad.
\end{lemma}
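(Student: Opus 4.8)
The plan is to bypass the externally-definable machinery entirely and reduce directly to the finite characterization of broadness recorded in Remark~\ref{n-form}. Fix $M$, $X_1,\ldots,X_n$, $Y$, $W$ as in the statement. For each fixed $m \in \Nn$, the condition ``there exist $a_{i,j} \in X_i$ for $i \in [n]$, $j \in [m]$, pairwise distinct within each row $i$, such that $(a_{1,\eta(1)},\ldots,a_{n,\eta(n)}) \in Y$ for every $\eta : [n] \to [m]$'' is a single first-order sentence $\theta_m$ with parameters from $M$, since $X_1,\ldots,X_n$ and $Y$ are $M$-definable; indeed it is the $m$-th conjunct of the small conjunction exhibited in the proof of Remark~\ref{type-definability}.\ref{true-type-definability}. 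Because $Y$ is broad, $\Mm \models \theta_m$ for every $m$ (this is precisely Remark~\ref{n-form}), and since $M \preceq \Mm$ we get $M \models \theta_m$; hence we may choose the witnesses $a_{i,j}$ to lie in $M$.

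The second step is a bookkeeping observation. For such witnesses, $a_{i,j} \in X_i(M) \subseteq X_i$, and every tuple $(a_{1,\eta(1)},\ldots,a_{n,\eta(n)})$ has all of its coordinates in $M$ and lies in $Y$, hence lies in the externally definable set $Y(M)$. Setting $S_i = \{a_{i,1},\ldots,a_{i,m}\}$ we obtain $S_i \subseteq X_i$ with $|S_i| = m$ and $S_1 \times \cdots \times S_n \subseteq Y(M) \subseteq W$. As $m$ was arbitrary, the ``$\Leftarrow$'' criterion of Remark~\ref{n-form} applied to $W$ now shows that $W$ is broad, which is what we wanted.

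I do not expect a genuine obstacle here: this lemma is the ``easy half'' of the coherence properties listed in the introduction, whereas the ``hard half'' — covering a broad set by finitely many externally definable sets — is exactly Lemma~\ref{chernikov-1}. The only point that needs any care is noticing that a grid whose coordinates all lie in $M$ automatically sits inside the external trace $Y(M)$, not merely inside $Y$, so that the inclusion $Y(M) \subseteq W$ can be invoked. For what it is worth, one could alternatively deduce the lemma from Lemma~\ref{chernikov-1} applied with $\ell = 1$ and $D_1 = W$, which produces an $M$-definable broad $Y' \subseteq Y$ with $Y'(M) \subseteq W$, and then run the same elementarity-plus-bookkeeping argument on $Y'$ in place of $Y$; but this is a needless detour.
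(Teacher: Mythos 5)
Your proof is correct and is essentially the paper's own argument: both use elementarity of $M \preceq \Mm$ to pull the finite witnessing grids for broadness of $Y$ down into $M$, observe that the resulting tuples then lie in $Y(M) \subseteq W$, and conclude via the finite characterization of broadness (Remark~\ref{n-form}). No meaningful difference in approach.
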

\begin{proof}
  For every $m$, we can find $\{a_{i,j}\}_{i \in [n], j \in [m]}$
  satisfying the following conditions:
  \begin{enumerate}
  \item $a_{i,j} \in X_i$.
  \item $a_{i,j} \ne a_{i,j'}$ for $j \ne j'$.
  \item For any $\eta : [n] \to [m]$ the tuple
    $(a_{1,\eta(1)},\ldots,a_{n,\eta(n)})$ lies in $Y$.
  \end{enumerate}
  The requirements on the $a_{i,j}$ are $M$-definable, and $[n] \times
  [m]$ is finite.  Therefore, for any $m$ we can choose the $a_{i,j}$
  to lie in $M$.  Having done so,
  \begin{equation*}
    (a_{1,\eta(1)},\ldots,a_{n,\eta(n)}) \in Y(M) \subseteq W
  \end{equation*}
  for any $\eta$.  The existence of the $a_{i,j}$ for all $m$ imply
  that $W$ is broad.
\end{proof}

\subsection{The finite rank setting}
We now turn to proving Theorem~\ref{finite-rank-case}, which relates
broadness and narrowness to dp-rank, under certain assumptions.
\begin{lemma}
  \label{narrow-confusion}
  Let $X_1, \ldots, X_n$ be $A$-definable sets, and let
  $(a_1,\ldots,a_n)$ be a tuple in $X_1 \times \cdots \times X_n$.
  Suppose that there exists a sequence $b_1, b_2, \ldots \in X_n$ of
  pairwise distinct elements such that
  \begin{equation*}
    (a_1,\ldots,a_{n-1},b_i) \equiv_A (a_1,\ldots,a_{n-1},a_n)
  \end{equation*}
  for every $i$, and such that $\tp(a_1,\ldots,a_{n-1}/A\vec{b})$ is
  broad.  Then $\tp(a_1,\ldots,a_n/A)$ is broad.
\end{lemma}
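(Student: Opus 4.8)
The plan is to produce witnesses to broadness of $\tp(a_1,\ldots,a_n/A)$ directly, using the sequence $\vec{b}$ as the bottom row of the array and borrowing the top $n-1$ rows from a set of witnesses to broadness of $\tp(a_1,\ldots,a_{n-1}/A\vec{b})$.

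First I would fix, using broadness of $\tp(a_1,\ldots,a_{n-1}/A\vec{b})$, an array $\langle c_{i,j}\rangle_{i\in[n-1],\ j\in\Nn}$ with $c_{i,j}\in X_i$, with the $c_{i,j}$ pairwise distinct for each fixed $i$, and with $(c_{1,\eta(1)},\ldots,c_{n-1,\eta(n-1)})\models \tp(a_1,\ldots,a_{n-1}/A\vec{b})$ for every $\eta:[n-1]\to\Nn$. Extend this to an array indexed by $[n]$ by setting $c_{n,j}:=b_j$; since the $b_j$ are pairwise distinct, the bottom row is legitimate. The only thing left to check is the combinatorial condition: given $\eta:[n]\to\Nn$, the tuple $(c_{1,\eta(1)},\ldots,c_{n,\eta(n)})$ should realize $\tp(\vec{a}/A)$.

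To verify this, note that $(c_{1,\eta(1)},\ldots,c_{n-1,\eta(n-1)})\models\tp(a_1,\ldots,a_{n-1}/A\vec{b})$, so in the monster model there is $\sigma\in\Aut(\Mm/A\vec{b})$ with $\sigma(a_k)=c_{k,\eta(k)}$ for $k=1,\ldots,n-1$. Since $\sigma$ fixes $\vec{b}$ pointwise, $\sigma(b_{\eta(n)})=b_{\eta(n)}$, so $\sigma$ sends the tuple $(a_1,\ldots,a_{n-1},b_{\eta(n)})$ to $(c_{1,\eta(1)},\ldots,c_{n-1,\eta(n-1)},b_{\eta(n)})$; and since $\sigma$ fixes $A$, these two tuples have the same type over $A$. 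By hypothesis $(a_1,\ldots,a_{n-1},b_{\eta(n)})\equiv_A(a_1,\ldots,a_{n-1},a_n)=\vec{a}$, so chaining the two equivalences gives $(c_{1,\eta(1)},\ldots,c_{n,\eta(n)})\equiv_A\vec{a}$, i.e.\ this tuple realizes $\tp(\vec{a}/A)$. Thus $\langle c_{i,j}\rangle_{i\in[n],\ j\in\Nn}$ witnesses that the realization set of $\tp(\vec{a}/A)$ is broad as a subset of $X_1\times\cdots\times X_n$, which is exactly what we want (cf.\ Proposition~\ref{completion}).

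There is no real obstacle here; the argument is a short automorphism manipulation. The only points requiring a modicum of care are that the bottom row $\vec{b}$ supplies the needed distinctness, and that $\sigma$, being an automorphism over $A\vec{b}$, simultaneously transports the realization of the smaller type and leaves each $b_i$ (in particular $b_{\eta(n)}$) in place, so that the hypothesis $(a_1,\ldots,a_{n-1},b_i)\equiv_A\vec{a}$ can be applied after conjugating. All of this goes through because the $X_i$ are $A$-definable and everything in sight is over $A$.
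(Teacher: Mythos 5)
Your proof is correct and is essentially identical to the paper's: both take witnesses to broadness of $\tp(a_1,\ldots,a_{n-1}/A\vec{b})$, append $\vec{b}$ as the $n$th row, and use the fact that $\equiv_{A\vec{b}}$-equivalence of the truncated tuples plus the hypothesis $(a_1,\ldots,a_{n-1},b_i)\equiv_A\vec{a}$ yields $\equiv_A$-equivalence of the full tuples. The only difference is that you spell out the automorphism manipulation that the paper leaves implicit.
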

\begin{proof}
  Let $\{e_{i,j}\}_{i \in [n-1], j \in \Nn}$ witness broadness of
  $\tp(a_1,\ldots,a_{n-1}/A\vec{b})$.  For any $\eta : [n-1] \to \Nn$,
  \begin{equation*}
    (e_{1,\eta(1)},\ldots,e_{n-1,\eta(n-1)}) \equiv_{A \vec{b}} (a_1,\ldots,a_{n-1}).
  \end{equation*}
  Thus, for any $\eta : [n-1] \to \Nn$ and any $j$,
  \begin{align*}
    (e_{1,\eta(1)},\ldots,e_{n-1,\eta(n-1)},b_j) & \equiv_A
    (a_1,\ldots,a_{n-1},b_j) \\ & \equiv_A (a_1,\ldots,a_{n-1},a_n).
  \end{align*}
  If we set $e_{n,j} := b_j$, then the $e_{i,j}$ show that
  $\tp(\vec{a}/A)$ is broad.
\end{proof}

\begin{remark} \label{faux-independence}
  Let $X_1, X_2, \ldots, X_n$ be $A$-definable sets of finite dp-ranks
  $r_1, \ldots, r_n$.  Let $(a_1,a_2,\ldots,a_n)$ be a tuple in $X_1
  \times \cdots \times X_n$.  Then
  \begin{equation*}
    \dpr(a_1,\ldots,a_n/A) \le r_1 + \cdots + r_n
  \end{equation*}
  by subadditivity of dp-rank, and if equality holds then
  \begin{equation*}
    \dpr(a_i/Aa_1a_2 \cdots a_{i-1} a_{i+1} \cdots a_n) = r_i
  \end{equation*}
  for every $i$.  Indeed, otherwise
  \begin{equation*}
    \dpr(a_i/Aa_1a_2 \cdots a_{i-1} a_{i+1} \cdots a_n) < r_i
  \end{equation*}
  and so
 \begin{align*}
   r_1 + \cdots + r_n & \le \dpr(a_i/Aa_1\cdots a_{i-1}a_{i+1}\cdots
   a_n) + \dpr(a_1,\ldots,a_{i-1}, a_{i+1}, \ldots, a_n/A) \\ & < r_i + (r_1
   + \cdots + r_{i-1} + r_{i+1} + \cdots + r_n)
 \end{align*}
  which is absurd.
\end{remark}

The technique for the next proof comes from Proposition 3.4 in
\cite{surprise}.
\begin{lemma}
  \label{mass-confusion}
  Let $X, Y$ be infinite $A$-definable sets of finite dp-rank $n$ and
  $m$, respectively.  Let $(a,b)$ be a tuple in $X \times Y$ with
  $\dpr(a,b/A) = n + m$.  Then there exist pairwise distinct $b_1,
  b_2, \ldots$ such that $ab_i \equiv_A ab$ and such that $\dpr(a/Ab_1
  \ldots, b_\ell) = n$ for every $\ell$.
\end{lemma}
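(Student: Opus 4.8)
The plan is to start by unwinding the hypothesis. Since $X$ and $Y$ are $A$-definable of finite dp-ranks $n$ and $m$ and $\dpr(a,b/A) = n+m$, Remark~\ref{faux-independence} gives $\dpr(a/Ab) = n$ and $\dpr(b/Aa) = m$; together with $\dpr(a/Ab) \le \dpr(a/A) \le \dpr(X) = n$ this also forces $\dpr(a/A) = n$. Because $Y$ is infinite, $m \ge 1$, so $\dpr(b/Aa) \ge 1$ and hence $b \notin \acl(Aa)$; thus the type $q := \tp(b/Aa)$ is non-algebraic, and has infinitely many realizations. Note that a tuple $b'$ realizes $q$ if and only if $ab' \equiv_A ab$.

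Next I reduce the statement. Applying Remark~\ref{faux-independence} to the product $X \times Y \times \cdots \times Y$ with $\ell$ copies of $Y$ and the tuple $(a, b_1, \ldots, b_\ell)$: if $\dpr(a,b_1,\ldots,b_\ell/A) = n + \ell m$ then in particular $\dpr(a/Ab_1\cdots b_\ell) = n$. So it is enough to build pairwise distinct $b_1, b_2, \ldots$, all realizing $q$, such that $\dpr(a, b_1, \ldots, b_\ell/A) = n + \ell m$ for every $\ell$.

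I would then construct such a sequence recursively, following the method of Proposition~3.4 of \cite{surprise}. Put $b_1 := b$; this works at $\ell = 1$ since $\dpr(a, b_1/A) = \dpr(a,b/A) = n+m$. Suppose $b_1, \ldots, b_\ell$ are distinct realizations of $q$ with $\dpr(a, b_1, \ldots, b_\ell/A) = n + \ell m$. The set $Y$ still has dp-rank $m$ over $B := Aab_1\cdots b_\ell$, so there is a type over $B$ concentrated on $Y$ of dp-rank $m$; the point is to produce one that moreover restricts to $q$ over $Aa$ and is independent enough from $a b_1 \cdots b_\ell$ over $A$ that the subadditivity bound $\dpr(a, b_1, \ldots, b_{\ell+1}/A) \le \dpr(b_{\ell+1}/B) + \dpr(a, b_1, \ldots, b_\ell/A) = m + (n + \ell m)$ is attained. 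This is exactly what Simon's indiscernible-sequence argument delivers: one chooses $b_{\ell+1}$ realizing $q$ so that it does not perturb a system of mutually indiscernible sequences witnessing $\dpr(a/A) = n$ — concretely, as a sufficiently generic (e.g. coheir) realization of $q$ over $B$. Such a $b_{\ell+1}$ has dp-rank $m \ge 1$ over $B$, hence lies outside $\acl(B)$ and is distinct from $b_1, \ldots, b_\ell$. Iterating gives the infinite sequence with no appeal to compactness, and then Remark~\ref{faux-independence} converts the rank equations into the assertion $\dpr(a/Ab_1\cdots b_\ell) = n$.

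The main obstacle is the construction step: controlling $\dpr(a, b_1, \ldots, b_{\ell+1}/A)$ when one is only permitted to vary the coordinate $b$. The difficulty is real, not cosmetic — even in $\mathrm{ACF}$, where everything reduces to transcendence degree, a carelessly chosen second realization of $q$ can make $\dpr(a/Ab_1b_2)$ drop to $0$ (take $a = b_1 - b_2$, with $b_1$ generic over $A$ and $b_2 = b_1 - a$), so one cannot argue greedily. The NIP content is precisely that any such collapse would be visible through an indiscernible pattern, which can be avoided by choosing $b_{\ell+1}$ sufficiently generically; formalizing "sufficiently generically" and checking that dp-rank adds up for the resulting choice is where the work lies. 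A minor secondary point is to confirm that dp-rank over the increasing parameter sets behaves continuously enough for the recursion to stabilize correctly, but this is routine.
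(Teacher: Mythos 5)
Your reduction is the right one and matches the paper's: by Remark~\ref{faux-independence} it suffices to produce pairwise distinct realizations $b_1, b_2, \ldots$ of $\tp(b/Aa)$ with $\dpr(a,b_1,\ldots,b_\ell/A) = n + \ell m$ for all $\ell$, and distinctness then comes for free from $\dpr(b_{\ell}/Aab_1\cdots b_{\ell-1}) = m > 0$. But the central step --- actually achieving the lower bound $\dpr(a,b_1,\ldots,b_{\ell+1}/A) \ge n + (\ell+1)m$ --- is not carried out, and the mechanism you propose for it does not work. Choosing $b_{\ell+1}$ as a coheir (or otherwise ``sufficiently generic'') realization of $q$ over $B = Aab_1\cdots b_\ell$ gives you no lower bound on dp-rank: subadditivity only bounds $\dpr(a,\vec b, b_{\ell+1}/A)$ from \emph{above} by $\dpr(b_{\ell+1}/B) + \dpr(a,\vec b/A)$, there is no general superadditivity for ``independent'' tuples, and coheir extensions can themselves drop in dp-rank. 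You correctly identify this as ``where the work lies,'' but that work is the entire content of the lemma; as written the proposal assumes the conclusion of the inductive step rather than proving it.

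The paper's proof does not proceed greedily. It constructs all the $b_j$ simultaneously: starting from an ict-pattern of depth $n+m$ in $\tp(ab/A)$, it extracts a mutually $A$-indiscernible array with columns re-indexed by $\Nn \times \Nn$, invokes the \emph{proof} (not just the statement) of subadditivity to find $m$ rows that remain mutually indiscernible over $Aa$, groups those rows into an $Aa$-indiscernible sequence $d_0, d_1, \ldots$, and chooses $b_j$ with $b_j d_j \equiv_{Aa} b d_0$. Each $b_j$ thereby carries its own explicit copy of $m$ rows of the pattern (via $\phi_i(a,b_j;c_{i,j,k}) \iff k = 0$), so the lower bound $n + \ell m$ is witnessed by an explicit ict-pattern of that depth. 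This transfer of the pattern along the $Aa$-indiscernible sequence $d_j$ is the idea your proposal is missing, and without it the recursion has no way to get off the ground even at $\ell = 2$.
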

\begin{proof}
  Take an ict-pattern of depth $n + m$ in $\tp(ab/A)$, extract a
  mutually $A$-indiscernible ict-pattern, and extend the pattern to
  have columns indexed by $\Nn \times \Nn$ (with lexicographic order).
  This yields formula $\phi_i(x,y;z)$ for $1 \le i \le n + m$ and
  elements $c_{i,j,k}$ for $1 \le i \le n + m$ and $j, k \in \Nn$ such
  that $c_{i;j,k}$ is a mutually $A$-indiscernible array, and such
  that
  \begin{equation*}
    \bigwedge_{i = 1}^{n + m} \left( \phi_i(x,y;c_{i;0,0}) \wedge
    \bigwedge_{(j,k) \ne (0,0)} \neg \phi_i(x,y;c_{i;j,k}) \right)
  \end{equation*}
  is consistent with $\tp(a,b/A)$.  Moving the $c$'s by an
  automorphism over $A$, we may assume that $(a,b)$ realizes this
  type, so that $\phi_i(a,b;c_{i;j,k})$ holds if and only if $(j,k) =
  (0,0)$.  By the proof of subadditivity of dp-rank, there exist $m$
  rows which form a mutually $Aa$-indiscernible array.  Without loss
  of generality, these are the rows $i = n + 1, \ldots, n + m$.  For
  $j \in \Nn$ let $d_j$ be an enumeration of $\{ c_{i;j,k} \}_{n < i
    \le n+m,~ k \in \Nn}$.  The $d_j$ form an indiscernible sequence
  over $Aa$, so we may choose $b_j$ such that
  \begin{equation*}
    b_jd_j \equiv_{Aa} bd_0
  \end{equation*}
  and such that $b_0 = b$.  In particular, $ab_j \equiv_A ab$ for any
  $j$.
  \begin{claim}
    For any $\ell > 0$ the tuple $(a,b_0,b_1,\ldots,b_{\ell - 1})$ has
    dp-rank $n + \ell m$ over $A$.
  \end{claim}
  \begin{claimproof}
    The dp-rank is at most $n + \ell m$ by subadditivity of dp-rank
    (as $b_j \equiv_A b \implies b_j \in Y$).  It thus suffices to
    exhibit $(n + \ell m)$-many mutually $A$-indiscernible sequences,
    none of which are indiscernible over $A a b_0 b_1 \ldots b_{\ell -
      1}$:
    \begin{itemize}
    \item For $1 \le i \le n$, the sequence
      \begin{equation*}
        c_{i,0,0}, c_{i,0,1}, c_{i,0,2}, \ldots,
      \end{equation*}
      which fails to be indiscernible over $ab_0 = ab$ on account of
      the fact that
      \begin{equation*}
        \phi_i(a,b,c_{i,0,k}) \iff k = 0.
      \end{equation*}
    \item For $n < i \le n + m$ and for $0 \le j < \ell$, the sequence
      \begin{equation*}
        c_{i,j,0}, c_{i,j,1}, c_{i,j,2}, \ldots
      \end{equation*}
      which fails to be indiscernible over $ab_j$ on account of the
      fact that
      \begin{equation*}
        \phi_i(a,b_j,c_{i,j,k}) \iff \phi_i(a,b,c_{i,0,k}) \iff k = 0.
      \end{equation*}
    \end{itemize}
    These sequences are indeed mutually $A$-indiscernible, because we
    can split the mutually indiscernible array $c_{i;j,k}$ into a
    mutually indiscernible array $c_{i,j;k}$.
  \end{claimproof}
  Now by Remark~\ref{faux-independence}, it follows that
  \begin{equation*}
    \dpr(a/Ab_0 b_1 \cdots b_{\ell - 1}) = n
  \end{equation*}
  for each $\ell$.  Moreover,
  \begin{equation*}
    \dpr(b_\ell/Aa b_0 b_1 \cdots b_{\ell - 1}) = m > 0,
  \end{equation*}
  implying that $b_\ell \notin \acl(A a b_0 \cdots b_{\ell - 1})$.  In
  particular, $b_\ell \ne b_j$ for $j < \ell$, and the $b_j$ are
  pairwise distinct.
\end{proof}

\begin{proposition} \label{halfway-there}
  For $i = 1, \ldots, n$ let $X_i$ be a definable set of finite
  dp-rank $r_i > 0$.  Let $Y \subseteq X_1 \times \cdots \times X_n$
  be type-definable.  If $\dpr(Y) = r_1 + \cdots + r_n$ then $Y$ is
  broad.
\end{proposition}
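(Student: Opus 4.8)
The plan is to argue by induction on $n$. The base case $n=1$ is immediate: a type-definable set $Y \subseteq X_1$ with $\dpr(Y) = r_1 > 0$ is infinite, hence broad by Remark~\ref{n-is-1}.

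For the inductive step, I would fix a small set $A$ over which $X_1, \ldots, X_n$ and $Y$ are defined. Since $\dpr(Y) \le \sum_i \dpr(X_i) = r_1 + \cdots + r_n$ always holds by subadditivity, and we are assuming equality, the supremum defining $\dpr(Y)$ is attained: there is a tuple $\vec a = (a_1, \ldots, a_n) \in Y$ with $\dpr(\vec a/A) = r_1 + \cdots + r_n$. I would then extract the rank data needed downstream: by Remark~\ref{faux-independence} (with $i = n$), $\dpr(a_n / A a_1 \cdots a_{n-1}) = r_n$, and subadditivity gives $\dpr(a_1, \ldots, a_{n-1}/A) = r_1 + \cdots + r_{n-1}$; since $(a_1,\ldots,a_{n-1})$ lies in the definable set $X_1 \times \cdots \times X_{n-1}$, subadditivity again yields $\dpr(X_1 \times \cdots \times X_{n-1}) = r_1 + \cdots + r_{n-1}$.

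Next I would feed this into Lemma~\ref{mass-confusion}, taking its ``$X$'' to be $X_1 \times \cdots \times X_{n-1}$, its ``$Y$'' to be $X_n$, and the tuple to be $\bigl((a_1,\ldots,a_{n-1}),\, a_n\bigr)$, whose dp-rank over $A$ is exactly $(r_1 + \cdots + r_{n-1}) + r_n$ by the previous paragraph. This produces pairwise distinct $b_1, b_2, \ldots \in X_n$ with $(a_1,\ldots,a_{n-1}, b_i) \equiv_A (a_1,\ldots,a_{n-1}, a_n)$ for all $i$, and with $\dpr(a_1,\ldots,a_{n-1} / A b_1 \cdots b_\ell) = r_1 + \cdots + r_{n-1}$ for every $\ell$. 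Now I would fix $\ell$ and apply the inductive hypothesis over the enlarged base $A b_1 \cdots b_\ell$ to the type-definable set $Y_\ell \subseteq X_1 \times \cdots \times X_{n-1}$ of realizations of $\tp(a_1,\ldots,a_{n-1}/A b_1 \cdots b_\ell)$; as $Y_\ell$ is a complete type's realization set, all its members have dp-rank $r_1 + \cdots + r_{n-1}$ over $A b_1 \cdots b_\ell$, so $\dpr(Y_\ell) = r_1 + \cdots + r_{n-1}$ and the $X_i$ still have dp-ranks $r_i > 0$; hence $Y_\ell$ is broad, and therefore $\tp(a_1,\ldots,a_{n-1}/A b_1 \cdots b_\ell)$ is broad by Proposition~\ref{completion}. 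Since every finite subtype of $\tp(a_1,\ldots,a_{n-1}/A\vec b)$ is contained in $\tp(a_1,\ldots,a_{n-1}/A b_1 \cdots b_\ell)$ for $\ell$ large, and supersets of broad sets are broad (Proposition~\ref{ideal}(1)), Remark~\ref{type-definability}.\ref{continuity} shows $\tp(a_1,\ldots,a_{n-1}/A\vec b)$ is broad. Then Lemma~\ref{narrow-confusion} applies verbatim, giving that $\tp(a_1,\ldots,a_n/A)$ is broad, whence $Y$ is broad by Proposition~\ref{completion}.

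The conceptual weight is carried entirely by Lemmas~\ref{mass-confusion} and \ref{narrow-confusion}; the rest is bookkeeping. The one place that needs genuine care is making the ranks line up so that Lemma~\ref{mass-confusion} is invoked with the product $X_1 \times \cdots \times X_{n-1}$ in the role of ``$X$'' carrying the \emph{exact} dp-rank $r_1 + \cdots + r_{n-1}$ — this is precisely why one first upgrades $\dpr(X_1 \times \cdots \times X_{n-1}) \le r_1 + \cdots + r_{n-1}$ to an equality using the chosen generic tuple — together with checking that the inductive hypothesis may legitimately be applied over the base $A b_1 \cdots b_\ell$, which is harmless since broadness is base-independent and the sets $X_i$ keep their dp-ranks.
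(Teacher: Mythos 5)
Your proposal is correct and follows essentially the same route as the paper: induction on $n$, choosing a tuple in $Y$ of maximal dp-rank, invoking Lemma~\ref{mass-confusion} with $X_1 \times \cdots \times X_{n-1}$ and $X_n$ in the roles of its two sets, applying the inductive hypothesis over the enlarged base $Ab_1\cdots b_\ell$, and finishing with Remark~\ref{type-definability}.\ref{continuity} and Lemma~\ref{narrow-confusion}. The extra bookkeeping you supply (upgrading $\dpr(X_1\times\cdots\times X_{n-1})$ to an exact equality so that Lemma~\ref{mass-confusion} applies) is a detail the paper leaves implicit, and your verification of it is sound.
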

\begin{proof}
  We proceed by induction on $n$.  Assume $\dpr(Y) = r_1 + \cdots +
  r_n$.  For the base case $n = 1$, we see that $\dpr(Y) = r_1 > 0$,
  so $Y$ is infinite.  By Remark~\ref{n-is-1}, $Y$ is broad.  Suppose $n
  > 1$.  Let $A$ be a small set of parameters such that the $X_i$ are
  definable over $A$ and $Y$ is type-definable over $A$.  Take
  $(a_1,\ldots,a_n) \in Y$ such that $\dpr(a_1,\ldots,a_n/A) = r_1 +
  \cdots + r_n$.  By Lemma~\ref{mass-confusion} applied to
  $(a_1,\ldots,a_{n-1};a_n)$, there exist pairwise distinct $b_1, b_2,
  \ldots$ such that
  \begin{equation*}
    (a_1,\ldots,a_{n-1},a_n) \equiv_A (a_1,\ldots,a_{n-1},b_j)
  \end{equation*}
  for every $j$ and such that
  \begin{equation*}
    \dpr(a_1,\ldots,a_{n-1}/Ab_1\ldots b_j) = r_1 + \cdots + r_{n-1}
  \end{equation*}
  for every $j$.  By induction, $\tp(a_1,\ldots,a_{n-1}/Ab_1 \ldots
  b_j)$ is broad for every $j$.  By
  Remark~\ref{type-definability}.\ref{continuity}, it follows that
  $\tp(a_1,\ldots,a_{n-1}/Ab_1 b_2 \cdots)$ is broad.  By
  Lemma~\ref{narrow-confusion}, it follows that $\tp(a_1,\ldots,a_n/A)$
  is broad, and so $Y$ is broad.
\end{proof}

\begin{definition}
  A definable set $D$ is \emph{quasi-minimal} if $D$ has finite
  dp-rank $n > 0$, and every definable subset $D' \subseteq D$ has
  dp-rank $0$ or $n$.  Equivalently, every infinite definable subset
  of $D$ has the same (finite) dp-rank as $D$.
\end{definition}
\begin{remark}
  If $D$ is a definable set of dp-rank 1, then $D$ is quasi-minimal.
\end{remark}
\begin{remark} \label{they-exist}
  If $D$ is an infinite definable set of finite dp-rank, then $D$ contains a
  quasi-minimal definable subset.
\end{remark}
\begin{theorem} \label{finite-rank-case}
  Assume NIP.  Let $X_1, \ldots, X_n$ be quasi-minimal definable sets
  of rank $r_1, \ldots, r_n$, and let $Y \subseteq X_1 \times \cdots
  \times X_n$ be definable.  Then $Y$ is broad if and only if $\dpr(Y) =
  r_1 + \cdots + r_n$.
\end{theorem}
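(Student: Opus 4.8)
The plan is to reduce the general quasi-minimal case to Proposition~\ref{halfway-there} and Theorem~\ref{main-characterization}, exploiting the fact that on a quasi-minimal set every definable subset is either finite or of full rank. One direction is essentially already done: if $\dpr(Y) = r_1 + \cdots + r_n$ then $Y$ is broad by Proposition~\ref{halfway-there}, since the $X_i$ have finite dp-rank $r_i > 0$ (quasi-minimality gives $r_i > 0$). So the substance is the converse: assuming $Y$ is broad, I want to conclude $\dpr(Y) = r_1 + \cdots + r_n$. Since $\dpr(Y) \le \dpr(X_1 \times \cdots \times X_n) \le r_1 + \cdots + r_n$ by subadditivity, it suffices to produce a subset of $Y$ of rank at least $r_1 + \cdots + r_n$.

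The key tool is Theorem~\ref{main-characterization}: since $Y$ is broad (and definable), there are infinite definable subsets $D_i \subseteq X_i$ such that $(D_1 \times \cdots \times D_n) \setminus Y$ is a ``hyperplane'' — for every $b \in D_n$, the fibre $\{(a_1,\ldots,a_{n-1}) \in D_1 \times \cdots \times D_{n-1} : (a_1,\ldots,a_{n-1},b) \notin Y\}$ is narrow in $X_1 \times \cdots \times X_{n-1}$. Now here is where quasi-minimality enters decisively: each $D_i$ is an infinite definable subset of the quasi-minimal set $X_i$, hence $\dpr(D_i) = r_i$. So it is enough to show $\dpr(Y) \ge \dpr(D_1 \times \cdots \times D_n)$, i.e.\ that removing the hyperplane $H := (D_1 \times \cdots \times D_n) \setminus Y$ does not drop the rank. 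For this I would argue by induction on $n$ that a narrow definable subset of a product of quasi-minimal sets has rank strictly less than the full rank $r_1 + \cdots + r_n$; equivalently, by Proposition~\ref{halfway-there} (contrapositive) a subset of full rank is broad, but I need the sharper statement that narrow forces a genuine rank drop. Granting that, each fibre $H_b$ of $H$ over $b \in D_n$ has $\dpr(H_b) < r_1 + \cdots + r_{n-1} = \dpr(D_1 \times \cdots \times D_{n-1})$, so picking $(a_1,\ldots,a_{n-1})$ in the full-rank part of $D_1 \times \cdots \times D_{n-1}$ generic over a suitable base and a $b \in D_n$ generic over that (using $\dpr(D_n) = r_n$), a counting-of-ict-patterns / subadditivity argument shows $(a_1,\ldots,a_{n-1},b) \in Y$ with $\dpr(a_1,\ldots,a_{n-1},b) = r_1 + \cdots + r_n$, giving $\dpr(Y) = r_1 + \cdots + r_n$.

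The main obstacle I anticipate is precisely the lemma that a narrow definable subset $N$ of a product $X_1 \times \cdots \times X_n$ of quasi-minimal sets has dp-rank strictly below $r_1 + \cdots + r_n$. Proposition~\ref{halfway-there} gives ``full rank $\Rightarrow$ broad'' for products of sets of positive finite rank, which is already the contrapositive of what I want — so in fact this obstacle may dissolve: if $N$ is narrow then by Proposition~\ref{halfway-there} its rank cannot equal $r_1 + \cdots + r_n$, hence is strictly smaller. The one thing to check carefully is that Proposition~\ref{halfway-there} applies to $N$ as a type-definable subset of $X_1 \times \cdots \times X_n$ with the $X_i$ of finite positive dp-rank — which it does, with $r_i = \dpr(X_i)$. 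So the real content is just assembling: (1) broad $\Rightarrow$ hyperplane decomposition via Theorem~\ref{main-characterization}; (2) $D_i$ infinite in quasi-minimal $X_i$ forces $\dpr(D_i) = r_i$; (3) narrow fibres have rank $< r_1 + \cdots + r_{n-1}$ by Proposition~\ref{halfway-there}; (4) a genericity/subadditivity computation locates a full-rank tuple inside $Y$. The only genuinely delicate point is step (4)'s rank bookkeeping — ensuring that choosing $b \in D_n$ outside the (bounded-rank) bad locus is possible and that the resulting tuple realizes full rank over the base — which follows from Remark~\ref{faux-independence} and the additivity half of subadditivity for independent choices.
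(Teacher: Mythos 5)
Your proposal is correct and follows essentially the same route as the paper: Proposition~\ref{halfway-there} for the ``if'' direction, and for the converse the hyperplane decomposition from Theorem~\ref{main-characterization}, quasi-minimality to get $\dpr(D_i)=r_i$, a strict rank drop on each narrow fibre $H_b$, and subadditivity over the fibration to conclude $\dpr(H)<r_1+\cdots+r_n$, forcing $Y'=(D_1\times\cdots\times D_n)\cap Y$ to have full rank. The one small difference is that the paper runs an induction on $n$ and invokes the inductive hypothesis to bound $\dpr(H_b)$, whereas you observe (correctly) that the contrapositive of Proposition~\ref{halfway-there} already gives ``narrow $\Rightarrow$ rank strictly below full'' with no induction needed — a modest streamlining; your generic-tuple phrasing of the last step is just the union formula $\dpr(H\cup Y')=\max(\dpr H,\dpr Y')$ in disguise.
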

\begin{proof}
  Note that the $r_i > 0$ by definition of quasi-minimal.  The ``if''
  direction then follows by Proposition~\ref{halfway-there}.  We prove
  the ``only if'' direction by induction on $n$.  For $n = 1$, $Y$ is
  broad if and only if $Y$ is infinite (by Remark~\ref{n-is-1}) if and
  only if $\dpr(Y) = r_1$ (by definition of quasi-minimal).  Assume $n
  > 1$.  Let $Y$ be broad.  By Theorem~\ref{main-characterization},
  there are infinite definable subsets $D_1 \times \cdots \times D_n$
  such that for every $b \in D_n$, the set
  \begin{equation*}
    H_b := \{(a_1,\ldots,a_{n-1}) \in D_1 \times \cdots \times D_{n-1} ~|~
    (a_1,\ldots,a_{n-1},b) \notin Y\}
  \end{equation*}
  is narrow (as a subset of $X_1 \times \cdots \times X_{n-1}$).  Note
  that $\dpr(D_i) = r_i$ by quasi-minimality.  Let
  \begin{align*}
    H &= (D_1 \times \cdots \times D_n) \setminus Y = \coprod_{b \in D_n} H_b \\
    Y' &= (D_1 \times \cdots \times D_n) \cap Y.
  \end{align*}
  By induction, $\dpr(H_b) < r_1 + \cdots + r_{n-1}$ for every $b \in
  D_n$.  By subadditivity of the dp-rank, it follows that
  \begin{equation*}
    \dpr(H) < r_1 + \cdots + r_{n-1} + \dpr(D_n) = r_1 + \cdots + r_n.
  \end{equation*}
  Now $D_1 \times \cdots \times D_n = H \cup Y'$, so
  \begin{equation*}
    r_1 + \cdots + r_n = \dpr(H \cup Y') = \max(\dpr(H),\dpr(Y')).
  \end{equation*}
  Given the low rank of $H$, this forces $Y'$ to have rank $r_1 +
  \cdots + r_n$.  Then
  \begin{equation*}
    r_1 + \cdots + r_n = \dpr(Y') \le \dpr(Y) \le \dpr(X_1 \times
    \cdots \times X_n) = r_1 + \cdots + r_n
  \end{equation*}
  completing the inductive step.
\end{proof}
Combining Theorem~\ref{finite-rank-case} with Corollary~\ref{n-bound}
yields the following:
\begin{corollary} \label{finite-rank-final}
  Assume $T$ is NIP and eliminates $\exists^\infty$, and let $X_1,
  \ldots, X_n$ be quasi-minimal sets of finite dp-rank.  Let $r =
  \dpr(X_1 \times \cdots \times X_n)$.  Given a definable family
  $\{D_b\}_{b \in Y}$ of subsets of $X_1 \times \cdots \times X_n$,
  the set of $b$ such that $\dpr(D_b) = r$ is definable.  In fact,
  there is some $m$ depending on the family such that $\dpr(D_b) = r$
  if and only if there exist finite subsets $S_i \subseteq X_i$ of
  cardinality $m$ such that $S_1 \times \cdots \times S_n \subseteq
  D_b$.
\end{corollary}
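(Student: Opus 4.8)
The plan is to reduce the two claims to results already in hand. By Theorem~\ref{finite-rank-case}, a definable subset $D \subseteq X_1 \times \cdots \times X_n$ has $\dpr(D) = r_1 + \cdots + r_n = r$ if and only if $D$ is broad; here I am using that each $X_i$ is quasi-minimal with $\dpr(X_i) = r_i$, and that subadditivity gives $\dpr(X_1 \times \cdots \times X_n) = r_1 + \cdots + r_n$ (the reverse inequality coming from Proposition~\ref{halfway-there} or directly from an ict-pattern argument). So the condition ``$\dpr(D_b) = r$'' on the family $\{D_b\}_{b \in Y}$ is literally the condition ``$D_b$ is broad,'' and the first assertion — definability of $\{b : \dpr(D_b) = r\}$ — is exactly the statement that broadness is definable in families, which is the theorem immediately preceding this corollary (valid since $T$ is NIP and eliminates $\exists^\infty$, at least on the relevant sets $X_i$).

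For the second assertion I would invoke Corollary~\ref{n-bound}: since broadness is now known to be a definable condition on the family $\{D_b\}_{b \in Y}$, Corollary~\ref{n-bound} supplies a single constant $m$, depending only on the family, such that $D_b$ is broad precisely when there exist $\{a_{i,j}\}_{i \in [n],\, j \in [m]}$ with the $a_{i,j}$ (for fixed $i$) pairwise distinct in $X_i$ and $(a_{1,\eta(1)}, \ldots, a_{n,\eta(n)}) \in D_b$ for every $\eta : [n] \to [m]$ — equivalently, finite subsets $S_i \subseteq X_i$ with $|S_i| = m$ and $S_1 \times \cdots \times S_n \subseteq D_b$ (this last rephrasing is Remark~\ref{n-form}). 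Combining with the translation between ``broad'' and ``$\dpr = r$'' from the previous paragraph gives exactly the stated finite combinatorial criterion for $\dpr(D_b) = r$.

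There is essentially no obstacle here; the corollary is a bookkeeping combination of Theorem~\ref{finite-rank-case}, the definability-in-families theorem, and Corollary~\ref{n-bound}. The one point deserving a sentence of care is the computation $\dpr(X_1 \times \cdots \times X_n) = r_1 + \cdots + r_n$: the inequality $\le$ is subadditivity, and the inequality $\ge$ follows because $X_1 \times \cdots \times X_n$ is trivially broad (choose infinitely many distinct elements in each $X_i$) and hence, by the ``if'' direction of Theorem~\ref{finite-rank-case} applied with $Y = X_1 \times \cdots \times X_n$, has dp-rank $r_1 + \cdots + r_n$. With that observation recorded, the rest is immediate.
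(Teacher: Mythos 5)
Your proof is correct and is exactly the paper's argument: the paper obtains this corollary by combining Theorem~\ref{finite-rank-case} (broad $\iff$ full dp-rank on products of quasi-minimal sets) with Corollary~\ref{n-bound} (the uniform bound $m$ for witnessing broadness in a definable family). Your extra sentence verifying $\dpr(X_1 \times \cdots \times X_n) = r_1 + \cdots + r_n$ is a reasonable bit of added care but not a departure from the paper's route.
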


\section{Heavy and light sets} \label{sec:heavy-light}
In this section, we assume that $\Mm$ is a dp-finite field, possibly
with additional structure.  The goal is to define a notion of
``heavy'' and ``light'' sets, and prove Theorem~\ref{heavy-light},
which verifies that heaviness satisfies the properties needed for the
construction of infinitesimals.

The heavy sets turn out to be exactly the definable sets of full rank,
but this will not be proven until a later paper (\cite{prdf2},
Theorem~5.9.2).

By the assumption of finite dp-rank, $T$ is NIP.  Moreover, $T$
eliminates $\exists^\infty$ by a theorem of Dolich and Goodrick
(\cite{dolich-goodrick-strong-oags}, Corollary 2.2).  Therefore, the
results of the previous section apply.

\subsection{Coordinate configurations}
\begin{definition}
  A \emph{coordinate configuration} is a tuple $(X_1,\ldots,X_n,P)$
  where $X_i \subseteq \Mm$ are quasi-minimal definable sets, where $P
  \subseteq X_1 \times \cdots \times X_n$ is a broad definable set,
  and where the map
  \begin{align*}
    P & \to \Mm \\
    (x_1,\ldots,x_n) & \mapsto x_1 + \cdots + x_n
  \end{align*}
  has finite fibers.  The \emph{target} of the coordinate
  configuration is the image of this map.  The \emph{rank} of the
  coordinate configuration is the sum $\sum_{i = 1}^n \dpr(X_i)$.
\end{definition}
\begin{remark}
  If $(X_1,\ldots,X_n,P)$ is a coordinate configuration with target
  $Y$ and rank $r$, then
  \begin{equation*}
    \dpr(Y) = \dpr(P) = \dpr(X_1 \times \cdots \times X_n) = r
  \end{equation*}
  by Theorem~\ref{finite-rank-case} and the subadditivity of dp-rank.
\end{remark}

\begin{proposition}
  \label{basic-properties}
  Let $(X_1,\ldots,X_n,P)$ be a coordinate configuration with target
  $Y$.
  \begin{enumerate}
  \item \label{bp-1} Full dp-rank is definable on $Y$: if $\{D_b\}$ is
    a definable family of subsets of $Y$, the set of $b$ such that
    $\dpr(D_b) = \dpr(Y)$ is definable. \label{full-rank-def}
  \item If $Y'$ is a definable subset of $Y$ and $\dpr(Y') = \dpr(Y)$,
    then $Y'$ is the target of some coordinate configuration.
  \end{enumerate}
\end{proposition}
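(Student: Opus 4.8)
The plan is to reduce both statements to the finite-rank results of \S\ref{sec:broad-narrow} by transporting everything through the finite-to-one sum map
\[
  \sigma : P \to \Mm, \qquad (x_1,\ldots,x_n) \mapsto x_1 + \cdots + x_n,
\]
whose image is $Y$. The elementary fact I will use repeatedly is that a definable finite-to-one surjection $f : A \to B$ preserves dp-rank: for $a \in A$, subadditivity gives $\dpr(a) \le \dpr(f(a)) + \dpr(a/f(a)) = \dpr(f(a))$ since the fibers are finite, so $\dpr(A) \le \dpr(B)$, while $\dpr(B) \le \dpr(A)$ because $B$ is a definable image of $A$; hence $\dpr(A) = \dpr(B)$. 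Write $r$ for the common value $\dpr(Y) = \dpr(P) = \dpr(X_1 \times \cdots \times X_n) = \sum_{i=1}^n \dpr(X_i)$ from the preceding Remark.

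For part~(\ref{bp-1}), given a definable family $\{D_b\}$ of subsets of $Y$, I would set $D_b' := \sigma^{-1}(D_b) \cap P$, a definable family of subsets of $X_1 \times \cdots \times X_n$. Since $\sigma$ restricts to a finite-to-one surjection $D_b' \to D_b$, we get $\dpr(D_b') = \dpr(D_b)$, so $\dpr(D_b) = r$ if and only if $\dpr(D_b') = r$. Because $\Mm$ is a dp-finite field --- hence NIP, and eliminating $\exists^\infty$ by Dolich--Goodrick --- and the $X_i$ are quasi-minimal of finite dp-rank with $r = \dpr(X_1 \times \cdots \times X_n)$, Corollary~\ref{finite-rank-final} applies to the family $\{D_b'\}$ and shows that $\{b : \dpr(D_b') = r\}$ is definable. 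That set is the one we want.

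For part~(2), given a definable $Y' \subseteq Y$ with $\dpr(Y') = r$, I would put $P' := P \cap \sigma^{-1}(Y')$. This is definable; the sum map restricted to $P'$ still has finite fibers, being a restriction of $\sigma$; and its image is exactly $Y'$, since any $y \in Y' \subseteq Y = \sigma(P)$ has a $\sigma$-preimage, which necessarily lands in $P'$. By the finite-fiber observation $\dpr(P') = \dpr(Y') = r = \sum_i \dpr(X_i)$, so $P'$ is broad by Theorem~\ref{finite-rank-case} (the $X_i$ being quasi-minimal). Thus $(X_1,\ldots,X_n,P')$ is a coordinate configuration with target $Y'$.

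I do not anticipate a real obstacle: the substantive work is already packaged in Theorem~\ref{finite-rank-case} and Corollary~\ref{finite-rank-final}, and the only things needing care are the (routine) invariance of dp-rank under finite-to-one maps and checking that the ambient rank in those results coincides with $\sum_i \dpr(X_i)$, which is precisely the content of the Remark just before the Proposition.
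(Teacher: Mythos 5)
Your proposal is correct and follows essentially the same route as the paper: pull the family (respectively, the subset $Y'$) back through the finite-to-one sum map $P \to Y$, note that dp-rank is preserved, and invoke Corollary~\ref{finite-rank-final} (respectively, Theorem~\ref{finite-rank-case} to get broadness of the preimage). The only difference is that you spell out the routine verification that finite-to-one surjections preserve dp-rank, which the paper takes for granted.
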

\begin{proof}
  Let $r$ be the rank of the coordinate configuration, or
  equivalently, $r = \dpr(Y)$.  Let $\pi : P \to Y$ be the map
  $\pi(x_1,\ldots,x_n) = \sum_{i = 1}^n x_i$.  Because $\pi$ is
  surjective with finite fibers, a subset $D_b \subseteq Y$ has rank
  $r$ if and only if $\pi^{-1}(D_b)$ has rank $r$.  Therefore, the
  definability of rank $r$ on $Y$ follows from the definability of
  rank $r$ on $X_1 \times \cdots \times X_n$, which is
  Corollary~\ref{finite-rank-final}.  If $Y'$ is a full-rank subset of
  $Y$, then $\pi^{-1}(Y)$ is a broad subset of $X_1 \times \cdots
  \times X_n$, and so $(X_1,\ldots,X_n,\pi^{-1}(Y))$ is a coordinate
  configuration with target $Y'$.
\end{proof}
\begin{lemma} \label{upgrades}
  Let $X_1, \ldots, X_n$ be quasi-minimal and $A$-definable.  Let
  $(a_1,\ldots,a_n)$ be an element of $X_1 \times \cdots \times X_n$
  such that $\tp(a_1,\ldots,a_n/A)$ is broad.  Let $s = a_1 + \cdots +
  a_n$.  If $\vec{a} \in \acl(sA)$, then there is a broad set $P
  \subseteq X_1 \times \cdots \times X_n$ containing $\vec{a}$ such
  that $(X_1,X_2,\ldots,X_n,P)$ is a coordinate configuration with
  target containing $s$.
\end{lemma}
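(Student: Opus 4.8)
The plan is to write down the set $P$ explicitly. The hypothesis $\vec a \in \acl(sA)$ is exactly what will force the summation map to have finite fibres, and broadness of $\tp(\vec a/A)$ will be inherited automatically by any $A$-definable set containing $\vec a$, hence by $P$.

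First I would unpack the algebraicity: since $\vec a \in \acl(sA)$, pick an $A$-formula $\varphi(\vec x, y)$ with $\models \varphi(\vec a, s)$ and $\varphi(\Mm^n, s)$ finite; conjoining ``$\vec x \in X_1 \times \cdots \times X_n$'', we may assume $\varphi(\Mm^n, y) \subseteq X_1 \times \cdots \times X_n$ for all $y$. Since $T$ eliminates $\exists^\infty$ (recalled at the start of \S\ref{sec:heavy-light}), the set $S_0 := \{ y : \varphi(\Mm^n, y) \text{ is finite}\}$ is $A$-definable, contains $s$, and there is $N \in \Nn$ with $|\varphi(\Mm^n, y)| \le N$ for all $y \in S_0$. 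Now put
\[
  P := \bigl\{ \vec x \in X_1 \times \cdots \times X_n ~:~ \varphi\bigl(\vec x, {\textstyle\sum_{i} x_i}\bigr) \text{ and } {\textstyle\sum_{i} x_i} \in S_0 \bigr\}.
\]
Then $P$ is $A$-definable and $\vec a \in P$, because $\varphi(\vec a, s)$ holds and $s \in S_0$. The summation map $\pi \colon P \to \Mm$ has fibres of size $\le N$: the fibre over $c$ is empty unless $c \in S_0$, and for $c \in S_0$ it is contained in $\varphi(\Mm^n, c)$. So $P$ satisfies the finite-fibre condition, and its target $\pi(P)$ contains $\pi(\vec a) = s$.

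It then remains only to see that $P$ is broad. The set of realisations of $\tp(\vec a/A)$ is type-definable over $A$, is broad by Proposition~\ref{completion} (it contains $\vec a$ and $\tp(\vec a/A)$ is broad), and is contained in $P$ since $P$ is $A$-definable with $\vec a \in P$; hence $P$ is broad by the monotonicity clause of Proposition~\ref{ideal} (equivalently Remark~\ref{type-definability}.\ref{continuity}). Therefore $(X_1, \ldots, X_n, P)$ is a coordinate configuration with target containing $s$.

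I do not anticipate a real obstacle. The only step needing attention is the use of elimination of $\exists^\infty$: it is what makes $S_0$ \emph{definable}, so that $P$ is an honest definable set, and it also provides the uniform bound $N$ --- though the definition of coordinate configuration requires only finiteness of the fibres, so mere definability of $S_0$ would already do.
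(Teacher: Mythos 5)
Your proof is correct and follows essentially the same route as the paper: define $P$ as the set of tuples $\vec{x}$ satisfying the algebraicity formula at $y=\sum_i x_i$, observe the fibres are uniformly finite, and get broadness of $P$ from broadness of $\tp(\vec{a}/A)$ via Proposition~\ref{completion}/\ref{ideal}. The only (cosmetic) difference is that the paper bounds the fibres by strengthening $\phi$ to ``$|\phi(\Mm^n,y)|\le k$'' (a purely first-order move), whereas you invoke elimination of $\exists^\infty$ to make $S_0$ definable --- both are valid here, and as you note, either suffices.
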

\begin{proof}
  We can find some formula $\phi(x_1,\ldots,x_n,y)$ (with suppressed
  parameters from $A$) such that $\phi(a_1,\ldots,a_n,s)$ holds, and
  $\phi(\Mm,s)$ is finite of size $k$.  Strengthening $\phi$, we may
  assume that $\phi(\Mm,s')$ has size at most $k$ for any $s'$.  We
  may also assume that $\phi(x_1,\ldots,x_n,y) \implies y = x_1 +
  \cdots + x_n$.  Let $P$ be the definable set
  \begin{equation*}
    P = \{(x_1,\ldots,x_n) \in X_1 \times \cdots \times X_n ~|~
    \phi(x_1,\ldots,x_n,x_1 + \cdots + x_n)\}.
  \end{equation*}
  Then $P$ is $A$-definable, and broad by virtue of containing
  $\vec{a}$.  The map
  \begin{align*}
    P &\to \Mm \\
    (x_1,\ldots,x_n) & \mapsto x_1 + \cdots + x_n
  \end{align*}
  has fibers of size at most $k$.  Therefore $(X_1,\ldots,X_n,P)$ is a
  coordinate configuration.
\end{proof}

\begin{lemma} \label{coheir-magic-1}
  Let $X_1,\ldots,X_n$ be quasi-minimal $A$-definable sets.  For each
  $i$ let $p_i$ be a global $A$-invariant type in $X_i$.  Assume $p_i$
  is not realized (i.e., not an algebraic type).  Then $(p_1 \otimes
  \cdots \otimes p_n) | A$ is broad.
\end{lemma}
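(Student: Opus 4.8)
The plan is to exhibit an explicit array $\langle a_{i,j}\rangle_{i\in[n],\,j\in\Nn}$ witnessing broadness of the type-definable (over $A$) set of realizations of $(p_1\otimes\cdots\otimes p_n)|A$, built out of Morley sequences of the $p_i$. The one point that makes it work is that the Morley sequence forming the $i$-th row should be taken over \emph{all} of the earlier rows at once, rather than over a single previously chosen tuple; this is what forces every transversal of the array into the tensor product.

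Concretely, I would recurse on $i=1,\ldots,n$. Put $B_0=A$. Having built rows $1,\ldots,i-1$, with $B_{i-1}:=A\cup\{a_{i',j}:i'<i,\ j\in\Nn\}$ (still a small set), let $\langle a_{i,j}\rangle_{j\in\Nn}$ be a Morley sequence of $p_i$ over $B_{i-1}$, so $a_{i,j}\models p_i|(B_{i-1}\cup\{a_{i,0},\dots,a_{i,j-1}\})$. Since $p_i$ concentrates on $X_i$, each $a_{i,j}$ lies in $X_i$; and since $p_i$ is non-algebraic, its restriction to any small set is non-algebraic (an algebraic restriction would make $p_i$ itself algebraic), so $a_{i,j}\notin\acl(B_{i-1}\cup\{a_{i,0},\dots,a_{i,j-1}\})$, and in particular the entries of row $i$ are pairwise distinct. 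Now fix any $\eta:[n]\to\Nn$. For each $i$ we have $A\cup\{a_{1,\eta(1)},\dots,a_{i-1,\eta(i-1)}\}\subseteq B_{i-1}$ and $a_{i,\eta(i)}\models p_i|B_{i-1}$, hence $a_{i,\eta(i)}\models p_i|(A\cup\{a_{1,\eta(1)},\dots,a_{i-1,\eta(i-1)}\})$. Unwinding the definition of the iterated tensor product of $A$-invariant types — a tuple $(b_1,\dots,b_n)$ realizes $(p_1\otimes\cdots\otimes p_n)|A$ precisely when $b_i\models p_i|(A\cup\{b_1,\dots,b_{i-1}\})$ for all $i$ — this says exactly that $(a_{1,\eta(1)},\dots,a_{n,\eta(n)})\models (p_1\otimes\cdots\otimes p_n)|A$. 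So the array witnesses broadness of $(p_1\otimes\cdots\otimes p_n)|A$. (If the tensor factors are ordered the other way in the paper's convention, one builds the rows from $i=n$ down to $i=1$ instead.)

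I do not expect a real obstacle here: the argument uses no NIP-specific input beyond the existence of Morley sequences of global invariant types, and does not even use quasi-minimality of the $X_i$ (only that the $p_i$ are non-algebraic types concentrating on the $X_i$). The only things needing a moment's care are the emphasis above — taking each row's Morley sequence over the union of all earlier rows — and the routine observation that a non-algebraic global invariant type stays non-algebraic when restricted to a small set, which is precisely what delivers distinctness within each row.
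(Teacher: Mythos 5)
Your proposal is correct and is essentially the paper's proof: the paper simply realizes $p_1^{\otimes\omega}\otimes\cdots\otimes p_n^{\otimes\omega}$ over $A$, which unwinds to exactly your row-by-row construction (each row a Morley sequence over the union of the previously realized rows), with distinctness within rows coming from non-algebraicity and every transversal realizing $p_1\otimes\cdots\otimes p_n|A$ by invariance. Your parenthetical about the ordering convention is the only adjustment needed, since the paper's convention realizes the last factor first.
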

\begin{proof}
  Because $p_i$ is not algebraic, any realization of $p_i^{\otimes
    \omega}$ is an indiscernible sequence of pairwise distinct
  elements.  Let $\{a_{i,j}\}_{i \in [n], j \in \Nn}$ be a realization
  of $p_1^{\otimes \omega} \otimes \cdots \otimes p_n^{\otimes
    \omega}$ over $A$.  Then for every $\eta : [n] \to \Nn$, the tuple
  \begin{equation*}
    (a_{1,\eta(1)},\ldots,a_{n,\eta(n)})
  \end{equation*}
  is a realization of $p_1 \otimes \cdots \otimes p_n$ over $A$, and
  the rows in this array are sequences of distinct elements.
\end{proof}
\begin{lemma}
  \label{coheir-magic-2}
  Let $(X_1,\ldots,X_n,P)$ be a coordinate configuration.  There
  exists a small set $A$ and non-algebraic global $A$-invariant types
  $p_i$ on $X_i$ such that if $(a_1,\ldots,a_n) \models p_1 \otimes
  \cdots \otimes p_n|A$ then $(a_1,\ldots,a_n) \in P$.
\end{lemma}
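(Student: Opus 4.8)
The plan is to read the desired invariant types off a witnessing array for the broadness of $P$: I take each $p_i$ to be the \emph{limit type} of the $i$-th row of such an array, and then show that the tensor product of these limit types is concentrated on $P$ by an induction that ``peels off'' one coordinate at a time.

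First I fix a small set $M$ over which $X_1,\ldots,X_n$ and $P$ are defined. Since $P$ is broad, the lemma on extracting mutually indiscernible witnesses of broadness (following Remark~\ref{type-definability}) gives a mutually $M$-indiscernible array $\langle a_{i,j}\rangle_{i\in[n],\ j\in\Nn}$ with the $a_{i,j}$ pairwise distinct elements of $X_i$ for each fixed $i$, and with $(a_{1,\eta(1)},\ldots,a_{n,\eta(n)})\in P$ for every $\eta:[n]\to\Nn$; by mutual indiscernibility all these transversals have the same type over $M$. For each $i$ I let $p_i$ be the global limit type $\lim_{j\to\infty}\tp(a_{i,j}/\Mm)$ of the $i$-th row. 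Since $T$ is NIP and the row is an endless indiscernible sequence, $p_i$ is a complete global type; it concentrates on $X_i$ (as every $a_{i,j}\in X_i$); it is non-algebraic, because the $a_{i,j}$ are pairwise distinct, so $p_i$ implies $x\ne c$ for every $c\in\Mm$; and it is invariant over the row $\{a_{i,j}:j\in\Nn\}$. Setting $A:=M\cup\bigcup_i\{a_{i,j}:j\in\Nn\}$, which is small, each $p_i$ is $A$-invariant, so $p_1\otimes\cdots\otimes p_n$ is a well-defined $A$-invariant global type on $X_1\times\cdots\times X_n$.

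The heart of the matter is the following statement, which I would prove by induction on $m$: \emph{if $N$ is a small set, $Z_1,\ldots,Z_m$ are $N$-definable, $\langle c_{i,j}\rangle_{i\in[m],\ j\in\Nn}$ is a mutually $N$-indiscernible array whose rows consist of pairwise distinct $c_{i,j}\in Z_i$, if $V\subseteq Z_1\times\cdots\times Z_m$ is $N$-definable and contains every transversal of the array, and if $q_i:=\lim_j\tp(c_{i,j}/\Mm)$, then every realization of $(q_1\otimes\cdots\otimes q_m)|N$ lies in $V$.} The base case $m=1$ is immediate: the single row is an $N$-indiscernible sequence lying in the $N$-definable set $V$, so ``$x\in V$'' lies in $q_1|N$, which is the type of any realization. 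For the inductive step, let $(b_1,\ldots,b_m)\models(q_1\otimes\cdots\otimes q_m)|N$; under the convention that the last coordinate is realized first, $b_m\models q_m|N$ and $(b_1,\ldots,b_{m-1})\models(q_1\otimes\cdots\otimes q_{m-1})|Nb_m$. Because $b_m$ realizes the limit type of the $m$-th row, and that row is indiscernible over $N$ together with the other rows, the sequence $c_{m,0},c_{m,1},\ldots,b_m$ is indiscernible over $N\cup\bigcup_{i<m}\{c_{i,j}:j\in\Nn\}$. Consequently $b_m\equiv c_{m,0}$ over $N$ together with the top $m-1$ rows, so the slice $V':=\{\vec x:(\vec x,b_m)\in V\}$ is $Nb_m$-definable and contains every transversal of the top $m-1$ rows; and those top $m-1$ rows remain a mutually $Nb_m$-indiscernible array, since they are mutually indiscernible over $N\cup\{c_{m,0}\}$ and $b_m\equiv c_{m,0}$ over that data. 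Moreover $q_i|Nb_m$ is still the limit type of the $i$-th row. The inductive hypothesis applied to the top $m-1$ rows, the set $V'$, and the base $Nb_m$ then yields $(b_1,\ldots,b_{m-1})\in V'$, i.e.\ $(b_1,\ldots,b_m)\in V$.

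Applying this with $N=M$, $Z_i=X_i$, $V=P$ and $c_{i,j}=a_{i,j}$, every realization of $(p_1\otimes\cdots\otimes p_n)|M$ lies in $P$; since $P$ is $M$-definable and $(p_1\otimes\cdots\otimes p_n)|A$ extends $(p_1\otimes\cdots\otimes p_n)|M$, the same holds over $A$, which is the assertion of the lemma (the opposite tensor convention is handled by the mirror-image induction). The main obstacle is the pair of indiscernibility transfers in the inductive step: that appending a realization of a row's limit type to that row preserves mutual indiscernibility of the array over the base, and that deleting a row preserves mutual indiscernibility over the base enlarged by the coordinate just peeled off. Both are instances of standard facts about limit types in NIP theories — a Morley segment of the limit type of an indiscernible sequence continues that sequence, and coheir-style extensions preserve indiscernibility of sequences over the base — and carefully verifying them is where essentially all the technical work lies.
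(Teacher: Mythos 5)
Your argument is correct in outline but follows a genuinely different route from the paper. The paper's proof is much shorter and less constructive: it first applies Theorem~\ref{main-characterization} (via Lemma~\ref{simon-lemma}) to find infinite definable $D_i \subseteq X_i$ with $(D_1\times\cdots\times D_n)\setminus P$ narrow, then takes \emph{arbitrary} non-algebraic $M$-invariant global types $p_i$ on $D_i$ (e.g.\ coheirs) and invokes Lemma~\ref{coheir-magic-1}: the realization of $\bigotimes p_i|M$ has broad type, hence cannot sit in the narrow complement, so it lies in $P$. You instead manufacture specific types --- the limit types of the rows of a mutually indiscernible array witnessing broadness of $P$ --- and verify directly, by peeling off coordinates, that their product concentrates on $P$. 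What the paper's approach buys is brevity and flexibility in the choice of the $p_i$ (it reuses the hyperplane machinery already in place); what yours buys is independence from Theorem~\ref{main-characterization} and from Lemma~\ref{coheir-magic-1}, at the price of the limit-type bookkeeping you acknowledge at the end. Either version of the lemma suffices for its later uses (e.g.\ in Lemma~\ref{key-argument}).

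One point in your inductive step needs repair as written. You take an \emph{arbitrary} realization of $(q_1\otimes\cdots\otimes q_m)|N$ and then assert that $c_{m,0},c_{m,1},\ldots,b_m$ is indiscernible over $N$ together with the other rows. That requires $b_m$ to realize $q_m$ over $N$ \emph{and the whole array}, not just over $N$; indeed $c_{m,5}$ itself realizes $q_m|N$, and appending it clearly breaks indiscernibility. The fix is standard: since $V$ is $N$-definable, membership of $\vec b$ in $V$ depends only on $\tp(\vec b/N)$, so you may replace $\vec b$ by the realization obtained by taking $b_m\models q_m|(N\cup\text{array})$ and then $(b_1,\ldots,b_{m-1})$ realizing the remaining product over $Nb_m$ and the array. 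The same device handles the fact that the decomposition ``last coordinate first, then the rest over the enlarged base'' of $(q_1\otimes\cdots\otimes q_m)|N$ is only literally valid over bases containing the invariance sets of the $q_i$ (which $N=M$ need not contain, as the array cannot be mutually indiscernible over itself). With the realization chosen generically over the array in this way, both indiscernibility transfers you cite are the standard NIP facts and the induction closes.
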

\begin{proof}
  By Lemma~\ref{simon-lemma}.\ref{sl-1} and
  Theorem~\ref{main-characterization}, we can find infinite definable
  sets $D_1, D_2, \ldots, D_n$ such that $(D_1 \times \cdots \times
  D_n) \setminus P$ is a hyperplane, hence narrow.  Choose a model $M$
  defining the $X_i$'s, $D_i$'s, and $P$, and let $p_i$ be any
  $M$-invariant non-algebraic global type in $D_i$, such as a coheir
  of a non-algebraic type in $D_i$ over $M$.  If $(a_1,\ldots,a_n)
  \models \bigotimes_{i = 1}^n p_i|M$, then $\tp(a_1,\ldots,a_n/M)$ is
  broad, hence cannot live in the hyperplane.  But $a_i \in D_i$, so
  $\vec{a} \in P$.
\end{proof}

\subsection{Critical coordinate configurations}
In this section, we carry out something similar to the Zilber
indecomposability technique in groups of finite Morley rank.  We
consider ways to expand coordinate configurations to higher rank, and
deduce consequences of being a maximal rank coordinate configuration.
\begin{definition}
  The \emph{critical rank} $\rho$ is the maximum rank of any
  coordinate configuration.  A coordinate configuration is
  \emph{critical} if its rank is the critical rank $\rho$.  A set $Y$
  is a \emph{critical set} if $Y$ is the target of some critical
  coordinate configuration.
\end{definition}
\begin{remark}
  \label{subs}
  If $Y$ is a critical set, then $\dpr(Y) = \rho$.  If $Y' \subseteq
  Y$ is definable of rank $\rho$, then $Y'$ is critical.
\end{remark}
\begin{proof}
  Indeed, $Y'$ is the target of some coordinate configuration by
  Proposition~\ref{basic-properties}.\ref{bp-1}. This coordinate
  configuration has rank $\rho$ and is thus critical.
\end{proof}
\begin{remark}
  \label{translate}
  If $Y$ is a critical set and $\alpha \in \Mm$, the translate $\alpha
  + Y$ is critical.
\end{remark}
\begin{proof}
  If $(X_1,\ldots,X_n,P)$ is a coordinate configuration with target
  $Y$, then $(X_1 + \alpha, X_2, \ldots, X_n, P')$ is a coordinate
  configuration with target $Y'$, where
  \begin{equation*}
    P' := \{(a_1 + \alpha, a_2, a_3, \ldots, a_n) ~|~ (a_1,\ldots,a_n) \in P\}.
  \end{equation*}
  Because $\dpr(Y) = \dpr(Y + \alpha)$, $Y + \alpha$ is critical.
\end{proof}

\begin{lemma}
  \label{expander}
  Let $(X_1,\ldots,X_n,P)$ be a critical coordinate configuration and
  $Q$ be a quasi-minimal set.  Let $A$ be a small set of parameters
  over which the $X_i,P,$ and $Q$ are defined.  Let
  $(a_1,\ldots,a_n,b)$ be a tuple in $X_1 \times \cdots \times X_n
  \times Q$ such that $\tp(a_1,\ldots,a_n,b/A)$ is broad and
  $(a_1,\ldots,a_n) \in P$.  Let $s = a_1 + \cdots + a_n + b$.  Then
  $b \notin \acl(sA)$.
\end{lemma}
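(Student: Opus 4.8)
The plan is to argue by contradiction. Suppose $b \in \acl(sA)$; I want to manufacture a coordinate configuration of rank $\rho + \dpr(Q)$, which exceeds the critical rank $\rho$ since $\dpr(Q) > 0$, a contradiction.

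The first step, and the only one needing any thought, is to promote the assumption $b \in \acl(sA)$ to the stronger statement $(a_1,\ldots,a_n,b) \in \acl(sA)$. This is where the finite-fiber clause in the definition of a coordinate configuration for $(X_1,\ldots,X_n,P)$ gets used: since $a_1 + \cdots + a_n = s - b$ and $b \in \acl(sA)$, we get $a_1 + \cdots + a_n \in \acl(sA)$; and because $(a_1,\ldots,a_n) \in P$ while the summation map $P \to \Mm$ has finite fibers, the fiber of that map over $a_1 + \cdots + a_n$ is a finite set definable over $A \cup \{a_1 + \cdots + a_n\} \subseteq \acl(sA)$, so $(a_1,\ldots,a_n) \in \acl(sA)$ as well. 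Combining, $(a_1,\ldots,a_n,b) \in \acl(sA)$.

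Now $X_1,\ldots,X_n,Q$ are quasi-minimal $A$-definable sets, the type $\tp(a_1,\ldots,a_n,b/A)$ is broad by hypothesis, the sum of the tuple is $s$, and the tuple lies in $\acl(sA)$. So Lemma~\ref{upgrades}, applied to the $n+1$ quasi-minimal sets $X_1,\ldots,X_n,Q$ and the tuple $(a_1,\ldots,a_n,b)$, yields a broad set $P' \subseteq X_1 \times \cdots \times X_n \times Q$ containing $(a_1,\ldots,a_n,b)$ such that $(X_1,\ldots,X_n,Q,P')$ is a coordinate configuration whose target contains $s$.

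The rank of this coordinate configuration is $\sum_{i=1}^{n} \dpr(X_i) + \dpr(Q) = \rho + \dpr(Q)$. Since $Q$ is quasi-minimal, $\dpr(Q) > 0$, so this rank strictly exceeds $\rho$, contradicting the maximality built into the definition of the critical rank. Hence $b \notin \acl(sA)$. I expect essentially no obstacle here beyond spotting that the finite-fiber property is exactly what converts $b \in \acl(sA)$ into the hypothesis of Lemma~\ref{upgrades}; after that the argument is a one-line appeal to that lemma together with criticality of $(X_1,\ldots,X_n,P)$.
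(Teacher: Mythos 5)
Your proof is correct and follows the paper's argument exactly: assume $b \in \acl(sA)$, deduce $s' = s - b \in \acl(sA)$, use the finite fibers of the summation map on $P$ to get $\vec{a} \in \acl(s'A) \subseteq \acl(sA)$, then apply Lemma~\ref{upgrades} to produce a coordinate configuration $(X_1,\ldots,X_n,Q,P')$ of rank $\rho + \dpr(Q) > \rho$, contradicting criticality. Your spelling out of why the finite-fiber clause gives $\vec{a} \in \acl(s'A)$ is exactly what the paper compresses into ``by definition of coordinate configuration.''
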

\begin{proof}
  Suppose otherwise.  Let $s' = a_1 + \cdots + a_n = s - b$.  If $b
  \in \acl(sA)$, then $s' \in \acl(sA)$.  Because $\vec{a} \in P$, we
  have $\vec{a} \in \acl(s'A)$ by definition of coordinate
  configuration.  It follows that $(\vec{a},b) \in \acl(sA)$.  By
  Lemma~\ref{upgrades}, there is a coordinate configuration
  $(X_1,\ldots,X_n,Q,P')$.  This coordinate configuration has higher
  rank than $(X_1,\ldots,X_n,P)$, contradicting criticality.
\end{proof}

\begin{lemma}
  \label{shuffle-chaos}
  Let $A$ be a small set.  Suppose $b \notin \acl(A)$, and suppose $a
  \models p|Ab$ for some global $A$-invariant type $p$.  Then there is
  a small model $M$ containing $A$ and a global $M$-invariant
  \emph{non-constant} type $r$ such that $(a,b) \models p \otimes r |
  M$, i.e., $b \models r | M$ and $a \models p|Mb$.
\end{lemma}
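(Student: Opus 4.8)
The plan is to realize a generic copy of $(a,b)$ over a small model and then transport it back onto $(a,b)$ by an automorphism fixing $A$ --- a standard device for producing invariant types in prescribed positions. First I would exploit $b \notin \acl(A)$: then $\tp(b/A)$ is non-algebraic, so by compactness it extends to a non-algebraic global type $q$, i.e.\ one containing $\neg\psi(x)$ for every formula $\psi$ over $\Mm$ with only finitely many solutions (consistency is clear, as any finite subtype merely removes finitely many finite sets from the infinite solution set of a formula in $\tp(b/A)$). Fixing an arbitrary small model $N \supseteq A$ and choosing $b^* \models q|N$, we get $\tp(b^*/N) = q|N$ non-algebraic, i.e.\ $b^* \notin \acl(N)$, while $b^* \equiv_A b$ since $q|A = \tp(b/A)$.

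Next I would stack $p$ on top: pick $a^* \models p|Nb^*$, possible because $p$ is global. Restricting to $Ab^*$ gives $a^* \models p|Ab^*$; combining this with $b^* \equiv_A b$, the hypothesis $a \models p|Ab$, and the $A$-invariance of $p$ yields $(a^*,b^*) \equiv_A (a,b)$. Choose $\sigma \in \Aut(\Mm/A)$ with $\sigma(a^*) = a$ and $\sigma(b^*) = b$, and set $M := \sigma(N)$. Then $M \supseteq A$ is a small model, $b = \sigma(b^*) \notin \acl(\sigma(N)) = \acl(M)$, and $a = \sigma(a^*) \models \sigma(p|Nb^*) = p|Mb$, using $\sigma(p) = p$ (as $\sigma$ fixes $A$ and $p$ is $A$-invariant).

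Finally, since $b \notin \acl(M)$ the type $\tp(b/M)$ is non-algebraic, so I would take $r$ to be a global coheir of $\tp(b/M)$ over $M$: it is $M$-invariant and non-constant, and $b \models r|M$. Together with $a \models p|Mb$ from the previous step, this is exactly $(a,b) \models p \otimes r|M$.

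I do not expect a serious obstacle. The one point requiring a moment's thought is the first step: for a fixed model $M$ the type $\tp(b/M)$ may well be algebraic, but since we are free to choose $M$, realizing a fixed non-algebraic global extension of $\tp(b/A)$ over a small model and transporting makes $b$ non-algebraic over the resulting model automatically. Everything else is formal bookkeeping with invariant types and automorphisms; NIP is not used anywhere.
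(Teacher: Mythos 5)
Your proof is correct and follows the same overall strategy as the paper's: realize the configuration generically over a fresh small model, transport it back onto $(a,b)$ by an automorphism over $A$, and then take a global coheir of $\tp(b/M)$, which is $M$-invariant and non-constant precisely because $b \notin \acl(M)$. The only real difference is how you arrange $b \notin \acl(M)$: the paper extracts a mutually indiscernible sequence of distinct realizations of $\tp(b/A)$ over an auxiliary model and moves its first term onto $b$, whereas you complete $\tp(b/A)$ to a non-algebraic global type and realize its restriction to $N$ --- a slightly more economical use of compactness that avoids indiscernible extraction. Your single transport $\sigma$ (versus the paper's two-step transport, first of $b$ over $A$ and then of $a'$ over $Ab$) is justified by the standard fact, which you state correctly, that $A$-invariance of $p$ makes $\tp(ab/A)$ determined by $\tp(b/A)$ together with $a \models p|Ab$.
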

\begin{proof}
  Let $b_1, b_2, \ldots$ be an $A$-indiscernible sequence of pairwise
  distinct realizations of $\tp(b/A)$.  Let $M_0$ be a small model
  containing $A$.  Let $b_1', b_2', \ldots$ be an $M_0$-indiscernible
  sequence extracted from $b_1, b_2, \ldots$.  Thus each $b_i'$
  realizes $\tp(b_i/A)$ and the $b'_i$ are pairwise distinct.  Let
  $\sigma \in \Aut(\Mm/A)$ move $b'_1$ to $b$, and let $M_1 =
  \sigma(M_0)$.  Then $\sigma(b'_1), \sigma(b'_2), \ldots$ is an
  $M_1$-indiscernible sequence whose first entry is $b$ and whose
  entries are pairwise distinct.  Therefore $M_1$ contains $A$ but not
  $b$.  Let $r_0$ be a global coheir of $tp(b/M_1)$; note that $r_0$
  is non-constant as $b \notin \acl(M_1)$.  Let $a'$ realize $p|M_1b$;
  thus $(a',b)$ realizes $p \otimes r_0 | M_1$.  Now $a'$ and $a$ both
  realize $p|Ab$, because $A \subseteq M_1$.  Therefore, there is an
  automorphism $\tau \in \Aut(\Mm/Ab)$ such that $\tau(a') = a$.  Let
  $M = \tau(M_1)$ and $r = \tau(r_0)$.  Then $(a,b) \models p \otimes
  r|M$ because $(a',b) \models p \otimes r_0|M_1$.
\end{proof}

\begin{lemma}
  \label{key-argument}
  Let $Y$ be a critical set, let $Q$ be quasi-minimal, and let $t \ge
  1$ be an integer.  There exist pairwise distinct $q_1, q_2, \ldots,
  q_t \in Q$ such that
  \begin{equation*}
    \dpr\left(\bigcap_{i = 1}^t (Y + q_i)\right) = \dpr(Y).
  \end{equation*}
  In particular, by Remarks~\ref{subs} and \ref{translate}, the
  intersection $\bigcap_{i = 1}^t (Y + q_i)$ is a critical set.
\end{lemma}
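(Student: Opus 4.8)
The plan is to reduce everything to the broad-set machinery of \S\ref{sec:broad-narrow}, exactly as in the corresponding step for dp-minimal fields. Fix a critical coordinate configuration $(X_1,\ldots,X_n,P)$ with target $Y$, and let $A$ be a small set of parameters over which $X_1,\ldots,X_n,P,Q$ are all defined. The idea is to build, for each $t$, a single tuple $(a_1,\ldots,a_n,q_1,\ldots,q_t)$ whose first $n$ coordinates lie in $P$, whose $q_j$ are pairwise distinct elements of $Q$, and which is ``as generic as possible'' so that each $q_j$ lies in the critical set $Y$ translated appropriately. More precisely, I would choose the $a_i$ realizing suitable invariant types so that $s:=a_1+\cdots+a_n$ is generic in $Y$, then repeatedly invoke Lemma~\ref{expander} to force each $q_j \notin \acl$ of the rest, and finally translate: if $s$ is the common value $a_1+\cdots+a_n$, then $s - q_j \in Y + (s - s')$-type translates, and arranging $s$ to be a realization of a generic type in $\bigcap_j (Y+q_j)$ is what we want.

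The key steps, in order. First, pick non-algebraic global $A$-invariant types $p_i$ on $X_i$ via Lemma~\ref{coheir-magic-2}, so that any realization of $\bigotimes p_i | A$ lands in $P$; by Lemma~\ref{coheir-magic-1} such a realization has broad type over $A$, hence $\dpr(s/A)=\rho$ where $s = \sum a_i$, i.e.\ $s$ is generic in the critical set $Y$. Second, set up an induction on $t$: having found pairwise distinct $q_1,\ldots,q_{t-1}$ with $s$ still generic in $\bigcap_{i<t}(Y+q_i)$ — equivalently, having arranged a long enough broad configuration — use Lemma~\ref{shuffle-chaos} to choose a fresh $q_t$ realizing a non-constant invariant type over a suitable model $M \supseteq A$, with $(a_1,\ldots,a_n,q_t)$ still of broad type and $(a_1,\ldots,a_n)$ still in $P$. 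Third, apply Lemma~\ref{expander} to the critical configuration together with the quasi-minimal set $Q$: it tells us $q_t \notin \acl(A' \cup \{s+q_t\})$ where $A'$ absorbs the earlier $q_j$'s, which is exactly the non-degeneracy needed to keep $q_t$ distinct from the previous ones and to keep the relevant rank maximal. Fourth, translate back: genericity of $s$ over the parameters naming $q_1,\ldots,q_t$ means precisely that for each $j$ the element $s$ (or an appropriate shift) lies in $Y+q_j$, so $s \in \bigcap_{j=1}^t (Y+q_j)$ while $\dpr(s / Aq_1\cdots q_t)$ is still $\rho = \dpr(Y)$, giving $\dpr(\bigcap_j(Y+q_j)) \ge \rho$; the reverse inequality is immediate since each $Y+q_j$ has rank $\rho$. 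The final sentence of the statement is then just Remarks~\ref{subs} and \ref{translate} applied to this full-rank definable subset of $Y + q_1$.

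The main obstacle I expect is the bookkeeping in the inductive step: one must keep track of which model the invariant types are over (it grows with $t$), ensure mutual genericity so that adding $q_t$ does not drop the dp-rank of $(a_1,\ldots,a_n)$ over the enlarged base, and verify that Lemma~\ref{expander}'s hypothesis ``$\tp(a_1,\ldots,a_n,b/A)$ broad and $(a_1,\ldots,a_n)\in P$'' remains satisfied at each stage with $b = q_t$ and $A$ replaced by the current parameter set augmented by $q_1,\ldots,q_{t-1}$. This is where Lemma~\ref{shuffle-chaos} does the real work — it lets us realize $q_t$ over a model by a non-constant invariant type while preserving the genericity of the $a_i$'s over that model — and the subtlety is in threading $A$-invariance through the chain of automorphisms so that the whole array $(a_i, q_1,\ldots,q_t)$ can be taken mutually indiscernible and its projection to $X_1\times\cdots\times X_n\times Q^t$ is broad. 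Once that array is in hand, the rank computation via subadditivity and Remark~\ref{faux-independence} (or directly via broadness and Theorem~\ref{finite-rank-case}) is routine.
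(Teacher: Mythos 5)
There is a genuine gap, and it sits exactly at the heart of the lemma. Your step four asserts that ``genericity of $s$ over the parameters naming $q_1,\ldots,q_t$ means precisely that for each $j$ the element $s$ (or an appropriate shift) lies in $Y+q_j$.'' This is a non sequitur: having $\dpr(s/Aq_1\cdots q_t)=\rho$ with $s\in Y$ says nothing about membership of $s$ in the translates $Y+q_j$, and a priori $Y\cap(Y+q)$ could have rank $<\rho$ for \emph{every} nonzero shift $q$ --- ruling that out is precisely what the lemma claims. Your induction on $t$ has the same problem at its first nontrivial step ($t=2$): nothing in Lemmas~\ref{expander}, \ref{shuffle-chaos}, \ref{coheir-magic-1}, \ref{coheir-magic-2} produces even one fresh $q_t$, distinct from the earlier ones, for which intersecting with $Y+q_t$ keeps the rank at $\rho$; Lemma~\ref{expander} only supplies a \emph{non}-algebraicity statement, and choosing $q_t$ generic (via an invariant type, as you propose) makes $s\in Y+q_t$ unprovable rather than automatic. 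Conversely, if you instead tried to choose the $q_j$'s from $\{q\in Q: s\in Y+q\}$ after fixing $s$, you would then have to show $\dpr(s/A\vec q)$ does not drop, and dp-rank gives no exchange/additivity principle that does this for free.

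The paper's proof supplies the missing mechanism and runs in the opposite logical direction: it argues by contradiction. Assuming every $t$-fold intersection of distinct translates has rank $\le\rho-1$, it considers, for $k$ independent generic blocks $\vec a_1,\ldots,\vec a_k$ on $P$ and a generic $q_0\in Q$, the set $\Omega_k$ of configurations for which \emph{infinitely many} $q\in Q$ satisfy $q_0+s_i\in Y+q$ for all $i$, and shows by a counting/subadditivity estimate ($th+k(\rho-1)<h+k\rho$ for large $k$) that $\Omega_k$ is narrow. A realization of $(p_1\otimes\cdots\otimes p_n)^{\otimes k}\otimes p_0$ is broad by Lemma~\ref{coheir-magic-1}, hence avoids $\Omega_k$, so only finitely many $q$ work; since $q_0$ itself works, $q_0\in\acl(A,s_1+q_0,\ldots,s_k+q_0)$. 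Taking $j$ minimal with this property and applying Lemma~\ref{shuffle-chaos} then sets up the hypotheses of Lemma~\ref{expander} over a model, and the algebraicity $q_0\in\acl(sM)$ contradicts criticality. This finite-versus-infinite dichotomy on the set of common translators $q$, together with the rank count over $k$ independent blocks, is the key idea; your outline uses the same ingredient lemmas but in an order and direction in which they cannot deliver the conclusion, so the proposal as written does not prove the statement.
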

\begin{proof}
  Suppose otherwise.  Then for any distinct $q_1, \ldots, q_t$ in $Q$,
  the intersection $\bigcap_{i = 1}^t (Y + q_i)$ has rank at most
  $\rho - 1$.  Choose a critical coordinate configuration
  $(X_1,\ldots,X_n,P)$ with target $Y$.  Note that $\sum_i \dpr(X_i) =
  \rho$.  By Lemma~\ref{coheir-magic-2}, we may find a small set $A$
  and global $A$-invariant non-algebraic types $p_i$ on $X_i$ and
  $p_0$ on $Q$ such that
  \begin{itemize}
  \item The sets $X_i, P, Q$ are $A$-definable.
  \item The type $p_1 \otimes \cdots \otimes p_n$ lives on $P$.
  \end{itemize}
  \begin{claim}
    For any $k$, let $\Omega_k$ be the set of
    $(q_0,a_{1,1},\ldots,a_{1,n},\ldots,a_{k,1},\ldots,a_{k,n}) \in Q
    \times (X_1 \times \cdots \times X_n)^k$ such that
    \begin{itemize}
    \item For each $i \in [k]$, the tuple $(a_{i,1},\ldots,a_{i,n})$
      lies on $P$.
    \item There are infinitely many $q \in Q$ such that
      \begin{equation*}
        \bigwedge_{i = 1}^k \left( q_0 + \sum_{j = 1}^n a_{i,j} \in Y
        + q \right).
      \end{equation*}
    \end{itemize}
    Then $\Omega_k$ is narrow as a subset of $Q \times (X_1 \times
    \cdots \times X_n)^k$ for sufficiently large $k$.
  \end{claim}
  \begin{claimproof}
    Let $h = \dpr(Q) > 0$ and take $k$ so large that $th + k(\rho - 1)
    < h + k\rho$.  The set $\Omega_k$ is definable (because
    $\exists^\infty$ is eliminated).  If $\Omega_k$ were broad, it
    would have dp-rank $h + k\rho$ by Theorem~\ref{finite-rank-case}.
    Therefore $\Omega_k$ would contain a specific tuple
    $(q_0,a_{1,1},\ldots,a_{k,n})$ of dp-rank $h + k\rho$ over $A$.
    Let $s_i = a_{i,1} + \cdots + a_{i,n}$ for $i \in [k]$.  Because
    $(a_{i,1},\ldots,a_{i,n}) \in P$, we have $s_i \in Y$.  Moreover,
    by definition of coordinate configuration,
    \begin{equation*}
      (a_{i,1},\ldots,a_{i,n}) \in \acl(s_iA).
    \end{equation*}
    By definition of $\Omega_k$ there are infinitely many $q$ such
    that
    \begin{equation*}
      \{q_0 + s_1, q_0 + s_2, \ldots, q_0 + s_k\} \subseteq Y + q.
    \end{equation*}
    Choose $q_1, \ldots, q_{t-1}$ distinct such $q$, not equal to
    $q_0$.  Then
    \begin{equation*}
      q_0 + s_i \in (Y + q_0) \cap (Y + q_1) \cap \cdots \cap (Y +
      q_{t-1})
    \end{equation*}
    for every $i$.  Therefore
    \begin{equation*}
      \dpr(a_{i,1},\ldots,a_{i,n}/Aq_0q_1 \cdots q_{t-1}) =
      \dpr(s_i/Aq_0q_1 \cdots q_{t-1}) \le \dpr\left(\bigcap_{i =
        0}^{t-1}(Y + q_i)\right) < \rho.
    \end{equation*}
    By subadditivity of dp-rank,
    \begin{equation*}
      \dpr(a_{1,1},\ldots,a_{k,n},q_0,q_1,\ldots,q_{t-1}/A) \le k(\rho - 1) + th < k\rho + h
    \end{equation*}
    contradicting the fact that
    \begin{equation*}
      \dpr(a_{1,1},\ldots,a_{i,n},q_0/A) = k\rho + h. \qedhere
    \end{equation*}
  \end{claimproof}
  Fix $k$ as in the Claim.  Choose
  \begin{equation*}
    (a_{k,1},\ldots,a_{k,n},\ldots,a_{1,1},\ldots,a_{1,n},q_0)
  \end{equation*}
  realizing $(p_1 \otimes \cdots \otimes p_n)^{\otimes k} \otimes p_0$
  over $A$.  Let $s_i = a_{i,1} + \cdots + a_{i,n}$.  By choice of
  $p_1 \otimes \cdots \otimes p_n$, each $(a_{i,1},\ldots,a_{i,n})$
  lives on $P$, and therefore $s_i \in Y$.  By
  Lemma~\ref{coheir-magic-1}, the type of $(\vec{a},q_0)$ over $A$ is
  broad in $(X_1 \times \cdots \times X_n)^k \times Q$.  Therefore the
  tuple $(q_0,\vec{a})$ cannot lie in the narrow set $\Omega_k$.
  Consequently, there are only finitely many $q \in Q$ such that
  \begin{equation*}
    \{s_1 + q_0, \ldots, s_k + q_0\} \subseteq Y + q.
  \end{equation*}
  As $s_i \in Y$, one such $q$ is $q_0$.  Therefore,
  \begin{equation*}
    q_0 \in \acl(A,s_1 + q_0,\ldots, s_k + q_0).
  \end{equation*}
  Choose $j$ minimal such that
  \begin{equation*}
    q_0 \in \acl(A,s_1 + q_0, \ldots, s_j + q_0).
  \end{equation*}
  Note that $j \ge 1$ because $\tp(q_0/A)$ is the non-algebraic type
  $p_0$.  Let $A' = A \cup \{s_1 + q_0, \ldots, s_{j-1} + q_0\}$.  By
  choice of $j$, $q_0 \notin \acl(A')$.  Note that
  $(a_{j,1},\ldots,a_{j,n})$ realizes the $A$-invariant type $p_1
  \otimes \cdots \otimes p_n$ over $A'q_0 \subseteq
  \dcl(q_0,a_{1,1},\ldots,a_{j-1,n}A)$.  Applying
  Lemma~\ref{shuffle-chaos}, we find a small model $M \supseteq A'$
  and a non-constant global $M$-invariant type $r$ such that
  \begin{equation*}
    (a_{j,1},\ldots,a_{j,n},q_0) \models p_1 \otimes \cdots \otimes
    p_n \otimes r|M.
  \end{equation*}
  The type $r$ extends $\tp(q_0/M)$ and therefore lives in the $M$-definable set $Q$.
  By Lemma~\ref{coheir-magic-1}, it follows that
  \begin{equation*}
    \tp(a_{j,1},\ldots,a_{j,n},q_0/M)
  \end{equation*}
  is broad, as a type in $X_1 \times \cdots \times X_n \times Q$.
  This contradicts Lemma~\ref{expander}: the $X_i, P, Q$ are all
  $M$-definable, the type of $(a_{j,1},\ldots,a_{j,n},q_0)$ over $M$
  is broad, the tuple $(a_{j,1},\ldots,a_{j,n})$ lives on $P$, and
  for
  \begin{equation*}
    s = a_{j,1} + \cdots + a_{j,n} + q_0 = s_j + q_0,
  \end{equation*}
  we have
  \begin{equation*}
    q_0 \in \acl(A',s_j + q_0) \subseteq \acl(sM). \qedhere
  \end{equation*}
\end{proof}

\begin{proposition}\label{stacking}
  Let $Y$ be a critical set and $Q_1, \ldots, Q_n$ be quasi-minimal.
  Then for every $m$ there exist $\{q_{i,j}\}_{i \in [n], j \in [m]}$
  such that
  \begin{itemize}
  \item For fixed $i \in [n]$, the sequence $q_{i,1},\ldots,q_{i,m}$
    consists of $m$ distinct elements of $Q_i$.
  \item The intersection
    \begin{equation*}
      \bigcap_{\eta : [n] \to [m]} \left(Y + \sum_{i = 1}^n q_{i,\eta(i)}\right)
    \end{equation*}
    is critical.
  \end{itemize}
\end{proposition}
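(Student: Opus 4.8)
The plan is to prove the statement by induction on $n$, peeling off one quasi-minimal factor at a time and reducing to Lemma~\ref{key-argument}. The crucial bookkeeping observation is that the set of functions $\eta : [n] \to [m]$ decomposes as $\{\eta' : [n-1] \to [m]\} \times [m]$, so that for any choice of elements $q_{i,j}$ with $i \in [n-1]$ and $j \in [m]$, setting
\[
  W := \bigcap_{\eta' : [n-1] \to [m]} \left(Y + \sum_{i=1}^{n-1} q_{i,\eta'(i)}\right),
\]
we have $\bigcap_{\eta : [n] \to [m]} \bigl(Y + \sum_{i=1}^n q_{i,\eta(i)}\bigr) = \bigcap_{k=1}^m (W + q_{n,k})$.

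For the base case $n = 1$ the claim is precisely Lemma~\ref{key-argument} applied to the critical set $Y$, the quasi-minimal set $Q_1$, and $t = m$ (one could equally start from the trivial case $n = 0$, where $W = Y$ and there is nothing to prove). For the inductive step, assume the proposition holds for $n - 1$. Given $Y$, $Q_1, \ldots, Q_n$, and $m$, I would first apply the induction hypothesis to the critical set $Y$ and the quasi-minimal sets $Q_1, \ldots, Q_{n-1}$ with the same $m$; this produces elements $q_{i,j}$ for $i \in [n-1]$, $j \in [m]$, with $q_{i,1}, \ldots, q_{i,m}$ pairwise distinct in $Q_i$, such that the set $W$ above is critical. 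Then apply Lemma~\ref{key-argument} to the critical set $W$, the quasi-minimal set $Q_n$, and $t = m$: this yields pairwise distinct $q_{n,1}, \ldots, q_{n,m} \in Q_n$ such that $\bigcap_{k=1}^m (W + q_{n,k})$ is critical. By the decomposition above this intersection is exactly $\bigcap_{\eta : [n] \to [m]} \bigl(Y + \sum_{i=1}^n q_{i,\eta(i)}\bigr)$, and the full array $\{q_{i,j}\}_{i \in [n], j \in [m]}$ has pairwise distinct rows, so the induction goes through.

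The real work has already been done in Lemma~\ref{key-argument} (the argument managing the interaction between $\acl$, subadditivity of dp-rank, and broad invariant types), so I do not expect a serious obstacle here. The one point that requires care is that the elements $q_{i,j}$ for $i < n$ must be shared across all $m$ layers indexed by the value $\eta(n)$; this is exactly why they are fixed once and for all via the induction hypothesis before Lemma~\ref{key-argument} is invoked on the single resulting critical set $W$.
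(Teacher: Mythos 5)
Your proposal is correct and is essentially identical to the paper's proof: induction on $n$ with base case Lemma~\ref{key-argument}, fixing the first $n-1$ rows via the inductive hypothesis to obtain a critical set $W$ (the paper's $Y'$), and then applying Lemma~\ref{key-argument} with $t = m$ to $W$ and $Q_n$, using the same factorization of the intersection over $\eta : [n] \to [m]$. Nothing is missing.
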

\begin{proof}
  We prove this by induction on $n$; the base case $n = 1$ is
  Lemma~\ref{key-argument}.  If $n > 1$, by induction there are
  $q_{1,1},\ldots,q_{n-1,m}$ in the correct places, such that
  \begin{equation*}
    Y' = \bigcap_{\eta : [n-1] \to [m]} \left(Y + \sum_{i = 1}^n
    q_{i,\eta(i)}\right)
  \end{equation*}
  is critical.  By Lemma~\ref{key-argument} there are
  $q_{n,1},\ldots,q_{n,m} \in Q_n$, pairwise distinct, such that
  \begin{equation*}
    Y'' = (Y' + q_{n,1}) \cap \cdots \cap (Y' + q_{n,m})
  \end{equation*}
  is critical.  But $x \in Y''$ if and only if for every $j \in [m]$
  and every $\eta : [n-1] \to [m]$ the element $x - q_{n,j} -
  q_{n-1,\eta(n-1)} - \cdots - q_{1,\eta(1)}$ lies in $Y$.  Thus
  \begin{equation*}
    Y'' = \bigcap_{\eta : [n] \to [m]} \left(Y + \sum_{i = 1}^n
    q_{i,\eta(i)} \right). \qedhere
  \end{equation*}
\end{proof}

\begin{corollary}
  \label{stacking-2}
  Let $Y$ be a critical set and $Q_1,\ldots,Q_n$ be quasi-minimal.
  There exists a $\delta \in \Mm$ such that
  \begin{equation*}
    \{ (x_1,\ldots,x_n) \in Q_1 \times \cdots \times Q_n ~|~ x_1 +
    \cdots + x_n \in Y + \delta\}
  \end{equation*}
  is broad, as a subset of $Q_1 \times \cdots \times Q_n$.
\end{corollary}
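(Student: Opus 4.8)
The plan is to obtain $\delta$ by a compactness argument, feeding in Proposition~\ref{stacking}. First observe that $Y$ is a \emph{definable} set: if $(X_1,\ldots,X_k,P)$ is a critical coordinate configuration with target $Y$, then $Y$ is the image of the definable set $P$ under the sum map $(x_1,\ldots,x_k)\mapsto \sum_j x_j$, which is definable. For $\delta\in\Mm$ write
\[
  D_\delta = \{(x_1,\ldots,x_n)\in Q_1\times\cdots\times Q_n ~|~ x_1+\cdots+x_n \in Y+\delta\};
\]
this is a definable set, and we must find $\delta$ for which $D_\delta$ is broad. By Remark~\ref{n-form}, $D_\delta$ is broad if and only if for every $m\in\Nn$ it contains a box $S_1\times\cdots\times S_n$ with $S_i\subseteq Q_i$ and $|S_i|=m$. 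So let $E_m$ be the set of $\delta$ such that $D_\delta$ contains such a box. Each $E_m$ is definable (over a fixed small set of parameters not depending on $m$, since it is cut out by a single formula in $\delta$), and clearly $E_{m+1}\subseteq E_m$. If every $E_m$ is nonempty, then $\{E_m : m\in\Nn\}$ is a decreasing chain of nonempty definable sets over a fixed parameter set, so by saturation of $\Mm$ its intersection is nonempty; any $\delta\in\bigcap_m E_m$ then has $D_\delta$ broad, as desired.

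It remains to show $E_m\neq\emptyset$ for each fixed $m$, and this is where the main subtlety lies. Applying Proposition~\ref{stacking} directly to $Y$ and $Q_1,\ldots,Q_n$ is not quite right: a point $z$ of the resulting critical intersection $\bigcap_\eta \bigl(Y+\sum_i q_{i,\eta(i)}\bigr)$ satisfies $\sum_i q_{i,\eta(i)}\in z-Y$ for all $\eta$, and $z-Y$ is a \emph{reflection} of $Y$, not a translate of $Y$, so it need not be of the form $Y+\delta$. The remedy is to apply the proposition to the sets $-Q_1,\ldots,-Q_n$ instead, which are again quasi-minimal because $x\mapsto -x$ is a definable bijection and preserves dp-rank. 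This yields, for the given $m$, elements $q'_{i,j}\in -Q_i$ ($i\in[n]$, $j\in[m]$), pairwise distinct for each fixed $i$, such that $Z:=\bigcap_{\eta:[n]\to[m]}\bigl(Y+\sum_i q'_{i,\eta(i)}\bigr)$ is critical, hence nonempty. Fix $z\in Z$, put $q_{i,j}:=-q'_{i,j}\in Q_i$ (still pairwise distinct for each fixed $i$), and set $\delta:=-z$. For every $\eta:[n]\to[m]$ we have $z-\sum_i q'_{i,\eta(i)}\in Y$, and since $\sum_i q'_{i,\eta(i)}=-\sum_i q_{i,\eta(i)}$ this reads $\sum_i q_{i,\eta(i)}\in Y-z=Y+\delta$. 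In other words $(q_{1,\eta(1)},\ldots,q_{n,\eta(n)})\in D_\delta$ for every $\eta$, so the box $S_i:=\{q_{i,1},\ldots,q_{i,m}\}$ witnesses $\delta\in E_m$.

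The only genuine obstacle in this argument is the reflection mismatch described above; everything else is bookkeeping together with the compactness packaging. (Alternatively, one could first check that $-Y$ is again a critical set, by reflecting a coordinate configuration for $Y$, and then apply Proposition~\ref{stacking} to $-Y$ and $Q_1,\ldots,Q_n$ directly; the computation is entirely symmetric.)
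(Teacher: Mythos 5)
Your proof is correct and follows essentially the same route as the paper: the key step — applying Proposition~\ref{stacking} to the quasi-minimal sets $-Q_1,\ldots,-Q_n$ and reading off $\delta$ as (minus) a point of the resulting nonempty critical intersection — is exactly the paper's argument. The only difference is cosmetic: you package the final compactness step as a decreasing chain of definable sets $E_m$ in the single variable $\delta$ (using the easy observation that $Y$, as the image of $P$ under the sum map, is definable), whereas the paper runs compactness directly on the partial type in $\delta$ and the array $\{q_{i,j}\}$; both are valid.
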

\begin{proof}
  For any $m \ge 1$, we can find $\{q_{i,j}\}_{i \in [n], j \in [m]}$
    such that
  \begin{itemize}
  \item $q_{i,j} \in Q_i$
  \item $q_{i,j} \ne q_{i,j'}$ for $j \ne j'$.
  \item The intersection
    \begin{equation*}
      \bigcap_{\eta : [n] \to [m]} \left(Y - \sum_{i = 1}^n q_{i,\eta(i)} \right)
    \end{equation*}
    is non-empty.
  \end{itemize}
  Indeed, this follows from Proposition~\ref{stacking} applied to the
  quasi-minimal sets $-Q_i$.  If $-\delta$ is an element of the
  intersection, then for any $\eta : [n] \to [m]$ we have
  \begin{equation*}
    -\delta \in Y - \sum_{i = 1}^n q_{i,\eta(i)}
  \end{equation*}
  or equivalently,
  \begin{equation*}
    \sum_{i = 1}^n q_{i, \eta(i)} \in Y + \delta.
  \end{equation*}
  Thus, for every $m$ we can find $\{q_{i,j}\}_{i \in [n], j \in [m]}$
  and $\delta \in \Mm$ such that
  \begin{itemize}
  \item $q_{i,j} \in Q_i$.
  \item $q_{i,j} \ne q_{i,j'}$ for $j \ne j'$.
  \item For any $\eta : [n] \to [m]$ the sum $\sum_{i = 1}^n
    q_{i,\eta(i)}$ lies in $Y + \delta$.
  \end{itemize}
  By compactness, we can find $\{q_{i,j}\}_{i \in [n],~j \in \Nn}$ and
  $\delta$ satisfying these same properties.  This means that
  \begin{equation*}
    \{(q_1,\ldots,q_n) \in Q_1 \times \cdots \times Q_n ~|~ q_1 + \cdots
    + q_n \in Y + \delta\}
  \end{equation*}
  is broad as a subset of $Q_1 \times \cdots \times Q_n$.
\end{proof}

\subsection{Heavy sets}
We are nearly ready to define heavy and light sets.
\begin{definition}
  Let $Y$ be a critical set.  Say that a definable set $X \subseteq
  \Mm$ is \emph{$Y$-heavy} if there is $\delta \in \Mm$ such that
  $\dpr(Y \cap (X + \delta)) = \dpr(Y)$.
\end{definition}
\begin{remark} \label{def-in-fams}
  Let $Y$ be a critical set and $\{D_b\}$ be a definable family of
  subsets of $\Mm$.  Let $S$ be the set of $b$ such that $D_b$ is
  $Y$-heavy.  Then the set $S$ is definable.  Indeed, this follows
  almost immediately from
  Proposition~\ref{basic-properties}.\ref{full-rank-def}.
\end{remark}
\begin{proposition}
  Let $Y, Y'$ be two critical sets and $X \subseteq \Mm$ be definable.
  Then $X$ is $Y$-heavy if and only if $X$ is $Y'$-heavy.
\end{proposition}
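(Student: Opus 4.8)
The plan is to reduce the statement to the assertion that \emph{every critical set is $W$-heavy, for every critical set $W$}, and then to prove that assertion by running Corollary~\ref{stacking-2} inside a carefully chosen sub-box of a coordinate configuration of $W$, using the hyperplane characterization of broadness (Theorem~\ref{main-characterization}) to keep the stacking essentially inside that configuration.

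\textbf{The reduction.} Suppose $X$ is $Y$-heavy, witnessed by $\delta_0$ with $\dpr(Y \cap (X+\delta_0)) = \dpr(Y) = \rho$ (recall $\dpr(Y) = \rho$ for $Y$ critical, by Remark~\ref{subs}). Then $Z := Y \cap (X+\delta_0)$ is a full-rank definable subset of the critical set $Y$, hence critical by Remark~\ref{subs}, and $Z \subseteq X + \delta_0$. If $Z$ is $Y'$-heavy, say $\dpr(Y' \cap (Z + \epsilon)) = \rho$, then $Z + \epsilon \subseteq X + (\delta_0 + \epsilon)$ gives $\dpr(Y' \cap (X + (\delta_0 + \epsilon))) = \rho = \dpr(Y')$, so $X$ is $Y'$-heavy; the converse is symmetric. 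Thus it suffices to show: for critical sets $Z, W$ there is $\epsilon \in \Mm$ with $\dpr(W \cap (Z+\epsilon)) = \rho$.

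\textbf{The main argument.} Fix a critical coordinate configuration $(Q_1,\ldots,Q_m,P_W)$ with target $W$: the $Q_j$ are quasi-minimal with $\sum_j \dpr(Q_j) = \rho$, the set $P_W$ is broad, and $\sum\colon P_W \to W$ has finite fibers. A direct application of Corollary~\ref{stacking-2} to $Z$ and $Q_1,\ldots,Q_m$ produces a broad $B \subseteq \prod_j Q_j$ whose sums lie in a single translate $Z + \epsilon$, but $B \cap P_W$ need not be broad, since intersections of broad sets can fail to be broad. To circumvent this, first apply Theorem~\ref{main-characterization} to the broad set $P_W$, obtaining infinite definable $D_j \subseteq Q_j$ such that $(\prod_j D_j) \setminus P_W$ is a hyperplane, hence narrow (exactly as in the proof of Theorem~\ref{main-characterization}, via Lemma~\ref{simon-lemma}.\ref{sl-1}); each $D_j$ is again quasi-minimal of rank $\dpr(Q_j)$. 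Now applying Corollary~\ref{stacking-2} to the critical set $Z$ and the quasi-minimal sets $D_1,\ldots,D_m$ yields $\epsilon$ such that
\[
  B := \{(q_1,\ldots,q_m) \in D_1 \times \cdots \times D_m ~|~ q_1 + \cdots + q_m \in Z + \epsilon\}
\]
is broad. Since $B \setminus P_W \subseteq (\prod_j D_j) \setminus P_W$ is narrow and $B$ is broad, the ideal property (Proposition~\ref{ideal}) forces $B \cap P_W$ to be broad, whence $\dpr(B \cap P_W) = \sum_j \dpr(Q_j) = \rho$ by Theorem~\ref{finite-rank-case}. The restriction of $\sum\colon P_W \to W$ to $B \cap P_W$ has finite fibers and image contained in $W \cap (Z+\epsilon)$, so (finite-to-one definable maps preserving dp-rank) $\dpr(W \cap (Z+\epsilon)) \ge \rho$, hence $= \dpr(W)$. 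This is the required $\epsilon$, and the Proposition follows.

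\textbf{The crux.} The essential point is the passage from $Q_j$ to $D_j$: one arranges $(\prod_j D_j)\setminus P_W$ to be narrow precisely so that the ideal property of narrow sets converts broadness of the stacked set $B$ into broadness of $B \cap P_W$. The remaining points are routine: the degenerate case $m = 1$ (where $D_1 = P_W = W$ is itself quasi-minimal and the hyperplane is empty, so $B \subseteq P_W$ already), and the harmless fact that broadness and narrowness of subsets of $\prod_j D_j$ are the same whether computed in $\prod_j D_j$ or in $\prod_j Q_j$.
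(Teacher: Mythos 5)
Your proposal is correct and follows essentially the same route as the paper: reduce to showing the critical set $Z = Y \cap (X+\delta_0)$ is $Y'$-heavy, shrink a coordinate configuration of $Y'$ so that the complement of $P$ is narrow, apply Corollary~\ref{stacking-2} to get a broad preimage of a translate of $Z$, use the ideal property of narrow sets to land inside $P$, and push forward along the finite-to-one sum map. The only cosmetic difference is that you state the key step as the symmetric assertion that any critical set is $W$-heavy for any critical $W$, which is exactly what the paper proves for $X'$.
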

\begin{proof}
  Suppose $X$ is $Y$-heavy.  Let $X' := Y \cap (X + \delta_0)$ have
  full rank in $Y$.  Then $X'$ is critical by Remark~\ref{subs}.  We
  claim that $X'$ is $Y'$-heavy.  Let $(A_1,\ldots,A_n,P)$ be a
  coordinate configuration for $Y'$.  The set $P$ is broad in $A_1
  \times \cdots \times A_n$, so there exist infinite definable subsets
  $Q_i \subseteq A_i$ such that $(Q_1 \times \cdots \times Q_n)
  \setminus P$ is a hyperplane, and in particular is narrow.  By
  Corollary~\ref{stacking-2}, there is a $\delta_1 \in \Mm$ such that
  \begin{equation*}
    \{(q_1,\ldots,q_n) \in Q_1 \times \cdots \times Q_n ~|~ q_1 + \cdots
    + q_n \in X' + \delta_1\}
  \end{equation*}
  is broad as a subset of $Q_1 \times \cdots \times Q_n$, hence broad
  as a subset of $A_1 \times \cdots \times A_n$.  Because $(Q_1 \times
  \cdots \times Q_n) \setminus P$ is narrow, it follows that
  \begin{equation*}
    \{(q_1,\ldots,q_n) \in P ~|~ q_1 + \cdots + q_n \in X' + \delta_1\}
  \end{equation*}
  is broad as a subset of $A_1 \times \cdots \times A_n$.  Because the
  map $(q_1,\ldots,q_n) \mapsto q_1 + \cdots + q_n$ has finite fibers
  on $P$, it follows that
  \begin{equation*}
    \{q_1 + \cdots + q_n ~|~ \vec{q} \in P \text{ and } q_1 + \cdots + q_n \in
    X' + \delta_1\} = Y' \cap (X' + \delta_1)
  \end{equation*}
  has full rank.  Therefore, $X'$ is $Y'$-heavy.  Now $X' \subseteq (X +
  \delta_0)$, and so $Y' \cap (X + \delta_0 + \delta_1)$ has full rank
  in $Y'$, implying that $X$ is $Y'$-heavy.  The converse follows by
  symmetry.
\end{proof}
\begin{definition}
  A definable subset $X \subseteq \Mm$ is \emph{heavy} if it is $Y$-heavy
  for any/every critical set $Y \subseteq \Mm$, and \emph{light}
  otherwise.
\end{definition}

\begin{theorem} \label{heavy-light}
  Let $X, Y$ be definable subsets of $\Mm$.
  \begin{enumerate}
  \item (Assuming $\Mm$ is infinite) If $X$ is finite, then $X$ is
    light.
  \item \label{light-union} If $X, Y$ are light, then $X \cup Y$ is
    light.
  \item If $Y$ is light and $X \subseteq Y$, then $X$ is light.
  \item If $\{D_b\}$ is a definable family of subsets of $\Mm$, then
    \begin{equation*}
      \{b ~|~ D_b \text{ is light}\}
    \end{equation*}
    is definable.
  \item $\Mm$ is heavy.
  \item If $X$ is heavy, then $\alpha \cdot X$ is heavy for $\alpha \in
    \Mm^\times$.
  \item \label{light-translate} If $X$ is heavy, then $\alpha + X$ is
    heavy for $\alpha \in \Mm$.
  \item If $X$ and $Y$ are heavy, the set
    \begin{equation*}
      X \mininf Y := \{\delta \in \Mm ~|~ X \cap (Y + \delta) \text{ is heavy}\}
    \end{equation*}
    is heavy.
  \end{enumerate}
\end{theorem}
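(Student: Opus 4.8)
The plan is to read off parts (1)--(7) directly from the definitions and then concentrate on (8). For (1): the critical rank $\rho$ is positive, since for any quasi-minimal $X_1 \subseteq \Mm$ (such exists by Remark~\ref{they-exist}) the pair $(X_1,X_1)$ is a coordinate configuration of rank $\dpr(X_1) > 0$; hence if $X$ is finite then $\dpr(Y \cap (X + \delta)) = 0 < \rho = \dpr(Y)$ for every critical $Y$ and every $\delta$. Part (2) holds because $Z \cap ((X \cup Y)+\delta) = (Z\cap(X+\delta)) \cup (Z\cap(Y+\delta))$ and the dp-rank of a union is the maximum; (3) because $Z\cap(X+\delta) \subseteq Z\cap(Y+\delta)$; (4) because $\{b : D_b \text{ light}\}$ is the complement of the definable set of Remark~\ref{def-in-fams}; (5) because $\Mm = \Mm+0 \supseteq Y$ for any critical $Y$. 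For (6) one checks that $(\alpha X_1,\ldots,\alpha X_n,\alpha P)$ is a coordinate configuration whenever $(X_1,\ldots,X_n,P)$ is, with target $\alpha Y$ if the target of the latter is $Y$; thus $\alpha Y$ is again critical and $\alpha Y \cap (\alpha X + \alpha\delta) = \alpha(Y\cap(X+\delta))$ witnesses heaviness of $\alpha X$. Part (7) is the easier identity $Y \cap ((\alpha+X)+(\delta-\alpha)) = Y\cap(X+\delta)$.

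For (8) I would first reduce to the case that $X$ and $Y$ are themselves critical sets. Heaviness of $X$ gives a critical $Z_X$ and a $\delta_X$ with $\dpr(Z_X \cap (X+\delta_X)) = \rho$; then $Z_X \cap (X+\delta_X)$ is critical by Remark~\ref{subs}, so by Remark~\ref{translate} the set $X$ contains a critical set $X'$, and likewise $Y \supseteq Y'$ critical. By (2)--(3) heaviness is upward monotone, and $X'\mininf Y' \subseteq X\mininf Y$ because $X'\cap(Y'+\delta) \subseteq X\cap(Y+\delta)$; so it is enough to treat $X',Y'$. Assuming now $X,Y$ critical, one has, for definable $W \subseteq X$, that $W$ is heavy iff $\dpr(W) = \rho$ (heaviness forces $\dpr(W)\ge\rho$, while $\dpr(W)\le\dpr(X)=\rho$ makes $W$ critical by Remark~\ref{subs}, hence heavy). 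Therefore $D := X\mininf Y = \{\delta : \dpr(X\cap(Y+\delta)) = \rho\}$, which is definable by Proposition~\ref{basic-properties}.\ref{full-rank-def}. Fixing a critical coordinate configuration $(B_1,\ldots,B_m,Q)$ with target $Y$ and using that the sum map $Q \to Y$ has finite fibers, $\dpr(X\cap(Y+\delta)) = \dpr\{\vec b\in Q : \sum_j b_j \in X-\delta\}$, and by Theorem~\ref{finite-rank-case} (since $\sum_j\dpr(B_j)=\rho$) this equals $\rho$ exactly when $\{\vec b\in Q : \sum_j b_j \in X-\delta\}$ is broad. Thus $\delta\in D$ iff that set is broad.

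The core of the proof is then a ``combined grid'' argument. Fix also a critical coordinate configuration $(C_1,\ldots,C_k,R)$ with target $X$; by Theorem~\ref{main-characterization} shrink each $C_i$ to an infinite $C_i'\subseteq C_i$ with $(\prod_i C_i')\setminus R$ narrow, and each $B_j$ to an infinite $B_j'\subseteq B_j$ with $(\prod_j B_j')\setminus Q$ narrow (the $C_i',B_j'$ stay quasi-minimal of the same ranks). Apply Corollary~\ref{stacking-2} to the critical set $X$ and the quasi-minimal sets $C_1',\ldots,C_k',B_1',\ldots,B_m'$: there is $\delta'$ with $\{(\vec c,\vec b)\in \prod_i C_i'\times\prod_j B_j' : \sum_i c_i + \sum_j b_j \in X+\delta'\}$ broad. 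A product of a narrow set with an arbitrary set is narrow, so $(\prod_i C_i'\times\prod_j B_j')\setminus(R\times Q)$ is narrow, and Proposition~\ref{ideal} yields that $S := \{(\vec c,\vec b)\in R\times Q : \sum_i c_i + \sum_j b_j \in X+\delta'\}$ is broad as a subset of $\prod_i C_i\times\prod_j B_j$. Unpacking, choose row-wise-distinct $\omega$-grids $(c_{i,l})_{i\in[k],l\in\Nn}$ in the $C_i$ and $(b_{j,l})_{j\in[m],l\in\Nn}$ in the $B_j$ with $(c_{1,\eta(1)},\ldots,c_{k,\eta(k)},b_{1,\kappa(1)},\ldots,b_{m,\kappa(m)})\in S$ for all $\eta\colon[k]\to\Nn$ and $\kappa\colon[m]\to\Nn$. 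For a fixed $\eta$, set $s_\eta := \sum_i c_{i,\eta(i)}$; then $(b_{j,l})$ witnesses that $\{\vec b\in Q : \sum_j b_j \in X + (\delta'-s_\eta)\}$ is broad, so $s_\eta - \delta' \in D$, i.e.\ $s_\eta \in D+\delta'$. Hence $(c_{i,l})$ witnesses that $\{\vec c\in R : \sum_i c_i \in D+\delta'\}$ is broad; by Theorem~\ref{finite-rank-case} it has dp-rank $\rho$, and pushing forward along the finite-to-one sum map $R\to X$ gives $\dpr(X\cap(D+\delta')) = \rho = \dpr(X)$. Since $X$ is critical, this is precisely the statement that $D$ is heavy.

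I expect the last step to be the main obstacle, and it is where both hypotheses are genuinely used: one must run the stacking corollary against the critical set $X$ while using quasi-minimal sets drawn from coordinate configurations of \emph{both} $X$ and $Y$, so that freezing the $C$-coordinates of a single broad rectangle simultaneously yields, for broadly many sums $s_\eta$, a broad ``$\vec b$-grid'' certifying $s_\eta - \delta'\in D$. Everything around it — the reduction to critical $X,Y$, the reformulation of $D$, and the bookkeeping with hyperplanes and the ideal of narrow sets — should be routine given the results of the preceding sections.
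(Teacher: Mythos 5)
Your proof is correct and follows essentially the same route as the paper: parts (1)--(7) are read off the definitions exactly as in the paper, and part (8) is the same ``doubled grid'' application of Corollary~\ref{stacking-2} after reducing to critical $X,Y$, freezing the outer coordinates to certify $s_\eta-\delta'\in X\mininf Y$ and then using the outer grid to certify heaviness of $X\mininf Y$. The only cosmetic difference is that the paper draws both halves of the $2n$-fold grid from $Y$'s coordinate configuration (so the final full-rank intersection is certified against $Y$), whereas you mix the configurations of $X$ and $Y$ and certify against $X$; both are equally valid instances of the stacking argument.
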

\begin{proof}
  \begin{enumerate}
  \item Indeed, if $X$ is heavy then some translate of $X$ contains a
    critical set, and so $X$ has rank at least $\rho$.  The critical
    rank $\rho$ cannot be 0 because there is at least one
    quasi-minimal set $Q_i$ (by Remark~\ref{they-exist}), and
    $(Q_i,Q_i)$ is a coordinate configuration with target $Q_i$.
    Therefore heavy sets are infinite.
  \item We show the contrapositive: if $X \cup Y$ is heavy, then $X$ is
    heavy or $Y$ is heavy.  Fix some critical set $Z$.  The fact that $X
    \cup Y$ is $Z$-heavy implies that there is a $\delta$ such that $Z
    \cap ((X \cup Y) + \delta)$ has full rank.  But
    \begin{equation*}
      Z \cap ((X \cup Y) + \delta) = (Z \cap (X + \delta)) \cup (Z
      \cap (Y + \delta))
    \end{equation*}
    so at least one of $Z \cap (X + \delta)$ and $Z \cap (Y + \delta)$
    has full rank $\rho$.
  \item If $X \subseteq Y$ and $X$ is $Z$-heavy because $Z \cap (X +
    \delta)$ has full rank $\rho$, then certainly $Z \cap (Y +
    \delta)$ has full rank $\rho$.
  \item This is Remark~\ref{def-in-fams}.
  \item Critical sets exist, and if $Y$ is a critical set, then
    $\dpr(Y \cap \Mm) = \dpr(Y)$.
  \item Heaviness was defined without any reference to multiplication,
    so it is preserved by definable automorphisms of the additive
    group $(\Mm,+)$.
  \item Heaviness is translation-invariant by definition.
  \item Shrinking $X$ and $Y$ we may assume that $X$ and $Y$ are
    translates of critical sets--or merely critical sets by
    Remark~\ref{translate}.  Let $(A_1,\ldots,A_n,P)$ be a coordinate
    configuration for $Y$.  By Theorem~\ref{main-characterization} we
    can find infinite definable subsets $Q_i \subseteq A_i$ such that
    $(Q_1 \times \cdots \times Q_n) \setminus P$ is narrow.  By
    Corollary~\ref{stacking-2}, there is a $\delta \in \Mm$ such that
    the set
    \begin{equation*}
      \{(q_1,\ldots,q_n,q'_1,\ldots,q'_n) \in (Q_1 \times \cdots
      \times Q_n)^2 ~|~ q_1 + \cdots + q_n + q'_1 + \cdots + q'_n \in X
      + \delta\}
    \end{equation*}
    is broad as a subset of $Q_1 \times \cdots \times Q_n \times Q_1
    \times \cdots \times Q_n$.  In particular, we can find $q_{i,j}$
    and $q'_{i,j}$ for $i \in [n]$ and $j \in \Nn$ such that
    \begin{itemize}
    \item $q_{i,j}$ and $q'_{i,j}$ are in $Q_i$.
    \item $q_{i,j} \ne q_{i,j'}$ and $q'_{i,j} \ne q'_{i,j'}$ for $j
      \ne j'$.
    \item For any $\eta : [n] \to \Nn$ and $\eta' : [n] \to \Nn$, we
      have
      \begin{equation*}
        \sum_{i = 1}^n (q_{i,\eta(i)} + q'_{i,\eta'(i)}) \in X +
        \delta.
      \end{equation*}
    \end{itemize}
    Let $s_\eta$ denote the sum $\sum_{i = 1}^n q_{i,\eta(i)}$.  Note
    that for any $\eta$, the $q'_{i,j}$ show that the set
    \begin{equation*}
      \{(q'_1,\ldots,q'_n) \in Q_1 \times \cdots \times Q_n ~|~ s_\eta +
      q'_1 + \cdots + q'_n \in X + \delta\}
    \end{equation*}
    is broad as a subset of $Q_1 \times \cdots \times Q_n$.  It follows
    that
    \begin{equation*}
      \{(q'_1,\ldots,q'_n) \in P ~|~ s_\eta +
      q'_1 + \cdots + q'_n \in X + \delta\}
    \end{equation*}
    has full rank $\rho$, and so $(Y + s_\eta) \cap (X + \delta)$ has
    full rank $\rho$ for any $\eta$.  The translate $X \cap (Y +
    s_\eta - \delta)$ has full rank and is trivially $X$-heavy.
    Therefore $s_\eta - \delta \in X \mininf Y$.  The fact that this
    holds for all $s_\eta$ implies that
    \begin{equation*}
      \{(q_1,\ldots,q_n) \in Q_1 \times \cdots \times Q_n ~|~ q_1 +
      \cdots + q_n + \delta \in X \mininf Y\}
    \end{equation*}
    is broad as a subset of $Q_1 \times \cdots \times Q_n$.  Again, this
    implies that $(Y + \delta) \cap (X \mininf Y)$ has full rank
    $\rho$, and so $X \mininf Y$ is heavy. \qedhere
  \end{enumerate}
\end{proof}

\subsection{Externally definable sets}
We prove the analogues of \S\ref{sec:extern1} for heavy and light
sets.
\begin{lemma}
  \label{chernikov-2}
  Let $M \preceq \Mm$ be a small model defining a critical coordinate
  configuration.  Let $Z$ be an $M$-definable heavy set.  Let $D_1,
  \ldots, D_m$ be $\Mm$-definable sets such that
  \begin{equation*}
    Z(M) \subseteq D_1 \cup \cdots \cup D_m.
  \end{equation*}
  Then there is a $j \in [m]$ and a $M$-definable heavy set $Z'
  \subseteq Z$ such that $Z'(M) \subseteq D_j$.
\end{lemma}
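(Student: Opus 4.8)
The plan is to pull the whole situation back through a critical coordinate configuration and invoke the broad-set version, Lemma~\ref{chernikov-1}. Fix a critical coordinate configuration $(X_1,\ldots,X_n,P)$ defined over $M$ with target $Y$, and let $\pi\colon P \to Y$, $\pi(x_1,\ldots,x_n) = x_1 + \cdots + x_n$, be the associated finite-to-one $M$-definable surjection; recall that $\sum_i \dpr(X_i) = \rho = \dpr(Y)$.

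First, a normalization. Since $Z$ is heavy it is $Y$-heavy, so there is $\delta$ with $\dpr(Y \cap (Z+\delta)) = \dpr(Y)$; by Proposition~\ref{basic-properties}.\ref{bp-1} the set of such $\delta$ is $M$-definable, hence, as $M \preceq \Mm$, it contains a point of $M$. Translating $Z$ and the $D_j$ by this $\delta$ — harmless, since $\delta \in M$, heaviness is translation-invariant (Theorem~\ref{heavy-light}.\ref{light-translate}), the translated $D_j$ still cover the translated $Z(M)$, and any conclusion translates back — we may assume $\dpr(Y \cap Z) = \dpr(Y) = \rho$. Put $Y^* := Y \cap Z$. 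This is an $M$-definable full-rank subset of the critical set $Y$, hence critical by Remark~\ref{subs}; and by the proof of Proposition~\ref{basic-properties} the tuple $(X_1,\ldots,X_n,P^*)$ with $P^* := \pi^{-1}(Y^*)$ is again a coordinate configuration, $M$-definable, with broad top component $P^*$ and target $Y^*$.

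Now pull the cover back. If $\vec{x} \in P^*(M)$ then $\pi(\vec{x}) \in Y^*(M) \subseteq Z(M) \subseteq \bigcup_j D_j$, so $P^*(M) \subseteq \bigcup_j E_j$ where $E_j := \{\vec{x} \in P^* : \pi(\vec{x}) \in D_j\}$ is $\Mm$-definable. Apply Lemma~\ref{chernikov-1} to the $M$-definable broad set $P^* \subseteq X_1 \times \cdots \times X_n$ and the cover $(E_j)_j$: we obtain some $j$ and an $M$-definable broad set $P' \subseteq P^*$ with $P'(M) \subseteq E_j$. Set $Z' := \pi(P')$. Then $Z'$ is $M$-definable, $Z' \subseteq \pi(P^*) = Y^* \subseteq Z$, and — since $P'$ is broad, the $X_i$ are quasi-minimal, and $\pi$ is finite-to-one on $P^* \supseteq P'$ — Theorem~\ref{finite-rank-case} gives $\dpr(Z') = \dpr(P') = \sum_i \dpr(X_i) = \rho$; as $Z' \subseteq Y$ this says $Y \cap Z' = Z'$ has full rank in $Y$, so $Z'$ is $Y$-heavy and hence heavy. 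Finally $Z'(M) \subseteq D_j$: given $z \in Z'(M)$, the fiber $\pi^{-1}(z) \cap P'$ is finite, nonempty, and $M$-definable, so all of its points lie in $M$; picking $\vec{x}$ there, $z = \pi(\vec{x}) \in D_j$ because $\vec{x} \in P'(M) \subseteq E_j$. Untranslating by $\delta$ yields the lemma.

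There is no substantial obstacle here: the entire combinatorial core is imported from Lemma~\ref{chernikov-1}. The two points requiring a little care are (i) arranging the translation parameter $\delta$ to lie in $M$, which is exactly what definability of full rank on $Y$ (Proposition~\ref{basic-properties}.\ref{bp-1}) buys us, and (ii) the passage from $P'(M) \subseteq E_j$ to $Z'(M) \subseteq D_j$, which fails for arbitrary surjections but works here because $\pi$ has finite fibers and $M \preceq \Mm$ forces every finite $M$-definable set to be contained in $M$.
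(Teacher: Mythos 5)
Your proof is correct and follows essentially the same route as the paper: translate so that $Y \cap Z$ has full rank with $\delta \in M$, pull the cover back through the finite-to-one map $\pi$ of a critical coordinate configuration, apply Lemma~\ref{chernikov-1} to the broad preimage, and push the resulting broad set forward to get $Z'$. Your set $P^* = \pi^{-1}(Y \cap Z)$ is exactly the paper's set $R$, and your finite-fiber argument for $Z'(M) \subseteq D_j$ is a minor variant of the paper's use of $M \preceq \Mm$ at the same step.
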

\begin{proof}
  Let $(X_1,\ldots,X_n,P)$ be an $M$-definable critical coordinate
  configuration, with target $Y$.  Because $Z$ is $Y$-heavy, there is
  some $\delta$ such that $Y \cap (Z + \delta)$ has full rank in $Y$.
  Full rank in $Y$ is a definable condition, clearly $M$-invariant, so
  we may take $\delta \in M$.  Replacing $Z$ and the $D_i$'s with $Z +
  \delta$ and $D_i + \delta$, we may assume $\delta = 0$.  The fact
  that $Y \cap Z$ has full rank implies that the set
  \begin{equation*}
    R := \{(x_1,\ldots,x_n) \in P ~|~ x_1 + \cdots + x_n \in Z\}
  \end{equation*}
  is broad.  Furthermore, $R(M)$ is a union of the
  externally-definable sets
  \begin{equation*}
    \{(x_1,\ldots,x_n) \in P(M) ~|~ x_1 + \cdots + x_n \in Z \cap D_j.\}
  \end{equation*}
  By Lemma~\ref{chernikov-1}, there is an $M$-definable $R' \subseteq
  R$ and a $j \in [m]$ such that $R'$ is broad and
  \begin{equation*}
    (x_1,\ldots,x_n) \in R'(M) \implies x_1 + \cdots + x_n \in D_j.
  \end{equation*}
  Let $Z'$ be the $M$-definable set
  \begin{equation*}
    \{x_1 + \cdots + x_n ~|~ (x_1,\ldots,x_n) \in R' \}.
  \end{equation*}
  Because $R' \subseteq R \subseteq P$, the set $Z' \subseteq Y \cap
  Z$.  The set of $(x_1,\ldots,x_n) \in P$ such that $x_1 + \cdots +
  x_n \in Z'$ contains the broad set $R'$, and is therefore broad.  It
  follows that $Z'$ has full rank in $Y$, and so $Z'$ is $Y$-heavy.
  Lastly, we claim that
  \begin{equation*}
    Z'(M) \subseteq D_j.
  \end{equation*}
  Indeed, suppose $y \in Z'(M)$.  Then there exists an $n$-tuple
  $(x_1, \ldots, x_n) \in R'$ such that $y = x_1 + \cdots + x_n$.  The
  fact that $M \preceq \Mm$ implies that we can take $\vec{x} \in
  \dcl(M)$.  Then by choice of $R'$, it follows that
  \begin{equation*}
    y = x_1 + \cdots + x_n \in D_j.
  \end{equation*}
  Therefore $Z'(M) \subseteq D_j$.
\end{proof}
\begin{lemma}\label{something-similar-2}
  Let $M$ be a small model defining a critical coordinate
  configuration.  Let $Z$ be an $M$-definable heavy set, and let $W$
  be an $\Mm$-definable set.  If $Z(M) \subseteq W$, then $W$ is
  heavy.
\end{lemma}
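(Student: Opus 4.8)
The plan is to follow the template of Lemma~\ref{chernikov-2}: translate the heaviness assumption on $Z$ into a broadness statement about a coordinate configuration, apply Lemma~\ref{something-similar-1} at the level of broad sets, and translate back.

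First I would fix an $M$-definable critical coordinate configuration $(X_1,\ldots,X_n,P)$ with target $Y$, which exists by the hypothesis on $M$; note each $X_i$ is infinite, since quasi-minimal sets have positive dp-rank. Since $Z$ is heavy it is $Y$-heavy, so there is $\delta$ with $\dpr(Y \cap (Z+\delta)) = \dpr(Y) = \rho$. By Proposition~\ref{basic-properties}.\ref{bp-1} the set of such $\delta$ is definable, and it is $M$-invariant because $Y$ and $Z$ are $M$-definable; as $M \preceq \Mm$ it therefore contains a point of $M$. Replacing $Z$ by $Z + \delta$ and $W$ by $W + \delta$ — permissible since $\delta \in M$ keeps the inclusion $Z(M) \subseteq W$ intact and heaviness is translation-invariant by Theorem~\ref{heavy-light}.\ref{light-translate} — we may assume $\delta = 0$, that is, $\dpr(Y \cap Z) = \rho$.

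Next, let $\pi : P \to Y$ be the finite-fibered summation map $\vec{x} \mapsto x_1 + \cdots + x_n$, and set $R := \{\vec{x} \in P ~|~ x_1 + \cdots + x_n \in Z\}$, an $M$-definable set. Since $\pi$ has finite fibers and $\pi(R) = Y \cap Z$ has rank $\rho = \dpr(X_1 \times \cdots \times X_n)$, the set $R$ has full rank, hence is broad by Theorem~\ref{finite-rank-case}. Now put $R_W := \{\vec{x} \in P ~|~ x_1 + \cdots + x_n \in W\}$, an $\Mm$-definable subset of $X_1 \times \cdots \times X_n$. Any $\vec{x} \in R(M)$ lies in $M$, so $x_1 + \cdots + x_n \in Z(M) \subseteq W$; thus $R(M) \subseteq R_W$. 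Applying Lemma~\ref{something-similar-1} with $R$ in the role of the $M$-definable broad set and $R_W$ in the role of the $\Mm$-definable superset, we conclude that $R_W$ is broad.

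Finally, $R_W$ broad forces $\dpr(R_W) = \dpr(X_1 \times \cdots \times X_n) = \rho$ by Theorem~\ref{finite-rank-case}, and since $\pi$ has finite fibers with $\pi(R_W) = Y \cap W$, we get $\dpr(Y \cap W) = \rho = \dpr(Y)$, so $W$ is $Y$-heavy and therefore heavy. I do not expect a serious obstacle here — the argument is essentially bookkeeping on top of Lemma~\ref{something-similar-1}. The only points needing attention are the descent to $\delta \in M$, which relies on both the definability of full rank on $Y$ and $M \preceq \Mm$, and the repeated use of the fact that pushing forward along the finite-fibered map $\pi$ preserves dp-rank, so that ``broad'' and ``full-rank image'' are interchangeable throughout.
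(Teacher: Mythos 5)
Your proof is correct and follows essentially the same route as the paper's: reduce to the case $\dpr(Y\cap Z)=\dpr(Y)$ by pulling $\delta$ into $M$ (the paper just cites the identical reduction from Lemma~\ref{chernikov-2}), pull back along the finite-fibered summation map to get a broad $M$-definable set whose $M$-points land in the pullback of $W$, apply Lemma~\ref{something-similar-1}, and push forward. The extra detail you supply on the descent of $\delta$ to $M$ and on the rank bookkeeping is exactly what the paper leaves implicit.
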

\begin{proof}
  Take an $M$-definable coordinate configuration $(X_1,\ldots,X_n,P)$
  with target $Y$.  As in the proof of Lemma~\ref{chernikov-2} we may
  assume that $Z \cap Y$ has full rank in $Y$, and so the set
  \begin{equation*}
    \{(x_1,\ldots,x_n) \in P ~|~ x_1 + \cdots + x_n \in Z\}
  \end{equation*}
  is broad in $X_1 \times \cdots \times X_n$.  This set is
  $M$-definable, and its $M$-definable points lie in the set
  \begin{equation*}
    \{(x_1,\ldots,x_n) \in P ~|~ x_1 + \cdots + x_n \in W\},
  \end{equation*}
  which is therefore broad by Lemma~\ref{something-similar-1}.  This
  implies that $W \cap Y$ has full rank, and so $W$ is ($Y$-)heavy.
\end{proof}

\section{The edge case: finite Morley rank} \label{sec:likeTT}
If our end goal is to construct non-trivial valuation rings on
dp-finite fields, we had better quarantine the stable fields at some
point.  In other words, we need to find some positive consequence of
the hypothesis ``not stable.''  We prove the relevant dichotomy in
this section.
\begin{lemma}\label{morley-rank}
  Let $X_1, \ldots, X_n$ be infinite definable sets.  If the number of
  broad global types in $X_1 \times \cdots \times X_n$ is finite, then
  every $X_i$ has Morley rank 1.
\end{lemma}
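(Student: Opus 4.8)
The plan is to prove the contrapositive: assuming that some $X_{i_0}$ is \emph{not} of Morley rank $1$, I will exhibit infinitely many distinct broad global types on $X_1 \times \cdots \times X_n$, contradicting the hypothesis.

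Since $X_{i_0}$ is infinite, $\operatorname{RM}(X_{i_0}) \ge 1$, so if $\operatorname{RM}(X_{i_0}) \ne 1$ then the recursive clause defining Morley rank gives $\operatorname{RM}(X_{i_0}) \ge 2$ (this holds whether or not the rank is actually infinite), which is to say there are pairwise disjoint infinite definable subsets $Y_1, Y_2, \ldots \subseteq X_{i_0}$. For each $k$ set
\[
  Z_k := X_1 \times \cdots \times X_{i_0-1} \times Y_k \times X_{i_0+1} \times \cdots \times X_n .
\]
Each $Z_k$ is a broad subset of $X_1 \times \cdots \times X_n$: pick, for each coordinate $i$, an infinite sequence of pairwise distinct elements of $Y_k$ when $i = i_0$ and of $X_i$ otherwise; every tuple produced by this array lies in $Z_k$ because its $i_0$-th entry lies in $Y_k$, so the array witnesses broadness.

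The remaining point is that any broad definable set $Z \subseteq X_1 \times \cdots \times X_n$ is contained in the realization set of some broad \emph{complete} global type. To see this I would take the collection of partial types $q$ over $\Mm$ with $q \supseteq \{\vec x \in Z\}$ that are broad; it is nonempty and, by the finite-character criterion Remark~\ref{type-definability}.\ref{continuity}, closed under unions of chains, so Zorn's lemma yields a maximal member $p$. If $p$ were incomplete, choose $\phi$ over $\Mm$ with $\phi \notin p$ and $\neg\phi \notin p$; by maximality both $p \cup \{\phi\}$ and $p \cup \{\neg\phi\}$ are narrow, but their realization sets cover that of $p$, so $p$ would be narrow by the ideal property of narrow sets (Proposition~\ref{ideal}), a contradiction. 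Thus $p$ is a broad complete global type concentrating on $Z$.

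Applying this to each $Z_k$ gives a broad global type $p_k$ with $p_k \vdash x_{i_0} \in Y_k$; since the $Y_k$ are pairwise disjoint the $p_k$ are pairwise distinct, and we have our contradiction. I do not expect a serious obstacle here: the only things needing care are the bookkeeping in the Zorn step (finite character of broadness and the ideal property of narrow sets, both already in hand) and the elementary fact that an infinite definable set of Morley rank $\ne 1$ splits into infinitely many pairwise disjoint infinite definable pieces.
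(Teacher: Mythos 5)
Your proof is correct and takes essentially the same route as the paper: argue the contrapositive, split a coordinate of Morley rank $>1$ into infinitely many pairwise disjoint infinite definable pieces, note that each resulting product box is broad, and put a broad complete global type on each box. The only difference is cosmetic — the paper gets the last step by citing Proposition~\ref{completion}, whereas you re-derive it with a Zorn's lemma argument using the finite character of broadness and the ideal property of narrow sets.
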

\begin{proof}
  Suppose, say, $X_1$ has Morley rank greater than 1.  Then we can
  find a sequence $D_1, D_2, D_3, \ldots$ of length $\omega$ where
  each $D_i$ is an infinite definable subset of $X_1$ and the $D_i$
  are pairwise disjoint.  Let $P_i = D_1 \times X_2 \times \cdots
  \times X_n$.  Each $P_i$ is broad and the $P_i$ are pairwise
  disjoint.  By Proposition~\ref{completion}, each $P_i$ is inhabited
  by a broad type over $\Mm$, so there are at least $\aleph_0$ broad
  global types.  If there are a finite number of broad global types,
  it follows that every $X_i$ has Morley rank 1.
\end{proof}

\begin{theorem}\label{tt-dichotomy}
  At least one of the following holds:
  \begin{enumerate}
  \item The field $\Mm$ has finite Morley rank.
  \item There are two disjoint heavy sets.
  \end{enumerate}
\end{theorem}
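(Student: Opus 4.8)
The plan is to prove the contrapositive: assuming there are no two disjoint heavy sets, show that $\Mm$ has finite Morley rank. The engine is Lemma~\ref{morley-rank}: if a product of infinite definable sets carries only finitely many broad global types, then every factor has Morley rank $1$. So the goal is to manufacture, out of a critical coordinate configuration, a product of quasi-minimal sets on which there are only finitely many broad global types, and then to propagate ``Morley rank $1$'' back up to the whole field.

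First I would record the basic consequence of the hypothesis. Since light sets form an ideal (Theorem~\ref{heavy-light}) and $\Mm$ is heavy, for any critical set $Y$ the definable subsets of $Y$ of full dp-rank $\rho$ form an ultrafilter on the Boolean algebra $\Def(Y)$: two disjoint full-rank subsets of $Y$ would both be $Y$-heavy, hence heavy, contradicting the hypothesis, while a partition of $Y$ into two definable pieces has a piece of full rank by subadditivity of dp-rank. Equivalently, $Y$ carries a unique global type of full dp-rank $\rho$.

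Next, fix a critical coordinate configuration $(X_1,\ldots,X_n,P)$ with target $Y$. Because $P$ is broad, Theorem~\ref{main-characterization} provides infinite definable $Q_i \subseteq X_i$ with $(Q_1 \times \cdots \times Q_n) \setminus P$ narrow. Each $Q_i$, being an infinite definable subset of a quasi-minimal set, is itself quasi-minimal, so $(Q_1,\ldots,Q_n, P \cap (Q_1 \times \cdots \times Q_n))$ is again a coordinate configuration; it has rank $\sum_i \dpr(Q_i) = \sum_i \dpr(X_i) = \rho$, hence is critical, with target $Y_0 \subseteq Y$ a full-rank (hence critical) subset of $Y$. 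I claim $Q_1 \times \cdots \times Q_n$ carries only finitely many broad global types. Indeed, a broad global type there cannot contain the narrow set $(Q_1 \times \cdots \times Q_n) \setminus P$, so it concentrates on $P \cap (Q_1 \times \cdots \times Q_n)$; pushing it forward along the finite-to-one sum map $\sigma$ yields a global type on $Y_0$ of the same dp-rank $\rho$, i.e. a full-rank type on $Y_0$, of which there is exactly one by the ultrafilter observation; and since $\sigma$ has fibers of bounded finite size, each global type on $Y_0$ is the $\sigma$-pushforward of only finitely many global types (lift a realization of the target type and note its finite fibre is $\Aut(\Mm)$-canonical). Hence there are finitely many broad global types on $Q_1 \times \cdots \times Q_n$, and Lemma~\ref{morley-rank} applies: each $Q_i$ has Morley rank $1$.

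Finally, $Q_1 \times \cdots \times Q_n$ has Morley rank $n < \omega$, so its definable subset $P \cap (Q_1 \times \cdots \times Q_n)$ does too, and therefore its finite-to-one image $Y_0$ is an infinite definable subset of the field $\Mm$ of finite Morley rank. It remains to deduce that $\Mm$ itself has finite Morley rank; this is the only step that uses the field structure rather than the combinatorics of heavy sets, and is where I expect the real work to lie --- namely, showing that a (dp-finite, NIP) field possessing an infinite definable subset of finite Morley rank must have finite Morley rank, e.g. by passing to a strongly minimal subset $S$, analysing the additive and multiplicative subgroups generated by $S$, and bounding the Morley rank of the subfield they generate. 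The other steps --- the ultrafilter observation, the shrinking of the configuration via Theorem~\ref{main-characterization}, and the finiteness count for broad global types --- are routine given the results already established, so the crux is this last field-theoretic input.
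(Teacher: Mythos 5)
Your first half is essentially the paper's own argument run in the contrapositive direction: shrink the critical configuration via Theorem~\ref{main-characterization}, observe that under ``no two disjoint heavy sets'' the pushforward along the finite-to-one sum map forces there to be only finitely many broad global types on $Q_1 \times \cdots \times Q_n$, and invoke Lemma~\ref{morley-rank} to get Morley rank $1$ for each $Q_i$. (A small caution there: Theorem~\ref{finite-rank-case} is stated for definable sets, so rather than asserting that a broad global type itself has dp-rank $\rho$, argue as the paper does, via the definable sets lying in the type; this is cosmetic.) The genuine gap is the last step. Having gotten each $Q_i$ to Morley rank $1$, you only conclude that $\Mm$ contains an infinite definable subset of finite Morley rank, and you defer the implication ``dp-finite field with an infinite definable subset of finite Morley rank $\Rightarrow$ the field has finite Morley rank'' to an unproven sketch (strongly minimal subset, generated subgroups/subfield). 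Nothing in the paper supplies such a transfer principle, and it is far from routine: there is no reason the subfield generated by a strongly minimal set should be all of $\Mm$, nor that an internal finite-rank piece bounds the Morley rank of the ambient field. As written, the proof does not close.

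The missing idea is that the hypothesis ``no two disjoint heavy sets'' must be used a \emph{second} time, and it is exactly what replaces your field-theoretic input. Set $W = Q_1 + \cdots + Q_n$; this contains the critical target, hence is heavy, and heaviness is translation-invariant (Theorem~\ref{heavy-light}.\ref{light-translate}). If $W - W \ne \Mm$, pick $x \notin W - W$: then $W$ and $W + x$ are disjoint heavy sets, contradicting your standing assumption. Therefore $W - W = \Mm$, so $\Mm$ is the image of $(Q_1 \times \cdots \times Q_n)^2$ under the definable map $(\vec{x},\vec{x}\,') \mapsto \sum_i x_i - \sum_i x'_i$; since a finite product of Morley rank $1$ sets has finite Morley rank and definable images preserve this, $\Mm$ itself has finite Morley rank. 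With this substitution for your final paragraph the argument matches the paper's proof of Theorem~\ref{tt-dichotomy}.
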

\begin{proof}
  Let $(X_1,\ldots,X_n,P)$ be a critical coordinate configuration.  We
  may find infinite definable subsets $Q_i \subseteq X_i$ such that for
  \begin{align*}
    H &:= (Q_1 \times \cdots \times Q_n) \setminus P \\
    P' &:= (Q_1 \times \cdots \times Q_n) \cap P
  \end{align*}
  the set $H$ is narrow in $Q_1 \times \cdots \times Q_n$, and
  therefore $P'$ is broad.  Then $(Q_1,\ldots,Q_n,P')$ is a coordinate
  configuration of the same rank as $(X_1,\ldots,X_n,P)$.  In
  particular $(Q_1,\ldots,Q_n,P')$ is critical and its target $Y$ is a
  critical set.

  First suppose that at least one $Q_i$ has Morley rank greater than 1
  (possibly $\infty$).  By Lemma~\ref{morley-rank}, there are
  infinitely many broad global types in $Q_1 \times \cdots \times
  Q_n$.  Let $p_1, p_2, p_3, \ldots$ be pairwise distinct broad global
  types.  Because $H$ is narrow, each $p_i$ must live in $P$.  Let $m$
  bound the fiber size of the map $\Sigma : P' \to Y$.  There is a
  pushforward map $\Sigma_*$ from the space of $\Mm$-types in $P'$ to
  the space of $\Mm$-types in $Y$, and this map has fibers of size no
  greater than $m$.  Consequently, we can find $i, j$ such that
  $\Sigma_* p_i \ne \Sigma_* p_j$.  Without loss of generality
  $\Sigma_* p_1 \ne \Sigma_* p_2$.  We can then write $Y$ as a
  disjoint union $Y = Z_1 \cup Z_2$ where $\Sigma_* p_i$ lives on
  $Z_i$.  For $i = 1, 2$, the definable set
  \begin{equation*}
    \{(x_1,\ldots,x_n) \in P' ~|~ x_1 + \cdots + x_n \in Z_i\}
  \end{equation*}
  is broad as a subset of $Q_1 \times \cdots \times Q_n$ because it
  contains the broad type $p_i$.  Therefore this set has dp-rank
  $\rho$ and so $\dpr(Z_i) = \rho = \dpr(Y)$.  So the two sets $Z_1,
  Z_2$ are full-rank subsets of $Y$, and are therefore $Y$-heavy.  We
  have constructed two disjoint heavy sets.

  Next suppose that every $Q_i$ has Morley rank 1.  Let
  \begin{equation*}
    W = Q_1 + \cdots + Q_n.
  \end{equation*}
  The set $W$ contains the critical set $Y$, so $W$ is heavy.  If $W -
  W = \Mm$, then there is a surjective map
  \begin{align*}
    Q_1 \times \cdots \times Q_n \times Q_1 \times \cdots \times Q_n &\to \Mm \\
    (x_1,\ldots,x_n,x'_1,\ldots,x'_n) &\mapsto x_1 + \cdots + x_n - x'_1 - \cdots - x'_n.
  \end{align*}
  The set $(Q_1 \times \cdots \times Q_n)^2$ has finite Morley
  rank,\footnote{See Proposition 1.5.16 in \cite{PillayGST} or the
    proof of Corollary 2.14 in \cite{stab-groups}.}, and so $\Mm$ must
  have finite Morley rank.  Otherwise, take $x \notin W - W$.  Then
  the sets $W$ and $W + x$ are disjoint, and both are heavy.
\end{proof}

\begin{remark}
  Theorem~\ref{tt-dichotomy} is closely related to Sinclair's
  \emph{Large Sets Property} (LSP).  A field $\Mm$ of dp-rank $n$ has
  the LSP if there do not exist two disjoint subsets of $\Mm$ of
  dp-rank $n$ (\cite{sinclair}, Definition 3.0.3).  Sinclair
  conjectures that fields with LSP are stable, hence algebraically
  closed.  Theorem~\ref{tt-dichotomy} says something similar, with
  ``heavy'' in place of dp-rank $n$.
\end{remark}

\section{Infinitesimals}\label{sec:infinitesimals}
In this section, we carry out the construction of infinitesimals.  The
arguments are copied directly from the dp-minimal setting (\cite{myself}, Chapter~9).
\subsection{Basic neighborhoods}
\begin{definition}
  If $X, Y$ are definable sets, $X \mininf Y$ denotes the set
  \begin{equation*}
    \{ \delta \in \Mm ~|~ X \cap (Y + \delta) \text{ is heavy}\}.
  \end{equation*}
\end{definition}
\begin{remark} \label{mininf-props}
  Let $X, Y$ be definable sets.
  \begin{enumerate}
  \item $X \mininf Y$ is definable.
  \item $X \mininf Y \subseteq X - Y$, where
    \begin{equation*}
      X - Y = \{x - y ~|~ x \in X, ~ y \in Y\}.
    \end{equation*}
  \item If $X$ or $Y$ is light, then $X \mininf Y$ is empty.
  \item If $X$ and $Y$ are heavy, then $X \mininf Y$ is heavy.
  \item For any $\delta_1, \delta_2 \in \Mm$,
    \begin{equation*}
      (X + \delta_1) \mininf (Y + \delta_2) = (X \mininf Y) +
      (\delta_1 - \delta_2).
    \end{equation*}
  \item For any $\alpha \in \Mm^\times$,
    \begin{equation*}
      (\alpha \cdot X) \mininf (\alpha \cdot Y) = \alpha \cdot (X
      \mininf Y).
    \end{equation*}
  \item If $X' \subseteq X$ and $Y' \subseteq Y$ then
    \begin{equation*}
      X' \mininf Y' \subseteq X \mininf Y.
    \end{equation*}
  \end{enumerate}
  Indeed, all of these properties follow immediately from
  Theorem~\ref{heavy-light}.
\end{remark}
\begin{definition}
  A \emph{basic neighborhood} is a set of the form $X \mininf X$,
  where $X$ is heavy.
\end{definition}
\begin{remark}
  If $M$ is a small model, then every $M$-definable basic
  neighborhood is of the form $X \mininf X$ with $X$ heavy and
  $M$-definable.
\end{remark}
\begin{proof}
  Let $U = X \mininf X$ be a basic neighborhood which is
  $M$-definable.  Write $X$ as $\phi(\Mm;b)$.  Because heaviness is
  definable in families (by Theorem~\ref{heavy-light}), the set of
  $b'$ such that $\phi(\Mm;b') \mininf \phi(\Mm;b') = U$ is definable.
  This set is $M$-invariant, and non-empty because of $b$, so it
  intersects $M$.  Therefore, we can find $M$-definable $X' =
  \phi(\Mm;b')$ such that $U = X' \mininf X'$.
\end{proof}
\begin{proposition}\label{basic-nbhds}
  ~
  \begin{enumerate}
  \item \label{heavy-nbhd} If $U$ is a basic neighborhood, then $U$ is
    heavy.
  \item \label{zero-in} If $U$ is a basic neighborhood, then $0 \in
    U$.
  \item \label{scaling} If $U$ is a basic neighborhood and $\alpha \in
    \Mm^\times$, then $\alpha \cdot U$ is a basic neighborhood.
  \item \label{filtered-isect} If $U_1, U_2$ are basic neighborhoods,
    there is a basic neighborhood $U_3 \subseteq U_1 \cap U_2$.  If
    $U_1, U_2$ are $M$-definable, we can choose $U_3$ to be
    $M$-definable.
  \item \label{non-degen} If $\Mm$ is not of finite Morley rank, then
    for every $\alpha \ne 0$ there is a basic neighborhood $U \not\ni
    \alpha$.  If $\alpha$ is $M$-definable, we can choose $U$ to be
    $M$-definable.
  \end{enumerate}
\end{proposition}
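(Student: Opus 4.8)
My plan: dispatch (\ref{heavy-nbhd})--(\ref{scaling}) by unwinding definitions, handle (\ref{filtered-isect}) and (\ref{non-degen}) by one and the same additive ``squeeze,'' and concentrate the work in (\ref{non-degen}). For (\ref{heavy-nbhd})--(\ref{scaling}), write $U = X \mininf X$ with $X$ heavy: then (\ref{heavy-nbhd}) is Remark~\ref{mininf-props}(4) applied to $X$ and $X$; (\ref{zero-in}) holds because $0 \in X \mininf X$ just says $X \cap (X + 0) = X$ is heavy; and (\ref{scaling}) follows from Remark~\ref{mininf-props}(6), which gives $\alpha U = (\alpha X) \mininf (\alpha X)$, together with the fact (Theorem~\ref{heavy-light}) that $\alpha X$ is heavy.

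For (\ref{filtered-isect}): given $U_i = X_i \mininf X_i$ with $X_i$ heavy, I would note that $X_1 \mininf X_2$ is heavy (Remark~\ref{mininf-props}(4)), hence nonempty since heavy sets are infinite; picking $\delta \in X_1 \mininf X_2$ makes $W := X_1 \cap (X_2 + \delta)$ heavy. Since $W \subseteq X_1$, monotonicity (Remark~\ref{mininf-props}(7)) gives $W \mininf W \subseteq X_1 \mininf X_1 = U_1$; since $W \subseteq X_2 + \delta$, monotonicity together with Remark~\ref{mininf-props}(5) gives $W \mininf W \subseteq (X_2 + \delta) \mininf (X_2 + \delta) = X_2 \mininf X_2 = U_2$. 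So $U_3 := W \mininf W \subseteq U_1 \cap U_2$ is a basic neighborhood. If $U_1, U_2$ are $M$-definable I would first take $X_1, X_2$ to be $M$-definable (the remark preceding the Proposition), observe that $X_1 \mininf X_2$ is then $M$-definable and infinite, hence meets $M$ because $M \preceq \Mm$, and choose $\delta \in M$, so that $W$ and $U_3$ are $M$-definable.

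For (\ref{non-degen}) I would argue by contradiction. Suppose some $\alpha \neq 0$ lies in every basic neighborhood. For any $\beta \neq 0$ and any basic neighborhood $U$, part (\ref{scaling}) makes $\alpha\beta^{-1}U$ a basic neighborhood, so $\alpha \in \alpha\beta^{-1}U$, i.e.\ $\beta \in U$; with (\ref{zero-in}) this forces $U = \Mm$ for \emph{every} basic neighborhood, so $X \mininf X = \Mm$ for every heavy $X$. Now I invoke the dichotomy: since $\Mm$ is not of finite Morley rank, Theorem~\ref{tt-dichotomy} produces disjoint heavy sets $A, B$. Then $A \mininf B$ is heavy, hence nonempty; choosing $\delta \in A \mininf B$ and setting $W := A \cap (B + \delta)$ gives a heavy set with $W \mininf W = \Mm$, so $-\delta \in W \mininf W$ and $W \cap (W - \delta)$ is heavy. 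But $W \subseteq A$ and $W - \delta \subseteq B$, so $W \cap (W - \delta) \subseteq A \cap B = \emptyset$, contradicting that the empty set is light (Theorem~\ref{heavy-light}, using that $\Mm$ is infinite). For the $M$-definable clause I would run this $M$-definably: if every $M$-definable basic neighborhood contains $\alpha$, scaling by elements of $M^\times$ (which preserves $M$-definability) shows each such $U$ contains $M$, whence $\Mm \setminus U$ is $M$-definable with no point in $M$ and so is empty, i.e.\ $U = \Mm$; one then needs $M$-definable disjoint heavy sets, obtained by re-running the proof of Theorem~\ref{tt-dichotomy} on an $M$-definable critical coordinate configuration (in the Morley-rank $> 1$ case pushing the separating formula down to $M$ via Proposition~\ref{basic-properties}(\ref{bp-1}), and in the Morley-rank $1$ case taking $x$ to be an $M$-point of the nonempty $M$-definable set $\Mm \setminus (W - W)$), after which the empty-set contradiction above goes through with all sets $M$-definable.

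I expect the genuinely routine parts to stay routine: (\ref{heavy-nbhd})--(\ref{scaling}) and the squeeze underlying (\ref{filtered-isect}) and (\ref{non-degen}) are just bookkeeping with Remark~\ref{mininf-props}. The one place needing care is the $M$-definable half of (\ref{non-degen})---verifying that Theorem~\ref{tt-dichotomy} can be made to output $M$-definable disjoint heavy sets as soon as $M$ defines a critical coordinate configuration---while the conceptual heart of (\ref{non-degen}) is simply the observation that ``every basic neighborhood is all of $\Mm$'' collapses heavy sets into each other and so cannot coexist with two disjoint heavy ones.
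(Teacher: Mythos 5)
Parts (1)--(4) and the parameter-free half of (5) are correct. (1)--(3) coincide with the paper's proof. In (4) you arrive at the same witness $X_3 = X_1 \cap (X_2+\delta)$ as the paper; for the $M$-definable clause you pull $\delta$ into $M$ using that $X_1 \mininf X_2$ is $M$-definable and nonempty, whereas the paper instead moves the parameter of $X_3$ after the fact via definability of heaviness in families --- both routes work. In (5) your argument by contradiction (``if some $\alpha\ne 0$ lies in every basic neighborhood then every basic neighborhood equals $\Mm$, which is incompatible with the existence of two disjoint heavy sets'') is a sound reorganization of the paper's direct construction, which takes disjoint heavy $X,Y$, sets $Y'=(X-\delta)\cap Y$, and observes $\delta\notin Y'\mininf Y'$; the two are essentially the same idea.

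The one genuine gap is in the $M$-definable clause of (5). Your plan needs $M$-definable disjoint heavy sets, which you propose to obtain by re-running the proof of Theorem~\ref{tt-dichotomy} over an $M$-definable critical coordinate configuration. But the proposition places no such hypothesis on $M$: it is an arbitrary small model containing $\alpha$, and there is no reason it should define a critical coordinate configuration (elsewhere the paper is careful to add exactly this hypothesis when it is needed, e.g.\ in Lemma~\ref{inf-heart}, and to remove it afterwards by a parameter-pulling argument as in Lemma~\ref{pull-down}). As written, this step would fail. The repair is short and is what the paper does: first produce \emph{some} basic neighborhood $\phi(\Mm;b)\mininf\phi(\Mm;b)$ avoiding $\alpha$, with $b$ arbitrary, by the parameter-free argument; then note that the condition ``$\phi(\Mm;b')$ is heavy and $\alpha\notin\phi(\Mm;b')\mininf\phi(\Mm;b')$'' is an $M$-definable condition on $b'$ (by Remark~\ref{def-in-fams} and the fact that $\alpha\in\dcl(M)$) and is nonempty, hence is realized by some $b'\in\dcl(M)$. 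This yields an $M$-definable basic neighborhood avoiding $\alpha$, with no need for $M$-definable disjoint heavy sets. With that substitution your proof is complete.
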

\begin{proof}
  \begin{enumerate}
  \item This is part of Remark~\ref{mininf-props}.
  \item If $X$ is heavy, then $0 \in X \mininf X$ because $X \cap (X +
    0)$ is the heavy set $X$.
  \item If $U = X \mininf X$, then $\alpha \cdot U = (\alpha \cdot X)
    \mininf (\alpha \cdot X)$ by Remark~\ref{mininf-props}, and
    $\alpha \cdot X$ is heavy by Theorem~\ref{heavy-light}.
  \item Let $U_i = X_i \mininf X_i$.  By Theorem~\ref{heavy-light} the
    set $X_1 \mininf X_2$ is heavy, hence non-empty.  Take $\delta \in
    X_1 \mininf X_2$.  Then $X_3 := X_1 \cap (X_2 + \delta)$ is heavy.
    By Remark~\ref{mininf-props}
    \begin{align*}
      X_3 \mininf X_3 &\subseteq X_1 \mininf X_1 = U_1 \\
      X_3 \mininf X_3 & \subseteq (X_2 + \delta) \mininf (X_2 +
      \delta) = X_2 \mininf X_2 = U_2.
    \end{align*}
    If $U_1$ and $U_2$ are $M$-definable and $X_3 = \phi(\Mm;b_0)$,
    the set of $b$ such that
    \begin{equation*}
      \emptyset \ne \phi(\Mm;b) \mininf \phi(\Mm;b) \subseteq U_1 \cap U_2
    \end{equation*}
    is $M$-definable and non-empty.  Therefore we can move $X_3$ to be
    $M$-definable.
  \item If $\Mm$ is not of finite morley rank, we can find two
    disjoint heavy sets $X, Y$.  By Theorem~\ref{heavy-light}, $X
    \mininf Y \ne \emptyset$.  Take $\delta \in X \mininf Y$.  Then
    the sets
    \begin{align*}
      X' & := X \cap (Y + \delta) \\
      Y' & := (X - \delta) \cap Y
    \end{align*}
    are heavy---$X'$ is heavy by choice of $\delta$ and $Y'$ is a
    translate of $X'$.  Also,
    \begin{equation*}
      Y' \cap (Y' + \delta) = Y' \cap X' \subseteq Y \cap X = \emptyset,
    \end{equation*}
    so $\delta \notin Y' \mininf Y'$.  Because $Y'$ is heavy, there is
    a basic neighborhood $U_0 = Y' \mininf Y'$ such that $\delta
    \notin U_0$.  By part 2, $\delta \ne 0$.  If $\alpha$ is a
    non-zero element of $\Mm$, then we can find $\beta \in \Mm^\times$
    such that $\alpha = \beta \cdot \delta$.  Then $\alpha = \beta
    \cdot \delta$ is not in $U = \beta \cdot U_0$, which is a basic
    neighborhood by part 3.

    Finally, suppose that $\alpha$ is a non-zero element of a small
    model $M$.  We know that there is some heavy set $\phi(\Mm;b)$
    such that
    \begin{equation*}
      \alpha \notin \phi(\Mm;b) \mininf \phi(\Mm;b).
    \end{equation*}
    As before, the set of $b$ with this property is $M$-definable,
    hence we can pull $b$ into $M$, producing an $M$-definable basic
    neighborhood that avoids $\alpha$. \qedhere
  \end{enumerate}
\end{proof}

\subsection{The set of infinitesimals}
\begin{definition}
  Let $M$ be a small model.  An element $\varepsilon \in \Mm$ is an
  \emph{$M$-infinitesimal} if $\varepsilon$ lies in every $M$-definable
  basic neighborhood.  Equivalently, for every $M$-definable heavy set
  $X$, the intersection $X \cap (X + \varepsilon)$ is heavy.  We let
  $I_M$ denote the set of $M$-infinitesimals.
\end{definition}
\begin{remark}
  Note that $I_M$ is type-definable over $M$, as it is the small
  intersection
  \begin{equation*}
    I_M = \bigcap_{X \text{ heavy and $M$-definable}} X \mininf X.
  \end{equation*}
  Moreover, this intersection is \emph{directed}, by
  Proposition~\ref{basic-nbhds}.
\end{remark}
\begin{remark}
  \label{change-of-M}
  If $M' \succeq M$, then $I_{M'} \subseteq I_M$ because there are
  more neighborhoods in the intersection.
\end{remark}
\begin{remark}\label{basic-infs}
  Let $M$ be a small model.
  \begin{enumerate}
  \item \label{non-trivial} If $X$ is a definable set containing
    $I_M$, then $X$ is heavy.  In particular, if $\Mm$ is infinite,
    then $I_M \ne 0$.
  \item $0 \in I_M$.
  \item \label{infinitesimal-times-standard} $I_M \cdot M \subseteq
    I_M$.
  \item If $\Mm$ is not of finite Morley rank, then $I_M \cap M =
    \{0\}$.
  \end{enumerate}
\end{remark}
\begin{proof}
  \begin{enumerate}
  \item The fact that $X \supseteq I_M$ implies by compactness that
    there exist $M$-definable basic neighborhoods $U_1, \ldots, U_n$
    such that
    \begin{equation*}
      X \supseteq U_1 \cap \cdots \cap U_n.
    \end{equation*}
    However, the fact that the intersection is directed (i.e.,
    Proposition~\ref{basic-nbhds}.\ref{filtered-isect}) implies that
    we can take $n = 1$.  Then $X \supseteq U_1$ and $U_1$ is heavy
    (by Proposition~\ref{basic-nbhds}.\ref{heavy-nbhd}), so $X$ is
    heavy.
  \item Every basic neighborhood contains $0$ by
    Proposition~\ref{basic-nbhds}.\ref{zero-in}.
  \item Suppose $a \in M$.  We claim that $a \cdot I_M \subseteq I_M$.
    If $a = 0$ this is the previous point.  Otherwise, the two
    collections
    \begin{align*}
      \{U & ~|~ U \text{ an $M$-definable basic neighborhood}\} \\
      \{a \cdot U &~|~ U \text{ an $M$-definable basic neighborhood}\}
    \end{align*}
    are equal by Proposition~\ref{basic-nbhds}.\ref{scaling}, so they
    have the same intersection.  But the intersection of the first is
    $I_M$ and the intersection of the second is $a \cdot I_M$.  Thus
    $a \cdot I_M = I_M$.
  \item This follows directly from
    Proposition~\ref{basic-nbhds}.\ref{non-degen}. \qedhere
  \end{enumerate}
\end{proof}

\subsection{The group of infinitesimals}
\begin{definition}
  Let $M$ be a small model and $X \subseteq \Mm$ be $M$-definable.  An
  element $\delta \in \Mm$ is said to \emph{$M$-displace} the set $X$
  if
  \begin{equation*}
    x \in X \cap x \in M \implies x + \delta \notin X.
  \end{equation*}
\end{definition}
\begin{lemma}\label{heirs}
  Let $M \preceq M'$ be an inclusion of small models, and $\varepsilon,
  \varepsilon'$ be elements of $\Mm$.  Suppose that $\tp(\varepsilon'/M')$
  is an heir of $\tp(\varepsilon/M)$.
  \begin{enumerate}
  \item If $\varepsilon$ is an $M$-infinitesimal, then $\varepsilon'$ is an
    $M'$-infinitesimal.
  \item If $X \subseteq \Mm$ is $M$-definable and $M$-displaced by
    $\varepsilon$, then $X$ is $M'$-displaced by $\varepsilon'$.
  \end{enumerate}
\end{lemma}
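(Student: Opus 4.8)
My plan rests on a single reformulation of the heir hypothesis. Write $p = \tp(\varepsilon/M)$ and $q = \tp(\varepsilon'/M')$. Since $q$ is an heir of $p$, for every formula $\theta(x,\vec{z})$ over $M$ one has the implication
\[
\left(\theta(x,\vec{b}) \in p \text{ for every } \vec{b} \in M\right) \implies \left(\theta(x,\vec{b}') \in q \text{ for every } \vec{b}' \in M'\right).
\]
Indeed, if $\theta(x,\vec{b}') \notin q$ for some $\vec{b}' \in M'$, then $\neg\theta(x,\vec{b}') \in q$, and applying the heir property to the $M$-formula $\neg\theta(x,\vec{z})$ yields some $\vec{b} \in M$ with $\neg\theta(x,\vec{b}) \in p$, contradicting the left-hand side. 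This ``descent of formulas'' is the only property of heirs I will use.

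For part~1, I would first record the definitional reformulation: for a small model $N$, an element is an $N$-infinitesimal iff it lies in $X \mininf X$ for every $N$-definable heavy $X$, and every $N$-definable heavy set has the form $\phi(\Mm;\vec{b})$ for some $\vec{b} \in N$ and some formula $\phi(x,\vec{z})$ over $\emptyset$. By Remark~\ref{mininf-props} and Theorem~\ref{heavy-light} (definability of heaviness, hence of $\mininf$, in families), for each such $\phi$ there is a formula $\theta_\phi(x,\vec{z})$, which we may take over $M$, expressing
\[
\phi(\Mm;\vec{z}) \text{ is heavy} \implies x \in \phi(\Mm;\vec{z}) \mininf \phi(\Mm;\vec{z}).
\]
Then ``$\varepsilon$ is an $M$-infinitesimal'' is literally the statement that $\theta_\phi(x,\vec{b}) \in p$ for every $\emptyset$-formula $\phi$ and every $\vec{b} \in M$ (when $\phi(\Mm;\vec{b})$ is heavy this says $\varepsilon$ lies in the $M$-definable basic neighborhood $\phi(\Mm;\vec{b}) \mininf \phi(\Mm;\vec{b})$; otherwise $\theta_\phi$ holds vacuously), and likewise ``$\varepsilon'$ is an $M'$-infinitesimal'' is the statement that $\theta_\phi(x,\vec{b}') \in q$ for every $\phi$ and every $\vec{b}' \in M'$. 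The implication between the two is now immediate from the descent property applied to each $\theta_\phi$.

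For part~2, write $X = \phi(\Mm)$ for a formula $\phi(x)$ over $M$ (folding the parameters naming $X$ into $M$), and set $\psi(x,z) := \phi(z) \to \neg\phi(z + x)$, again a formula over $M$. By definition, ``$X$ is $M$-displaced by $\varepsilon$'' says precisely that $\psi(\varepsilon,m)$ holds for every $m \in M$, i.e.\ $\psi(x,m) \in p$ for all $m \in M$; and ``$X$ is $M'$-displaced by $\varepsilon'$'' says precisely that $\psi(x,m') \in q$ for all $m' \in M'$. This is once more a direct instance of the descent property, applied to the single formula $\psi$.

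I do not expect any serious obstacle here; the content is just packaging the definitions correctly. The one point worth care is that basic neighborhoods and heavy sets defined over $M'$ are in general unrelated to those over $M$ — they need not be translates or scalings of $M$-definable ones — so part~1 must be run through the uniformly definable family $\{\phi(\Mm;\vec{z}) \mininf \phi(\Mm;\vec{z})\}_{\vec{z}}$ together with the heir hypothesis, rather than by comparing the partial types $I_M$ and $I_{M'}$ directly (their set-theoretic relationship $I_{M'} \subseteq I_M$ from Remark~\ref{change-of-M} points the wrong way). The uniform definability that makes this work is exactly what Remark~\ref{mininf-props} and Theorem~\ref{heavy-light} provide.
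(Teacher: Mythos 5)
Your proof is correct and is essentially the paper's argument: both hinge on the uniform definability (in the parameter $\vec z$) of the conditions ``$\phi(\Mm;\vec z)$ is heavy and $x\in\phi(\Mm;\vec z)\mininf\phi(\Mm;\vec z)$'' and ``$\phi(z)\to\neg\phi(z+x)$'', and then transfer witnesses between $M'$ and $M$ via the heir hypothesis. The only difference is presentational: the paper first reduces to $\varepsilon'=\varepsilon$ and phrases the transfer as finite satisfiability of $\tp(b/M\varepsilon)$ in $M$ (the coheir dual), while you apply the heir definition directly to the formulas $\theta_\phi$ and $\psi$ over $M$ --- the same argument.
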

\begin{proof}
  The assumptions imply that $\varepsilon' \equiv_M \varepsilon$, and the
  statements about $\varepsilon$ are $M$-invariant, so we may assume
  $\varepsilon' = \varepsilon$.
  \begin{enumerate}
  \item Suppose $\varepsilon$ fails to be $M'$-infinitesimal.  Then there
    is a tuple $b \in \dcl(M')$ such that $\phi(\Mm;b)$ is heavy and
    $\varepsilon \notin \phi(\Mm;b) \mininf \phi(\Mm;b)$.  These
    conditions on $b$ are $M\varepsilon$-definable.  Because
    $\tp(b/M\varepsilon)$ is finitely satisfiable in $M$, we can find
    such a $b$ in $M$.  Then $\varepsilon$ fails to be $M$-infinitesimal
    because it lies outside $\phi(\Mm;b) \mininf \phi(\Mm;b)$.
  \item Suppose $\varepsilon$ fails to $M'$-displace $X$.  Then there
    exists $b \in X(M')$ such that $b + \varepsilon \in X = X(\Mm)$.
    These conditions on $b$ are $M\varepsilon$-definable, so we can find
    such a $b$ in $M$.  Then $\varepsilon$ fails to $M$-displace $X$. \qedhere
  \end{enumerate}
\end{proof}
\begin{lemma}
  \label{inf-heart}
  Let $M$ be a small model defining a critical coordinate
  configuration.  Let $\varepsilon$ be an $M$-infinitesimal, and $X
  \subseteq \Mm$ be an $M$-definable set that is $M$-displaced by
  $\varepsilon$.  Then $X$ is light.
\end{lemma}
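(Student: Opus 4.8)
The plan is to argue by contradiction, following the dp-minimal argument of \cite{myself}, Chapter~9. So assume $X$ is heavy. Since $\varepsilon$ is an $M$-infinitesimal it lies in every $M$-definable basic neighborhood, in particular in $X \mininf X$, which is such a neighborhood because $X$ is heavy and $M$-definable; thus $X \cap (X + \varepsilon)$ is heavy. The displacement hypothesis says that for $m \in X(M)$ we have $m + \varepsilon \notin X$. This is already a contradiction when $\varepsilon \in M$: then $X \cap (X + \varepsilon)$ is $M$-definable and heavy, hence nonempty, hence meets $M$, but any $m \in (X \cap (X + \varepsilon))(M)$ has $m - \varepsilon \in X(M)$ and $(m - \varepsilon) + \varepsilon = m \in X$, against displacement. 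So from now on $\varepsilon \notin M$; since $M$ is a model this also means $\varepsilon \notin \acl(M)$.

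For the remaining case I first transport everything along an heir sequence. Using Lemma~\ref{heirs}, build models $M = M_0 \preceq M_1 \preceq \cdots$ together with elements $\varepsilon = \varepsilon_0, \varepsilon_1, \varepsilon_2, \ldots$ such that $\varepsilon_i \in M_i$ and $\tp(\varepsilon_{i+1}/M_i)$ is an heir of $\tp(\varepsilon/M)$; the sequence $(\varepsilon_i)$ may be taken $M$-indiscernible. By Lemma~\ref{heirs}, each $\varepsilon_i$ is an $M_{i-1}$-infinitesimal, hence (by Remark~\ref{change-of-M}) an $M$-infinitesimal, and $X$ is $M_{i-1}$-displaced, hence $M$-displaced, by $\varepsilon_i$; the $\varepsilon_i$ are pairwise distinct because $\varepsilon \notin \acl(M)$. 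Now fix a global type $q$ finitely satisfiable in $M$ with ``$x \in X$'' $\in q$ (such $q$ exists, since $X$ is heavy, hence infinite and $M$-definable), and take $c \models q|N$ for a small model $N$ containing $M$ and all the $\varepsilon_i$. Then $c \in X$, and for each $i$ the formula ``$z \in X \wedge z + \varepsilon_i \in X$'' is over $N$; if it lay in $\tp(c/N)$ it would be realized in $M$, giving $m \in X(M)$ with $m + \varepsilon_i \in X$ and contradicting that $X$ is $M$-displaced by $\varepsilon_i$. Hence $c \in X$ while $c + \varepsilon_i \notin X$ for every $i$.

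It remains to see that this configuration cannot occur -- a coheir-generic $c \in X$ together with an $M$-indiscernible sequence $(\varepsilon_i)$ of $M$-infinitesimals with $c + \varepsilon_i \notin X$ for all $i$; equivalently (taking $i = 0$, and noting that $X \cap (X - \varepsilon)$ is heavy by translation-invariance of heaviness while having no point of $M$), one must contradict the coexistence of heaviness and displacement. This is where NIP and the externally-definable machinery of \S\ref{sec:heavy-light} do the real work. One uses that each slice $X \cap (X - \varepsilon_i)$ is heavy (as $\varepsilon_i \in I_M \subseteq X \mininf X$), that each one-sided boundary $X \setminus (X - \varepsilon_i)$ is also heavy -- by Lemma~\ref{something-similar-2}, whose hypotheses hold because $M$ defines a critical coordinate configuration and $X(M) \subseteq X \setminus (X - \varepsilon_i)$ by displacement -- and that none of these sets meets $M$; then Lemma~\ref{chernikov-2} (again available since $M$ defines a critical coordinate configuration) is used to pass between these externally definable sets and genuine $M$-definable heavy subsets of $X$, while NIP bounds how far the displacement can be propagated along the indiscernible sequence. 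Combining these ingredients contradicts the heaviness of $X$. I expect this last NIP/heaviness argument -- the precise orchestration of the displaced slices, the cover, and the external-to-internal transfer -- to be the main obstacle; everything before it is routine bookkeeping with heirs (Lemma~\ref{heirs}) and with the closure properties of heavy sets recorded in Theorem~\ref{heavy-light} and Remark~\ref{mininf-props}.
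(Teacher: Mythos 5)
There is a genuine gap, and you essentially flag it yourself: the entire contradiction is deferred to a final step that you describe only as "the precise orchestration of the displaced slices, the cover, and the external-to-internal transfer," which you expect "to be the main obstacle." That step is the proof; everything before it is setup. Moreover, the intermediate configuration you actually derive --- a coheir-generic $c \in X$ with $c + \varepsilon_i \notin X$ for all $i$ --- is not contradictory on its own. It only exhibits one point lying in a single "branch" of behaviour (namely $x + \varepsilon_i \notin X$ for every $i$), and the existence of such a point is perfectly consistent with NIP. So the claim "it remains to see that this configuration cannot occur" is not correct as stated; no contradiction can be extracted from that configuration alone.

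The missing idea in the paper's proof is a binary-tree construction. For each string $\alpha \in \{0,1\}^{<\omega}$ of length $n$ one defines $X_{\alpha 0} = \{x \in X_\alpha \mid x + \varepsilon_n \notin X\}$ and $X_{\alpha 1} = \{x \in X_\alpha \mid x + \varepsilon_n \in X\}$, starting from $X_{\{\}} = X$. The two ingredients you correctly identified are then used to show that heaviness propagates down \emph{both} children of every node: the $1$-child is heavy because $X_{\alpha 1} \supseteq X_\alpha \cap (X_\alpha - \varepsilon_n)$ and $\varepsilon_n$ is an $M_n$-infinitesimal; the $0$-child is heavy because $X_\alpha(M_n) \subseteq X_{\alpha 0}$ by displacement, so Lemma~\ref{something-similar-2} applies. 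Hence if $X$ is heavy, every $X_\alpha$ is heavy and in particular nonempty, which says exactly that the formula $x + y \in X$ has the independence property --- the contradiction with NIP. (Lemma~\ref{chernikov-2}, which you invoke, is not needed here; it is used in the subsequent Lemma~\ref{inf-subtr}.) Your heir-sequence setup matches the paper's, but without the tree of sets $X_\alpha$ and the observation that all $2^n$ cells at every level are nonempty, there is no contradiction to be had.
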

\begin{proof}
  Build a sequence $\varepsilon_0, \varepsilon_1, \ldots$ and $M = M_0
  \preceq M_1 \preceq M_2 \preceq \cdots$ so that for each $i$,
  \begin{itemize}
  \item $M_{i+1} \ni \varepsilon_i$
  \item $\tp(\varepsilon_i/M_i)$ is an heir of $\tp(\varepsilon/M)$.
  \end{itemize}
  By Lemma~\ref{heirs},
  \begin{itemize}
  \item $\varepsilon_i$ is $M_i$-infinitesimal.
  \item The set $X$ is $M_i$-displaced by $\varepsilon_i$.
  \end{itemize}
  For $\alpha$ a string in $\{0,1\}^{< \omega}$, define $X_\alpha$
  recursively as follows:
  \begin{itemize}
  \item $X_{\{\}} = X$.
  \item If $\alpha$ has length $n$, then $X_{\alpha 0} = \{x \in
    X_\alpha ~|~ x + \varepsilon_n \notin X\}$.
  \item If $\alpha$ has length $n$, then $X_{\alpha 1} = \{x \in
    X_\alpha ~|~ x + \varepsilon_n \in X\}$.
  \end{itemize}
  For example
  \begin{align*}
    X_0 = \{x \in X ~|~ &x + \varepsilon_0 \notin X\} \\
    X_1 = \{x \in X ~|~ &x + \varepsilon_0 \in X\} \\
    X_{011} = \{x \in X ~|~ &x + \varepsilon_0 \notin X,\\ &x + \varepsilon_1
    \in X,\\&x + \varepsilon_2 \in X\}.
  \end{align*}
  Note that some of the $X_\alpha$ must be empty, by NIP.  Note also
  that if $\alpha$ has length $n$, then $X_\alpha$ is $M_n$-definable.
  \begin{claim}
    If $X_\alpha$ is heavy, then $X_{\alpha 1}$ is heavy.
  \end{claim}
  \begin{claimproof}
    Let $\alpha$ have length $n$.  Then $X_\alpha$ is heavy and
    $M_n$-definable, and $\varepsilon_n$ is $M_n$-infinitesimal.
    Consequently $X_\alpha \cap (X_\alpha - \varepsilon_n)$ is heavy.
    But
    \begin{equation*}
      X_{\alpha 1} = X_\alpha \cap (X - \varepsilon_n) \supseteq X_\alpha
      \cap (X_\alpha - \varepsilon_n),
    \end{equation*}
    and so $X_{\alpha 1}$ must be heavy.
  \end{claimproof}
  \begin{claim}
    If $X_\alpha$ is heavy, then $X_{\alpha 0}$ is heavy.
  \end{claim}
  \begin{claimproof}
    Let $\alpha$ have length $n$.  The set $X_\alpha$ is
    $M_n$-definable.  Note that
    \begin{equation*}
      x \in X_\alpha(M_n) \implies x \in X(M_n) \implies x +
      \varepsilon_n \notin X(M_n)
    \end{equation*}
    because $X$ is $M_n$-displaced by $\varepsilon_n$.  Therefore
    \begin{equation*}
      X_\alpha(M_n) \subseteq X_{\alpha 0}.
    \end{equation*}
    By Lemma~\ref{something-similar-2} it follows that $X_{\alpha 0}$
    is heavy.
  \end{claimproof}
  If $X = X_{\{\}}$ is heavy, then the two claims imply that every
  $X_\alpha$ is heavy, hence non-empty, for every $\alpha$.  This
  contradicts NIP.
\end{proof}

\begin{lemma}
  \label{inf-subtr}
  Let $M$ be a model defining a critical coordinate configuration.
  Let $\varepsilon_1, \varepsilon_2$ be two $M$-infinitesimals.  Then
  $\varepsilon_1 - \varepsilon_2$ is an $M$-infinitesimal.
\end{lemma}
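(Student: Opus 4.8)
The plan is to reduce to the following statement: for every $M$-definable heavy set $X$, the set $X \cap (X + \varepsilon_1 - \varepsilon_2)$ is heavy. Since $I_M$ is the directed intersection of the basic neighborhoods $X \mininf X$, and $X \cap (X + \delta)$ heavy is exactly $\delta \in X \mininf X$, this will give $\varepsilon_1 - \varepsilon_2 \in I_M$. So I would fix such an $X$ and assume, toward a contradiction, that $X \cap (X + \varepsilon_1 - \varepsilon_2)$ is light.

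First I would build a chain of small models $M = N_0 \preceq N_1 \preceq N_2 \preceq \cdots$ and pairs $(\varepsilon_1^{(n)},\varepsilon_2^{(n)})$ such that $(\varepsilon_1^{(n)},\varepsilon_2^{(n)})$ realizes an heir of $\tp(\varepsilon_1\varepsilon_2/M)$ over $N_n$, and $N_{n+1}$ is a small model containing $N_n \cup \{\varepsilon_1^{(n)},\varepsilon_2^{(n)}\}$. Because the restriction of an heir to a subtuple is again an heir, $\tp(\varepsilon_j^{(n)}/N_n)$ is an heir of $\tp(\varepsilon_j/M)$, so Lemma~\ref{heirs} gives that each $\varepsilon_j^{(n)}$ is an $N_n$-infinitesimal (hence also $-\varepsilon_j^{(n)} \in I_{N_n}$, by Remark~\ref{basic-infs}). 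Moreover $(\varepsilon_1^{(n)},\varepsilon_2^{(n)}) \equiv_M (\varepsilon_1,\varepsilon_2)$, so $\varepsilon_1^{(n)} - \varepsilon_2^{(n)} \equiv_M \varepsilon_1 - \varepsilon_2$; since $X$ is $M$-definable and ``light'' is a definable (hence $M$-invariant) condition on the family $\{X \cap (X+\delta)\}_\delta$ by Theorem~\ref{heavy-light}, it follows that $X \cap (X + \varepsilon_1^{(n)} - \varepsilon_2^{(n)})$ is light for every $n$.

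Next I would run a binary-tree argument in the style of Lemma~\ref{inf-heart}. Put $X_{\langle\rangle} = X$, and for $\alpha$ of length $n$ set
\[ X_{\alpha 1} = \{x \in X_\alpha ~|~ x + \varepsilon_1^{(n)} \in X\}, \qquad X_{\alpha 0} = \{x \in X_\alpha ~|~ x + \varepsilon_1^{(n)} \notin X\}. \]
Then $X_\alpha$ is $N_n$-definable and contained in $X$. The heart of the argument is the claim that if $X_\alpha$ is heavy then both children are heavy. For $X_{\alpha 1}$: it contains $X_\alpha \cap (X_\alpha - \varepsilon_1^{(n)})$, which is heavy since $X_\alpha$ is $N_n$-definable and heavy and $-\varepsilon_1^{(n)} \in I_{N_n}$. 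For $X_{\alpha 0}$: the set $B := \{x \in X_\alpha ~|~ x + \varepsilon_2^{(n)} \in X\}$ is heavy by the same reasoning with $\varepsilon_2^{(n)}$; now if $x \in X_{\alpha 1} \cap B$ then $x + \varepsilon_1^{(n)}$ and $x + \varepsilon_2^{(n)}$ both lie in $X$, whence $x + \varepsilon_1^{(n)} \in X \cap (X + \varepsilon_1^{(n)} - \varepsilon_2^{(n)})$, so $X_{\alpha 1}\cap B$ is, up to the translate by $\varepsilon_1^{(n)}$, contained in a light set and is therefore light (Theorem~\ref{heavy-light}). Hence $B \setminus X_{\alpha 1}$ is a heavy set minus a light set, so heavy, and $B \setminus X_{\alpha 1} \subseteq X_{\alpha 0}$; thus $X_{\alpha 0}$ is heavy.

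By induction every $X_\alpha$ is heavy, hence infinite and in particular nonempty (Theorem~\ref{heavy-light}; we may assume $\Mm$ is infinite, as otherwise $I_M = \{0\}$ and there is nothing to prove). Choosing $x_\alpha \in X_\alpha$ for each $\alpha \in \{0,1\}^{<\omega}$ gives $x_\alpha + \varepsilon_1^{(j)} \in X \iff \alpha(j) = 1$ for all $j < |\alpha|$, so the formula $\varphi(x;y)$ given by $x + y \in X$ has the independence property, contradicting NIP. The part I expect to be the real obstacle is the very first step: a priori, once $\varepsilon_1^{(n)}$ is absorbed into the base model, $\varepsilon_2^{(n)}$ need no longer be infinitesimal over it (compare Remark~\ref{basic-infs}), so it is essential that $\varepsilon_1^{(n)}$ and $\varepsilon_2^{(n)}$ arise as the two coordinates of a single heir of the joint type $\tp(\varepsilon_1\varepsilon_2/M)$ over $N_n$ — this is exactly what simultaneously makes each coordinate $N_n$-infinitesimal and preserves the $M$-type of the difference.
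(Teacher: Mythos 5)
Your proof is correct, but it takes a genuinely different route from the paper's. The paper covers $X$ by the three sets $D_0 = \{x \in X \mid x + \varepsilon_1 \in X,\ x+\varepsilon_2 \in X\}$, $D_1 = \{x \mid x+\varepsilon_1 \notin X\}$, $D_2 = \{x \mid x + \varepsilon_2 \notin X\}$, applies the externally-definable covering lemma (Lemma~\ref{chernikov-2}, which rests on honest definitions) to produce an $M$-definable heavy $X' \subseteq X$ with $X'(M) \subseteq D_j$, rules out $j = 1,2$ by noting that $X'$ would then be $M$-displaced by $\varepsilon_j$, contradicting Lemma~\ref{inf-heart}, and finally gets heaviness of $D_0$ from Lemma~\ref{something-similar-2} together with $D_0 + \varepsilon_1 \subseteq X \cap (X + \varepsilon_1 - \varepsilon_2)$. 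You instead inline the tree/NIP argument of Lemma~\ref{inf-heart} directly, with the essential twist — which you correctly identify as the crux — of realizing both infinitesimals simultaneously as the two coordinates of a single heir of the joint type $\tp(\varepsilon_1\varepsilon_2/M)$ over each $N_n$: restriction of an heir to a subtuple is an heir, so each coordinate stays infinitesimal over $N_n$, while preservation of the $M$-type of the pair keeps $X \cap (X + \varepsilon_1^{(n)} - \varepsilon_2^{(n)})$ light (lightness being $M$-definable in families by Theorem~\ref{heavy-light}). Your argument that $X_{\alpha 0}$ is heavy — via the auxiliary heavy set $B$ and the observation that $(X_{\alpha 1} \cap B) + \varepsilon_1^{(n)}$ lands inside the light set $X \cap (X + \varepsilon_1^{(n)} - \varepsilon_2^{(n)})$ — is exactly the step that replaces the paper's appeal to external definability. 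What your route buys is that this lemma becomes independent of the honest-definitions machinery of Lemmas~\ref{chernikov-1}--\ref{something-similar-2}; what the paper's route buys is brevity, since \ref{chernikov-2} and \ref{inf-heart} are already developed and reused elsewhere. Both proofs bottom out in the same NIP obstruction (the formula $x + y \in X$ shattering the $\varepsilon_1^{(j)}$).
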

\begin{proof}
  Let $X$ be an $M$-definable heavy set; we will show that $X \cap (X
  + \varepsilon_1 - \varepsilon_2)$ is heavy.  Note that $X$ is covered by
  the union of the following three sets:
  \begin{align*}
    D_0 &:= \{x \in X ~|~ x + \varepsilon_1 \in X,~ x + \varepsilon_2 \in X\}
    \\
    D_1 & := \{x \in \Mm ~|~ x + \varepsilon_1 \notin X \} \\
    D_2 & := \{x \in \Mm ~|~ x + \varepsilon_2 \notin X \}.
  \end{align*}
  By Lemma~\ref{chernikov-2}, there is a $j \in \{0,1,2\}$ and an
  $M$-definable heavy set $X' \subseteq X$ such that $X'(M) \subseteq
  D_j$.  If $j > 0$, then
  \begin{equation*}
    x \in X'(M) \implies x \in D_j \implies x + \varepsilon_j \notin X
    \implies x + \varepsilon_j \notin X'.
  \end{equation*}
  In other words, $X'$ is $M$-displaced by $\varepsilon_j$.  But $X'$ is
  heavy and $\varepsilon_j$ is an $M$-infinitesimal, so this would
  contradict Lemma~\ref{inf-heart}.  Therefore $j = 0$.  The fact that
  $X'(M) \subseteq D_0$ implies that $D_0$ is heavy, by
  Lemma~\ref{something-similar-2}.  By definition of $D_0$,
  \begin{align*}
    D_0 + \varepsilon_1 &\subseteq X \\
    D_0 + \varepsilon_2 &\subseteq X \\
    D_0 + \varepsilon_1 &\subseteq X + \varepsilon_1 - \varepsilon_2 \\
    D_0 + \varepsilon_1 & \subseteq X \cap (X + \varepsilon_1 - \varepsilon_2).
  \end{align*}
  Heaviness of $D_0$ then implies heaviness of $X \cap (X + \varepsilon_1
  - \varepsilon_2)$.
\end{proof}

\begin{lemma} \label{pull-down}
  Let $U$ be an $M$-definable basic neighborhood.  Then there is an
  $M$-definable basic neighborhood $V$ such that $V - V \subseteq U$.
\end{lemma}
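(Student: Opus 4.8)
The plan is to deduce this from Lemma~\ref{inf-subtr}, the fact that the group of infinitesimals is closed under subtraction, via compactness and the now-standard trick of pulling parameters down into $M$. The point to keep in mind is that Lemma~\ref{inf-subtr} is only available for a model defining a critical coordinate configuration, so I will first enlarge $M$, do the argument there, and then descend.

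First I would fix a small model $M' \succeq M$ which defines a critical coordinate configuration; such an $M'$ exists because critical coordinate configurations exist and are definable over small parameter sets. Since $U$ is $M$-definable it is $M'$-definable, so $U$ is one of the $M'$-definable basic neighborhoods appearing in the intersection defining $I_{M'}$, and hence $I_{M'} \subseteq U$. Moreover, by Lemma~\ref{inf-subtr} applied to $M'$, any two $M'$-infinitesimals $\delta_1,\delta_2$ satisfy $\delta_1 - \delta_2 \in I_{M'} \subseteq U$.

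Now write $I_{M'}$ as the directed intersection $\bigcap_j U_j$ of its $M'$-definable basic neighborhoods. The previous paragraph says that $\bigcap_{j,k}(U_j \times U_k)$, which by directedness (Proposition~\ref{basic-nbhds}.\ref{filtered-isect}) equals $\bigcap_j (U_j \times U_j)$, is contained in the definable set $\{(\delta_1,\delta_2) : \delta_1 - \delta_2 \in U\}$. By compactness in the monster model, finitely many of the $U_j \times U_j$ already have this property, and by directedness a single one does: there is an $M'$-definable basic neighborhood $V'$ with $V' - V' \subseteq U$.

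Finally I would pull down to $M$. Write $V' = \phi(\Mm;b') \mininf \phi(\Mm;b')$ with $\phi(\Mm;b')$ heavy and $b' \in M'$. Because heaviness is definable in families (Theorem~\ref{heavy-light}) and the operation $\mininf$ is uniformly definable (Remark~\ref{mininf-props}) and $U$ is $M$-definable, the set of $b$ such that $\phi(\Mm;b)$ is heavy and $\bigl(\phi(\Mm;b)\mininf\phi(\Mm;b)\bigr) - \bigl(\phi(\Mm;b)\mininf\phi(\Mm;b)\bigr) \subseteq U$ is $M$-definable; it is non-empty, being witnessed by $b'$, so it contains some $b \in M$. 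Then $V := \phi(\Mm;b) \mininf \phi(\Mm;b)$ is an $M$-definable basic neighborhood with $V - V \subseteq U$. There is no genuine obstacle here: all the content is in Lemma~\ref{inf-subtr}, and the only thing to be careful about is that working over $M'$ would leave us with a merely $M'$-definable answer, which is exactly what the final descent step repairs.
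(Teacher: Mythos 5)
Your proposal is correct and follows essentially the same route as the paper: enlarge to a model $M'$ defining a critical coordinate configuration, use Lemma~\ref{inf-subtr} to get $I_{M'} - I_{M'} \subseteq I_{M'} \subseteq U$, extract a single $M'$-definable basic neighborhood $V'$ with $V' - V' \subseteq U$ by compactness and directedness, and then pull the parameter into $M$ using definability of heaviness. The only cosmetic difference is that you observe $I_{M'} \subseteq U$ directly from $U$ being an $M'$-definable basic neighborhood, whereas the paper routes through $I_{M'} \subseteq I_M$ (Remark~\ref{change-of-M}); both are fine.
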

\begin{proof}
  Let $M' \succeq M$ be a larger model that defines a critical
  coordinate configuration.  By Lemma~\ref{inf-subtr} and Remark~\ref{change-of-M},
  \begin{equation*}
    I_{M'} - I_{M'} \subseteq I_{M'} \subseteq I_M \subseteq U.
  \end{equation*}
  The $M'$-definable partial type asserting that
  \begin{align*}
    x & \in I_{M'} \\
    y & \in I_{M'} \\
    x - y & \notin U
  \end{align*}
  is inconsistent.  Because $I_{M'}$ is the directed intersection of
  $M'$-definable basic neighborhoods, there is an $M'$-definable basic
  neighborhood $V'$ such that 
  \begin{equation*}
    x \in V' \wedge y \in V' \implies x - y \in U,
  \end{equation*}
  i.e., $V' - V' \subseteq U$.  In particular, there is a formula
  $\phi(x;z)$ and $c \in \dcl(M')$ such that $\phi(\Mm;c)$ is heavy
  and
  \begin{equation}
    \label{c-cond}
    (\phi(\Mm;c) \mininf \phi(\Mm;c)) - (\phi(\Mm;c) \mininf
    \phi(\Mm;c)) \subseteq U.
  \end{equation}
  But heaviness of $\phi(\Mm;c)$ and condition (\ref{c-cond}) are both
  $M$-definable constraints on $c$, so we may find $c_0 \in \dcl(M)$
  satisfying the same conditions.  Then
  \begin{equation*}
    V := \phi(\Mm;c_0) \mininf \phi(\Mm;c_0)
  \end{equation*}
  is an $M$-definable basic neighborhood such that $V - V \subseteq
  U$.
\end{proof}

\begin{theorem}
  \label{inf-add}
  If $M$ is any model, the set $I_M$ of $M$-infinitesimals is a
  subgroup of the additive group $(\Mm,+)$.
\end{theorem}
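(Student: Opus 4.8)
The plan is to reduce the statement to the single fact that $I_M$ is nonempty and closed under subtraction, since any such subset of an abelian group is a subgroup. Nonemptiness is immediate: $0 \in I_M$ by Remark~\ref{basic-infs}.\ref{infinitesimal-times-standard} (or rather the preceding item). For closure under subtraction, recall that by definition $\varepsilon \in I_M$ precisely when $\varepsilon$ lies in every $M$-definable basic neighborhood, and that $I_M$ is the directed intersection of these neighborhoods.

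So fix $\varepsilon_1, \varepsilon_2 \in I_M$ and let $U$ be an arbitrary $M$-definable basic neighborhood; I must show $\varepsilon_1 - \varepsilon_2 \in U$. Here the key tool is Lemma~\ref{pull-down}: there is an $M$-definable basic neighborhood $V$ with $V - V \subseteq U$. Since $V$ is an $M$-definable basic neighborhood and $\varepsilon_1, \varepsilon_2 \in I_M$, both $\varepsilon_1$ and $\varepsilon_2$ lie in $V$, so $\varepsilon_1 - \varepsilon_2 \in V - V \subseteq U$. As $U$ was an arbitrary $M$-definable basic neighborhood, $\varepsilon_1 - \varepsilon_2$ lies in all of them, i.e., $\varepsilon_1 - \varepsilon_2 \in I_M$. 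This is closure under subtraction; taking $\varepsilon_1 = 0$ gives closure under negation, and combining the two yields closure under addition, so $(I_M,+)$ is a subgroup of $(\Mm,+)$.

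The real content has already been extracted into the lemmas we are invoking; given Lemma~\ref{pull-down}, the present statement is essentially formal. In particular, the delicate point—that $M$ need not itself define a critical coordinate configuration—was dealt with in Lemma~\ref{pull-down} (and, underneath it, in Lemma~\ref{inf-subtr}): one passes to a larger model $M' \succeq M$ that does define a critical coordinate configuration, applies the subtraction estimate for $M'$-infinitesimals together with $I_{M'} \subseteq I_M$, and then descends the resulting witness back into $\dcl(M)$ using the fact that heaviness and the basic-neighborhood relation are definable in families. So I do not anticipate any further obstacle at this stage.
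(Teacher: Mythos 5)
Your proof is correct and is essentially the paper's own argument: the paper likewise gets $0 \in I_M$ from Remark~\ref{basic-infs} and deduces $I_M - I_M \subseteq I_M$ from Lemma~\ref{pull-down}, exactly by the unwinding you spell out. The only quibble is the citation for $0 \in I_M$ (it is the second item of Remark~\ref{basic-infs}, not \ref{infinitesimal-times-standard}), which you already flag yourself.
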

\begin{proof}
  The fact that $0 \in I_M$ follows by \ref{basic-infs}, and the fact
  that $I_M - I_M \subseteq I_M$ is a consequence of
  Lemma~\ref{pull-down}.
\end{proof}

\subsection{Consequences}
We mention a few corollaries of Theorem~\ref{inf-add}.
\begin{remark} \label{exists-topology}
  If $M$ is a field of finite dp-rank but not finite Morley rank, then
  there is a (unique) Hausdorff non-discrete group topology on $(M,+)$
  such that
  \begin{enumerate}
  \item The basic neighborhoods $X \mininf X$ form a basis of
    neighborhoods of 0.
  \item For any $\alpha \in M$, the map $x \mapsto x \cdot \alpha$ is
    continuous.
  \end{enumerate}
  Indeed, this follows formally from the fact that basic neighborhoods
  are filtered with intersection $I_M$, and
  \begin{align*}
    I_M - I_M & \subseteq I_M \\
    0 & \in I_M \\
    \{0\} & \ne I_M \\
    I_M \cap M & = \{0\} \\
    M \cdot I_M & \subseteq I_M.
  \end{align*}
  On the other hand, we have not yet shown that there is a definable
  basis of opens, or that multiplication is continuous in general
  (which would require $I_M \cdot I_M \subseteq I_M$).
\end{remark}
Later (Corollary~\ref{ring-topology}), we will see that multiplication
is continuous.

\begin{corollary} \label{minimal-heavy-subgroup}
  Let $J$ be a subgroup of $(\Mm,+)$, type-definable over a small
  model $M$.  Suppose that every $M$-definable set containing $J$ is
  heavy.  Then $J \supseteq I_M$.
\end{corollary}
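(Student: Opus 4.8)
The plan is to show directly that every $M$-infinitesimal lies in $J$, exploiting the fact that $J$ is a group (so $J - J = J$) together with the containment $X \mininf X \subseteq X - X$ from Remark~\ref{mininf-props}. No coordinate-configuration machinery or finite-Morley-rank dichotomy is needed; the argument is purely formal.

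First I would fix an $M$-infinitesimal $\varepsilon \in I_M$ and an arbitrary $M$-definable set $X$ with $J \subseteq X$. Since $J$ is type-definable over $M$, it is the intersection of all such $X$, so it suffices to prove $\varepsilon \in X$ for each of them. Because $J$ is a subgroup, $J - J = J \subseteq X$; applying compactness to the partial type $x \in J \wedge y \in J$ over $M$ against the $M$-definable condition $x - y \in X$, I obtain an $M$-definable set $X'$ with $J \subseteq X'$ and $X' - X' \subseteq X$.

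Now the hypothesis applies: $X'$ is an $M$-definable set containing $J$, hence heavy, so $U := X' \mininf X'$ is an $M$-definable basic neighborhood. By definition of $I_M$ we get $\varepsilon \in U$, while Remark~\ref{mininf-props} gives $U = X' \mininf X' \subseteq X' - X' \subseteq X$. Thus $\varepsilon \in X$, and since $X$ was an arbitrary $M$-definable superset of $J$, we conclude $\varepsilon \in J$; as $\varepsilon$ was an arbitrary element of $I_M$, this yields $I_M \subseteq J$.

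I do not expect a real obstacle here: the substantive inputs are just the definition of $I_M$ as the intersection of $M$-definable basic neighborhoods, the inclusion $X \mininf X \subseteq X - X$, and the fact that $X \mininf X$ is $M$-definable whenever $X$ is (both from Remark~\ref{mininf-props}). The only point requiring a little care is the compactness step, namely arranging that the refined neighborhood $X'$ can simultaneously be taken to contain $J$ and to satisfy $X' - X' \subseteq X$ — which is exactly the standard ``shrink a neighborhood inside a group'' argument applied to the partial type defining $J$.
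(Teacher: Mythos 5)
Your proof is correct and is essentially identical to the paper's own argument: both use compactness to shrink an arbitrary $M$-definable superset $D \supseteq J$ to a $D'$ with $D' - D' \subseteq D$, invoke the heaviness hypothesis to see $D' \mininf D'$ is a basic neighborhood, and conclude via $I_M \subseteq D' \mininf D' \subseteq D' - D' \subseteq D$. No issues.
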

\begin{proof}
  Let $D$ be any $M$-definable set containing $J$.  We claim $D$
  contains $I_M$.  Because $J - J \subseteq J \subseteq D$ there is,
  by compactness, an $M$-definable set $D'$ such that $D' \supseteq J$
  and $D' - D' \subseteq D$.  By assumption, $D'$ is heavy.  Then $D'
  \mininf D'$ is an $M$-definable basic neighborhood, and so
  \begin{equation*}
    I_M \subseteq D' \mininf D' \subseteq D' - D' \subseteq D.
  \end{equation*}
  As this holds for all $M$-definable $D \supseteq J$, we see that
  $I_M \subseteq J$.
\end{proof}

Because we are in an NIP setting, $G^{00}$ and $G^{000}$ exist for any
type-definable group $G$, by \cite{shelah-g000}, Theorem 1.12 in the abelian case and \cite{gismatullin-g000} in general.
\begin{corollary} \label{triply-connected}
  For any small model $M$, $I_M = I_M^{00} = I_M^{000}$.
\end{corollary}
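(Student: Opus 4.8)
The plan is to establish $I_M = I_M^{00}$ first and then $I_M = I_M^{000}$. The connected components are nested, so $I_M^{000} \le I_M^{00} \le I_M$, and for the first equality it is enough to prove $I_M \subseteq I_M^{00}$. Since $I_M^{00}$ is a subgroup of $(\Mm,+)$ type-definable over $M$, Corollary~\ref{minimal-heavy-subgroup} reduces this to showing that every $M$-definable set containing $I_M^{00}$ is heavy.

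To prove that, I would argue by contradiction with a covering argument. Suppose $D$ is an $M$-definable light set with $I_M^{00} \subseteq D$. The index $[I_M : I_M^{00}]$ is bounded, so there are coset representatives $\varepsilon_j \in I_M$ for $j < \lambda$, with $\lambda$ small, such that $I_M = \bigcup_{j < \lambda}(\varepsilon_j + I_M^{00}) \subseteq \bigcup_{j < \lambda}(\varepsilon_j + D)$. Pick a small model $M^* \succeq M$ that contains every $\varepsilon_j$ and defines a critical coordinate configuration. By Remark~\ref{change-of-M}, $I_{M^*} \subseteq I_M$, so $I_{M^*}$ is covered by the $M^*$-definable sets $\varepsilon_j + D$, each of which is light by Theorem~\ref{heavy-light}.\ref{light-translate}. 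Because $I_{M^*}$ is the directed intersection of $M^*$-definable basic neighborhoods, compactness (and directedness) produces a single $M^*$-definable basic neighborhood $U$ with $U \subseteq (\varepsilon_{j_1} + D) \cup \cdots \cup (\varepsilon_{j_\ell} + D)$ for finitely many indices $j_1, \ldots, j_\ell$. Now $U$ is heavy by Proposition~\ref{basic-nbhds}.\ref{heavy-nbhd}; Lemma~\ref{chernikov-2}, applied over $M^*$, yields an index $k$ and an $M^*$-definable heavy $U' \subseteq U$ with $U'(M^*) \subseteq \varepsilon_{j_k} + D$, and then Lemma~\ref{something-similar-2} forces $\varepsilon_{j_k} + D$ to be heavy, contradicting its lightness. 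Hence every $M$-definable $D \supseteq I_M^{00}$ is heavy, and Corollary~\ref{minimal-heavy-subgroup} gives $I_M = I_M^{00}$.

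For $I_M = I_M^{000}$, the key observation is that the covering argument just sketched used only that $I_M^{00}$ is $\Aut(\Mm/M)$-invariant and of bounded index in the type-definable group $I_M$, not that it is itself type-definable; the identical computation therefore shows that every $M$-definable set containing $I_M^{000}$ is heavy. From here $I_M^{000} = I_M$ follows, either by invoking that $G^{00} = G^{000}$ for abelian type-definable groups in NIP theories (so $I_M^{000} = I_M^{00} = I_M$), or by re-running the proof of Corollary~\ref{minimal-heavy-subgroup} with $I_M^{000}$ substituted for the type-definable subgroup. I expect the covering argument to be the substantive step --- especially the passage from the bounded cover of $I_{M^*}$ to a finite one and its processing through the externally-definable-set lemmas (Lemmas~\ref{chernikov-2} and~\ref{something-similar-2}); the reduction of the $000$-case to the $00$-case should be comparatively routine.
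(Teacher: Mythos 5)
Your argument for $I_M = I_M^{00}$ is correct and is a workable alternative to the paper's: covering $I_M$ by boundedly many translates $\varepsilon_j + D$, passing to $I_{M^*} \subseteq I_M$, and extracting a single $M^*$-definable basic neighborhood inside a finite subunion all go through. (In fact you do not need Lemmas~\ref{chernikov-2} and \ref{something-similar-2} there: the sets $\varepsilon_{j_k}+D$ are internally $M^*$-definable, so the ideal property of light sets in Theorem~\ref{heavy-light} already makes the finite union light, contradicting heaviness of the basic neighborhood.) Since $I_M^{00}$ is type-definable over $M$, Corollary~\ref{minimal-heavy-subgroup} then closes that case.

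The gap is in the $000$ case, which is where the real content of the corollary lies --- not, as you suggest, a routine addendum. Your covering argument does show that every $M$-definable $D \supseteq I_M^{000}$ is heavy, but ``re-running the proof of Corollary~\ref{minimal-heavy-subgroup} with $I_M^{000}$ substituted'' fails: that proof uses compactness in two places that require $J$ to be type-definable over $M$, first to produce an $M$-definable $D' \supseteq J$ with $D' - D' \subseteq D$ from $J - J \subseteq D$, and second to pass from ``$I_M \subseteq D$ for every $M$-definable $D \supseteq J$'' to $I_M \subseteq J$ (an invariant subgroup need not equal the intersection of the $M$-definable sets containing it). Your other option --- quoting $G^{00} = G^{000}$ for abelian groups in NIP theories --- is a true theorem (it follows from Massicot--Wagner via amenability of abelian groups), but it is a substantial external input that the paper deliberately avoids. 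The paper's own fix is a short trick: take $M' \supseteq M$ containing coset representatives for $I_M^{000}$ in $I_M$, choose $a \in I_M$ lying in no $M'$-definable light set, subtract the $M'$-definable representative of its coset to get $a' \in I_M^{000}$ still avoiding all $M'$-definable light sets, and let $Y$ be the set of realizations of $\tp(a'/M)$. Then $Y$ is type-definable, $Y \subseteq I_M^{000}$ by $M$-invariance, every $M$-definable $D \supseteq Y$ is heavy so $I_M \subseteq D \mininf D \subseteq D - D$, and compactness gives $I_M \subseteq Y - Y \subseteq I_M^{000}$. The point is to replace the merely invariant group by a single complete type sitting inside it; some such idea is needed to make your second step go through without outside machinery.
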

\begin{proof}
  Let $M' \supseteq M$ be a model containing a representative from
  each coset of $I_M^{000}$ in $I_M$.  Take $a \in I_M$ such that $a$
  does not lie in any $M'$-definable light set; this is possible by
  Theorem~\ref{heavy-light}.\ref{light-union} and
  Proposition~\ref{basic-nbhds}.\ref{heavy-nbhd}.  If $\delta$ is the
  $M'$-definable coset representative for $a + I_M^{000}$, then $a' :=
  a - \delta$ lies in $I_M^{000}$, but not in any $M'$-definable light
  set (by Theorem~\ref{heavy-light}.\ref{light-translate}).  Let $Y$
  be the set of realizations of $\tp(a'/M)$.  By $M$-invariance of
  $I_M^{000}$ we have $Y \subseteq I_M^{000}$.  On the other hand,
  every $M$-definable neighborhood $D \supseteq Y$ is heavy, so $I_M
  \subseteq D \mininf D \subseteq D - D$.  As $Y$ is the directed
  intersection of $M$-definable $D \supseteq Y$, it follows that $I_M
  \subseteq Y - Y \subseteq I_M^{000}$.
\end{proof}

\section{Further comments on henselianity} \label{sec:further-hensel}
Now that we have a type-definable group of infinitesimals, we can
reduce the Shelah conjecture for dp-finite fields of positive
characteristic to the construction of invariant valuation rings.

\begin{lemma}
  \label{full-heavy}
  Let $X$ be a definable set of full dp-rank.  Then $X$ is heavy.
\end{lemma}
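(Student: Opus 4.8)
The plan is to reduce the claim to the definition of heaviness by exhibiting, for a definable set $X$ of full dp-rank $\dpr(X) = \dpr(\Mm) =: d$, a translate $X + \delta$ whose intersection with some critical set $Y$ has full rank $\rho$ in $Y$. The key point is that $\rho \le d$ always holds (the target of any coordinate configuration is a definable subset of $\Mm$, hence has dp-rank at most $d$), so "full rank in $Y$" is a strictly weaker demand than "full rank in $\Mm$". First I would fix a critical coordinate configuration $(X_1,\ldots,X_n,P)$ with target $Y$, so that $\dpr(Y) = \rho$. Using Remark~\ref{they-exist} I would refine if necessary so that the $X_i$ are quasi-minimal, which they already are by the definition of coordinate configuration.

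The heart of the argument is a counting/fibering step reminiscent of Corollary~\ref{stacking-2}: I want to find $\delta$ so that a full-rank chunk of the product $X_1 \times \cdots \times X_n$ maps (via $\vec{x} \mapsto \sum_i x_i$) into $X + \delta$. Consider the definable family, indexed by $\delta \in \Mm$, of sets
\begin{equation*}
  P_\delta := \{(x_1,\ldots,x_n) \in P ~|~ x_1 + \cdots + x_n \in X + \delta\}.
\end{equation*}
As $\delta$ ranges over $\Mm$, these cover $P$ (in fact $\bigcup_\delta P_\delta = P$ provided $X$ is nonempty, which it is since $\dpr(X) = d > 0$ once $\Mm$ is infinite; the finite-$\Mm$ case is trivial). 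Since $\dpr(P) = \rho$ and $\Mm$ has dp-rank $d$, a subadditivity/fiber-dimension count shows that for some $\delta$ the fiber $P_\delta$ must itself have dp-rank $\rho$: heuristically, $\rho = \dpr(P) \le \dpr(X \text{ as the parameter space}) + \max_\delta \dpr(P_\delta) = d + \max_\delta \dpr(P_\delta)$ is the wrong direction, so instead I would argue directly that the generic fiber over the image can be taken of full rank. More carefully: let $\pi : P \to Y$ be the finite-to-one projection; then $\pi(P_\delta) = Y \cap (X + \delta)$, and $\dpr(P_\delta) = \dpr(Y \cap (X+\delta))$. I need some $\delta$ with this equal to $\rho$. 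Equivalently I need some translate of $X$ to meet $Y$ in full rank. This is exactly where I would invoke the machinery already built: apply Corollary~\ref{stacking-2} or, more directly, observe that since $\dpr(X) = d \ge \dpr(Y) = \rho$ and dp-rank is "additive enough", a generic translate of $Y$ lands inside $X$ up to rank $\rho$ — concretely, take $a \in X$ and $b \in Y$ generic and independent over the defining parameters with $\dpr(a/\emptyset) = d$, set $\delta = a - b$; then $b = a - \delta \in (X - \delta) \cap Y$, and a subadditivity computation of $\dpr(b / \delta)$ shows $\dpr((X-\delta) \cap Y) = \rho$.

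I expect the main obstacle to be making that last dp-rank computation fully rigorous: one must choose the realizations so that $\dpr(a,b) = d + \rho$ (using that $X$ has a point of rank $d$ and $Y$ a point of rank $\rho$, and that these can be taken independent), then deduce $\dpr(b/a-b) = \rho$ via Remark~\ref{faux-independence}-style reasoning, and finally transfer this from the point $b$ to the definable set $(X - \delta) \cap Y$ using that dp-rank of a type bounds dp-rank of any definable set it lies in. Once $\dpr((X - \delta) \cap Y) = \rho = \dpr(Y)$, the set $X$ is $Y$-heavy by definition, hence heavy since heaviness is independent of the choice of critical set. This completes the proof.
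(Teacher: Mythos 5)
Your proposal is correct and is essentially the paper's own argument: the paper likewise takes a tuple of dp-rank $\dpr(X)+\rho$ in $X\times Q_1\times\cdots\times Q_m$, sets $\delta$ equal to the difference, and runs the same subadditivity computation to conclude $\dpr(\vec{b}/\delta M)=\rho$. The only cosmetic difference is that the paper keeps the witness in the product of quasi-minimal sets and passes through broadness (Proposition~\ref{halfway-there}) and the narrowness of $(Q_1\times\cdots\times Q_m)\setminus P$ before pushing down to $Y$ along the finite-to-one sum map, whereas you place the witness $b$ in $Y$ directly and use that the dp-rank of a definable set is at least that of any type on it over the defining parameters; both routes are valid.
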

\begin{proof}
  Let $n$ be $\dpr(\Mm)$.  Let $(Q_1,\ldots,Q_m,P)$ be a critical
  coordinate configuration of rank $\rho$.  Shrinking $Q_i$, we may
  assume that $(Q_1 \times \cdots \times Q_m) \setminus P$ is narrow.
  Let $M$ be a small model over which $X, Q_1, \ldots, Q_m, P$ are
  defined.  Take a tuple
  \begin{equation*}
    (a,b_1,\ldots,b_m) \in X \times Q_1 \times \cdots \times Q_m
  \end{equation*}
  such that
  \begin{equation*}
    \dpr(a,b_1,\ldots,b_m/M) = \dpr(X) + \dpr(Q_1) + \cdots +
    \dpr(Q_m) = n + \rho.
  \end{equation*}
  By Remark~\ref{faux-independence}, $\dpr(\vec{b}/M) = \rho$.  Let
  $\delta = a - (b_1 + \cdots + b_m)$.  Then
  \begin{equation*}
    n + \rho = \dpr(a, \vec{b}/M) = \dpr(\delta,\vec{b}/M) \le
    \dpr(\vec{b}/\delta M) + \dpr(\delta/M).
  \end{equation*}
  As $\dpr(\delta/M) \le \dpr(\Mm) = n$, it follows that
  \begin{equation*}
    \rho \le \dpr(\vec{b}/\delta M) \le \dpr(\vec{b}/M) = \rho.
  \end{equation*}
  By Proposition~\ref{halfway-there}, $\tp(\vec{b}/\delta M)$ is
  broad.  It follows that the set
  \begin{equation*}
    \{(b_1,\ldots,b_m) \in Q_1 \times \cdots \times Q_m ~|~ b_1 + \cdots
    + b_m + \delta \in X\}
  \end{equation*}
  is broad.  As we arranged $(Q_1 \times \cdots \times Q_m) \setminus
  P$ to be narrow, the set
  \begin{equation*}
    \{(b_1,\ldots,b_m) \in P ~|~ b_1 + \cdots + b_m + \delta \in X\}
  \end{equation*}
  is also broad, and therefore has dp-rank $\rho$.  If $Y$ is the
  target of $(Q_1,\ldots,Q_n,P)$, then
  \begin{equation*}
    \{y \in Y ~|~ y + \delta \in X\}
  \end{equation*}
  has dp-rank $\rho$, as the map $P \to Y$ has finite fibers.
  Therefore $X$ is $Y$-heavy.
\end{proof}
In fact, the converse to Lemma~\ref{full-heavy} is true.  See Theorem~5.9.2 in \cite{prdf2}.
\begin{lemma}
  \label{primality-trick}
  Let $M$ be a small model and $\mathcal{O}$ be a non-trivial
  $M$-invariant valuation ring.  Then the maximal ideal $\mathfrak{m}$
  contains $I_M$, and the subideal of $\mathcal{O}$ generated by $I_M$
  is prime.
\end{lemma}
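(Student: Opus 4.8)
The plan is to establish the two assertions in order, using the first inside the proof of the second.

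\emph{The inclusion $I_M \subseteq \mathfrak{m}$.} First I would record that $\mathfrak{m}$ has full dp-rank $n := \dpr(\Mm)$: fixing a nonzero $t \in \mathfrak{m}$ we have $t\mathcal{O} \subseteq \mathfrak{m} \subseteq \mathcal{O}$, so $\dpr(\mathfrak{m}) = \dpr(\mathcal{O})$ since dp-rank is unchanged under multiplication by a fixed nonzero element, while $x \mapsto x^{-1}$ is a definable bijection $\Mm \setminus \mathcal{O} \to \mathfrak{m} \setminus \{0\}$, so from $\Mm = \mathcal{O} \cup (\Mm \setminus \mathcal{O})$ we get $n = \max(\dpr(\mathcal{O}), \dpr(\mathfrak{m})) = \dpr(\mathfrak{m})$. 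Now suppose toward a contradiction that some $\varepsilon \in I_M$ lies outside $\mathfrak{m}$. Enlarge $M$ to a model $M' \succeq M$ defining a critical coordinate configuration and pick $\varepsilon'$ with $\tp(\varepsilon'/M')$ an heir of $\tp(\varepsilon/M)$; by Lemma~\ref{heirs}, $\varepsilon'$ is an $M'$-infinitesimal, and $\varepsilon' \equiv_M \varepsilon$, so $\varepsilon' \notin \mathfrak{m}$ by $M$-invariance of $\mathcal{O}$. Choose $a \in \mathfrak{m}$ with $\dpr(a/M') = n$ and let $Y$ be the set of realizations of $\tp(a/M')$, so $Y \subseteq \mathfrak{m}$. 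Since $\mathfrak{m}$ is a subgroup of $(\Mm,+)$ not containing $\varepsilon'$, we have $Y \cap (Y - \varepsilon') = \emptyset$, so by compactness there is an $M'$-definable $D \supseteq Y$ with $D \cap (D - \varepsilon') = \emptyset$; in particular $\varepsilon'$ $M'$-displaces $D$. But $D$ contains $\tp(a/M')$, so $\dpr(D) = n$, hence $D$ is heavy by Lemma~\ref{full-heavy}, contradicting Lemma~\ref{inf-heart}. Therefore $I_M \subseteq \mathfrak{m}$.

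\emph{Primality of $\mathfrak{a} := I_M \cdot \mathcal{O}$.} Because $\mathcal{O}$ is a valuation ring and $I_M$ is an additive subgroup, $\mathfrak{a}$ is the increasing union $\bigcup_{0 \ne \varepsilon \in I_M} \varepsilon\mathcal{O}$ of principal ideals, hence an ideal, and $\mathfrak{a} \subseteq \mathfrak{m}$ by the first part, so $\mathfrak{a}$ is proper. It therefore suffices to show $\mathcal{O} \setminus \mathfrak{a}$ is closed under multiplication. Note that $x \in \mathcal{O} \setminus \mathfrak{a}$ means exactly that $\val(x) < \val(\varepsilon)$ for every nonzero $\varepsilon \in I_M$. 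So, given $x,y \in \mathcal{O}\setminus\mathfrak{a}$ and an arbitrary nonzero $\varepsilon_0 \in I_M$, it is enough to produce a nonzero $\varepsilon' \in I_M$ with $2\,\val(\varepsilon') \le \val(\varepsilon_0)$: then $\val(xy) = \val(x) + \val(y) < 2\,\val(\varepsilon') \le \val(\varepsilon_0)$, and since $\varepsilon_0$ was arbitrary this gives $xy \notin \mathfrak{a}$. Here is where positive characteristic enters. The Frobenius $\phi(z) = z^p$ is an $\emptyset$-definable additive automorphism of $(\Mm,+)$ (recall $\Mm$ is perfect), so it preserves heaviness; hence for any $M$-definable $D \supseteq \phi(I_M)$ the set $\phi^{-1}(D) \supseteq I_M$ is $M$-definable and therefore heavy (Remark~\ref{basic-infs}), so $D = \phi(\phi^{-1}(D))$ is heavy, and Corollary~\ref{minimal-heavy-subgroup} applied to the $M$-type-definable subgroup $\phi(I_M)$ yields $I_M \subseteq \phi(I_M)$. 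Thus $\varepsilon' := \varepsilon_0^{1/p} \in I_M$, and $\varepsilon' \in \mathfrak{m}$ by the first part, so $\val(\varepsilon') > 0$ and $2\,\val(\varepsilon') \le p\,\val(\varepsilon') = \val(\varepsilon_0)$, as required.

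The main obstacle is the primality step, and specifically the sub-claim that $\mathfrak{a}$ is radical, equivalently that $I_M$ has no minimal positive valuation relative to $\mathcal{O}$, equivalently that $\mathfrak{a}$ is idempotent. In positive characteristic the Frobenius-stability of $I_M$ makes this automatic; in characteristic zero there is no analogous mechanism, which is consistent with the henselianity arguments of this section being confined to positive characteristic. By contrast, the first assertion is characteristic-free, resting only on the full dp-rank of $\mathfrak{m}$ and the displacement machinery packaged in Lemma~\ref{inf-heart}.
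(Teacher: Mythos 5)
Your first assertion is handled correctly, by a slightly different route than the paper's: you derive a contradiction from a hypothetical $\varepsilon \in I_M \setminus \mathfrak{m}$ via the displacement machinery (an $M'$-definable $D \supseteq Y$ that $\varepsilon'$ displaces, contradicting Lemma~\ref{inf-heart} and Lemma~\ref{full-heavy}), whereas the paper argues directly that for the full-rank type-definable $Y \subseteq \mathfrak{m}$ one has $I_M \subseteq D \mininf D \subseteq D - D$ for every $M$-definable $D \supseteq Y$, hence $I_M \subseteq Y - Y \subseteq \mathfrak{m}$. Both work. Your Frobenius observation $I_M \subseteq (I_M)^p$ is also sound: $\Mm$ is perfect, Frobenius is a definable automorphism of $(\Mm,+)$ and so preserves heaviness by the same reasoning the paper uses for scalar multiplication, and Corollary~\ref{minimal-heavy-subgroup} then applies to the $M$-type-definable subgroup $(I_M)^p$.

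The gap is in the primality step: your argument genuinely requires $\characteristic(\Mm) = p > 0$, but the lemma carries no characteristic hypothesis, and the paper's proof is characteristic-free (only Theorem~\ref{invariant-valuations-are-enough}, further downstream, restricts to positive characteristic). The mechanism you are missing is that \emph{every} non-zero $M$-invariant ideal of $\mathcal{O}$ contains $I_M$, not just the ideal $J$ generated by $I_M$. Concretely: if $J$ were not prime, then by the correspondence between ideals of a valuation ring and up-closed subsets of $\Gamma_{\geq 0}$, the ideal $J' := J \cdot J$ would be strictly smaller than $J$, non-zero, $M$-invariant, and (by minimality of $J$ among ideals containing $I_M$) would satisfy $I_M \not\subseteq J'$. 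But $J' \neq 0$ gives $c \cdot \mathfrak{m} \subseteq J'$ for some $c \neq 0$; picking $d \in Y$ with $\dpr(dc/M) = n$ and letting $Z$ be the set of realizations of $\tp(dc/M)$, invariance gives $Z \subseteq J'$, Lemma~\ref{full-heavy} makes every $M$-definable $D \supseteq Z$ heavy, and the usual argument yields $I_M \subseteq Z - Z \subseteq J'$, a contradiction. No mention of the characteristic is needed. Your Frobenius trick is a legitimate alternative in characteristic $p$ (it shows $\val(I_M\setminus\{0\})$ is closed under division by $p$, so the complement of the cut it generates is closed under addition), but as written the proposal only proves the lemma under an extra hypothesis that the statement does not make. (A small side point: your closing claim that primality of $\mathfrak{a}$ is ``equivalent'' to idempotence is not right in general --- the maximal ideal of a discrete valuation ring is prime but not idempotent --- though your actual argument does not rely on that equivalence.)
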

\begin{proof}
  Let $n = \dpr(\Mm)$.  Take an element $a$ of positive valuation.
  Take an element $b \in \Mm$ with $\dpr(b/aM) = n$.  Replacing $b$
  with $a/b$ if necessary, we may assume $b \in \mathfrak{m}$.  Let
  $Y$ be the type-definable set of realizations of $\tp(b/M)$.  Note
  $\dpr(Y) \ge \dpr(b/aM) = n$, so $\dpr(Y) = n$.  Also note that $Y
  \subseteq \mathfrak{m}$ because $\mathfrak{m}$ is $M$-invariant.
  Every $M$-definable set $D \supseteq Y$ is heavy, by
  Lemma~\ref{full-heavy}.  Therefore, $I_M \subseteq D - D$.  As $D$
  is arbitrary,
  \begin{equation*}
    I_M \subseteq Y - Y \subseteq \mathfrak{m} - \mathfrak{m}
    \subseteq \mathfrak{m}.
  \end{equation*}
  Now let $J$ be the ideal of $\mathcal{O}$ generated by $I_M$.
  Evidently $J \le \mathfrak{m}$ and $J$ is $M$-invariant.  Suppose
  $J$ is not prime.  By the characterization of ideals in valuation
  rings in terms of cuts in the value group, non-primality of $J$
  implies that the ideal $J' := J \cdot J$ is a strictly smaller
  non-zero ideal in $\mathcal{O}$, and in particular $J' \not
  \supseteq I_M$ (by choice of $J$).  On the other hand, $J'$ is
  $M$-invariant.  Because $J'$ is non-zero, there exists $c \in
  \Mm^\times$ such that $c \cdot \mathfrak{m} \subseteq J'$.  Let $d$
  be an element of $Y$ such that $\dpr(d/cM) = n$.  Then $\dpr(d \cdot
  c / cM) = \dpr(d / cM) = n$, so $\dpr(d \cdot c / M) = n$.  Let $Z$
  be the set of realizations of $\tp(d \cdot c / M)$.  As $d \cdot c
  \in c \cdot Y \subseteq c \cdot \mathfrak{m} \subseteq J'$, the
  usual argument shows that $I_M \subseteq Z - Z \subseteq J' - J' =
  J'$, a contradiction.  Therefore $J$ is prime.
\end{proof}

\begin{proposition}
  \label{upgrade-to-span}
  Let $M$ be a small model and $\mathcal{O}$ be a non-trivial
  $M$-invariant valuation ring.  Then there is an $M$-invariant
  coarsening $\mathcal{O}' \supseteq \mathcal{O}$ such that $I_M$
  spans $\mathcal{O}'$, in the sense of
  Definition~\ref{span-a-valuation}.
\end{proposition}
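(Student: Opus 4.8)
The plan is to extract the coarsening directly from Lemma~\ref{primality-trick}. That lemma tells us that $I_M \subseteq \mathfrak{m}$ and that the ideal $J := \mathcal{O} \cdot I_M$ of $\mathcal{O}$ generated by $I_M$ is prime. Since $\Mm$ is infinite, $I_M \ne 0$ by Remark~\ref{basic-infs}.\ref{non-trivial}, so $J$ is a \emph{non-zero} prime ideal of $\mathcal{O}$.

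Next I would invoke the classical correspondence between prime ideals of a valuation ring and its coarsenings: every ring between $\mathcal{O}$ and $K$ is a valuation ring of $K$, and for a prime $\mathfrak{p}$ of $\mathcal{O}$ the localization $\mathcal{O}_\mathfrak{p}$ is such a coarsening, with maximal ideal exactly $\mathfrak{p}$ (the equality $\mathfrak{p}\mathcal{O}_\mathfrak{p} = \mathfrak{p}$ being immediate from the description of ideals of $\mathcal{O}_\mathfrak{p}$ in terms of convex subgroups of the value group). Accordingly, set $\mathcal{O}' := \mathcal{O}_J$. Then $\mathcal{O}' \supseteq \mathcal{O}$ is a coarsening with maximal ideal $\mathfrak{m}' = J$, and $\mathcal{O}' \ne K$ because $J \ne 0$, so $\mathcal{O}'$ is non-trivial.

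For $M$-invariance, note that any $\sigma \in \Aut(\Mm/M)$ fixes $\mathcal{O}$ setwise and fixes $I_M$ setwise (as $I_M$ is type-definable over $M$); hence $\sigma$ fixes the generated ideal $J$ setwise, and therefore fixes $\mathcal{O}' = \mathcal{O}_J$ setwise. Finally, to see that $I_M$ spans $\mathcal{O}'$, I would use the reformulation in Definition~\ref{span-a-valuation}: it suffices to show that $I_M$ is a set of generators of $\mathfrak{m}'$ as an ideal of $\mathcal{O}'$. We already know $\mathcal{O} \cdot I_M = J$; enlarging the coefficient ring gives $\mathcal{O} \cdot I_M \subseteq \mathcal{O}' \cdot I_M$, while $I_M \subseteq J = \mathfrak{m}'$ forces $\mathcal{O}' \cdot I_M \subseteq \mathcal{O}' \cdot \mathfrak{m}' = \mathfrak{m}' = J$. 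Hence $\mathcal{O}' \cdot I_M = \mathfrak{m}'$, which is exactly the spanning condition.

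I do not expect any real obstacle here once Lemma~\ref{primality-trick} is available: the argument is entirely formal, and the only thing requiring care is the standard commutative algebra of valuation rings — namely that the localization at a prime is again a valuation ring (a coarsening) whose maximal ideal is that same prime. All of this is classical, so the heart of the matter was already dispatched in the proof of Lemma~\ref{primality-trick}.
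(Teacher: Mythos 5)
Your proposal is correct and follows essentially the same route as the paper: both invoke Lemma~\ref{primality-trick} to get that $\mathcal{O}\cdot I_M$ is prime, pass to the coarsening whose maximal ideal is that prime, and verify spanning via $I_M \subseteq \mathfrak{p} = \mathcal{O}'\cdot I_M$. You are merely more explicit about identifying the coarsening as the localization and about $M$-invariance and non-triviality, which the paper leaves implicit.
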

\begin{proof}
  It is a general fact that if $\mathfrak{p}$ is a prime ideal in a
  valuation ring $\mathcal{O}$, then $\mathfrak{p}$ is the maximal
  ideal of some (unique) coarsening $\mathcal{O}' \supseteq
  \mathcal{O}$.  In our specific case,
  \begin{align*}
    I_M & \subseteq \mathfrak{p} = \mathcal{O} \cdot I_M \\
    \implies I_M & \subseteq \mathfrak{p} \subseteq \mathcal{O}' \cdot
    I_M \subseteq \mathcal{O}' \cdot \mathfrak{p} = \mathfrak{p} \\
    \implies I_M & \subseteq \mathfrak{p} = \mathcal{O}' \cdot I_M
  \end{align*}
  so $I_M$ spans $\mathcal{O}'$.
\end{proof}

\begin{theorem}
  \label{invariant-valuations-are-enough}
  If $\Mm$ has characteristic $p > 0$ and if at least one non-trivial
  invariant valuation ring exists, then $\Mm$ admits a non-trivial
  henselian valuation.
\end{theorem}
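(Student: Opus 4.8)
The strategy is to reduce everything to Proposition~\ref{invariant-henselian}: I want to exhibit a \emph{non-trivial} invariant valuation ring that is spanned by a type-definable additive subgroup $I$ with $I = I^{000}$, and the natural candidate for $I$ is the group of infinitesimals $I_M$. So suppose $\mathcal{O}_0$ is a non-trivial $A$-invariant valuation ring for some small $A$, and fix a small model $M \supseteq A$ defining a critical coordinate configuration; then $\mathcal{O}_0$ is $M$-invariant since $\Aut(\Mm/M) \subseteq \Aut(\Mm/A)$. Because $\Mm$ carries a non-trivial valuation it is infinite (a finite field has no proper subring with nonzero maximal ideal), so $I_M \ne 0$ by Remark~\ref{basic-infs}.\ref{non-trivial}; and by Theorem~\ref{inf-add} together with Corollary~\ref{triply-connected}, $I_M$ is a type-definable subgroup of $(\Mm,+)$ with $I_M = I_M^{00} = I_M^{000}$.

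Next, I would apply Proposition~\ref{upgrade-to-span} to obtain an $M$-invariant coarsening $\mathcal{O}' \supseteq \mathcal{O}_0$ whose maximal ideal $\mathfrak{m}'$ is the ideal $\mathcal{O}_0 \cdot I_M$ generated by $I_M$; thus $I_M$ spans $\mathcal{O}'$ in the sense of Definition~\ref{span-a-valuation}. Before continuing, I would verify that $\mathcal{O}'$ is still non-trivial. By Lemma~\ref{primality-trick}, $I_M \subseteq \mathfrak{m}_0$, where $\mathfrak{m}_0$ is the maximal ideal of $\mathcal{O}_0$; hence $0 \ne I_M \subseteq \mathcal{O}_0 \cdot I_M = \mathfrak{m}'$, so $\mathfrak{m}' \ne \{0\}$ and therefore $\mathcal{O}' \ne \Mm$.

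Finally, I would invoke Proposition~\ref{invariant-henselian}: the structure $\Mm$ is a monster model which is NIP (being dp-finite) and of characteristic $p > 0$, the valuation ring $\mathcal{O}'$ is invariant, and it is spanned by the type-definable group $I_M$ with $I_M = I_M^{000}$. Hence $\mathcal{O}'$ is henselian, and it is the desired non-trivial henselian valuation on $\Mm$. I do not expect a genuine obstacle here: this theorem amounts to stringing together Lemma~\ref{primality-trick}, Proposition~\ref{upgrade-to-span}, and Proposition~\ref{invariant-henselian}, all already in hand. The one point deserving a moment of care is the non-triviality of the coarsening $\mathcal{O}'$, which is exactly the chain $0 \ne I_M \subseteq \mathfrak{m}' \subsetneq \mathcal{O}'$ recorded above.
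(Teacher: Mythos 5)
Your proposal is correct and follows essentially the same route as the paper: coarsen the given invariant valuation ring via Proposition~\ref{upgrade-to-span} so that it is spanned by $I_M$, note non-triviality from $0 \ne I_M \subseteq \mathfrak{m}'$, use Corollary~\ref{triply-connected} for $I_M = I_M^{000}$, and conclude with Proposition~\ref{invariant-henselian}. The extra care you take over non-triviality of the coarsening is exactly the point the paper handles with the remark that $I_M \ne 0$ prevents the coarsening from being trivial.
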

\begin{proof}
  By Proposition~\ref{upgrade-to-span} there is a small model $M$ and
  an $M$-invariant valuation ring $\mathcal{O}$ spanned by $I_M$.  The
  fact that $I_M \ne 0$ prevents $\mathcal{O}$ from being the trivial
  valuation ring.  Moreover, $I_M = I_M^{000}$ by
  Corollary~\ref{triply-connected}.  By
  Proposition~\ref{invariant-henselian} it follows that $\mathcal{O}$
  is henselian.
\end{proof}

\section{Bounds on connected components} \label{sec:bounds}
In this section, $(G,+,\ldots)$ is a monster-model abelian group,
possibly with additional structure, of finite dp-rank $n$.

\begin{fact} \label{baldwin-saxl-variant}
  Let $G_0, \ldots, G_n$ be type-definable subgroups of $G$.  There is
  some $0 \le k \le n$ such that
  \begin{equation*}
    \left( \bigcap_{i = 0}^n G_i \right)^{00} = \left( \bigcap_{i \ne
      k} G_i \right)^{00}.
  \end{equation*}
\end{fact}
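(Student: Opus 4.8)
The statement is a Baldwin–Saxl type principle, with dp-rank $n$ playing the role of the VC-codimension bound and with $G^{00}$'s replacing finite-index subgroups. The plan is to argue by contradiction: suppose that for \emph{every} $k \in \{0,\dots,n\}$ we have a strict inclusion
\begin{equation*}
  \left(\bigcap_{i=0}^n G_i\right)^{00} \subsetneq \left(\bigcap_{i \ne k} G_i\right)^{00}.
\end{equation*}
The idea is that the quotient groups $\left(\bigcap_{i\ne k}G_i\right) / \left(\bigcap_i G_i\right)^{00}$, for $k=0,\dots,n$, are $n+1$ pairwise ``independent'' (in a suitable sense) nontrivial type-definable subgroups of the quotient group $G/\left(\bigcap_i G_i\right)^{00}$, and that this independence lets us build an ict-pattern of depth $n+1$, contradicting $\dpr(G) = n$.

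First I would pass to the quotient: set $N = \left(\bigcap_{i=0}^n G_i\right)^{00}$, which is a type-definable subgroup of $G$, and work in the dp-finite group $\bar G := G/N$ (dp-rank of a type-definable quotient is at most that of $G$; this is among the hyperimaginary bookkeeping the paper elsewhere takes for granted). Write $H_k := \bigcap_{i\ne k} G_i$ and $\bar H_k := H_k/N$, a type-definable subgroup of $\bar G$. By assumption each $\bar H_k^{00}$ is nontrivial, so in particular each $\bar H_k$ is infinite (its $00$-component is nontrivial, hence unbounded). The key combinatorial observation is the ``disjointness'': for any $k$,
\begin{equation*}
  \bar H_k \cap \bigcap_{j \ne k} \bar H_j = \bigcap_{i=0}^n \bar G_i \subseteq \overline{\left(\bigcap_i G_i\right)^{00}} = \{0\},
\end{equation*}
where $\bar G_i = G_i/N$. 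So the $\bar H_k$ are $n+1$ infinite type-definable subgroups of $\bar G$ whose common intersection in any $n+1$-fold (equivalently, the intersection of all of them) is trivial, yet each is infinite.

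The main step — and the main obstacle — is to extract from this configuration an ict-pattern of depth $n+1$. The plan is: for each $k$, choose (using saturation and the fact that $\bar H_k^{00} \ne \{0\}$) an indiscernible sequence $(a_{k,j})_{j\in\Nn}$ of pairwise distinct elements of $\bar H_k$; then the formula $\phi_k(x;y) := (x - y \in \bar H_k)$ should give the $k$-th row of an ict-pattern, since $x \in \bar H_k + a_{k,j}$ is consistent for a single $j$ but becomes inconsistent once we also ask $x \in \bar H_{k'} + a_{k',j'}$ for $k' \ne k$ — here we use that the relevant intersections collapse to $\{0\}$. To make this precise one needs the rows to be \emph{mutually} indiscernible, and one needs that the system of congruences $\{x \in \bar H_k + a_{k,\eta(k)} : k \le n\}$ is consistent iff $\eta$ is constant-like in an appropriate sense — this is where the $H_k^{00}$ business, rather than bare $H_k$, is genuinely needed, and it is the part requiring care (one likely wants to choose the $a_{k,j}$ inside $\bar H_k^{00}$ and use that each $\bar H_k / \bigcap_i \bar G_i$ is still infinite). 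Having built a depth-$(n+1)$ ict-pattern, NIP-ness via $\dpr(\bar G) \le \dpr(G) = n$ is violated, giving the contradiction; hence some $k$ works and the Fact follows. An alternative, possibly cleaner route avoiding explicit ict-patterns: invoke directly the subadditivity and the dp-rank inequalities for type-definable (hyperimaginary) groups to show $\sum_{k} \dpr\!\left(\bar H_k\right) \le \dpr(\bar G)$ when the $\bar H_k$ are ``independent'' in the sense above, and note each summand is $\ge 1$; this would again force $\dpr(\bar G) \ge n+1$, a contradiction.
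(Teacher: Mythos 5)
Your high-level plan (a Baldwin--Saxl argument producing a depth-$(n+1)$ pattern that contradicts $\dpr(G)=n$) is the right one --- the paper gives no proof and simply cites the proof of Proposition~4.5.2 in \cite{CKS}, which is exactly this argument --- but the pattern you construct does not work, and the step you flag as ``requiring care'' is precisely where the whole proof lives. With $\phi_k(x;y) := (x-y \in \bar H_k)$ and witnesses $a_{k,j} \in \bar H_k$, every instance in row $k$ defines the same set $\bar H_k + a_{k,j} = \bar H_k$, so the rows are degenerate and no ict- or inp-pattern arises. The correct configuration is asymmetric: the $k$-th row formula is $x - y \in G_k$ (the \emph{single} group $G_k$, not $H_k := \bigcap_{i\ne k} G_i$), while the $k$-th row parameters $a_{k,j}$ are chosen in $H_k$, lying in pairwise distinct cosets of $G_k$. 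Then each row is $2$-inconsistent (since $a_{k,j}-a_{k,j'}\notin G_k$), and every path $\eta$ is consistent, witnessed by $x=\sum_k a_{k,\eta(k)}$: for each $k$ one has $x - a_{k,\eta(k)} = \sum_{k'\ne k} a_{k',\eta(k')} \in G_k$ because $H_{k'}\subseteq G_k$ for $k'\ne k$ (and the same witness omits the other instances in each row, so this is directly an ict-pattern). The supply of infinitely many distinct $G_k$-cosets inside $H_k$ is exactly what the contradiction hypothesis provides: if $H:=\bigcap_i G_i$ had bounded index in $H_k$, then $H^{00}$ would have bounded index in $H_k$, whence $H_k^{00}\subseteq H^{00}$ and so $H_k^{00}=H^{00}$. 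Extracting mutually indiscernible rows then contradicts $\dpr(G)\le n$.

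Two further problems. Your ``disjointness'' computation is false: $\bigcap_i \bar G_i = H/H^{00}$ is bounded, not $\{0\}$, since $H^{00}$ is in general a proper subgroup of $H$; and in any case the argument needs no quotient at all --- passing to $G/N$ only imports the hyperimaginary bookkeeping the paper is at pains to avoid. Finally, the ``alternative route'' via $\sum_k \dpr(\bar H_k)\le\dpr(\bar G)$ cannot be invoked as a black box: such a superadditivity statement for independent subgroups is not among the standard dp-rank axioms, and proving it amounts to the same pattern construction you are trying to avoid.
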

This follows by the proof of Proposition~4.5.2 in \cite{CKS}.

\begin{lemma}
  \label{up-bound}
  Let $H$ be a type-definable subgroup of $G$.  There is a cardinal
  $\kappa$ depending only on $H$ and $G$ such that if $H < H' < G$ for
  some type-definable subgroup $H'$, and if $H'/H$ is bounded, then
  $H'/H$ has size at most $\kappa$.  This $\kappa$ continues to work
  in arbitrary elementary extensions.
\end{lemma}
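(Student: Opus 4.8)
The plan is to realize $H'/H$ as a compact Hausdorff abelian group in the logic topology and to bound its weight by data attached only to $H$ and $G$. The first step is the observation that $H'^{00} = H^{00}$: since $H^{00}$ is type-definable of bounded index in $H$ and $[H':H]$ is bounded, $H^{00}$ has bounded index in $H'$, so $H'^{00} \le H^{00}$; conversely $H^{00}\cap H'^{00} = H'^{00}$ has bounded index in $H$, so $H^{00} \le H'^{00}$. Writing $H_0 := H^{00} = H'^{00}$, the group $H'/H'^{00} = H'/H_0$ is compact Hausdorff in the logic topology, $H/H_0$ is a closed subgroup, and hence $H'/H \cong (H'/H_0)/(H/H_0)$ is again compact Hausdorff abelian. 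Therefore $|H'/H| \le 2^{w(H'/H)} \le 2^{w(H'/H_0)}$, and it suffices to bound the weight of the compact group $H'/H_0$; equivalently, fixing a small set $A$ over which $H$ (hence $H_0$) is type-definable, it suffices to bound $[H':H_0]$ over all type-definable $H' \ge H_0$ with $[H':H_0]$ bounded, by a cardinal depending only on $A$, $G$, and $T$.

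The naive bound on $w(H'/H_0)$ is $|A'| + |T|$, where $A'$ is a set of parameters for $H'$; the whole content of the lemma is to remove the dependence on $A'$. The strategy here is to approximate the logic topology on $H'/H_0$ using only $A$-definable data coming from $H$. Writing $H = \bigcap_{i \in I} D_i$ with each $D_i$ an $A$-definable set, the family directed under intersection and $|I| \le |A| + |T|$, one replaces $D_i$ by its $H$-core $D_i^\circ := \bigcap_{h\in H}(D_i - h)$, which is $A$-type-definable, $H$-invariant, and still satisfies $\bigcap_i D_i^\circ = H$. Since the projection $\pi : H' \to H'/H$ commutes with intersections of $H$-invariant sets, and by a compactness argument inside $\Mm$, the closed sets $\pi(D_i^\circ)$, and more precisely their finite intersections (of which there are only $\le |A| + |T|$ many), are cofinal below the neighbourhoods of $0$ in $H'/H$. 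If one could conclude from this that $H'/H$ has character $\le |A| + |T|$, one would be done, with $\kappa := 2^{|A|+|T|}$.

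The main obstacle is exactly this last inference. The sets $\pi(D_i^\circ)$ are closed but need not be neighbourhoods of $0$, and a cofinal family of arbitrary closed sets through $0$ does not by itself bound the weight of a compact group — the singleton $\{0\}$ is always such a family. Overcoming this is where the \emph{dp-finiteness} of $G$ must be used, rather than mere NIP: in dp-rank $n$ the bounded quotients $H'/H_0$ have bounded topological complexity (their weight is controlled by $n$ and $|T|$, just as in the dp-minimal case $\mathcal O/\mathfrak m^\infty \cong \mathbb Z_p$ has weight $\aleph_0$), and this has to be extracted from the chain conditions for type-definable subgroups of a dp-finite abelian group developed in this section in the spirit of Fact~\ref{baldwin-saxl-variant} — ruling out arbitrarily long chains or arbitrarily complicated configurations of type-definable subgroups between $H_0$ and $H'$, so that the cofinal family of closed sets above can be refined to an honest neighbourhood base. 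Once $\kappa := 2^{|A|+|T|}$ is obtained this way, the assertion about elementary extensions is immediate, since $|A|$ and $|T|$ do not change and every step above relativizes verbatim.
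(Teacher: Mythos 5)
There is a genuine gap, and you have located it yourself: the inference from ``a cofinal family of at most $|A|+|T|$ closed sets through $0$'' to ``character at most $|A|+|T|$'' is the entire content of the lemma, and your proposal only gestures at how it might be supplied (``has to be extracted from the chain conditions\dots so that the cofinal family\dots can be refined to an honest neighbourhood base''). Nothing in Fact~\ref{baldwin-saxl-variant} or elsewhere in the section performs such a refinement, and the reduction to bounding $[H':H^{00}]$ uniformly over all type-definable $H'\supseteq H^{00}$ with bounded quotient is not a simplification: it is verbatim the statement of the lemma with $H$ replaced by $H^{00}$. Worse, the uniform bound on $|H'/H'^{00}|$ that you invoke as the dp-finite input is Theorem~\ref{bounding-theorem}, the goal of the section, which is proved \emph{from} Lemma~\ref{up-bound}; so the plan is circular in spirit as well as incomplete.

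The actual argument needs neither NIP nor dp-finiteness nor the logic topology. Name the parameters of $H$ so that $H$ is $\emptyset$-type-definable, and let $\kappa$ be a Morley--Erd\H{o}s--Rado bound guaranteeing that from any $\kappa$-sequence in $G$ one can extract an $\emptyset$-indiscernible sequence $(b_i)_{i<\omega}$ realizing only EM-types realized by the original sequence. If $|H'/H|\ge\kappa$, take $\kappa$ coset representatives $a_\alpha\in H'$ and extract $(b_i)$; since lying in distinct cosets of the $\emptyset$-type-definable $H$ is part of the EM-type, the $b_i$ lie in distinct cosets, and by compactness there is a single $0$-definable $D\supseteq H$ with $b_i-b_j\notin D$ for $i\ne j$. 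Now for any $\lambda$, the $\ast$-type in variables $(x_\alpha)_{\alpha<\lambda}$ asserting $x_\alpha\in H'$ and that every increasing $n$-tuple is $\emptyset$-equivalent to $b_1\cdots b_n$ is finitely satisfiable by tuples from the original $(a_\alpha)$ (which do lie in $H'$), hence consistent; its realizations give $\lambda$ elements of $H'$ in pairwise distinct cosets of $H$, so $H'/H$ is unbounded. This is the contrapositive of the lemma, with $\kappa$ depending only on $H$'s parameters and the theory, hence stable under elementary extension.
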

\begin{proof}
  Naming parameters, we may assume that $H$ (but not $H')$ is
  type-definable over $\emptyset$.  By Morley-Erd\H{o}s-Rado there is
  some cardinal $\kappa$ with the following property: for any sequence
  $\{a_\alpha\}_{\alpha < \kappa}$ of elements of $G$, there is some
  $\emptyset$-indiscernible sequence $\{b_i\}_{i \in \Nn}$ such that
  for any $i_1 < \cdots < i_n$ there is $\alpha_1 < \cdots < \alpha_n$
  such that
  \begin{equation*}
    a_{\alpha_1} \cdots a_{\alpha_n} \equiv_\emptyset b_{i_1} \cdots
    b_{i_n}.
  \end{equation*}
  Let $H'$ be a subgroup of $G$, containing $H$, type-definable over
  some small set $A$.  Suppose that $|H'/H| \ge \kappa$.  We claim
  that $H'/H$ is unbounded.  Suppose for the sake of contradiction
  that $|H'/H| < \lambda$ in all elementary extensions.  Take a
  sequence $\{a_\alpha\}_{\alpha < \kappa}$ of elements of $H'$ lying
  in pairwise distinct cosets of $H$.  Let $\{b_i\}_{i \in \Nn}$ be an
  $\emptyset$-indiscernible sequence extracted from the $a_\alpha$ by
  Morley-Erd\H{o}s-Rado.  Because the $a_\alpha$ live in pairwise
  distinct cosets of $H$ and $H$ is $\emptyset$-definable, the $b_i$
  live in pairwise distinct cosets of $H$.  By indiscernibility, there
  is a 0-definable set $D \supseteq H$ such that $b_i - b_j \notin D$
  for $i \ne j$.  Consider the $\ast$-type over $A$ in variables
  $\{x_\alpha\}_{\alpha < \lambda}$ asserting that
  \begin{enumerate}
  \item $x_\alpha \in H'$ for every $\alpha < \lambda$
  \item If $\alpha_1 < \cdots < \alpha_n$, then
    \begin{equation*}
      x_{\alpha_1} \cdots x_{\alpha_n} \equiv_\emptyset b_1 \cdots b_n.
    \end{equation*}
  \end{enumerate}
  This type is consistent.  Indeed, if
  $\Sigma_{\alpha_1,\ldots,\alpha_n}(\vec{x})$ is the sub-type
  asserting that
  \begin{align*}
    x_{\alpha_1}, \ldots, x_{\alpha_n} &\in H' \\
      x_{\alpha_1} \cdots x_{\alpha_n} &\equiv_\emptyset b_1 \cdots b_n
  \end{align*}
  then $\Sigma_{\alpha_1,\ldots,\alpha_n}(\vec{x})$ is satisfied by
  $(a_{\beta_1},\ldots,a_{\beta_n})$ for some well chosen $\beta_i$,
  by virtue of how the $b_i$ were extracted.  Moreover, the full type
  is a filtered union of $\Sigma_{\vec{\alpha}}(\vec{x})$'s, so it is
  consistent.  Let $\{c_\alpha\}_{\alpha < \lambda}$ be a set of
  realizations.  Then every $c_\alpha$ lies in $H'$, but
  \begin{equation*}
    c_{\alpha} - c_{\alpha'} \notin D \supseteq H
  \end{equation*}
  for $\alpha \ne \alpha'$.  Therefore, the $c_\alpha$ lie in pairwise
  distinct cosets of $H$, and $|H'/H| \ge \lambda$, a contradiction.
\end{proof}

\begin{lemma}
  \label{extract-of-mer}
  For any cardinal $\kappa$ there is a cardinal $\tau(\kappa)$ with
  the following property: given any family $\{H_\alpha\}_{\alpha <
    \tau(\kappa)}$ of type-definable subgroups of $G$, there exist
  subsets $S_1, S_2 \subseteq \tau(\kappa)$ such that $S_1$ is finite,
  $|S_2| = \kappa$, and
  \begin{equation*}
    \left( \bigcap_{\alpha \in S_1} H_\alpha \right)^{00} \subseteq
    \bigcap_{\alpha \in S_2} H_\alpha.
  \end{equation*}
\end{lemma}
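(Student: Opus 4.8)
The plan is to obtain the lemma as an application of the Erdős–Rado partition theorem, using Fact~\ref{baldwin-saxl-variant} to supply a finite colouring. Fix the dp-rank $n = \dpr(G)$ (the statement is trivial if $G$ is finite, so assume $n \ge 1$). Given a family $\{H_\alpha\}_{\alpha < \tau(\kappa)}$, I would colour the $(n+1)$-element subsets of $\tau(\kappa)$ with $n+1$ colours by the rule: for $\alpha_0 < \alpha_1 < \cdots < \alpha_n$, let $c(\{\alpha_0,\ldots,\alpha_n\})$ be some $k \in \{0,\ldots,n\}$ with $\left(\bigcap_{i=0}^n H_{\alpha_i}\right)^{00} = \left(\bigcap_{i \ne k} H_{\alpha_i}\right)^{00}$, which exists by Fact~\ref{baldwin-saxl-variant} applied to the $n+1$ type-definable subgroups $H_{\alpha_0},\ldots,H_{\alpha_n}$ (pick one such $k$ via a choice function). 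Set $\tau(\kappa) = \beth_n(\mu)^+$ where $\mu = \kappa + \aleph_0$; since $n+1 < \aleph_0 \le \mu$, the Erdős–Rado theorem gives $\tau(\kappa) \to (\mu^+)^{n+1}_\mu$, hence a homogeneous set $X \subseteq \tau(\kappa)$ with $|X| = \mu^+ \ge \kappa^+$. Let $k_0$ be the constant value of $c$ on $[X]^{n+1}$.

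Next I would choose $S_1$ and $S_2$ inside $X$. Writing $X$ in increasing order (it is a subset of an ordinal), I would pick $n$ elements of $X$ to serve as $\alpha_i$, $i \in \{0,\ldots,n\} \setminus \{k_0\}$, in increasing order, arranged so that at least $\kappa$ elements of $X$ lie strictly between $\alpha_{k_0 - 1}$ and $\alpha_{k_0 + 1}$ — with the conventions $\alpha_{-1} = -\infty$ and $\alpha_{n+1} = +\infty$ at the two ends. This placement is possible precisely because $|X| = \mu^+ > \kappa$ (so the order type of $X$ exceeds $\kappa$, and the gap can be made to contain $\kappa$ points). Let $S_1$ be the set of these $n$ chosen elements and let $S_2$ be a subset of exactly $\kappa$ elements of $\{x \in X ~|~ \alpha_{k_0 - 1} < x < \alpha_{k_0 + 1}\}$; both are subsets of $\tau(\kappa)$, $S_1$ is finite, and $|S_2| = \kappa$.

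Finally I would verify the containment. Fix $\alpha \in S_2$. Then $S_1 \cup \{\alpha\}$ is an $(n+1)$-element subset of $X$ whose increasing enumeration is $\alpha_0 < \cdots < \alpha_{k_0-1} < \alpha < \alpha_{k_0+1} < \cdots < \alpha_n$, so $\alpha$ occupies position $k_0$. Homogeneity then gives $\left(\bigcap_{\gamma \in S_1} H_\gamma \cap H_\alpha\right)^{00} = \left(\bigcap_{\gamma \in S_1} H_\gamma\right)^{00}$, and since the left-hand side is contained in $H_\alpha$, we conclude $\left(\bigcap_{\gamma \in S_1} H_\gamma\right)^{00} \subseteq H_\alpha$. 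As $\alpha \in S_2$ was arbitrary, $\left(\bigcap_{\alpha \in S_1} H_\alpha\right)^{00} \subseteq \bigcap_{\alpha \in S_2} H_\alpha$, which is exactly the assertion.

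The only delicate point is the bookkeeping that makes this go through: one must colour by \emph{which} of the $n+1$ groups is deletable, and then realise that homogeneity lets the deletable group be a \emph{varying} element (everything in $S_2$) rather than a fixed one — this is what converts "some subfamily of size $\le n$ controls the $00$" into "a single finite $S_1$ controls the $00$ of $\kappa$-many $H_\alpha$ simultaneously." Once this is seen, no further input beyond Fact~\ref{baldwin-saxl-variant} and Erdős–Rado is needed, and in particular no iteration of the Baldwin–Saxl statement is required.
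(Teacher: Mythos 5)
Your proof is correct and follows essentially the same route as the paper: colour $(n+1)$-element subsets by the index that Fact~\ref{baldwin-saxl-variant} allows you to delete, apply Erd\H{o}s--Rado to get a homogeneous set, and then slide each element of $S_2$ into the deletable position of an $(n+1)$-tuple drawn from a fixed finite pool. The only (cosmetic) difference is that you exploit the known homogeneous colour $k_0$ to place exactly $k_0$ elements of $S_1$ below the gap and $n-k_0$ above, getting $|S_1|=n$, whereas the paper keeps a symmetric pool of $n$ indices on each side of $S_2$ and so takes $|S_1|=2n$.
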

\begin{proof}
  Without loss of generality $\kappa \ge \aleph_0$.  By the
  Erd\H{o}s-Rado theorem (or something weaker), we can choose
  $\tau(\kappa)$ such that any coloring of the $n+1$-element subsets
  of $\tau(\kappa)$ with $n+1$ colors contains a homogeneous subset of
  cardinality $\kappa^+$.  Now suppose we are given $H_\alpha$ for
  $\alpha < \tau(\kappa)$.  Given $\alpha_1 < \cdots < \alpha_{n+1}$,
  color the set $\{\alpha_1, \ldots, \alpha_{n+1}\}$ with the smallest
  $k \in \{1, \ldots, n+1\}$ such that
  \begin{equation*}
    \left( \bigcap_{i = 1}^{n+1} H_{\alpha_i} \right)^{00} = \left(
    \bigcap_{i = 1}^{k-1} H_{\alpha_i} \cap \bigcap_{i = k + 1}^{n+1}
    H_{\alpha_i} \right)^{00}.
  \end{equation*}
  This is possible by Fact~\ref{baldwin-saxl-variant}.  Passing to a
  homogeneous subset and re-indexing, we get $\{H_\alpha\}_{\alpha <
    \kappa^+}$ such that every $(n+1)$-element set has color $k$ for
  some fixed $k$.  In particular, for any $\alpha_1 < \cdots <
  \alpha_{n+1} < \kappa^+$, we have
  \begin{equation*}
    H_{\alpha_k} \supseteq \left( \bigcap_{i = 1}^{n+1} H_{\alpha_i}
      \right)^{00} = \left( \bigcap_{i = 1}^{k - 1} H_{\alpha_i} \cap
      \bigcap_{i = k + 1}^{n+1} H_{\alpha_i} \right)^{00}.
  \end{equation*}
  Thus, for any $\beta_1 < \beta_2 < \cdots < \beta_{2n+1} < \kappa^+$
  we have
  \begin{equation*}
    H_{\beta_{n+1}} \supseteq \left( \bigcap_{i = n - k + 2}^n
    H_{\beta_i} \cap \bigcap_{i = n + 2}^{2n - k + 2} H_{\beta_i}
    \right)^{00} \supseteq \left ( \bigcap_{i = 1}^n H_{\beta_i} \cap
    \bigcap_{i = n+ 2}^{2n + 1} H_{\beta_i}\right)^{00}
  \end{equation*}
  by taking
  \begin{equation*}
    (\alpha_1, \ldots, \alpha_{n+1}) =
    (\beta_{n-k+2},\ldots,\beta_{2n-k+2}).
  \end{equation*}
  Then, for any $\beta \in [n+1,\kappa]$,
  \begin{equation*}
    H_\beta \supseteq \left( H_1 \cap \cdots \cap H_n \cap H_{\kappa +
      1} \cap H_{\kappa + n}\right)^{00},
  \end{equation*}
  so we may take $S_1 = \{1, \ldots, n, \kappa + 1, \ldots, \kappa +
  n\}$ and $S_2 = [n+1,\kappa]$.
\end{proof}

\begin{theorem} \label{bounding-theorem}
  There is a cardinal $\kappa$, depending only on the ambient group
  $G$, such that for any type-definable subgroup $H < G$, the index of
  $H^{00}$ in $H$ is less than $\kappa$.  This $\kappa$ continues to
  work in arbitrary elementary extensions.
\end{theorem}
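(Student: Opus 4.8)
The plan is to combine the two lemmas just proven, \ref{up-bound} and \ref{extract-of-mer}, to get a uniform bound. The idea is that a type-definable subgroup $H$ with $|H/H^{00}|$ large would generate, by intersecting translates or by forming a long descending chain, too many distinct type-definable subgroups, which \ref{extract-of-mer} forbids. Concretely, I would argue as follows. Suppose toward a contradiction that there is no such $\kappa$; since we want a bound working in all elementary extensions, pass to a monster and suppose $|H/H^{00}|$ is unbounded for some type-definable $H < G$. By Lemma~\ref{up-bound} (applied with ``$H$'' there taken to be $H^{00}$ and ``$H'$'' taken to be $H$), there is already a cardinal $\kappa_0 = \kappa_0(H,G)$ bounding $|H/H^{00}|$ \emph{whenever it is bounded}; the point is thus to show it is always bounded, equivalently to rule out $|H/H^{00}|$ unbounded. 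Here ``unbounded'' means: in suitable elementary extensions the index grows without bound.

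The mechanism: if $H^{00} \subsetneq H$, then inside the interval $[H^{00}, H]$ there is a type-definable subgroup $H'$ with $H' \subsetneq H$ and $H/H'$ finite — indeed $H^{00}$ is the intersection of such $H'$. Iterating, if $|H/H^{00}| \geq \tau(\kappa)$ for the $\tau$ of Lemma~\ref{extract-of-mer}, I would extract a family $\{H_\alpha\}_{\alpha < \tau(\kappa)}$ of type-definable subgroups lying between $H^{00}$ and $H$ and pairwise distinct in a strong enough sense (e.g.\ $H_{\alpha+1}$ of finite index in $H_\alpha$, or each $H_\alpha$ cutting a distinct coset), so that no finite subintersection followed by $00$ lies inside a $\kappa$-sized subintersection. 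That is exactly the configuration Lemma~\ref{extract-of-mer} prohibits: it produces finite $S_1$ and $|S_2| = \kappa$ with $\left(\bigcap_{\alpha \in S_1} H_\alpha\right)^{00} \subseteq \bigcap_{\alpha \in S_2} H_\alpha$, and since each $H_\alpha \subseteq H$ and $\bigcap_{S_1} H_\alpha$ still sits above $H^{00}$, the group $\left(\bigcap_{S_1} H_\alpha\right)^{00}$ has bounded index in $H$ by Lemma~\ref{up-bound}, forcing $\bigcap_{S_2} H_\alpha$ to have bounded index too — but by construction the $H_\alpha$ for $\alpha \in S_2$ were chosen to cut out $\kappa$-many distinct cosets, a contradiction once $\kappa$ is taken large enough (larger than the $\kappa_0$ from Lemma~\ref{up-bound}).

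So the cardinal $\kappa$ we produce is $\tau(\kappa_0(G))$ (or a slightly larger bookkeeping cardinal), where $\kappa_0(G) = \sup_H \kappa_0(H,G)$ and the sup is over type-definable $H$ up to the finitely many relevant parameters — here one uses that $G$ itself has finite dp-rank, hence $\exists^\infty$ is eliminated and the ``number of parameters'' issue is controlled, or more simply one observes $\kappa_0(H,G)$ can be bounded independently of $H$ by re-running the Morley--Erd\H{o}s--Rado argument of Lemma~\ref{up-bound} uniformly. The main obstacle I anticipate is precisely this last uniformity: Lemma~\ref{up-bound} as stated gives a $\kappa$ depending on $H$, and to feed it into Lemma~\ref{extract-of-mer} I need a single cardinal working for all type-definable $H$ simultaneously. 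Resolving this should come down to noting that the Morley--Erd\H{o}s--Rado cardinal in the proof of Lemma~\ref{up-bound} depends only on the language and on $G$ (through the number of types over $\emptyset$), not on the particular $H$, since $H^{00}$ is always the intersection of $\emptyset$-definable supersets after naming the finitely many parameters; and the number of those is itself bounded. Once that is in hand, the two lemmas chain together as above.
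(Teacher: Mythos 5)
You have correctly identified the two lemmas that must be combined, but the mechanism you propose for combining them has a genuine gap, and the framing is off in a way that matters. First, $|H/H^{00}|$ is \emph{always} bounded for each individual $H$ (that is what $H^{00}$ means in an NIP theory), so "the point is to show it is always bounded" is not the content of the theorem; the content is uniformity of the bound over all type-definable $H$. Your proposed resolution of the uniformity obstacle --- that the Morley--Erd\H{o}s--Rado cardinal in Lemma~\ref{up-bound} "depends only on the language and on $G$" because $H$ can be named by "finitely many parameters" --- is false: a type-definable $H$ may require an arbitrarily large (small) parameter set, naming it changes the language, and the extraction cardinal grows with it. The paper's key idea, which is missing from your proposal, is to restrict attention to \emph{$\omega$-definable} groups (type-definable over countable sets): up to automorphism there are only boundedly many of these, so Lemma~\ref{up-bound} yields a single $\kappa_0$ working for all of them, and the general $H$ is then handled by writing $H^{00}$ as the intersection of the traces $H \cap K$ over $\omega$-definable $K \supseteq H^{00}$.

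Second, your application of Lemma~\ref{extract-of-mer} does not produce a contradiction. You take subgroups $H_\alpha$ between $H^{00}$ and $H$; then $\bigcap_{\alpha \in S_2} H_\alpha$ contains $H^{00}$ and therefore automatically has index at most $|H/H^{00}|$ in $H$, so the conclusion "$\bigcap_{S_2} H_\alpha$ has bounded index" contradicts nothing --- your "$\kappa$-many distinct cosets" do not force the index of the \emph{intersection} above $|H/H^{00}|$. The paper instead applies Lemma~\ref{extract-of-mer} to a family of $\omega$-definable $K_\alpha \supseteq H^{00}$ with pairwise distinct traces $H \cap K_\alpha$, sets $J = (\bigcap_{S_1} K_\alpha)^{00}$, uses $|H/(H\cap J)| < \kappa_0$ to bound the number of subgroups between $H \cap J$ and $H$ by $2^{\kappa_0}$, and contradicts this with $|S_2| = (2^{\kappa_0})^+$ distinct traces. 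This counting step, together with the final product bound $|H/H^{00}| \le \prod_{K} |H/(H\cap K)| \le \kappa_0^{\kappa_1}$ obtained from $H^{00} = \bigcap_K (H \cap K)$ over a subfamily of size $< \kappa_1 = \tau((2^{\kappa_0})^+)$, is the substance of the proof and is absent from your proposal.
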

\begin{proof}
  Say that a subgroup $K \subseteq G$ is \emph{$\omega$-definable} if
  it is type-definable over a countable set.  Note that if $K$ is
  $\omega$-definable, so is $K^{00}$.  Moreover, if $K_1, K_2$ are
  $\omega$-definable, then so are $K_1 \cap K_2$ and $K_1 + K_2$.
  Also note that if $H$ is any type-definable group, then $H$ is a
  small filtered intersection of $\omega$-definable groups.

  Up to automorphism, there are only a bounded number of
  $\omega$-definable subgroups of $G$, so by Lemma~\ref{up-bound}
  there is some cardinal $\kappa_0$ with the following property: if
  $K$ is an $\omega$-definable group and if $K'$ is a bigger
  type-definable group, then either $|K'/K| < \kappa_0$ or $|K'/K|$ is
  unbounded.
  \begin{claim} \label{diamond-slide-claim}
    If $H$ is a type-definable group and $K$ is an $\omega$-definable
    group containing $H^{00}$, then $|H/(H \cap K)| < \kappa_0$.
  \end{claim}
  \begin{claimproof}
    Note that
    \begin{equation*}
      H^{00} \subseteq H \cap K \subseteq H,
    \end{equation*}
    so $H/(H \cap K)$ is bounded.  On the other hand, $H/(H \cap K)$
    is isomorphic to $(H + K)/K$, which must then have cardinality
    less than $\kappa_0$.
  \end{claimproof}
  Let $\kappa_1 = \tau((2^{\kappa_0})^+)$ where $\tau(-)$ is as in
  Lemma~\ref{extract-of-mer}.
  \begin{claim}
    If $H$ is a type-definable subgroup of $G$, then there are fewer
    than $\kappa_1$ subgroups of the form $H \cap K$ where $K$ is
    $\omega$-definable and $K \supseteq H^{00}$.
  \end{claim}
  \begin{claimproof}
    Otherwise, choose $\{K_\alpha\}_{\alpha \in \kappa_1}$ such that
    $K_\alpha$ is $\omega$-definable, $K_\alpha \supseteq H^{00}$, and
    \begin{equation*}
      H \cap K_\alpha \ne H \cap K_{\alpha'}
    \end{equation*}
    for $\alpha < \alpha' < \kappa_1$.  By Lemma~\ref{extract-of-mer},
    there are subsets $S_1, S_2 \subseteq \kappa_1$ such that $|S_1| <
    \aleph_0$, $|S_2| = (2^{\kappa_0})^+$, and
    \begin{equation*}
      \left( \bigcap_{\alpha \in S_1} K_\alpha \right)^{00} \subseteq
      \bigcap_{\alpha \in S_2} K_\alpha.
    \end{equation*}
    Let $J$ be the left-hand side.  Then $J$ is an $\omega$-definable
    group containing $H^{00}$, so $|H/(H \cap J)| < \kappa_0$ by
    Claim~\ref{diamond-slide-claim}.  Now for any $\alpha \in S_2$,
    \begin{equation*}
      J \subseteq K_\alpha \implies H \cap J \subseteq H \cap K_\alpha
      \subseteq H.
    \end{equation*}
    There are at most $2^{|H/(H \cap J)|} \le 2^{\kappa_0}$ groups
    between $H \cap J$ and $J$, so there are at most $2^{\kappa_0}$
    possibilities for $H \cap K_\alpha$, contradicting the fact that
    $|S_2| > 2^{\kappa_0}$ and the $H \cap K_\alpha$ are pairwise
    distinct for distinct $\alpha$.
  \end{claimproof}
  Now given the claim, we see that the index of $H^{00}$ in $H$ can be
  at most $\kappa_0^{\kappa_1}$.  Indeed, let $\mathcal{S}$ be the
  collection of $\omega$-definable groups $K$ such that $K \supseteq
  H^{00}$, and let $\mathcal{S}'$ be a subcollection containing a
  representative $K$ for every possibility of $H \cap K$.  By the
  second claim, $|\mathcal{S}'| < \kappa_1$.  Every type-definable
  group is an intersection of $\omega$-definable groups, so
  \begin{equation*}
    H^{00} = \bigcap_{K \in \mathcal{S}} K = \bigcap_{K \in
      \mathcal{S}} (H \cap K) = \bigcap_{K \in \mathcal{S}'} (H \cap K).
  \end{equation*}
  Then there is an injective map
  \begin{equation*}
    H/H^{00} \hookrightarrow \prod_{K \in \mathcal{S}'} H/(H \cap K),
  \end{equation*}
  and the right hand size has cardinality at most
  $\kappa_0^{\kappa_1}$.  But $\kappa_0^{\kappa_1}$ is independent of
  $H$.
\end{proof}
\begin{corollary}
  \label{big-enough-model}
  Let $\Mm$ be a field of finite dp-rank.  There is a cardinal
  $\kappa$ with the following property: if $M \preceq \Mm$ is any
  small model of cardinality at least $\kappa$, and if $J$ is a
  type-definable $M$-linear subspace of $\Mm$, then $J = J^{00}$.
  More generally, if $J$ is a type-definable $M$-linear subspace of
  $\Mm^k$, then $J = J^{00}$.
\end{corollary}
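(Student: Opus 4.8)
The plan is to deduce everything from Theorem~\ref{bounding-theorem}, applied to the abelian group $G = (\Mm^k,+)$ with its full induced structure. Since $\Mm$ is a dp-finite field, this $G$ is a monster-model abelian group of finite dp-rank (namely $\le k\cdot\dpr(\Mm)$), so Theorem~\ref{bounding-theorem} produces a cardinal $\kappa$, which we may take infinite, with $|H/H^{00}| < \kappa$ for every type-definable subgroup $H \le \Mm^k$; taking $k=1$ gives the cardinal needed for the first assertion. I would then claim this $\kappa$ works, the point being that an $M$-linear subspace of $\Mm^k$ with $|M| \ge \kappa$ cannot have a nontrivial bounded quotient, because such a quotient would be a vector space over the infinite field $M$ and hence of size $\ge |M| \ge \kappa$.

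The key steps, in order: (1) Reduce to the case $\Mm$ infinite — otherwise $\Mm^k$ is finite, every type-definable $J$ is finite, and $J = J^{00}$ for free. (2) Given a small $M \preceq \Mm$ with $|M| \ge \kappa$ and a type-definable $M$-linear subspace $J \le \Mm^k$, observe that $J^{00}$ exists by NIP. (3) Show $J^{00}$ is itself $M$-linear: for $a \in M^\times$ the scalar map $\mu_a : x \mapsto a\cdot x$ is an $M$-definable automorphism of $(\Mm^k,+)$ carrying $J$ onto $J$ (using $aJ \subseteq J$ and $a^{-1}J\subseteq J$), so $\mu_a(J^{00})$ is a type-definable subgroup of $J$ of the same, bounded, index $[J:J^{00}]$; by minimality of $J^{00}$ this forces $\mu_a(J^{00}) \supseteq J^{00}$, and applying the same to $a^{-1}$ gives $a\cdot J^{00} = J^{00}$. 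Hence $J^{00}$ is closed under addition, negation and multiplication by $M^\times$, i.e. it is an $M$-subspace. (4) Conclude: $J/J^{00}$ is an $M$-vector space with $|J/J^{00}| < \kappa \le |M|$; if it were nonzero then for any nonzero $\bar v$ the vectors $\{a\bar v : a \in M\}$ would be pairwise distinct (as $M$ is a field), forcing $|J/J^{00}| \ge |M| \ge \kappa$, a contradiction. Therefore $J = J^{00}$, and the same $\kappa$ handles all $k$ once one fixes $k$ at the outset.

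There is essentially no serious obstacle here: all the real work sits inside Theorem~\ref{bounding-theorem}. The only points requiring care are step (3) — one must genuinely check that $J^{00}$ inherits $M$-linearity, since the vector-space counting in step (4) relies on it — and the bookkeeping about whether $\kappa$ may be chosen uniformly in $k$ (it is cleanest to fix $k$ and invoke Theorem~\ref{bounding-theorem} for $\Mm^k$, accepting a possible dependence of $\kappa$ on $k$, which is harmless for the applications). I would also remark that the argument does not use type-definability of $J$ over $M$ specifically — only that $J$ is type-definable over some small set and $M$-linear — so the statement applies verbatim to subspaces defined over larger parameter sets.
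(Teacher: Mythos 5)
Your argument is essentially the paper's own proof: invoke Theorem~\ref{bounding-theorem} for $(\Mm^k,+)$, note that $J^{00}$ is itself $M$-linear because scalar multiplication by $a \in M^\times$ is a definable automorphism with $(a\cdot J)^{00} = a\cdot J^{00}$, and then observe that the nontrivial $M$-vector space $J/J^{00}$ would have cardinality at least $|M| \ge \kappa$. The only cosmetic difference is the uniformity in $k$: the paper gets a single $\kappa$ by taking the supremum of the cardinals produced for each $\Mm^k$, which disposes of the dependence you flag as harmless.
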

Note that we are not assuming $J$ is type-definable over $M$.
\begin{proof}
  Take $\kappa$ as in the Theorem, $M$ a small model of size at least
  $\kappa$, and $J$ a type-definable $M$-linear subspace of $\Mm$.
  For any $\alpha \in \Mm^\times$, we have $(\alpha \cdot J)^{00} =
  \alpha \cdot J^{00}$.  Restricting to $\alpha \in M^\times$, we see
  that $\alpha \cdot J^{00} = J^{00}$.  In other words, $J^{00}$ is an
  $M$-linear subspace itself.  The quotient $J/J^{00}$ naturally has
  the structure of a vector space over $M$.  If it is non-trivial, it
  has cardinality at least $\kappa$, contradicting the choice of
  $\kappa$.  Therefore, $J/J^{00}$ is the trivial vector space, and
  $J^{00} = J$.  For the ``more generally'' claim, apply
  Theorem~\ref{bounding-theorem} to the groups $\Mm^k$ and take the
  supremum of the resulting $\kappa$.
\end{proof}

\section{Generalities on modular lattices} \label{sec:modular-lats}
Recall that a a lattice is \emph{modular} if the identity
\[ (x \vee a) \wedge b = (x \wedge b) \vee a\]
holds whenever $a \le b$.
Modularity is equivalent to the statement that for any $a, b$, the
interval $[a \wedge b, a]$ is isomorphic as a poset to $[b, a \vee b]$
via the maps
\begin{align*}
  [a \wedge b, a] & \to [b, a \vee b] \\
  x & \mapsto x \vee b
\end{align*}
and
\begin{align*}
  [b, a \vee b] & \mapsto [a \wedge b, a] \\
  x & \mapsto x \wedge a.
\end{align*}
\subsection{Independence} \label{sec:independence}
Let $(P,<)$ be a modular lattice with bottom element $\bot$.
\begin{definition} \label{def-of-ind}
  A finite sequence $a_1, \ldots, a_n$ of elements of $P$ is
  \emph{independent} if $a_k \wedge \bigvee_{i = 1}^{k-1} a_i = \bot$
  for $2 \le k \le n$.
\end{definition}
Note that if $a$ and $b$ are independent (i.e., $a \wedge b = \bot$),
then there is an isomorphism between the poset $[\bot, a]$ and $[b, a
  \vee b]$ as above.
\begin{lemma}\label{3-perm}
  If the sequence $\{a, b, c\}$ is independent, then the sequence
  $\{a, c, b\}$ is independent.
\end{lemma}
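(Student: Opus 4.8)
The plan is to verify the two conditions of Definition~\ref{def-of-ind} for the reordered triple $\{a,c,b\}$ directly from the hypothesis, using only monotonicity of $\wedge$ and the modular law as stated at the start of this section. Unwinding the definitions: the assumption that $\{a,b,c\}$ is independent says precisely that $b \wedge a = \bot$ and $c \wedge (a \vee b) = \bot$; what must be shown is that $c \wedge a = \bot$ and $b \wedge (a \vee c) = \bot$.

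The first condition is immediate from monotonicity: since $a \le a \vee b$, we get $c \wedge a \le c \wedge (a \vee b) = \bot$. The substance of the argument is the second condition. The key step is to compute $(a \vee c) \wedge (a \vee b)$ by applying the modular identity $(x \vee a) \wedge b = (x \wedge b) \vee a$ with $x = c$ and $b$ replaced by $a \vee b$ (legitimate since $a \le a \vee b$); this yields $(a \vee c) \wedge (a \vee b) = (c \wedge (a \vee b)) \vee a = \bot \vee a = a$. Once this is in hand, observe that $b \wedge (a \vee c)$ lies below $b$, hence below $a \vee b$, and also below $a \vee c$, so it lies below $(a \vee b) \wedge (a \vee c) = a$; being simultaneously below $b$, it lies below $a \wedge b = \bot$, so $b \wedge (a \vee c) = \bot$. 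This gives independence of $\{a,c,b\}$.

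I do not expect a genuine obstacle here: the only point that requires a moment of care is selecting the correct instance of the modular law (and checking the hypothesis $a \le a \vee b$ under which it applies); after that the proof is just monotonicity of the meet operation. The statement is essentially a bookkeeping lemma about independence, which is why it can be settled in a couple of lines.
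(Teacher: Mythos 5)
Your proof is correct. It differs from the paper's argument in presentation rather than in substance: the paper assumes $x := b \wedge (a \vee c) > \bot$ and derives a contradiction by pushing $x$ up through the two interval isomorphisms $[\bot,b] \cong [a, a\vee b]$ and $[\bot, a\vee b] \cong [c, a\vee b\vee c]$ (both instances of modularity), concluding that $x \vee a \vee c$ is simultaneously equal to and strictly greater than $a \vee c$. You instead make a single direct application of the modular identity to compute $(a\vee c)\wedge(a\vee b) = (c\wedge(a\vee b))\vee a = a$, and then finish with monotonicity of $\wedge$: $b\wedge(a\vee c) \le b \wedge (a\vee b)\wedge(a\vee c) = b\wedge a = \bot$. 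Your instance of the modular law is correctly licensed by $a \le a\vee b$, and every inequality you use is justified. The computational route is arguably cleaner, avoiding the proof by contradiction; what it gives up is the explicit interval-isomorphism picture that the paper reuses repeatedly in later lemmas (e.g.\ Lemma~\ref{3-fold-lemma}), but nothing in the later development depends on which proof of this particular lemma one adopts.
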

\begin{proof}
  By assumption,
  \begin{align*}
    b \wedge a &= \bot \\
    c \wedge (a \vee b) &= \bot.
  \end{align*}
  First note that
  \begin{equation*}
    c \wedge a \le c \wedge (a \vee b) = \bot,
  \end{equation*}
  so $\{a,c\}$ is certainly independent.  Let $x = b \wedge (a \vee
  c)$.  We must show that $x = \bot$.  Otherwise,
  \begin{equation*}
    \bot < x \le b.
  \end{equation*}
  Applying the isomorphism $[\bot, b] \cong [b, a \vee b]$,
  \begin{equation*}
    a < x \vee a \le a \vee b.
  \end{equation*}
  Applying the isomorphism $[\bot, a \vee b] \cong [c, a \vee b \vee
    c]$,
  \begin{equation*}
    a \vee c < x \vee a \vee c \le a \vee b \vee c.
  \end{equation*}
  But $x \vee (a \vee c) = a \vee c$, as $x \le (a \vee c)$.
\end{proof}
\begin{proposition} \label{perm-invar}
  Independence is permutation invariant: if $\{a_1,\ldots,a_n\}$ is
  independent and $\pi$ is a permutation of $[n]$, then
  $\{a_{\pi(1)},\ldots,a_{\pi(n)}\}$ is independent.
\end{proposition}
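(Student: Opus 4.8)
The plan is to reduce the statement to the case of a single adjacent transposition. Recall that the transpositions $s_i = (i,\, i+1)$ for $1 \le i \le n-1$ generate the symmetric group $S_n$. Fix an independent sequence $a_1, \ldots, a_n$ and let $G$ be the set of $\pi \in S_n$ for which $\{a_{\pi(1)}, \ldots, a_{\pi(n)}\}$ is independent. Clearly $\id \in G$. If I can show that $\pi \in G$ implies $\pi s_i \in G$ for every $i$, then $G$ contains every product of adjacent transpositions, hence $G = S_n$, which is the proposition. Since $\{a_{(\pi s_i)(1)}, \ldots, a_{(\pi s_i)(n)}\}$ is obtained from the independent sequence $\{a_{\pi(1)}, \ldots, a_{\pi(n)}\}$ by interchanging the entries in positions $i$ and $i+1$, everything comes down to the following claim: if $\{b_1, \ldots, b_n\}$ is independent, then so is the sequence $b_1, \ldots, b_{i-1}, b_{i+1}, b_i, b_{i+2}, \ldots, b_n$.

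To prove the claim I would write $c = \bigvee_{j < i} b_j$ (with $c = \bot$ if $i = 1$) and verify the defining condition of independence for the swapped sequence one position at a time. For positions $k < i$ nothing has changed. For positions $k > i+1$ the join of the first $k-1$ entries is unchanged, since it is the join of the same set of elements, so those conditions are inherited from independence of $\{b_1, \ldots, b_n\}$. The only new content is at positions $i$ and $i+1$. At position $i$ the new entry is $b_{i+1}$, and the required condition $b_{i+1} \wedge c = \bot$ is weaker than the original condition $b_{i+1} \wedge (c \vee b_i) = \bot$, hence holds. At position $i+1$ the new entry is $b_i$, and the required condition is $b_i \wedge (c \vee b_{i+1}) = \bot$.

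This last equation is exactly what Lemma~\ref{3-perm} provides. The original independence of $\{b_1, \ldots, b_n\}$ gives $b_i \wedge c = \bot$ and $b_{i+1} \wedge (c \vee b_i) = \bot$; these two facts say precisely that the three-element sequence $c, b_i, b_{i+1}$ is independent. Lemma~\ref{3-perm} then yields independence of $c, b_{i+1}, b_i$, whose second defining condition is $b_i \wedge (c \vee b_{i+1}) = \bot$, as needed. I do not expect a real obstacle here: Lemma~\ref{3-perm} carries the whole geometric weight, and what remains is bookkeeping. The two points worth slight care are that $G$ must be closed under \emph{right} multiplication by the $s_i$ (so that closure genuinely propagates to all of $S_n$), and the degenerate case $i = 1$ where $c = \bot$ — but the argument via Lemma~\ref{3-perm} applies verbatim there too, since independence of $\{\bot, b_1, b_2\}$ is just the statement $b_1 \wedge b_2 = \bot$.
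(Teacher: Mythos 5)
Your proof is correct and is essentially the paper's own argument: reduce to an adjacent transposition and invoke Lemma~\ref{3-perm} applied to the triple $c, b_i, b_{i+1}$ with $c$ the join of the preceding terms. The extra bookkeeping you supply (generation of $S_n$ by adjacent transpositions, the unchanged conditions at other positions, the degenerate case $c = \bot$) is exactly what the paper leaves implicit.
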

\begin{proof}
  It suffices to consider the case where $\pi$ is the transposition of
  $k$ and $k+1$.  Let $b = a_1 \vee \cdots \vee a_{k-1}$.  Then we know
  \begin{align*}
    a_k \wedge b &= \bot \\
    a_{k+1} \wedge (b \vee a_k) &= \bot
  \end{align*}
  and we must show
  \begin{align*}
    a_{k+1} \wedge b & \stackrel{?}{=} \bot \\
    a_k \wedge (b \vee a_{k+1}) &\stackrel{?}{=} \bot.
  \end{align*}
  This is exactly Lemma~\ref{3-perm}.
\end{proof}
\begin{proposition} \label{1-collapse}
  Let $c_1, \ldots, c_n$ and $a_1, \ldots, a_m$ be two sequences.  Set
  $a_0 = c_1 \vee \cdots \vee c_n$.  Then the following are
  equivalent:
  \begin{enumerate}
  \item The sequence $c_1, \ldots, c_n$ is independent and the
    sequence $a_0, a_1, \ldots, a_m$ is independent.
  \item The sequence $c_1, \ldots, c_n, a_1, \ldots, a_m$ is
    independent.
  \end{enumerate}
\end{proposition}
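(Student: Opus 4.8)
The plan is to prove the equivalence by simply unwinding Definition~\ref{def-of-ind} on both sides and matching up the resulting lists of meet-equations, using the relation $a_0 = c_1 \vee \cdots \vee c_n$. Notably, modularity plays no role in this particular proposition; it is a bookkeeping statement about the defining conditions for independence.

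Concretely, I would first reindex the combined sequence in statement~(2) as $d_1, \ldots, d_{n+m}$, where $d_i = c_i$ for $1 \le i \le n$ and $d_{n+j} = a_j$ for $1 \le j \le m$. By definition, (2) asserts that $d_k \wedge \bigvee_{i=1}^{k-1} d_i = \bot$ holds for every $k$ with $2 \le k \le n+m$. Split these conditions into those with $k \le n$ and those with $k > n$. For $2 \le k \le n$ the condition reads $c_k \wedge \bigvee_{i=1}^{k-1} c_i = \bot$, and the conjunction of all of them is, verbatim, the assertion that $c_1, \ldots, c_n$ is independent. For $k = n+j$ with $1 \le j \le m$, the condition reads $a_j \wedge \left( \bigvee_{i=1}^n c_i \vee \bigvee_{i=1}^{j-1} a_i \right) = \bot$; substituting $a_0 = \bigvee_{i=1}^n c_i$ turns this into $a_j \wedge \left( a_0 \vee \bigvee_{i=1}^{j-1} a_i \right) = \bot$, and the conjunction of these for $1 \le j \le m$ is, verbatim, the assertion that $a_0, a_1, \ldots, a_m$ is independent. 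Hence (2) is equivalent to the conjunction of these two assertions, which is exactly (1), and both implications are established at once.

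The only place that calls for mild attention is the index bookkeeping at the boundary between the two blocks: the $j = 1$ instance of the second family of conditions is $a_1 \wedge a_0 = \bot$, which is precisely the first (i.e. $k = 2$) condition in the independence of $a_0, a_1, \ldots, a_m$, and one should note that subsequences of length $\le 1$ impose no conditions, so nothing is lost in the split. Beyond that there is no real obstacle; the proposition is best viewed as a normalization lemma that lets later arguments freely ``collapse'' an initial independent block $c_1, \ldots, c_n$ into the single element $a_0$ and conversely ``refine'' such an element, and I expect it to be applied repeatedly in the treatment of independent families and cubes.
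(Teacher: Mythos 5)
Your proof is correct and is exactly the argument the paper intends: the paper's own proof reads ``Trivial after unrolling the definition,'' and your unrolling---splitting the defining meet-equations of the combined sequence into the block with $k \le n$ and the block with $k > n$, then substituting $a_0 = c_1 \vee \cdots \vee c_n$---is precisely that verification, carried out carefully.
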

\begin{proof}
  Trivial after unrolling the definition.
\end{proof}
\begin{remark}
  \label{1-grouping}
  Let $a_1, \ldots, a_n$ be an independent sequence and $S$ be a
  subset of $[n]$.  Let $b = \bigvee_{i \in S} a_i$ (understood as
  $\bot$ when $S = \emptyset$).  Then $\{b\} \cup \{a_i ~|~ i \notin
  S\}$ is an independent sequence.
\end{remark}
\begin{proof}
  Modulo Proposition~\ref{perm-invar}, this is the (2) $\implies$ (1)
  direction of Proposition~\ref{1-collapse}.
\end{proof}
\begin{lemma}
  \label{grouping}
  Let $a_1, \ldots, a_n$ be an independent sequence.  Let $S_1,
  \ldots, S_m$ be pairwise disjoint subsets of $[n]$.  For $j \in [m]$
  let $b_j = \bigvee_{i \in S_j} a_i$, or $\bot$ if $S_j$ is empty.
  Then $b_1, \ldots, b_m$ is an independent sequence.
\end{lemma}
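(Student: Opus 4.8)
The plan is to deduce this from Remark~\ref{1-grouping} and Proposition~\ref{perm-invar} by an induction on $m$, carrying along the ``unused'' coordinates as an explicit tail so that at each stage the block to be collapsed is still visible as a sub-block of a known independent sequence. Write $T = [n] \setminus (S_1 \cup \cdots \cup S_m)$, and list $T = \{i_1 < \cdots < i_r\}$. I claim it suffices to prove the stronger statement that the sequence $b_1, \ldots, b_m, a_{i_1}, \ldots, a_{i_r}$ is independent; for then $b_1, \ldots, b_m$ is a \emph{prefix} of an independent sequence, and a prefix of an independent sequence is independent immediately from Definition~\ref{def-of-ind} (the defining conditions for the prefix are a subset of those for the whole sequence).

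To prove the stronger statement I would induct on $m$. For $m = 0$ the sequence in question is just $a_1, \ldots, a_n$, which is independent by hypothesis. For $m \ge 1$, apply the inductive hypothesis to the pairwise disjoint subsets $S_1, \ldots, S_{m-1}$ of $[n]$: writing $T' = [n] \setminus (S_1 \cup \cdots \cup S_{m-1}) = S_m \cup T$, this gives that the sequence $\sigma$ whose terms are $b_1, \ldots, b_{m-1}$ followed by $a_j$ for $j \in T'$ in increasing order is independent. Now the terms of $\sigma$ of the form $a_j$ with $j \in S_m$ occupy a subset of the positions of $\sigma$ (disjoint from the positions of $b_1, \ldots, b_{m-1}$ since the $S_k$ are pairwise disjoint), and their join is exactly $\bigvee_{j \in S_m} a_j = b_m$. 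Applying Remark~\ref{1-grouping} to $\sigma$ and this subset of positions, we get that $b_m, b_1, \ldots, b_{m-1}$ followed by $a_j$ for $j \in T' \setminus S_m = T$ (in increasing order) is independent. Finally, Proposition~\ref{perm-invar} lets us permute $b_m$ past $b_1, \ldots, b_{m-1}$ back into position $m$, yielding independence of $b_1, \ldots, b_m, a_{i_1}, \ldots, a_{i_r}$, which closes the induction.

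There is no serious obstacle here; the only point requiring care is the bookkeeping in the inductive step — one must keep the tail coordinates $a_j$ for $j \notin S_1 \cup \cdots \cup S_{m-1}$ present in the sequence produced by the inductive hypothesis, precisely so that the block indexed by $S_m$ is available as a genuine subset of the current sequence's index set and Remark~\ref{1-grouping} can be invoked. Once that is set up, the combination ``collapse a block with Remark~\ref{1-grouping}, then reorder with Proposition~\ref{perm-invar}'' does all the work, and the empty-block convention $b_j = \bot$ causes no trouble since $\bot = \bigvee_{i \in \emptyset} a_i$ is handled uniformly by Remark~\ref{1-grouping}.
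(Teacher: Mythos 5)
Your proof is correct and is exactly the iteration of Remark~\ref{1-grouping} (together with Proposition~\ref{perm-invar}) that the paper's one-line proof intends, just with the bookkeeping made explicit.
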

\begin{proof}
  Iterate Remark~\ref{1-grouping}.
\end{proof}
\begin{lemma}
  \label{3-odd}
  If $a, b, c$ is an independent sequence, then $(a \vee c) \wedge (b
  \vee c) = c$.
\end{lemma}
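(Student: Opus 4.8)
The plan is to reduce the whole statement to a single application of the modular law, after first massaging the hypothesis into a convenient form via permutation invariance. I would begin with the trivial observation that $c \le a \vee c$ and $c \le b \vee c$, hence $c \le (a \vee c) \wedge (b \vee c)$ automatically; so only the reverse inequality requires work.

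The next step is to extract the right vanishing condition from independence. Applying Proposition~\ref{perm-invar} to the independent sequence $a, b, c$, the reordered sequence $b, c, a$ is also independent. Unrolling Definition~\ref{def-of-ind} for $b, c, a$ gives in particular $a \wedge (b \vee c) = \bot$ (it also gives $c \wedge b = \bot$, which will not be needed).

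The key computation is then a direct invocation of modularity. Since $c \le b \vee c$, the modular identity $(x \vee a) \wedge b = (x \wedge b) \vee a$ (valid when $a \le b$) applied with $x := a$, with the identity's ``$a$'' taken to be $c$, and with the identity's ``$b$'' taken to be $b \vee c$, yields
\[ (a \vee c) \wedge (b \vee c) = (a \wedge (b \vee c)) \vee c. \]
Substituting $a \wedge (b \vee c) = \bot$ collapses the right-hand side to $\bot \vee c = c$, which completes the proof.

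I do not expect a real obstacle here; the only thing to be careful about is the bookkeeping in the modular-law step — making sure the instance is applied with the three elements slotted into the correct roles and that the inequality hypothesis $c \le b \vee c$ is the one actually needed — together with checking that the permutation of the independent sequence produces precisely the equation $a \wedge (b \vee c) = \bot$ rather than a different coordinate's vanishing condition.
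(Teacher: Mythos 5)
Your proof is correct and is essentially the paper's own argument: the paper likewise applies modularity with $c \le b \vee c$ to rewrite $(a \vee c) \wedge (b \vee c)$ as $(a \wedge (b \vee c)) \vee c$ and then kills the first term using independence of the permuted sequence $\langle b, c, a \rangle$. The only cosmetic difference is that you state the trivial inequality $c \le (a\vee c)\wedge(b\vee c)$ separately, which the paper absorbs into the single displayed chain of equalities.
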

\begin{proof}
  As $c \le b \vee c$,
  \begin{equation*}
    (a \vee c) \wedge (b \vee c) = (a \wedge (b \vee c)) \vee c = \bot
    \vee c = c,
  \end{equation*}
  where the first equality is by modularity, and the second equality
  is by independence of $\langle b,c,a \rangle$.
\end{proof}
\begin{proposition}
  \label{proto-cube}
  Let $a_1, \ldots, a_n$ be an independent sequence.  For $S \subseteq
  [n]$, let $a_S = \bigvee_{i \in S} a_i$.  Then the following facts
  hold:
  \begin{enumerate}
  \item $a_{S \cup S'} = a_S \vee a_{S'}$.
  \item $a_\emptyset = \bot$.
  \item $a_{S \cap S'} = a_S \wedge a_{S'}$.
  \end{enumerate}
  In other words, $S \mapsto a_S$ is a homomorphism of lower-bounded
  lattices from $\Pow([n])$ to $P$.
\end{proposition}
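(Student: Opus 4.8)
The plan is to treat the three claims in turn, noting that the first two are essentially formal and that the whole content sits in the meet formula (3).

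First I would dispatch (1) and (2). Claim (2) is the convention $a_\emptyset = \bigvee_{i \in \emptyset} a_i = \bot$. For (1), the join operation is associative, commutative, and idempotent, so $a_{S \cup S'} = \bigvee_{i \in S \cup S'} a_i = \left(\bigvee_{i \in S} a_i\right) \vee \left(\bigvee_{i \in S'} a_i\right) = a_S \vee a_{S'}$, with no independence hypothesis needed.

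For (3), the key move is to split off the common part: write $C = S \cap S'$, $A = S \setminus S'$, $B = S' \setminus S$, so that $S = A \sqcup C$ and $S' = B \sqcup C$ are disjoint unions, and $A, B, C$ are pairwise disjoint subsets of $[n]$. By Lemma~\ref{grouping} applied to the independent sequence $a_1, \ldots, a_n$ and the pairwise disjoint sets $A, B, C$, the sequence $a_A, a_B, a_C$ is independent. Now apply Lemma~\ref{3-odd} with $a = a_A$, $b = a_B$, $c = a_C$: this gives $(a_A \vee a_C) \wedge (a_B \vee a_C) = a_C$. By part (1), $a_A \vee a_C = a_{A \cup C} = a_S$ and $a_B \vee a_C = a_{B \cup C} = a_{S'}$, while $a_C = a_{S \cap S'}$, so the displayed identity reads exactly $a_S \wedge a_{S'} = a_{S \cap S'}$, as desired. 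The final sentence of the statement is then just the observation that (1), (2), (3) together say precisely that $S \mapsto a_S$ preserves $\vee$, $\bot$, and $\wedge$, hence is a homomorphism of lower-bounded lattices $\Pow([n]) \to P$.

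There is no real obstacle here; the only point requiring a little care is making sure the decomposition $S = A \sqcup C$, $S' = B \sqcup C$ is set up so that Lemma~\ref{grouping} applies to \emph{pairwise disjoint} index sets, after which Lemma~\ref{3-odd} is a black box. Everything else is bookkeeping with part (1).
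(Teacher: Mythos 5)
Your proof is correct and is essentially identical to the paper's: both dispatch (1) and (2) as formal, then prove (3) by decomposing $S$ and $S'$ into $S\setminus S'$, $S'\setminus S$, $S\cap S'$, invoking Lemma~\ref{grouping} for independence of the three joins and Lemma~\ref{3-odd} for the meet identity. Nothing to add.
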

\begin{proof}
  The first two points are clear by definition.  For the third point, let
  \begin{align*}
    S_1 &= S \setminus S' \\
    S_2 &= S' \setminus S \\
    S_3 &= S \cap S'.
  \end{align*}
  By Lemma~\ref{grouping}, the sequence $a_{S_1}, a_{S_2}, a_{S_3}$ is
  independent.  By Lemma~\ref{3-odd},
  \begin{equation*}
    a_{S \cap S'} = a_{S_3} \stackrel{!}{=} (a_{S_1} \vee a_{S_3})
    \wedge (a_{S_2} \vee a_{S_3}) = a_{S_1 \cup S_3} \wedge a_{S_2
      \cup S_3} = a_S \wedge a_{S'}. \qedhere
  \end{equation*}
\end{proof}
\begin{proposition}
  \label{proto-sharp-cube}
  In Proposition~\ref{proto-cube}, suppose in addition that $a_i >
  \bot$ for $1 \le i \le n$.  Then the map $S \mapsto a_S$ is an isomorphism onto its image:
  \begin{equation*}
    S \subseteq S' \iff a_S \le a_{S'}.
  \end{equation*}
\end{proposition}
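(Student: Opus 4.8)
The plan is to prove the only non-trivial implication, namely $a_S \le a_{S'} \implies S \subseteq S'$; the converse is immediate, since when $S \subseteq S'$ the element $a_S$ is a join taken over a subset of the generators of $a_{S'}$. So suppose $a_S \le a_{S'}$ and fix an arbitrary $i \in S$. Then $a_i \le a_S \le a_{S'}$, and it suffices to deduce $i \in S'$ from $a_i \le a_{S'}$.

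I would argue the contrapositive: assuming $i \notin S'$, I will show $a_i \wedge a_{S'} = \bot$, whence $a_i \not\le a_{S'}$ because $a_i > \bot$ (if $a_i \le a_{S'}$ then $a_i \wedge a_{S'} = a_i > \bot$, a contradiction). To get $a_i \wedge a_{S'} = \bot$, apply Lemma~\ref{grouping} to the independent sequence $a_1, \ldots, a_n$ with the two pairwise disjoint index sets $S_1 = S'$ and $S_2 = \{i\}$ (disjoint precisely because $i \notin S'$): the lemma yields that the two-term sequence $a_{S'}, a_i$ is independent, which by Definition~\ref{def-of-ind} says exactly that $a_i \wedge a_{S'} = \bot$. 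Running this for every $i \in S$ gives $S \subseteq S'$, as desired.

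I do not expect any real obstacle: the combinatorial content has already been isolated in Lemma~\ref{grouping} (which itself packages the permutation-invariance and regrouping facts from Proposition~\ref{perm-invar} and Proposition~\ref{proto-cube}). The only place the new hypothesis $a_i > \bot$ enters is the final step, passing from $a_i \wedge a_{S'} = \bot$ to $a_i \not\le a_{S'}$; without it the map $S \mapsto a_S$ can collapse (e.g.\ when some $a_i = \bot$), so this hypothesis is genuinely needed and the argument is otherwise formal.
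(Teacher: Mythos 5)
Your proof is correct and follows essentially the same route as the paper: the key step in both is that $i \in S \setminus S'$ forces $a_i \le a_{S'}$ while $a_i \wedge a_{S'} = \bot$, contradicting $a_i > \bot$. The only cosmetic difference is that the paper reads off $a_i \wedge a_{S'} = a_{\{i\} \cap S'} = a_\emptyset = \bot$ from the meet formula already established in Proposition~\ref{proto-cube}, whereas you re-derive the same independence directly from Lemma~\ref{grouping}; these are interchangeable.
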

\begin{proof}
  The $\implies$ direction follows formally from the fact that $S
  \mapsto a_S$ is a lattice morphism.  Conversely, suppose $S
  \not\subseteq S'$ but $a_S \le a_{S'}$.  Take $i \in S
  \setminus S'$.  Then $a_i \le a_S \le a_{S'}$, so
  \begin{equation*}
    a_i = a_i \wedge a_{S'} = a_{S' \cap \{i\}} = a_\emptyset = \bot,
  \end{equation*}
  contradicting the assumption.
\end{proof}
\begin{lemma}
  \label{down-smashing}
  Let $(P,\le)$ and $(P',\le)$ be two bounded-below modular lattices,
  and $f : P \to P'$ be a map satisfying the following conditions:
  \begin{align*}
    f(x \wedge y) &= f(x) \wedge f(y) \\
    f(\bot_P) &= \bot_{P'}.
  \end{align*}
  If $a_1, \ldots, a_n$ is an independent sequence in $P$ then
  $f(a_1), \ldots, f(a_n)$ is an independent sequence in $P'$.
\end{lemma}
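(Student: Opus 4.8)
The plan is to exploit that $f$, though not assumed to preserve joins, is automatically order-preserving, and that consequently $\bigvee_i f(a_i) \le f\bigl(\bigvee_i a_i\bigr)$ holds in $P'$. First I would observe that $f$ is monotone: if $x \le y$ in $P$, then $x = x \wedge y$, so $f(x) = f(x) \wedge f(y)$, i.e.\ $f(x) \le f(y)$. In particular, for each $k$ with $2 \le k \le n$, every $f(a_i)$ with $i < k$ satisfies $f(a_i) \le f\bigl(\bigvee_{i < k} a_i\bigr)$, and hence by the least-upper-bound property $\bigvee_{i < k} f(a_i) \le f\bigl(\bigvee_{i < k} a_i\bigr)$.

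Then I would check the defining condition for independence of the sequence $f(a_1), \ldots, f(a_n)$ directly. For $2 \le k \le n$,
\begin{align*}
  f(a_k) \wedge \bigvee_{i = 1}^{k-1} f(a_i) & \le f(a_k) \wedge f\left(\bigvee_{i = 1}^{k-1} a_i\right) \\
  & = f\left(a_k \wedge \bigvee_{i = 1}^{k-1} a_i\right) = f(\bot_P) = \bot_{P'},
\end{align*}
using in turn the inequality just established, meet-preservation of $f$, the independence of $a_1, \ldots, a_n$, and the hypothesis $f(\bot_P) = \bot_{P'}$. Since $\bot_{P'}$ is the bottom element of $P'$, the meet on the left equals $\bot_{P'}$, which is exactly what Definition~\ref{def-of-ind} demands of $f(a_1), \ldots, f(a_n)$.

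There is no genuine obstacle here; the only subtle point is that one must resist the temptation to write $f\bigl(\bigvee_{i<k} a_i\bigr) = \bigvee_{i<k} f(a_i)$, which would require $f$ to preserve the relevant joins and is not available. The one-sided inequality $\bigvee_i f(a_i) \le f\bigl(\bigvee_i a_i\bigr)$ is all that is needed, because being $\le \bot_{P'}$ forces equality with $\bot_{P'}$. Note also that modularity of $P$ and $P'$ plays no role in this argument.
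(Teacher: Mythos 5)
Your proof is correct and follows essentially the same route as the paper's: establish that meet-preservation makes $f$ monotone, deduce $\bigvee_{i<k} f(a_i) \le f\bigl(\bigvee_{i<k} a_i\bigr)$, and then bound $f(a_k) \wedge \bigvee_{i<k} f(a_i)$ by $f(\bot_P) = \bot_{P'}$. Your closing remarks about the one-sided join inequality and the irrelevance of modularity match the structure of the paper's argument exactly.
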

\begin{proof}
  Note that
  \begin{equation*}
    x \le y \iff x = x \wedge y \implies f(x) = f(x) \wedge f(y) \iff
    f(x) \le f(y),
  \end{equation*}
  so $f$ is weakly order-preserving.  In particular, for any $x, y$
  \begin{align*}
    f(x) & \le f(x \vee y) \\
    f(y) & \le f(x \vee y) \\
    f(x) \vee f(y) & \le f(x \vee y).
  \end{align*}
  Therefore for $2 \le k \le n$ we have
  \begin{equation*}
    f(a_k) \wedge \bigvee_{i < k} f(a_i) \le f(a_k) \wedge f\left(
    \bigvee_{i < k} a_i \right) = f\left(a_k \wedge \bigvee_{i < k}
    a_i \right) = f(\bot_P) = \bot_{P'}. \qedhere
  \end{equation*}
\end{proof}

\subsection{Cubes and relative independence} \label{sec:cubes}
We continue to work in a modular lattice $(P,\le)$, but drop the
assumption that a bottom element exists.
\begin{definition}
  Let $b$ be an element of $P$.
  \begin{enumerate}
  \item A sequence $a_1, \ldots, a_n$ of elements of $P$ is
    \emph{independent over $b$} if it is an independent sequence in
    the bounded-below modular lattice $\{x \in P ~|~ x \ge b\}$.
    Equivalently, $a_1, \ldots, a_n$ is independent over $b$ if $a_i
    \ge b$ for each $i$, and
    \begin{equation*}
      a_k \wedge \bigvee_{i < k} a_i = b
    \end{equation*}
    for $2 \le k \le n$.
  \item A sequence $a_1, \ldots, a_n$ of elements of $P$ is
    \emph{co-independent under $b$} if $a_i \le b$ for each $i$, and
    \begin{equation*}
      a_k \vee \bigwedge_{i < k} a_i = b
    \end{equation*}
    for $2 \le k \le n$.
  \end{enumerate}
\end{definition}
\begin{lemma}
  \label{relative-smashing}
  Let $b$ and $c$ be elements of $P$.
  \begin{enumerate}
  \item If $a_1, \ldots, a_n$ is an independent sequence over $b$,
    then $a_1 \wedge c, a_2 \wedge c, \ldots, a_n \wedge c$ is an
    independent sequence over $b \wedge c$.
  \item If $a_1, \ldots, a_n$ is a co-independent sequence under $b$,
    then $a_1 \vee c, a_2 \vee c, \ldots, a_n \vee c$ is a
    co-independent sequence under $b \vee c$.
  \end{enumerate}
\end{lemma}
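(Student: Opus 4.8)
The plan is to deduce both parts from Lemma~\ref{down-smashing}, with a duality argument handling the second part.

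For part (1), I would first note that $P_b := \{x \in P ~|~ x \ge b\}$ and $P_{b \wedge c} := \{x \in P ~|~ x \ge b \wedge c\}$ are bounded-below modular lattices (with bottoms $b$ and $b \wedge c$ respectively), whose meet and join are inherited from $P$, and that ``independent over $b$'' (resp.\ ``over $b \wedge c$'') is precisely ``independent'' in the sense of Definition~\ref{def-of-ind} computed inside $P_b$ (resp.\ $P_{b \wedge c}$). Then I would consider the map $f : P_b \to P_{b \wedge c}$ given by $f(x) = x \wedge c$, and check the hypotheses of Lemma~\ref{down-smashing}: it is well-defined because $x \ge b$ forces $x \wedge c \ge b \wedge c$; it satisfies $f(x \wedge y) = (x \wedge y) \wedge c = (x \wedge c) \wedge (y \wedge c) = f(x) \wedge f(y)$ by commutativity and associativity of meet; and it sends the bottom $b$ of $P_b$ to the bottom $b \wedge c$ of $P_{b \wedge c}$. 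Lemma~\ref{down-smashing} applied to $f$ then gives immediately that $a_1 \wedge c, \ldots, a_n \wedge c$ is independent over $b \wedge c$, which is part (1).

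For part (2), I would pass to the opposite lattice $P^{\mathrm{op}}$, which is again modular since the modular law is self-dual. Under this duality ``co-independent under $b$'' in $P$ becomes ``independent over $b$'' in $P^{\mathrm{op}}$, the operation $x \mapsto x \vee c$ in $P$ becomes $x \mapsto x \wedge c$ in $P^{\mathrm{op}}$, and $b \vee c$ becomes $b \wedge c$. Applying part (1) inside $P^{\mathrm{op}}$ then yields exactly the claimed statement. (One could instead just rerun the argument of part (1) with meets and joins interchanged, invoking the dual of Lemma~\ref{down-smashing}, but invoking duality avoids repetition.)

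There is essentially no serious obstacle. The only points needing care are the bookkeeping that the relative notions of (co)independence coincide with Definition~\ref{def-of-ind} in the appropriate bounded sublattices, and that $x \mapsto x \wedge c$ really does meet the hypotheses of Lemma~\ref{down-smashing}; both are routine.
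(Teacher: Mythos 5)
Your proof is correct and is exactly the paper's argument: part (1) is Lemma~\ref{down-smashing} applied to the meet-preserving, bottom-preserving map $x \mapsto x \wedge c$ from $\{x \ge b\}$ to $\{x \ge b \wedge c\}$, and part (2) follows by duality. The extra verification of the hypotheses is fine but routine, as you note.
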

\begin{proof}
  Part 1 follows from Lemma~\ref{down-smashing} applied to the
  function
  \begin{align*}
    f : \{x \in P ~|~ x \ge b\} & \to \{x \in P ~|~ x \ge b \wedge c\} \\
    x & \mapsto x \wedge c,
  \end{align*}
  and part 2 follows by duality.
\end{proof}
In what follows, we require ``lattice homomorphisms'' to preserve
$\vee$ and $\wedge$, but not necessarily $\top$ and $\bot$ when they
exist.
\begin{definition}
  An \emph{$n$-cube} in $P$ is a family $\{a_S\}_{S \subseteq [n]}$ of
  elements of $P$ such that $S \mapsto a_S$ is a lattice homomorphism
  from $\Pow([n])$ to $P$.  A \emph{strict $n$-cube} is an $n$-cube
  such that this homomorphism is injective.  The \emph{top} and
  \emph{bottom} of an $n$-cube are the elements $a_\emptyset$ and
  $a_{[n]}$, respectively.
\end{definition}
\begin{remark} \label{lattice-hom}
  Lattice homomorphisms are weakly order-preserving, and injective
  lattice homomorphisms are strictly order-preserving.
\end{remark}
\begin{proposition} \label{cubes-and-independence}
  Let $b$ be an element of $P$.
  \begin{enumerate}
  \item If $\{a_S\}_{S \subseteq [n]}$ is an $n$-cube with bottom $b$,
    then $a_1, \ldots, a_n$ is an independent sequence over $b$.
  \item This establishes a bijection from the collection of $n$-cubes
    with bottom $b$ to the collection of independent sequences over
    $b$ of length $n$.
  \item If $a_1, \ldots, a_n$ is an independent sequence over $b$, the
    corresponding $n$-cube is strict if and only if every $a_i$ is
    strictly greater than $b$.
  \end{enumerate}
\end{proposition}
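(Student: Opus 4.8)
The plan is to reduce all three parts to the bounded-below results of \S\ref{sec:independence} by working inside the principal filter $P_{\ge b} := \{x \in P ~|~ x \ge b\}$. First I would observe that $P_{\ge b}$ is a sublattice of $P$: it is closed under $\wedge$ and $\vee$ because $b \le x$ and $b \le y$ force $b \le x \wedge y \le x \vee y$, so its meets and joins coincide with those of $P$. Hence $P_{\ge b}$ is a modular lattice (the modular law is inherited since the operations agree), now equipped with a bottom element, namely $b$; and by definition an independent sequence over $b$ in $P$ is exactly an independent sequence in $P_{\ge b}$. After this observation, Propositions~\ref{proto-cube} and~\ref{proto-sharp-cube} and Remark~\ref{lattice-hom} apply directly to $P_{\ge b}$, and everything else is bookkeeping.

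For part (1), given an $n$-cube $\{a_S\}_{S \subseteq [n]}$ with $a_\emptyset = b$, the homomorphism $S \mapsto a_S$ is weakly order-preserving by Remark~\ref{lattice-hom}, so each $a_{\{i\}} \ge a_\emptyset = b$; and using that $S \mapsto a_S$ preserves $\vee$ and $\wedge$ one computes $a_{\{k\}} \wedge \bigvee_{i < k} a_{\{i\}} = a_{\{k\}} \wedge a_{\{1,\ldots,k-1\}} = a_{\{k\} \cap \{1,\ldots,k-1\}} = a_\emptyset = b$. Writing $a_i$ for $a_{\{i\}}$, this is precisely the statement that $a_1, \ldots, a_n$ is independent over $b$.

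For part (2), the key point for injectivity is that an $n$-cube with bottom $b$ is recovered from its singleton values: since $S = \bigcup_{i \in S}\{i\}$ and the homomorphism preserves $\vee$, we get $a_S = \bigvee_{i \in S} a_i$ for $S \ne \emptyset$ and $a_\emptyset = b$, so the assignment of part (1) is injective. For surjectivity I would feed an independent sequence $a_1, \ldots, a_n$ over $b$ into Proposition~\ref{proto-cube} applied inside $P_{\ge b}$ (whose bottom element is $b$): this produces a homomorphism of lower-bounded lattices $\Pow([n]) \to P_{\ge b} \subseteq P$, i.e.\ an $n$-cube in $P$ with bottom $b$ whose singleton values are the given $a_i$. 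The two constructions are then visibly mutually inverse, giving the bijection.

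For part (3), the $n$-cube attached to $a_1, \ldots, a_n$ is strict exactly when $S \mapsto a_S$ is injective. Each $a_i \ge b = a_\emptyset$ by independence over $b$. If some $a_i = b$, then $a_{\{i\}} = a_\emptyset$ with $\{i\} \ne \emptyset$, so injectivity fails; conversely, if every $a_i > b$ then Proposition~\ref{proto-sharp-cube} applied in $P_{\ge b}$ shows $S \mapsto a_S$ is an order-embedding, hence injective. Thus the cube is strict iff every $a_i > b$. I do not anticipate a real obstacle here: the only thing that genuinely needs care is verifying that $P_{\ge b}$ is a modular lattice with bottom $b$ whose lattice operations agree with those of $P$; once that is in hand, parts (1)--(3) are direct translations of the already-proved $\bot$-case together with the elementary fact that a lattice homomorphism out of $\Pow([n])$ is determined by its values on singletons.
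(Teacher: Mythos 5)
Your proof is correct and follows essentially the same route as the paper: part (1) by the same computation $a_{\{k\}} \wedge a_{[k-1]} = a_\emptyset = b$, injectivity in (2) by recovering the cube from its singleton values via joins, surjectivity by applying Proposition~\ref{proto-cube} inside the bounded-below sublattice $\{x \in P ~|~ x \ge b\}$, and part (3) via Proposition~\ref{proto-sharp-cube}. Your explicit verification that $P_{\ge b}$ is a modular sublattice with bottom $b$ is a detail the paper leaves implicit, but the argument is the same.
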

\begin{proof}
  \begin{enumerate}
  \item By definition of $n$-cube and Remark~\ref{lattice-hom},
    \begin{align*}
      a_i & \ge a_\emptyset = b \\
      a_i \wedge \bigvee_{j < i} a_j & = a_i \wedge a_{[i-1]} = a_{i
        \cap [i-1]} = a_\emptyset = b.
    \end{align*}
  \item Injectivity: if $\{a_S\}_{S \subseteq [n]}$ and $\{a'_S\}_{S
    \subseteq [n]}$ are two $n$-cubes such that $a_S = a'_S$ when $|S|
    \le 1$, then
    \begin{equation*}
      a_S = \bigvee_{i \in S} a_i = \bigvee_{i \in S} a'_i = a'_S
    \end{equation*}
    when $|S| > 1$.

    Surjectivity: let $a_1, \ldots, a_n$ be a sequence independent
    over $b$.  By Proposition~\ref{proto-cube} applied to the
    sublattice $\{x \in P ~|~ x \ge b\}$, there is an $n$-cube with
    bottom $b$.
  \item Let $\{a_S\}_{S \subseteq [n]}$ be the corresponding $n$-cube.
    If $a_i > b$ for $i \in [n]$, then the cube is strict by
    Proposition~\ref{proto-sharp-cube}.  Conversely, if the cube is
    strict, then $a_i > a_\emptyset = b$. \qedhere
  \end{enumerate}
\end{proof}
Dually,
\begin{proposition}
  \label{cubes-and-coindependence}
  Let $b$ be an element of $P$.
  \begin{enumerate}
  \item Let $\{a_S\}_{S \subseteq [n]}$ be an $n$-cube with top $b$.
    For $i \in [n]$ let $c_i = a_{[n] \setminus \{i\}}$.  Then $c_1,
    \ldots, c_n$ is a co-independent sequence under $b$.
  \item This establishes a bijection from the collection of $n$-cubes
    with top $b$ to the collection of co-independent sequences under
    $b$ of length $n$.
  \item If $c_1, \ldots, c_n$ is a co-independent sequence under $b$,
    the corresponding $n$-cube is strict if and only if every $c_i$ is
    strictly less than $b$.
  \end{enumerate}
\end{proposition}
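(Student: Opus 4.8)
The plan is to deduce Proposition~\ref{cubes-and-coindependence} from Proposition~\ref{cubes-and-independence} by passing to the opposite lattice, rather than redoing the arguments of \S\ref{sec:independence} with all inequalities reversed. The opposite lattice $P^{\mathrm{op}}$ of a modular lattice is again modular, since the modularity identity $(x \vee a) \wedge b = (x \wedge b) \vee a$ for $a \le b$ is self-dual. Moreover the complementation map $S \mapsto [n] \setminus S$ is an order-reversing lattice automorphism of $\Pow([n])$, hence an isomorphism of lattices $\Pow([n]) \xrightarrow{\sim} \Pow([n])^{\mathrm{op}}$ sending $\cup$ to $\cap$ and $\cap$ to $\cup$. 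Composing these two observations, I would check that a family $\{a_S\}_{S \subseteq [n]}$ is an $n$-cube in $P$ with top $b$ if and only if the family $\{a'_S\}_{S \subseteq [n]}$ defined by $a'_S := a_{[n] \setminus S}$ is an $n$-cube in $P^{\mathrm{op}}$ with bottom $b$, and that one cube is strict exactly when the other is.

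Granting this dictionary, I would apply Proposition~\ref{cubes-and-independence} verbatim to $P^{\mathrm{op}}$ and the cube $\{a'_S\}$. Its part (1) says that $a'_{\{1\}}, \ldots, a'_{\{n\}}$ is a sequence independent over $b$ in $P^{\mathrm{op}}$; unwinding Definition of independence in $P^{\mathrm{op}}$, this is precisely the statement that the sequence is co-independent under $b$ in $P$, and since $a'_{\{i\}} = a_{[n] \setminus \{i\}} = c_i$ this is exactly part (1) of what we want. Part (2) of Proposition~\ref{cubes-and-independence} gives a bijection between $n$-cubes in $P^{\mathrm{op}}$ with bottom $b$ and length-$n$ sequences independent over $b$ in $P^{\mathrm{op}}$; transporting it along the two isomorphisms above, and checking that the assignment ``cube $\mapsto (c_i)_i$'' in $P$ matches ``cube $\mapsto (a'_{\{i\}})_i$'' in $P^{\mathrm{op}}$, yields part (2). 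Part (3) of Proposition~\ref{cubes-and-independence} says the cube $\{a'_S\}$ is strict iff $a'_{\{i\}} > b$ in $P^{\mathrm{op}}$ for every $i$; translating, ``$>$ in $P^{\mathrm{op}}$'' is ``$<$ in $P$'', strictness is preserved under the duality, and $a'_{\{i\}} = c_i$, which gives part (3).

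I do not expect a genuine obstacle: the entire content is the bookkeeping that ``$n$-cube'', ``top/bottom'', ``strict'', and ``co-independent'' dualize to ``$n$-cube'', ``bottom/top'', ``strict'', and ``independent'' under $P \rightsquigarrow P^{\mathrm{op}}$ combined with the complementation isomorphism of $\Pow([n])$. The one point requiring care is to keep the indexing $S \leftrightarrow [n] \setminus S$ consistent throughout — in particular that the $i$-th generator of the dual cube is $a'_{\{i\}} = a_{[n] \setminus \{i\}} = c_i$ and not $a_{\{i\}}$. As a fallback, if one prefers to avoid opposite lattices entirely, the same three statements can be obtained directly by mimicking the proofs of Proposition~\ref{proto-cube} and Proposition~\ref{proto-sharp-cube} (and hence Proposition~\ref{cubes-and-independence}) with $\vee$ and $\wedge$ interchanged, invoking the evident dual forms of Lemma~\ref{grouping} and Lemma~\ref{3-odd}; but the duality argument is cleaner and I would present that one.
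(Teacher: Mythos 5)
Your proposal is correct and is exactly the paper's intended argument: the paper states this proposition with only the word ``Dually,'' and offers no further proof, leaving precisely the duality bookkeeping you spell out (opposite lattice plus the complementation automorphism of $\mathcal{P}ow([n])$, with $a'_{\{i\}} = a_{[n]\setminus\{i\}} = c_i$). Your version is a faithful and complete expansion of that one-word proof.
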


\subsection{Subadditive ranks} \label{sec:reduced-rank}
\begin{definition}
  The \emph{reduced rank} of a modular lattice $(P,\le)$ is the
  supremum of $n \in \Zz$ such that a strict $n$-cube exists in $P$,
  or $\infty$ if there is no supremum.  If $a \ge b$ are two elements
  of $P$, the \emph{reduced rank} $\redrk(a/b)$ is the reduced rank of
  the sublattice $[b,a] := \{x \in P ~|~ b \le x \le a\}$.
\end{definition}
\begin{lemma}
  \label{subs-and-products}
  ~ 
  \begin{enumerate}
  \item Let $P_1$ and $P_2$ be two modular lattices.  If the reduced
    rank of $P_1$ is at least $n$ and the reduced rank of $P_2$ is at
    least $m$, then the reduced rank of $P_1 \times P_2$ is at least
    $n + m$.
  \item \label{subs2} If $P_1$ is a modular lattice and $P_2$ is a
    sublattice, the reduced rank of $P_1$ is at least the reduced rank
    of $P_2$.
  \item If $a, b$ are two elements of a modular lattice $(P,\le)$,
    then there is an injective lattice homomorphism
    \begin{align*}
      [a \wedge b, a] \times [a \wedge b, b] &\to [a \wedge b, a \vee b] \\
      (x,y) & \mapsto x \vee y.
    \end{align*}
  \item \label{product-goal} If $a, b$ are two elements of a modular
    lattice $(P,\le)$ such that $\redrk(a/a \wedge b) \ge n$ and
    $\redrk(b/ a \wedge b) \ge m$, then $\redrk(a \vee b/a \wedge b)
    \ge n + m$.
  \end{enumerate}
\end{lemma}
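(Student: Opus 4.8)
The plan is to prove the four assertions in order; assertion (4) will then follow by chaining the first three. Assertion (2) is immediate: if $P_2\subseteq P_1$ is a sublattice, then a strict $n$-cube in $P_2$ is literally a strict $n$-cube in $P_1$, since the lattice operations of $P_2$ are the restrictions of those of $P_1$ and injectivity is unaffected; hence any strict cube witnessing a lower bound for $\redrk(P_2)$ witnesses the same bound for $\redrk(P_1)$.

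For assertion (1), I would first note that reduced rank $\ge n$ actually supplies a strict $n$-cube: a strict $N$-cube $\{a_S\}_{S\subseteq[N]}$ restricts along the inclusion $\Pow([n])\hookrightarrow\Pow([N])$ to a strict $n$-cube whenever $n\le N$. So fix a strict $n$-cube $\{a_S\}_{S\subseteq[n]}$ in $P_1$ and a strict $m$-cube $\{b_T\}_{T\subseteq[m]}$ in $P_2$, identify $[n+m]$ with $[n]\sqcup[m]$, and write each $U\subseteq[n+m]$ uniquely as $S\sqcup T$ with $S\subseteq[n]$, $T\subseteq[m]$. Setting $c_U:=(a_S,b_T)$, the map $U\mapsto c_U$ is a lattice homomorphism $\Pow([n+m])\to P_1\times P_2$ because union and intersection of subsets of $[n+m]$ act blockwise on the two blocks, and it is injective because $S$, $T$, hence $U$, are recovered from $(a_S,b_T)$ by strictness of the two cubes. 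Therefore $\redrk(P_1\times P_2)\ge n+m$.

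Assertion (3) is the substantive step, and I expect the main difficulty to lie here: it is the classical fact that in a modular lattice the interval $[a\wedge b,\,a\vee b]$ contains a copy of $[a\wedge b,a]\times[a\wedge b,b]$. Write $c=a\wedge b$ and $\phi(x,y)=x\vee y$ for $x\in[c,a]$, $y\in[c,b]$. That $\phi$ lands in $[c,a\vee b]$ and preserves $\vee$ is clear. For injectivity I would use modularity: $\phi(x,y)\wedge a=(x\vee y)\wedge a=x\vee(y\wedge a)=x$ (since $y\wedge a\le b\wedge a=c\le x$), and symmetrically $\phi(x,y)\wedge b=y$, so $z\mapsto(z\wedge a,\,z\wedge b)$ is a left inverse of $\phi$. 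The remaining task, preservation of $\wedge$, is the identity $(x\vee y)\wedge(x'\vee y')=(x\wedge x')\vee(y\wedge y')$; the inequality $\ge$ is automatic, and for $\le$ I would set $t=(x\vee y)\wedge(x'\vee y')$, compute $t\wedge a=x\wedge x'$ and $t\wedge b=y\wedge y'$ exactly as above, then use the modular poset-isomorphism $[c,a]\cong[b,a\vee b]$ (recalled at the start of this section) to get $t\vee b=(x\wedge x')\vee b$ and, dually, $t\vee a=(y\wedge y')\vee a$, and finally conclude
\[ t\le(t\vee a)\wedge(t\vee b)=\bigl((y\wedge y')\vee a\bigr)\wedge\bigl((x\wedge x')\vee b\bigr)=(x\wedge x')\vee(y\wedge y') \]
by one further application of modularity. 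The argument is routine once organized, but keeping the modular law pointed in the right direction at each step is where the care is needed.

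Finally, for assertion (4): the hypotheses say $[a\wedge b,a]$ and $[a\wedge b,b]$ have reduced ranks $\ge n$ and $\ge m$, so by (1) their product has reduced rank $\ge n+m$; by (3) that product embeds as a sublattice of $[a\wedge b,a\vee b]$, and so by (2) the latter has reduced rank $\ge n+m$, i.e.\ $\redrk(a\vee b/a\wedge b)\ge n+m$.
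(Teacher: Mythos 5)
Your overall strategy coincides with the paper's: parts (1), (2) and (4) are handled exactly as in the text, and for part (3) you likewise reduce everything to the two modular interval isomorphisms $[c,a]\cong[b,a\vee b]$ and $[c,b]\cong[a,a\vee b]$, where $c=a\wedge b$. Parts (1), (2), (4), the $\vee$-preservation, and your injectivity argument (the left inverse $z\mapsto(z\wedge a,\,z\wedge b)$, justified by the modular computation $(x\vee y)\wedge a=x\vee(y\wedge a)=x$) are all correct.

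There is, however, an invalid inference in your $\wedge$-preservation step. Writing $t=(x\vee y)\wedge(x'\vee y')$, you compute $t\wedge a=x\wedge x'$ and then invoke the isomorphism $[c,a]\cong[b,a\vee b]$ to conclude $t\vee b=(x\wedge x')\vee b$. That implication fails for a general $t\in[c,a\vee b]$: in the subspace lattice of $k^2$, with $a,b$ two distinct lines, $c=0$, and $t$ a third line, one has $t\wedge a=0$ but $t\vee b=k^2\neq 0\vee b$. The isomorphism only gives $v=(v\wedge a)\vee b$ for $v\geq b$, and your $t$ need not lie above $b$. The equality $t\vee b=(x\wedge x')\vee b$ is nevertheless true for your particular $t$, but it needs a different justification: since $y\leq b$ and $y'\leq b$ we have $t\leq(x\vee b)\wedge(x'\vee b)=(x\wedge x')\vee b$ (the last step because $u\mapsto u\vee b$ is a lattice isomorphism $[c,a]\to[b,a\vee b]$ and hence preserves meets), while $t\geq x\wedge x'$; these two inequalities give $t\vee b=(x\wedge x')\vee b$, and dually for $t\vee a$. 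With that patch your concluding display goes through, granted the identity $\bigl((y\wedge y')\vee a\bigr)\wedge\bigl((x\wedge x')\vee b\bigr)=(x\wedge x')\vee(y\wedge y')$ — which is precisely the claim $x\vee y=(x\vee b)\wedge(y\vee a)$ that the paper establishes first and then uses to factor the entire map through the two interval isomorphisms followed by a meet, a cleaner organization that sidesteps the issue above.
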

\begin{proof}
  \begin{enumerate}
  \item Let $\{a_S\}_{S \subseteq[n]}$ and $\{b_S\}_{S \subseteq [m]}$
    be a strict $n$-cube in $P_1$ and a strict $m$-cube in $P_2$.
    Then
    \begin{equation*}
      (S,T) \mapsto (a_S,b_T)
    \end{equation*}
    is an injective lattice homomorphism $\Pow([n]) \times \Pow([m]) \to
    P_1 \times P_2$.  But $\Pow([n]) \times \Pow([m])$ is isomorphic to
    $\Pow([n+m])$.
  \item Any $n$-cube in $P_2$ is already an $n$-cube in $P_1$.
  \item The map is clearly a homomorphism of $\vee$-semilattices.
    \begin{claim}
      If $(x, y) \in [a \wedge b, a] \times [a \wedge b, b]$, then $x
      \vee y = (x \vee b) \wedge (y \vee a)$.
    \end{claim}
    \begin{claimproof}
      By modularity,
      \begin{equation*}
        (b \vee x) \wedge a = (b \wedge a) \vee x.
      \end{equation*}
      Simplifying, we see that $a \wedge (x \vee b) = x$.  Because $y
      \le b$, we have $x \vee y \le x \vee b$.  By modularity,
      \begin{equation*}
        (a \vee (x \vee y)) \wedge (x \vee b) = (a \wedge (x \vee b))
        \vee (x \vee y).
      \end{equation*}
      Simplifying, we see that
      \begin{equation*}
        (y \vee a) \wedge (x \vee b) = x \vee x \vee y = x \vee y. \qedhere
      \end{equation*}
    \end{claimproof}
    In light of the claim, the map $(x,y) \mapsto x \vee y$ is the
    composition
    \begin{equation*}
      [a \wedge b, a] \times [a \wedge b, b] \stackrel{\sim}{\to} [b,
        a \vee b] \times [a, a \vee b] \to [a \wedge b, a \vee b]
    \end{equation*}
    where the first map is the product of the poset isomorphisms
    \begin{align*}
      [a \wedge b, a] & \stackrel{\sim}{\to} [b, a \vee b] \\
      [a \wedge b, b] & \stackrel{\sim}{\to} [a, a \vee b]
    \end{align*}
    and the second map is the $\wedge$-semilattice homomorphism
    $(x',y') \mapsto x' \wedge y'$.  As a composition of a poset
    isomorphism and a $\wedge$-semilattice homomorphism, the original
    map in question is a $\wedge$-semilattice homomorphism.  Finally,
    for injectivity, note that the composition of $(x,y) \mapsto x
    \vee y$ with $z \mapsto (z \vee b, z \vee a)$ is
    \begin{equation*}
      (x,y) \mapsto (x \vee b, y \vee a)
    \end{equation*}
    which is the aforementioned isomorphism
    \begin{equation*}
      [a \wedge b, a] \times [a \wedge b, b] \stackrel{\sim}{\to} [b,
        a \vee b] \times [a, a \vee b].
    \end{equation*}
    Therefore $(x,y) \mapsto x \vee y$ is injective on $[a \wedge b,
      a] \times [a \wedge b, b]$.
  \item This follows by combining the previous three points. \qedhere
  \end{enumerate}
\end{proof}
Work in a modular lattice $(P,\le)$.
\begin{lemma}
  \label{subadditive-lemma}
  Let $x \le y \le z$ be three elements of $P$.  If there is a strict
  $n$-cube $\{a_S\}_{S \subseteq [n]}$ in $[x,z]$, then we can write
  $n = m + \ell$ and find a strict $m$-cube in $[x,y]$ and a strict
  $\ell$-cube in $[y,z]$.
\end{lemma}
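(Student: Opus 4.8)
By Proposition~\ref{cubes-and-independence}, a strict $n$-cube in $[x,z]$ is the same datum as a sequence $a_1,\dots,a_n$ that is independent over $x$ with every $a_i > x$ and $a_1\vee\dots\vee a_n = z$; likewise finding the desired cubes in $[x,y]$ and $[y,z]$ amounts to finding such sequences in those intervals. Since any initial segment of an independent sequence with all terms above the base is again such a sequence, a strict $k$-cube contains strict $k'$-cubes for every $k'\le k$. Hence it suffices to produce a strict $m_0$-cube in $[x,y]$ and a strict $\ell_0$-cube in $[y,z]$ with $m_0+\ell_0\ge n$ (then take $m=\min(m_0,n)$, $\ell=n-m\le\ell_0$); equivalently, it suffices to prove $\redrk(z/x)\le\redrk(y/x)+\redrk(z/y)$. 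I would prove this by induction on $n$, the cases $n\le 1$ being immediate (for $n=1$, $x<z$, and then either $x<y$ or $y<z$ supplies the needed $1$-cube).

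\textbf{Inductive step and the easy cases.} For $n\ge 2$ set $z':=a_1\vee\dots\vee a_{n-1}$, so that $a_1,\dots,a_{n-1}$ is a strict $(n-1)$-cube in $[x,z']$ with $z'\wedge a_n=x$, $z'\vee a_n=z$, $z'<z$, and $[z',z]\cong[x,a_n]$ via the modular isomorphism. If $y\le z'$: apply the inductive hypothesis to $x\le y\le z'$, obtaining a strict $m$-cube in $[x,y]$ and a strict $\ell'$-cube in $[y,z']$ with $m+\ell'=n-1$, and adjoin $a_n\vee y$ to the latter. This is legitimate: $a_n\not\le y$ (else $a_n=a_n\wedge z'=x$), so $a_n\vee y>y$; and if $J$ denotes the join of the $\ell'$-cube then $J\le z'$ and $(a_n\vee y)\wedge J\le(a_n\vee y)\wedge z'=y\vee(a_n\wedge z')=y$ by modularity, so the new sequence is independent over $y$, giving a strict $(\ell'+1)$-cube in $[y,z]$. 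Dually, if $a_n\le y$: put $\bar y:=y\wedge z'$, note $y=\bar y\vee a_n$ by modularity, apply the inductive hypothesis to $x\le\bar y\le z'$, adjoin $a_n$ to the $[x,\bar y]$-part (its join lies under $z'$ and $a_n\wedge z'=x$), and observe $[\bar y,z']\cong[y,\bar y\vee z']=[y,z]$.

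\textbf{The main obstacle: the transverse case.} The remaining case is when $y$ is incomparable to both $z'$ and $a_n$. Here the naive projections of the given cube onto $y$ need not help — all of the $a_i\wedge y$ can equal $x$ — so one must genuinely synthesise cubes. My plan is to set $u:=y\wedge z'$ and $v:=y\vee z'$ (so $x\le u<y<v\le z$), apply the inductive hypothesis to $x\le u\le z'$ with the $(n-1)$-cube to get a strict $p$-cube in $[x,u]$ and a strict $q$-cube in $[u,z']$ with $p+q=n-1$, keep the first inside $[x,y]$, and transport the second across $[u,z']\cong[y,v]\subseteq[y,z]$. This leaves one dimension to recover. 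The idea is to exploit the modular isomorphisms $[u,y]\cong[z',v]\cong[x,a_n^{\#}]$, where $a_n^{\#}:=v\wedge a_n>x$: one shows that a complement of $a_n^{\#}$ inside $[x,a_n]$ transports to a relative complement of $v$ in $[y,z]$ (the computation $(y\vee c)\wedge v=y\vee(c\wedge v)=y$ goes through because $c\wedge v\le c\wedge a_n^{\#}=x$), which can be adjoined to the transported $q$-cube to reach the missing dimension; dually, when $v=z$ one instead feeds a relative complement of $u$ into the $[x,y]$-side. Verifying that the modular structure always yields relative complements of the right rank in these sub-intervals — e.g.\ by iterating the reduction to a ``fully transverse'' configuration where $y\wedge a_i$ is constant — is the crux of the argument, and is where I expect essentially all of the work to be.
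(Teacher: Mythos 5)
Your reduction to independent sequences, the base case, and the two ``comparable'' cases ($y \le z'$ and $a_n \le y$) are fine (modulo a typo: $[\bar y, z'] \cong [y, y\vee z']$, not $[y,\bar y \vee z']$). But the transverse case is not an edge case to be deferred --- it is the entire content of the lemma --- and what you offer there is a plan, not a proof, as you yourself acknowledge. Worse, the plan as sketched cannot work in general: it hinges on finding a (relative) complement of $a_n^{\#}$ in $[x,a_n]$ with $c \wedge a_n^{\#} = x$ and $c > x$, but modular lattices need not be complemented. If $[x,a_n]$ is a three-element chain $x < a_n^{\#} < a_n$, no such $c$ exists, and nothing in the hypotheses rules this out. (You also assert $a_n^{\#} = v \wedge a_n > x$ without justification; the case $v \wedge a_n = x$ is actually the easy one, since then $a_n \vee y$ can be adjoined directly, but the case $x < a_n^{\#} < a_n$ is where your argument has no fallback.) So there is a genuine gap, and it sits exactly where all the difficulty of the lemma lives.

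The paper's proof avoids complements entirely via one elementary observation that you are missing: by modularity, for any $b < c$ one has $b \wedge y < c \wedge y$ \emph{or} $b \vee y < c \vee y$ (if both were equalities, then $c = (y\vee c)\wedge c = (y \vee b)\wedge c = (y\wedge c)\vee b = b$). One then chooses $S_0 \subseteq [n]$ \emph{maximal} with $a_{S_0}\wedge y = a_\emptyset \wedge y$, say $S_0 = [\ell]$, splits the cube at $a_{[\ell]}$ into a co-independent sequence below it and an independent sequence above it, and checks that meeting the upper part with $y$ stays strict (by maximality of $S_0$) while joining the lower part with $y$ stays strict (by the displayed dichotomy). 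This is a global, non-inductive argument; your suggested ``iterate to a fully transverse configuration where $y \wedge a_i$ is constant'' is pointing at the same maximality idea, but without the dichotomy above you have no way to convert ``$\wedge\, y$ collapses'' into ``$\vee\, y$ stays strict,'' and that conversion is the proof.
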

\begin{proof}
  Passing to the sublattice $[x,z]$, we may assume that $\bot, \top$
  exist and equal $x, z$ respectively.
  \begin{claim} \label{one-or-the-other}
    For any strict inequality $b < c$ in $P$, at least \emph{one} of
    the following strict inequalities holds:
    \begin{align*}
      b \wedge y &< c \wedge y \\
      b \vee y &< c \vee y.
    \end{align*}
  \end{claim}
  \begin{claimproof}
    Otherwise, $b \wedge y = c \wedge y$ and $b \vee y = c \vee y$.
    Then
    \begin{equation*}
      c = (y \vee c) \wedge c = (y \vee b) \wedge c = (y \wedge c)
      \vee b = (y \wedge b) \vee b = b
    \end{equation*}
    where the middle equality is the modular law.
  \end{claimproof}
  Take $S_0 \subseteq [n]$ maximal such that $a_{S_0} \wedge y =
  a_\emptyset \wedge y$.  Applying a permutation, we may assume $S_0 =
  [\ell]$ for some $\ell \le n$.  Let $m = n - \ell$.  Note that there
  are injective lattice homomorphisms
  \begin{align*}
    \Pow([\ell]) &\to P \\
    S &\mapsto a_S \\
    \Pow([n] \setminus [\ell]) &\to P \\
    S &\mapsto a_{S \cup [\ell]}
  \end{align*}
  obtained by restricting the original lattice homomorphism $S \mapsto
  a_S$.  By Proposition~\ref{cubes-and-coindependence} applied to the
  first of these two cubes, there is a co-independent sequence $b_1,
  \ldots, b_\ell$ under $a_{[\ell]}$ given by
  \begin{equation*}
    b_i = a_{[\ell] \setminus \{i\}},
  \end{equation*}
  and $b_i < a_{[\ell]}$ for $i \in [\ell]$.  Similarly, by
  Proposition~\ref{cubes-and-independence} applied to the second of
  these two cubes, there is an independent sequence $c_1, \ldots, c_m$
  over $a_{[\ell]}$ given by
  \begin{equation*}
    c_i = a_{[\ell] \cup \{\ell + i\}},
  \end{equation*}
  and $c_i > a_{[\ell]}$ for $i \in [m]$.  Note that
  \begin{equation*}
    a_\emptyset \wedge y \le b_i \wedge y \le a_{[\ell]} \wedge y \le
    c_j \wedge y
  \end{equation*}
  for $i \in [\ell]$ and $j \in [m]$.  By choice of $S_0$, we in fact
  have
  \begin{equation*}
    a_\emptyset \wedge y = b_i \wedge y = a_{[\ell]} \wedge y < c_j \wedge y.
  \end{equation*}
  Then $b_i \wedge y = a_{[\ell]} \wedge y$ and $b_i < a_{[\ell]}$
  imply $b_i \vee y < a_{[\ell]} \vee y$ by
  Claim~\ref{one-or-the-other}.  Thus
  \begin{align*}
    b_i \vee y & < a_{[\ell]} \vee y \\
    a_{[\ell]} \wedge y & < c_j \wedge y
  \end{align*}
  for all $i \in [\ell]$ and $j \in [m]$.  By
  Lemma~\ref{relative-smashing}, the sequence $\{b_i \vee y\}$ is
  co-independent under $\{a_{[\ell]} \vee y\}$, yielding a strict
  $\ell$-cube in $[y,\top]$.  Similarly, the sequence $\{c_j \wedge
  y\}$ is independent over $\{a_{[\ell]} \wedge y\}$, yielding a
  strict $m$-cube in $[\bot,y]$.
\end{proof}
\begin{corollary}
  \label{subadditive-corollary}
  Let $a \ge b \ge c$ be three elements of a modular lattice
  $(P,\le)$.  If $\redrk(a/b) < \infty$ and $\redrk(b/c) < \infty$,
  then
  \begin{equation*}
    \redrk(a/c) \le \redrk(a/b) + \redrk(b/c) < \infty.
  \end{equation*}
\end{corollary}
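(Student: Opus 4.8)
The plan is to deduce this directly from Lemma~\ref{subadditive-lemma}, with only a small amount of care taken over the suprema involved. I would first reformulate the goal: it suffices to show that whenever a strict $n$-cube exists in the interval $[c,a]$, one has $n \le \redrk(a/b) + \redrk(b/c)$. Granting this, if the right-hand side is finite it bounds the size of every strict cube in $[c,a]$, so the supremum defining $\redrk(a/c)$ is finite and at most $\redrk(a/b) + \redrk(b/c)$; that is precisely the asserted inequality, and finiteness of the bound is part of the hypothesis. (If either summand were allowed to be infinite there would be nothing to prove, but here we assume both are finite.)

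So let $\{a_S\}_{S \subseteq [n]}$ be a strict $n$-cube in $[c,a]$. Since $c \le b \le a$, I would apply Lemma~\ref{subadditive-lemma} with $x = c$, $y = b$, $z = a$. This yields a decomposition $n = m + \ell$ together with a strict $m$-cube contained in $[c,b]$ and a strict $\ell$-cube contained in $[b,a]$. By the definition of reduced rank applied to the sublattices $[c,b]$ and $[b,a]$, we get $m \le \redrk(b/c)$ and $\ell \le \redrk(a/b)$, hence $n = m + \ell \le \redrk(a/b) + \redrk(b/c)$. This establishes the reformulated claim, and therefore the corollary.

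There is essentially no obstacle beyond invoking Lemma~\ref{subadditive-lemma}, which is where all the modular-lattice content resides: the splitting of a strict cube across an intermediate element $y$ according to whether each generator becomes strict after meeting with $y$ or after joining with $y$. The only things requiring attention are the direction of the final inequality and the bookkeeping with suprema in the degenerate/infinite cases, both of which are routine.
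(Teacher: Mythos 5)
Your proof is correct and matches the paper's intent: Corollary~\ref{subadditive-corollary} is stated as an immediate consequence of Lemma~\ref{subadditive-lemma}, exactly by splitting any strict $n$-cube in $[c,a]$ across $b$ into an $m$-cube in $[c,b]$ and an $\ell$-cube in $[b,a]$ and bounding $m$ and $\ell$ by the respective reduced ranks. The bookkeeping with the supremum is handled correctly, so there is nothing to add.
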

\begin{definition}
  \label{def-of-subadd-rank}
  A \emph{subadditive rank} on a modular lattice is a function
  assigning a non-negative integer $\rk(a/b)$ to every pair $a \ge b$,
  satisfying the following axioms
  \begin{enumerate}
  \item \label{not-in-weak} $\rk(a/b) = 0$ if and only if $a = b$.
  \item If $a \ge b \ge c$, then
    \begin{align*}
      \rk(a/c) & \le \rk(a/b) + \rk(b/c) \\
      \rk(a/c) & \ge \rk(a/b) \\
      \rk(a/c) & \ge \rk(b/c).
    \end{align*}
  \item If $a, b$ are arbitrary, then
    \begin{align*}
      \rk(a/a \wedge b) &= \rk(a \vee b/b) \\
      \rk(b/a \wedge b) &= \rk(a \vee b/a) \\
      \rk(a \vee b / a \wedge b) & = \rk(a/a \wedge b) + \rk(b/a \wedge b).
    \end{align*}
  \end{enumerate}
  A \emph{weak subadditive rank} is a function satisfying the axioms
  other than (\ref{not-in-weak}).
\end{definition}
\begin{speculation} \label{abelian-category-motivation}
  We could also define the notion of a subadditive rank on an abelian
  category.  This should be a function $\rk(-)$ from objects to
  nonnegative integers satisfying the axioms:
  \begin{enumerate}
  \item $\rk(A) = 0$ if and only if $A = 0$.
  \item If $f : A \to B$ is epic (resp. monic) then $\rk(A) \ge
    \rk(B)$ (resp. $\rk(A) \le \rk(B)$).
  \item In a short exactly sequence
    \begin{equation*}
      0 \to A \to B \to C \to 0,
    \end{equation*}
    we have $\rk(B) \le \rk(A) + \rk(C)$ with equality if the sequence
    is split.
  \end{enumerate}
  A weak subadditive rank would satisfy all the axioms except the
  first.  With these definitions, a weak or strong subadditive rank on
  an abelian category $\mathcal{C}$ should induce a weak or strong
  subadditive rank on the subobject lattice of any $A \in
  \mathcal{C}$.\footnote{These ideas are developed further in
    \cite{prdf3}, Appendix E.}
\end{speculation}
\begin{remark}
  \label{subadditive-rk-in-cubes}
  If $\rk(-/-)$ is a subadditive rank and $a_1, a_2, \ldots, a_n$ is
  relatively independent over $b$, then
  \begin{equation*}
    \rk(a_1 \vee \cdots \vee a_n/b) = \sum_{i = 1}^n \rk(a_i/b),
  \end{equation*}
  by induction on $n$.
\end{remark}
\begin{speculation} \label{dp-rank-example}
  Let $K$ be a $\kappa$-saturated field of dp-rank $n < \infty$.  Let
  $P$ be the lattice of subgroups of $(K,+)$ that are type-definable
  over sets of size less than $\kappa$.  There is probably a weak
  subadditive rank on $P$ such that $\rk_{dp}(A/B)$ is the dp-rank of
  the hyper-definable set $A/B$.  To verify this, we would merely need
  to double-check that subadditivity of dp-rank continues to hold on
  hyperimaginaries.  Although this example motivates the notion of
  ``subadditive rank,'' we will never specifically need it.
\end{speculation}
\begin{speculation}
  \begin{enumerate}
  \item Let $P$ be a modular lattice with a weak subadditive rank.
    Define $x \approx y$ if $\rk(x/x \wedge y) = \rk(y/x \wedge y) =
    0$.  Then $\approx$ is probably a lattice congruence, so we can
    form the quotient lattice $P/\approx$.  The weak rank on $P$
    should induce a (non-weak) subadditive rank on $P/\approx$.
  \item In the example of Remark~\ref{dp-rank-example}, $G \approx H$
    if and only if $G^{00} = H^{00}$, and $P/\approx$ is presumably
    isomorphic to the subposet
    \begin{equation*}
      P' := \{G \in P ~|~ G = G^{00}\},
    \end{equation*}
    which is a lattice under the operations
    \begin{align*}
      G \vee H & := G + H \\
      G \wedge H & := (G \cap H)^{00}.
    \end{align*}
    The fact that dp-rank is weak and $P'$ is not a sublattice of $P$
    is an annoyance that motivates \S\ref{sec:bounds}.  By taking a
    large enough small model $K_0 \preceq K$ and restricting to the
    sublattice
    \begin{equation*}
      P'' := \{G \in P ~|~ G \text{ is a $K_0$-linear subspace of } K\}
    \end{equation*}
    the equation $G = G^{00}$ is automatic, and dp-rank presumably
    induces a (non-weak) subadditive rank on $P''$.
  \end{enumerate}
\end{speculation}
\begin{proposition} \label{redrk-subadd}
  Let $(P,\le)$ be a modular lattice.
  \begin{enumerate}
  \item If the reduced rank $\redrk(a/b)$ is finite for all pairs $a
    \ge b$, then $\redrk$ is a (non-weak) subadditive rank.
  \item If there is a subadditive rank $\rk$ on $P$, then $\redrk(a/b)
    \le \rk(a/b) < \infty$ for all $a \ge b$.
  \end{enumerate}
\end{proposition}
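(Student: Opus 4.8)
The plan is to prove the two parts in turn, in each case reducing everything to lemmas already established in this subsection. For part~(1) the hypothesis says exactly that every interval $[b,a]$ has finite reduced rank, and the task is to verify the three axioms of Definition~\ref{def-of-subadd-rank} for the assignment $(a,b) \mapsto \redrk(a/b)$. For part~(2), given an abstract subadditive rank $\rk$, the idea is to bound the length of any strict cube living inside an interval $[b,a]$ by $\rk(a/b)$, using that such a cube is the same data as a relatively independent family of strictly larger elements (Proposition~\ref{cubes-and-independence}) and that $\rk$ is additive along such families (Remark~\ref{subadditive-rk-in-cubes}).

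For part~(1): The non-degeneracy axiom is immediate, since a strict $0$-cube always exists while a strict $1$-cube in $[b,a]$ is just a pair of distinct comparable elements of $[b,a]$, which exists iff $a > b$; hence $\redrk(a/b) = 0 \iff a = b$. The two monotonicity inequalities follow from Lemma~\ref{subs-and-products}.\ref{subs2}: when $a \ge b \ge c$, both $[b,a]$ and $[c,b]$ are sublattices of $[c,a]$, so each has reduced rank at most $\redrk(a/c)$. Subadditivity $\redrk(a/c) \le \redrk(a/b) + \redrk(b/c)$ is precisely Corollary~\ref{subadditive-corollary}; this is the one place where the finiteness hypothesis is needed, and it is also what makes $\redrk(a/c)$ a genuine non-negative integer. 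For the modularity axiom, recall the poset isomorphism $[a \wedge b, a] \cong [b, a \vee b]$ given by $x \mapsto x \vee b$, which holds by modularity; an order isomorphism between lattices automatically preserves $\vee$ and $\wedge$, so this is a lattice isomorphism and preserves reduced rank, giving $\redrk(a/a\wedge b) = \redrk(a\vee b/b)$ and, symmetrically, $\redrk(b/a\wedge b) = \redrk(a\vee b/a)$. Finally, the equality $\redrk(a\vee b/a\wedge b) = \redrk(a/a\wedge b) + \redrk(b/a\wedge b)$ is obtained by combining two inequalities: ``$\ge$'' is exactly Lemma~\ref{subs-and-products}.\ref{product-goal}, and ``$\le$'' follows from subadditivity applied to $a\wedge b \le a \le a\vee b$ together with the identity $\redrk(a\vee b/a) = \redrk(b/a\wedge b)$ just proved.

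For part~(2): Let $\rk$ be a subadditive rank, so each value $\rk(a/b)$ is by definition a non-negative integer, in particular finite. Fix $a \ge b$ and a strict $n$-cube $\{c_S\}_{S \subseteq [n]}$ in $[b,a]$, noting $c_{[n]} = c_1 \vee \cdots \vee c_n$. By Proposition~\ref{cubes-and-independence} the sequence $c_1, \ldots, c_n$ is independent over $c_\emptyset$ with $c_i > c_\emptyset$ for each $i$. By Remark~\ref{subadditive-rk-in-cubes},
\[ \rk(c_{[n]}/c_\emptyset) = \sum_{i=1}^n \rk(c_i/c_\emptyset) \ge n, \]
each term being $\ge 1$ because $c_i > c_\emptyset$. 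Since $b \le c_\emptyset \le c_{[n]} \le a$, monotonicity of $\rk$ gives $\rk(a/b) \ge \rk(c_{[n]}/b) \ge \rk(c_{[n]}/c_\emptyset) \ge n$. Taking the supremum over all strict cubes in $[b,a]$ yields $\redrk(a/b) \le \rk(a/b) < \infty$.

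I do not anticipate a genuine obstacle: every step above is a one-line appeal to an earlier result. The only subtleties worth flagging are that subadditivity of $\redrk$ really does use the standing finiteness hypothesis in part~(1) (via Corollary~\ref{subadditive-corollary}, which fails without it), and that the $a\vee b$-equation is proved not in one shot but by pinching the ``product'' lower bound of Lemma~\ref{subs-and-products}.\ref{product-goal} against the subadditivity upper bound; everything else is bookkeeping with the interval isomorphisms coming from modularity.
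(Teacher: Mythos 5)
Your proposal is correct and follows essentially the same route as the paper: part (1) is verified axiom by axiom via Lemma~\ref{subs-and-products}, Corollary~\ref{subadditive-corollary}, and the interval isomorphisms from modularity, and part (2) bounds the length of a strict cube by additivity of $\rk$ along the associated independent sequence. The only cosmetic difference is that in part (2) you invoke Remark~\ref{subadditive-rk-in-cubes} where the paper inlines the same induction on $i$ using $\rk(c_{[i]}/c_\emptyset) = \rk(c_{[i-1]}/c_\emptyset) + \rk(c_i/c_\emptyset)$.
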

\begin{proof}
  \begin{enumerate}
  \item Assume that $\redrk(a/b)$ is finite for all pairs $a \ge b$.
    We verify the definition of subadditive rank.
    \begin{itemize}
    \item Note that a strict 1-cube is just a pair $(x,y)$ with $x <
      y$.  Therefore, $\redrk(a/b) \ge 1$ if and only if there exist
      $x, y \in [b,a]$ such that $x < y$, if and only if $a > b$.
      Conversely, $\redrk(a/b) = 0$ if and only if $a = b$.
    \item Suppose $a \ge b \ge c$.  The inequalities
      \begin{align*}
        \redrk(a/c) & \ge \redrk(a/b) \\
        \redrk(a/c) & \ge \redrk(b/c)
      \end{align*}
      follow by Lemma~\ref{subs-and-products}.\ref{subs2} applied to
      the inclusions $[c,b] \subseteq [c,a]$ and $[b,a] \subseteq
      [c,a]$.  The requirement that $\redrk(a/c) \le \redrk(a/b) +
      \redrk(b/c)$ is Corollary~\ref{subadditive-corollary}.
    \item Let $a, b$ be arbitrary.  By modularity, we have lattice
      isomorphisms
      \begin{align*}
        [a \wedge b, a] & \cong [b, a \vee b] \\
        [a \wedge b, b] & \cong [a, a \vee b],
      \end{align*}
      so certainly
      \begin{align*}
        \redrk(a/a \wedge b) &= \redrk(a \vee b/b) \\
        \redrk(b/a \wedge b) &= \redrk(a \vee b/a).
      \end{align*}
      By subadditivity, it remains to show that
      \begin{align*}
        \redrk(a \vee b / a \wedge b) \ge \redrk(a / a \wedge b) +
        \redrk(b / a \wedge b).
      \end{align*}
      This is Lemma~\ref{subs-and-products}.\ref{product-goal}.
    \end{itemize}
  \item Suppose $\redrk(a/b) \ge n$; we will show $\rk(a/b) \ge n$.
    By definition of reduced rank, there is a strict $n$-cube
    $\{c_S\}_{S \subseteq [n]}$ in the interval $[b, a]$.  By
    definition of strict $n$-cube, we have
    \begin{align*}
      c_i \wedge c_{[i-1]} &= c_\emptyset \\
      c_i \vee c_{[i-1]} &= c_{[i]} \\
      c_i &> c_\emptyset
    \end{align*}
    for $0 < i \le n$.  By the axioms of subadditive rank,
    \begin{equation*}
      \rk(c_{[i]}/c_\emptyset) = \rk(c_{[i-1]}/c_\emptyset) +
      \rk(c_i/c_\emptyset) > \rk(c_{[i-1]}/c_\emptyset).
    \end{equation*}
    By induction on $i$, we see that $\rk(c_{[i]}/c_\emptyset) \ge i$.
    In particular,
    \begin{equation*}
      \rk(a/b) \ge \rk(c_{[n]}/c_\emptyset) \ge n. \qedhere
    \end{equation*}
  \end{enumerate}
\end{proof}
\begin{corollary}
  A modular lattice $(P,\le)$ admits a subadditive rank if and only if
  $\redrk(a/b) < \infty$ for all pairs $a \ge b$, in which case
  $\redrk(a/b)$ is the unique minimum subadditive rank.
\end{corollary}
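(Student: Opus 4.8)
The plan is to derive the statement directly from Proposition~\ref{redrk-subadd}, which already contains both halves of the equivalence. First I would treat the ``only if'' direction: suppose $P$ admits a subadditive rank $\rk$. Then part (2) of Proposition~\ref{redrk-subadd} gives $\redrk(a/b) \le \rk(a/b) < \infty$ for every pair $a \ge b$, so the reduced rank is finite everywhere, as required.

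For the ``if'' direction, suppose $\redrk(a/b) < \infty$ for all $a \ge b$. Then part (1) of Proposition~\ref{redrk-subadd} says that $\redrk$ is itself a (non-weak) subadditive rank on $P$, so in particular a subadditive rank exists.

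It remains to check that, under this hypothesis, $\redrk$ is the \emph{unique minimum} subadditive rank. For minimality, let $\rk$ be any subadditive rank on $P$; part (2) of Proposition~\ref{redrk-subadd} gives $\redrk(a/b) \le \rk(a/b)$ for all $a \ge b$, i.e. $\redrk \le \rk$ pointwise. Since $\redrk$ is itself a subadditive rank (part (1)), it lies in the pointwise-ordered set of all subadditive ranks on $P$ and is a lower bound for that set, hence is its minimum. Uniqueness is then automatic: if $\rk'$ is another subadditive rank that is a pointwise lower bound for every subadditive rank, then comparing $\rk'$ with $\redrk$ in both directions forces $\rk' = \redrk$.

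I do not expect any genuine obstacle here — the corollary is a purely formal consequence of the two parts of Proposition~\ref{redrk-subadd}. The only point that calls for a word of care is the meaning of ``unique minimum'': one should make explicit that the collection of subadditive ranks on $P$ is partially ordered pointwise, that $\redrk$ belongs to it and bounds it below, and that these two facts together identify $\redrk$ as the (necessarily unique) least element.
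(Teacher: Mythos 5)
Your proposal is correct and is exactly the argument the paper intends: the corollary is stated without proof precisely because it is the formal combination of the two parts of Proposition~\ref{redrk-subadd}, just as you describe. Your extra care about what ``unique minimum'' means in the pointwise order is sound but adds nothing beyond the implicit reading.
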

\begin{speculation}
  \label{expected-properties}
  Let $K$ be a $\kappa$-saturated field of dp-rank $n$, and $P$ be the
  lattice of subgroups of $(K,+)$ that are type-definable over sets of
  size less than $\kappa$.
  \begin{enumerate}
  \item $P$ need not have finite reduced rank.  For example, if $K$
    has positive characteristic, there are arbitrarily big cubes made
    of \emph{finite} subgroups of $(K,+)$.
  \item Let $P'$ be the subposet of $G \in P$ such that $G = G^{00}$.
    Then $P'$ should be a modular lattice with lattice operations
    given by $G \wedge H = (G \cap H)^{00}$ and $G \vee H = G + H$.
    The map $G \mapsto G^{00}$ should be a surjective lattice
    homomorphism from $P$ to $P'$.  The lattice $P'$ should have
    reduced rank at most $n$ by Fact~\ref{baldwin-saxl-variant}.
  \end{enumerate}
  Moreover, the same facts should hold when $K$ is a field of finite
  \emph{burden} $n$.  However, $G^{00}$ can fail to exist in this
  case, and we can only realize $P'$ as the quotient of $P$ modulo the
  00-commensurability equivalence relation where $G \approx H$ iff
  $G/(G \cap H)$ and $H/(G \cap H)$ are bounded.\footnote{These claims
    have now been verified in \cite{prdf3}, \S 6.4.}  In particular,
  we get a weak subadditive rank on the lattice $P$ and a subadditive
  rank on the lattice $P'$ without needing the conjectural
  subadditivity of burden.
\end{speculation}
We will not directly use Remark~\ref{expected-properties}.  Instead,
we will take the following variant approach which avoids all $G^{00}$
issues:
\begin{proposition} \label{reduced-rank-vs-dp-rk}
  Let $\Mm$ be a field, possibly with additional structure.  Assume
  $\Mm$ is a monster model, and $\dpr(\Mm) = n < \infty$.  Let $M_0$
  be a small submodel as in Corollary~\ref{big-enough-model}.  Let $P$
  be the modular lattice of $M_0$-linear subspaces of $\Mm$,
  type-definable over small\footnote{Here and in what follows, we will
    be a bit sloppy with what exactly ``small'' means, but this can
    probably be fixed.} parameter sets.  Then $P$ has reduced rank at
  most $n$.
\end{proposition}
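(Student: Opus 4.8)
The plan is to prove the contrapositive of the cube condition: no strict $(n+1)$-cube can exist in $P$. Since a strict $m$-cube with $m > n+1$ restricts (along the inclusion $\Pow([n+1]) \hookrightarrow \Pow([m])$) to a strict $(n+1)$-cube, and $\redrk(P)$ is a supremum, ruling out strict $(n+1)$-cubes gives $\redrk(P) \le n$. So suppose $\{a_S\}_{S \subseteq [n+1]}$ is a strict $(n+1)$-cube in $P$; write $b = a_\emptyset$ and $a_i = a_{\{i\}}$. By Proposition~\ref{cubes-and-independence} the sequence $a_1,\ldots,a_{n+1}$ is independent over $b$ with each $a_i > b$, and by Proposition~\ref{perm-invar} applied inside the sublattice $\{x \ge b\}$ we get $a_k \wedge \bigvee_{i \ne k} a_i = b$ for every $k$. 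The first thing to note is that, because $M_0$ is chosen as in Corollary~\ref{big-enough-model}, every element of $P$ equals its own $00$-component, so the lattice operations of $P$ are honest intersection and sum; in particular $a_S = \sum_{i \in S} a_i$ and $a_k \cap \sum_{i \ne k} a_i = b$ for each $k$.

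Next I would pass to the hyperdefinable abelian group $V := c/b$, where $c := a_{[n+1]} = \sum_{i=1}^{n+1} a_i$ (an $M_0$-linear subspace type-definable over a small set). The key algebraic step is that the addition map induces a hyperdefinable isomorphism
\[
\prod_{i=1}^{n+1} (a_i/b) \stackrel{\sim}{\to} V, \qquad (x_i + b)_i \mapsto \textstyle\sum_i x_i + b .
\]
Surjectivity is immediate from $c = \sum_i a_i$. Injectivity is precisely the content of the independence relations: if $\sum_i x_i \in b$ with $x_i \in a_i$, then (using $n+1 \ge 2$, so $b \subseteq \sum_{j \ne i} a_j$) we have $x_i = \sum_j x_j - \sum_{j \ne i} x_j \in a_i \cap \sum_{j \ne i} a_j = b$, so each class $x_i + b$ vanishes. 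Since this isomorphism is induced by a definable function, it preserves dp-rank.

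Now I would feed in two dp-rank computations. First, each quotient $a_i/b$ is infinite: if $a_i/b$ were bounded then $b$ would be a type-definable subgroup of $a_i$ of bounded index, forcing $b \supseteq a_i^{00} = a_i$ and contradicting $a_i > b$; hence $\dpr(a_i/b) \ge 1$. By superadditivity of dp-rank on products (a statement about mutually indiscernible sequences, hence valid for hyperimaginaries), $\dpr(V) \ge \sum_{i=1}^{n+1}\dpr(a_i/b) \ge n+1$. Second, $V = c/b$ is a quotient of the type-definable set $c \subseteq \Mm$ by a type-definable equivalence relation, and dp-rank does not increase either under passing to a subset of the home sort or under such a quotient (the $b$-class of an element lies in its definable closure), so $\dpr(V) \le \dpr(c) \le \dpr(\Mm) = n$. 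These contradict each other, so no strict $(n+1)$-cube exists and $\redrk(P) \le n$.

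The main obstacle is the hyperimaginary bookkeeping: one must check that superadditivity of dp-rank on products and monotonicity of dp-rank under definable quotients genuinely apply to the hyperdefinable groups $a_i/b$ and $c/b$. This is exactly what the choice of $M_0$ via Corollary~\ref{big-enough-model} (i.e. Theorem~\ref{bounding-theorem}) buys us: having $J = J^{00}$ for all the relevant subspaces simultaneously makes $P$ an honest sublattice with $\wedge = \cap$, and converts ``$a_i$ strictly above $b$'' into ``$a_i/b$ infinite,'' which is the only input the dp-rank estimate needs.
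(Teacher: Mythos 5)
Your reduction to ruling out a strict $(n+1)$-cube, and the algebraic decomposition of $c/b$ (with $c=a_{[n+1]}$, $b=a_\emptyset$) as the product of the $n+1$ quotients $a_i/b$, are both fine: the injectivity argument via $a_i \cap \sum_{j\ne i}a_j = b$ (using Proposition~\ref{perm-invar} and Proposition~\ref{cubes-and-independence}) is correct, and using Corollary~\ref{big-enough-model} to turn $a_i \supsetneq b$ into ``$a_i/b$ unbounded'' is exactly right. The gap is in the dp-rank step. You invoke superadditivity of dp-rank over products and monotonicity under quotients \emph{for hyperdefinable sets} ($a_i/b$ and $c/b$ are quotients of type-definable groups by a type-definable subgroup), waving this through with ``a statement about mutually indiscernible sequences, hence valid for hyperimaginaries.'' That is precisely what this paper refuses to take on faith: the introduction and Speculation~\ref{dp-rank-example} explicitly flag the behaviour of dp-rank on hyperimaginaries as unverified, and the reduced-rank machinery exists largely to avoid it. Moreover, your closing claim that the choice of $M_0$ ``is exactly what buys us'' these facts is wrong: Corollary~\ref{big-enough-model} only gives $J=J^{00}$, hence honest meets and unbounded quotients; it does nothing toward establishing the dp-rank calculus for hyperdefinable quotients, which is a general issue independent of $M_0$. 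So as written the proposal has a genuine gap, even though the strategy is morally sound.

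For comparison, the paper closes this off entirely at the level of type-definable groups: from the strict $(n+1)$-cube $\{H_S\}_{S\subseteq[n+1]}$ set $G_i := H_{[n+1]\setminus\{i\}}$; then $\bigcap_{i=1}^{n+1}G_i = H_\emptyset \subsetneq H_{\{k\}} = \bigcap_{i\ne k}G_i$ for every $k$, and since all of these are type-definable $M_0$-linear subspaces they equal their own $00$'s, contradicting Fact~\ref{baldwin-saxl-variant}; the dp-rank content is packaged once and for all in that Baldwin--Saxl-type fact. Alternatively, your decomposition can be salvaged without hyperimaginaries: for each $i$ take a long sequence in $a_i$ lying in pairwise distinct cosets of $b$, extract a mutually indiscernible array over a base $A$ defining the cube (membership in $a_i$ and distinctness of cosets are preserved by extraction), and set $s=\sum_i g^i_0$. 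For each row $i$, the type-definable condition ``$s-x \in \sum_{j\ne i}a_j$'' holds of $g^i_0$ but fails of $g^i_1$ (otherwise $g^i_0-g^i_1 \in a_i\cap\sum_{j\ne i}a_j = b$), so some formula over $As$ distinguishes them and no row is indiscernible over $As$; this gives $\dpr(\Mm)\ge n+1$ for the real element $s$, the desired contradiction, using only dp-rank of real tuples.
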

\begin{proof}
  Otherwise, there is an $(n+1)$-cube $\{H_S\}_{S \subseteq [n+1]}$ of
  type-definable subgroups which happen to be $M_0$-linear subspaces
  of $\Mm$.  For $i = 1, \ldots, n+1$, let $G_i = H_{[n+1] \setminus
    \{i\}}$.  Note that
  \begin{align*}
    \bigcap_{i = 1}^{n+1} G_i &= H_\emptyset \\
    \bigcap_{i \ne k} G_i &= H_k,
  \end{align*}
  for any $k \in [n+1]$, by definition of $(n+1)$-cube.  Because the
  cube is strict, we have $H_\emptyset \subsetneq H_k$, so
  \begin{equation*}
    \bigcap_{i = 1}^{n+1} G_i \subsetneq \bigcap_{i \ne k} G_i
  \end{equation*}
  for each $k$.  Both sides are $M_0$-linear type-definable subspaces.
  By choice of $M_0$ (i.e., by Corollary~\ref{big-enough-model}), for
  every $k$ we have
  \begin{equation*}
    \left( \bigcap_{i = 1}^{n+1} G_i\right)^{00} = \bigcap_{i = 1}^{n+1} G_i
    \subsetneq \bigcap_{i \ne k} G_i = \left( \bigcap_{i \ne k} G_i
    \right)^{00}
  \end{equation*}
  contradicting Fact~\ref{baldwin-saxl-variant}.
\end{proof}

\subsection{The modular pregeometry on quasi-atoms} \label{sec:geometry}
Work in a modular lattice $(P,\le)$ with bottom element $\bot$.
\begin{definition}
  An element $a > \bot$ is a \emph{quasi-atom} if the set
  \begin{equation*}
    \{x \in P ~|~ \bot < x \le a\}
  \end{equation*}
  is downwards directed.  Equivalently, if $\bot < x \le a$ and $\bot
  < y \le a$, then $\bot < x \wedge y$.
\end{definition}
\begin{proposition}
  \label{quasi-atoms-basics}
  ~
  \begin{enumerate}
  \item If $a$ is a quasi-atom and $\bot < a' < a$ then $a'$ is a
    quasi-atom.
  \item Every atom is a quasi-atom.
  \item \label{sym-trans} Non-independence is an equivalence relation
    on quasi-atoms.  Two quasi-atoms $a, a'$ are equivalent if $a
    \wedge a' > \bot$.  This equivalence relation is the transitive
    symmetric closure of $\ge$ on the set set of quasi-atoms.
  \item \label{enough-quasiats} Suppose a subadditive rank exists.  For every $a > \bot$,
    there is a quasi-atom $a' \le a$.
  \end{enumerate}
\end{proposition}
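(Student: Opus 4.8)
The plan is to treat the four points in order; (1)--(3) are formal consequences of the definitions, and (4) is where the subadditive rank is used. For (1), suppose $a$ is a quasi-atom and $\bot < a' < a$; given $\bot < x \le a'$ and $\bot < y \le a'$ we have $x, y \le a$, so $x \wedge y > \bot$ by quasi-atomicity of $a$, and $x \wedge y \le a'$, so $\{z \in P ~|~ \bot < z \le a'\}$ is downwards directed. For (2), if $a$ is an atom then $\{z \in P ~|~ \bot < z \le a\} = \{a\}$ is a singleton, hence trivially downwards directed.

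For (3), reflexivity and symmetry of non-independence are immediate (e.g.\ $a \wedge a = a > \bot$). For transitivity I would use quasi-atomicity of the \emph{middle} element: if $a, b, c$ are quasi-atoms with $a \wedge b > \bot$ and $b \wedge c > \bot$, then $a \wedge b$ and $b \wedge c$ are both nonzero elements below the quasi-atom $b$, so $a \wedge b \wedge c > \bot$, whence $a \wedge c \ge a \wedge b \wedge c > \bot$. For the final sentence of (3): the relation $\ge$ on quasi-atoms refines non-independence (if $a \ge a'$ then $a \wedge a' = a' > \bot$), hence so does its symmetric-transitive closure; conversely, if $a \wedge b > \bot$ for quasi-atoms $a,b$, then $a \wedge b$ is again a quasi-atom by (1), and the chain $a \ge a \wedge b \le b$ exhibits $a$ and $b$ as related in the symmetric-transitive closure of $\ge$.

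For (4), the substantive point, the plan is a minimal-rank argument. Fix a subadditive rank $\rk(-/-)$ on $P$ (one exists by hypothesis; alternatively one may take $\redrk$ and invoke Proposition~\ref{redrk-subadd}). Given $a > \bot$, the set $\{\rk(x/\bot) ~|~ \bot < x \le a\}$ is a nonempty set of positive integers, so we may choose $a'$ with $\bot < a' \le a$ minimizing $\rk(a'/\bot)$. I claim $a'$ is a quasi-atom. If not, there are $y, z$ with $\bot < y \le a'$, $\bot < z \le a'$ and $y \wedge z = \bot$. Then the additivity axiom of Definition~\ref{def-of-subadd-rank} gives $\rk(y \vee z/\bot) = \rk(y \vee z/y \wedge z) = \rk(y/\bot) + \rk(z/\bot)$, minimality gives $\rk(y/\bot), \rk(z/\bot) \ge \rk(a'/\bot) \ge 1$, and monotonicity gives $\rk(y \vee z/\bot) \le \rk(a'/\bot)$ since $\bot \le y \vee z \le a'$. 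Combining,
\[ \rk(a'/\bot) \ge \rk(y/\bot) + \rk(z/\bot) \ge \rk(a'/\bot) + 1, \]
a contradiction. Hence $a'$ is a quasi-atom below $a$.

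The only real obstacle is (4), and even there the subtlety is merely in calling on the right combination of the subadditive-rank axioms: the additivity identity $\rk(u \vee v / u \wedge v) = \rk(u/u\wedge v) + \rk(v/u \wedge v)$ specialized to $u \wedge v = \bot$, together with monotonicity bounding $\rk(y \vee z/\bot)$ above by $\rk(a'/\bot)$, is exactly what converts a failure of quasi-atomicity into a strictly smaller-rank nonzero element below $a$, contradicting minimality.
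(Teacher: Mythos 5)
Your proposal is correct and follows essentially the same route as the paper's proof: parts (1)--(3) are handled by the same direct manipulations (including using quasi-atomicity of the middle element for transitivity and part (1) plus the chain $a \ge a \wedge b \le b$ for the final claim of (3)), and part (4) is the same minimal-rank argument, with the contradiction extracted from the additivity identity in a slightly rearranged but equivalent way.
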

\begin{proof}
  \begin{enumerate}
  \item If $\bot < x, y$ and $x,y \le a'$ then $x,y \le a$ so $x
    \wedge y > \bot$.
  \item If $a$ is an atom, then $\{x \in P ~|~ \bot < x \le a\} = \{a\}$
    which is trivially directed.
  \item By definition, two arbitrary elements $a, b$ are
    non-independent exactly if $a \wedge b > \bot$.  Now restrict to
    the case of quasi-atoms.  The relation $a \wedge a' > \bot$ is
    clearly symmetric.  It is reflexive because $a > \bot$ is part of
    the definition of quasi-atom.  For transitivity, suppose that $a,
    a', a''$ are quasi-atoms, and $a \wedge a' > \bot < a' \wedge
    a''$.  Then
    \begin{equation*}
      a \wedge a'' \ge (a \wedge a') \wedge (a'' \wedge a') > \bot
    \end{equation*}
    where the strict inequality holds because $a'$ is a quasi-atom and
    \begin{align*}
      \bot &< a \wedge a' \le a' \\
      \bot &< a'' \wedge a' \le a'.
    \end{align*}
    Thus transitivity holds.  This equivalence relation contains the
    restriction of $\ge$ to quasi-atoms, because if $a$ and $a'$ are
    quasi-atoms such that $a \ge a'$, then $a \wedge a' = a' > \bot$.
    Finally, it is contained in the transitive symmetric closure of
    $\ge$ because if $a \wedge a' > \bot$, then $a'' := a \wedge a'$
    is a quasi-atom by part 1, and $a \ge a'' \le a$.
  \item Among the elements of the set $\{x \in P ~|~ \bot < x \le a\}$,
    choose an element $a'$ such that $\rk(a'/\bot)$ is minimal.  We
    claim that $a'$ is a quasi-atom.  Otherwise, there exist $x, y \le
    a'$ such that $x, y > \bot = x \wedge y$.  Then
    \begin{equation*}
      \rk(a'/\bot) \ge \rk(x \vee y / \bot) = \rk(x/\bot) + \rk(y/\bot).
    \end{equation*}
    As $y > \bot$, it follows that $\rk(y/\bot) > 0$ and so
    $\rk(x/\bot) < \rk(a'/\bot)$, contradicting the choice of $a'$. \qedhere
  \end{enumerate}
\end{proof}
\begin{speculation}
  In particular, if $K$ is a field of finite dp-rank and $P$ is the
  lattice of type-definable subgroups $G \le (K,+)$ such that $G =
  G^{00}$, then every non-zero element $G \in P$ has a quasi-atomic
  subgroup $G' < G$.  The same fact might hold in strongly dependent
  fields, even if a finite subadditive rank is lacking.  Otherwise,
  one can take a counterexample $G$ and split off an infinite
  independent sequence $H_1, H_2, \ldots$ of subgroups of $G$.  This
  sequence might violate strong dependence.
\end{speculation}
Say that two quasi-atoms $a, a'$ are \emph{equivalent} if $a \wedge a'
> \bot$, as in Proposition~\ref{quasi-atoms-basics}.\ref{sym-trans}.
\begin{lemma}
  \label{proto-respect-equivalence}
  Let $a, a'$ be equivalent quasi-atoms, and $b$ be an arbitrary
  element.  Then $\{a,b\}$ is independent if and only if $\{a',b\}$ is
  independent.
\end{lemma}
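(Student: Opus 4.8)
By symmetry it suffices to prove one implication: assuming $\{a,b\}$ is independent (that is, $a \wedge b = \bot$), I would show $\{a',b\}$ is independent (that is, $a' \wedge b = \bot$). The whole point is that $a'$ is a quasi-atom, so any two nonzero elements below it have nonzero meet, and we will exhibit two such elements whose meet is forced to be $\bot$.

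First I would suppose for contradiction that $a' \wedge b > \bot$. I then have two elements sitting below $a'$:
\begin{equation*}
  \bot < a' \wedge b \le a', \qquad \bot < a \wedge a' \le a',
\end{equation*}
where the second strict inequality is exactly the hypothesis that $a$ and $a'$ are equivalent quasi-atoms. Since $a'$ is a quasi-atom, the meet of these two elements is $> \bot$:
\begin{equation*}
  \bot < (a' \wedge b) \wedge (a \wedge a').
\end{equation*}

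The last step is the (routine) computation that this meet is actually $\bot$: $(a' \wedge b) \wedge (a \wedge a') = a \wedge a' \wedge b \le a \wedge b = \bot$, using the independence of $\{a,b\}$. This contradicts the previous display, so $a' \wedge b = \bot$, i.e. $\{a',b\}$ is independent. Swapping the roles of $a$ and $a'$ (note equivalence of quasi-atoms is symmetric) gives the converse, completing the proof. There is no real obstacle here; the only thing to be careful about is invoking the quasi-atom property for $a'$ rather than for $a$, and recalling that "equivalent" was defined as $a \wedge a' > \bot$, so the second element below $a'$ is genuinely nonzero.
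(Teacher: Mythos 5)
Your proof is correct and takes essentially the same route as the paper's: both invoke the downward-directedness of $(\bot, a']$ on two nonzero elements below the quasi-atom $a'$ and then compute that their meet collapses to $a \wedge b = \bot$. The only cosmetic difference is that the paper first reduces to the case $a \le a'$ (using that $a \wedge a'$ is an equivalent quasi-atom), whereas you work directly with $a \wedge a'$ as the second element below $a'$, which if anything streamlines the argument.
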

\begin{proof}
  By Proposition~\ref{quasi-atoms-basics}.\ref{sym-trans}, we may
  assume $a \le a'$.  Then
  \begin{equation*}
    a' \wedge b = \bot \implies a \wedge b = \bot
  \end{equation*}
  trivially.  Conversely, suppose $a \wedge b = \bot$ but $a' \wedge b
  > \bot$.  Note that
  \begin{equation*}
    \{a, a' \wedge b\} \subseteq \{x \in P ~|~ \bot < x \le a'\}.
  \end{equation*}
  As $a'$ is a quasi-atom, we have
  \begin{equation*}
    \bot = a \wedge b = (a \wedge a') \wedge b = a \wedge (a' \wedge
    b) \stackrel{!}{>} \bot. \qedhere
  \end{equation*}
\end{proof}
\begin{lemma} \label{respect-equivalence}
  Let $a_1, \ldots, a_n$ be an independent sequence.  Suppose that
  $a_k$ is a quasi-atom for some $k$.  Let $a'_k$ be an equivalent
  quasi-atom.  Then $a_1, \ldots, a_{k-1}, a'_k, a_{k+1}, \ldots, a_n$
  is an independent sequence.
\end{lemma}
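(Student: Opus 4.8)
The plan is to reduce, via the permutation invariance of independence (Proposition~\ref{perm-invar}), to the situation where the quasi-atom being replaced sits at the end of the sequence. Concretely, since $\{a_1,\ldots,a_n\}$ is independent, so is the reordered sequence $a_1,\ldots,a_{k-1},a_{k+1},\ldots,a_n,a_k$; truncating this sequence to drop its last term shows that $a_1,\ldots,a_{k-1},a_{k+1},\ldots,a_n$ is itself independent (immediately from Definition~\ref{def-of-ind}). Setting $b := \bigvee_{i \ne k} a_i$ (read as $\bot$ when $n=1$), the independence condition imposed on the last term $a_k$ of the reordered sequence is exactly $a_k \wedge b = \bot$, i.e. the pair $\{a_k,b\}$ is independent.

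Now comes the core step: apply Lemma~\ref{proto-respect-equivalence} to the equivalent quasi-atoms $a_k,a'_k$ and the element $b$, which yields that $\{a'_k,b\}$ is independent as well, i.e. $a'_k \wedge b = \bot$. This is the only place where the quasi-atom hypothesis on $a_k$ (and $a'_k$) enters, and Lemma~\ref{proto-respect-equivalence} is precisely the one-step version of what we want. To finish, I would reassemble the sequence $a_1,\ldots,a_{k-1},a_{k+1},\ldots,a_n,a'_k$: each independence condition indexed by a term $a_i$ with $i \ne k$ holds because $a_1,\ldots,a_{k-1},a_{k+1},\ldots,a_n$ is independent, while the condition indexed by the final term $a'_k$ is $a'_k \wedge \bigvee_{i \ne k} a_i = a'_k \wedge b = \bot$, just established. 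Hence this sequence is independent, and one last application of Proposition~\ref{perm-invar} moves $a'_k$ back into position $k$, giving independence of $a_1,\ldots,a_{k-1},a'_k,a_{k+1},\ldots,a_n$.

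I do not anticipate a genuine obstacle: the substantive content is entirely contained in Lemma~\ref{proto-respect-equivalence}, and the remainder is bookkeeping with permutations. The only points needing a little care are (a) recording explicitly that an initial segment of an independent sequence is independent, so that the remaining terms keep satisfying their conditions once $a_k$ is removed, and (b) the degenerate case $n=1$, where $b=\bot$ and the assertion collapses to the trivial statement that a one-term sequence is independent.
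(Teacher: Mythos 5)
Your proof is correct and is essentially the paper's own argument: permute $a_k$ to the end via Proposition~\ref{perm-invar}, set $b = \bigvee_{i \ne k} a_i$, and invoke Lemma~\ref{proto-respect-equivalence} to transfer independence of $\{a_k,b\}$ to $\{a'_k,b\}$. The extra bookkeeping you spell out (truncation of an independent sequence, the $n=1$ case) is harmless and implicit in the paper's version.
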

\begin{proof}
  By Proposition~\ref{perm-invar} we may assume $k = n$.  Let $b = a_1
  \vee \cdots \vee a_{n-1}$.  Then $\{b, a_n\}$ is independent and it
  suffices to show that $\{b, a'_n\}$ is independent.  This is
  Lemma~\ref{proto-respect-equivalence}.
\end{proof}

In the next few lemmas, we adopt the following notation.  The set of
quasi-atoms will be denoted by $Q$.  If $x \in P$, then $V(x)$ will
denote the set of $a \in Q$ such that $a \wedge x > \bot$.  Thus $Q
\setminus V(x)$ is the set of quasi-atoms independent from $x$.
\begin{lemma} \label{2-ary-v}
  $V(x) \cap V(y) = V(x \wedge y)$ for any $x, y \in P$.
\end{lemma}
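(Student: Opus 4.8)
The plan is to prove the two inclusions separately, and to note that only one of them requires the quasi-atom hypothesis. First I would handle the inclusion $V(x \wedge y) \subseteq V(x) \cap V(y)$, which holds for arbitrary elements of $P$ and does not use quasi-atomicity: if $a \in Q$ and $a \wedge (x \wedge y) > \bot$, then since $\wedge$ is order-preserving we have $a \wedge (x \wedge y) \le a \wedge x$ and $a \wedge (x \wedge y) \le a \wedge y$, so both $a \wedge x > \bot$ and $a \wedge y > \bot$, i.e.\ $a \in V(x) \cap V(y)$.

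For the reverse inclusion $V(x) \cap V(y) \subseteq V(x \wedge y)$, I would fix a quasi-atom $a$ with $a \wedge x > \bot$ and $a \wedge y > \bot$. The key observation is that $a \wedge x$ and $a \wedge y$ are both elements of the set $\{z \in P ~|~ \bot < z \le a\}$, which is downwards directed because $a$ is a quasi-atom. Hence their meet is again strictly above $\bot$:
\begin{equation*}
  \bot < (a \wedge x) \wedge (a \wedge y) = a \wedge (x \wedge y),
\end{equation*}
using associativity and commutativity of $\wedge$ together with idempotence ($a \wedge a = a$). This gives $a \in V(x \wedge y)$.

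Combining the two inclusions yields $V(x) \cap V(y) = V(x \wedge y)$. There is no real obstacle here; the only subtlety is remembering to invoke the defining property of a quasi-atom (downward directedness of its nonzero lower set) for the nontrivial inclusion, and this is exactly the reason the statement is phrased in terms of quasi-atoms rather than arbitrary elements. No appeal to modularity or to the existence of a subadditive rank is needed.
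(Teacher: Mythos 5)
Your proof is correct and is essentially the same argument as the paper's: the inclusion $V(x\wedge y)\subseteq V(x)\cap V(y)$ holds generally, while the reverse inclusion uses the downward directedness of $\{z~|~\bot < z \le a\}$ for the quasi-atom $a$, together with the identity $(a\wedge x)\wedge(a\wedge y)=a\wedge(x\wedge y)$. No differences worth noting.
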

\begin{proof}
  Let $a$ be a quasi-atom.  Note that
  \begin{equation*}
    (a \wedge x) \wedge (a \wedge y) > \bot \iff \left( (a \wedge x
    > \bot) \text{ and } (a \wedge y > \bot)\right);
  \end{equation*}
  the left-to-right implication holds generally and the
  right-to-left implication holds because $a$ is a quasi-atom.  The
  left hand side is equivalent to $a \in V(x \wedge y)$ and the
  right hand side is equivalent to $a \in V(x) \cap V(y)$.
\end{proof}
\begin{lemma}
  \label{main-lemma-for-pregeometry}
  Let $S$ be a finite subset of $Q$.
  \begin{enumerate}
  \item There is a closure operation on $S$ whose closed sets are
    exactly the sets of the form $V(x) \cap S$.
  \item \label{tfae} Let $a_1, \ldots, a_n$ be a sequence of elements
    of $S$, independent in the sense of Definition~\ref{def-of-ind}.
    The following are equivalent for $b \in S$:
    \begin{enumerate}
    \item \label{option-a} $b$ lies in the closure of
      $\{a_1,\ldots,a_n\}$.
    \item \label{option-b} $b \in V(a_1 \vee \cdots \vee a_n)$.
    \item \label{option-c} The sequence $a_1, \ldots, a_n, b$ is
      \emph{not} independent.
    \end{enumerate}
  \item If $a_1, \ldots, a_n$ is a sequence in $S$, there is an
    subsequence $b_1, \ldots, b_m$ having the same closure, and
    independent in the sense of Definition~\ref{def-of-ind}.
  \item The closure operation on $S$ satisfies exchange, i.e., it is a
    pregeometry.
  \item A sequence $a_1, \ldots, a_n$ in $S$ is independent with
    respect to the pregeometry if and only if it is independent in the
    sense of Definition~\ref{def-of-ind}.
  \end{enumerate}
\end{lemma}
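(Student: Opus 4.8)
The plan is to establish the five parts in order, bootstrapping each from the earlier ones and from the basic facts about quasi-atoms already proved. For part 1, define $\cl(T)$ to be the intersection of all sets of the form $V(x) \cap S$ that contain $T$; this is a genuine closure operator once we know such sets are closed under intersection and include $S$ itself. Intersection-closure is Lemma~\ref{2-ary-v} ($V(x) \cap V(y) = V(x \wedge y)$), and since $S$ is finite only binary intersections matter; moreover $S = V(a_1 \vee \cdots \vee a_k) \cap S$ when $S = \{a_1,\ldots,a_k\}$, because $a_i \le a_1 \vee \cdots \vee a_k$ forces $a_i \wedge (a_1 \vee \cdots \vee a_k) = a_i > \bot$. (Note also $V(\bot) \cap S = \emptyset$ is closed, so $\cl(\emptyset) = \emptyset$.) By construction the closed sets are exactly the $V(x) \cap S$.

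For part 2, the implication \ref{option-a}$\Rightarrow$\ref{option-b} is immediate, since $V(a_1 \vee \cdots \vee a_n) \cap S$ is a closed set containing every $a_i$ and hence contains $\cl(\{a_1,\ldots,a_n\})$; and \ref{option-b}$\iff$\ref{option-c} is just unwinding the definition of independence, as with $a_1,\ldots,a_n$ independent the extended sequence $a_1,\ldots,a_n,b$ is independent exactly when $b \wedge (a_1 \vee \cdots \vee a_n) = \bot$. The substance is \ref{option-c}$\Rightarrow$\ref{option-a}: fix $x$ with $a_i \in V(x)$ for all $i$ and put $a_i' = a_i \wedge x$. By Proposition~\ref{quasi-atoms-basics} each $a_i'$ is a quasi-atom equivalent to $a_i$, so iterating Lemma~\ref{respect-equivalence} shows $a_1',\ldots,a_n'$ is independent; were $a_1',\ldots,a_n',b$ independent, replacing the $a_i'$ back by the $a_i$ one at a time (again by Lemma~\ref{respect-equivalence}) would make $a_1,\ldots,a_n,b$ independent, contradicting \ref{option-c}. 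Hence $b \wedge (a_1' \vee \cdots \vee a_n') > \bot$, and since $a_1' \vee \cdots \vee a_n' \le x$ this gives $b \in V(x)$. As $x$ ranged over all closed-set witnesses, $b \in \cl(\{a_1,\ldots,a_n\})$.

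Parts 3 and 5 then follow by a greedy construction plus induction. Process $a_1,\ldots,a_n$ in order, keeping $a_j$ precisely when it is not in the closure of the previously kept terms; let $b_1,\ldots,b_m$ be the kept subsequence. Each discarded term lies in $\cl(\{b_1,\ldots,b_m\})$, so the closures agree, and by part 2 — which says, given that $b_1,\ldots,b_{k-1}$ is independent, that $b_k \notin \cl(\{b_1,\ldots,b_{k-1}\})$ is equivalent to $b_1,\ldots,b_k$ being independent — an induction on $k$ shows $b_1,\ldots,b_m$ is independent, proving part 3. The same equivalence, applied to an arbitrary sequence and its successive prefixes, identifies independence in the sense of Definition~\ref{def-of-ind} with the condition $a_k \notin \cl(\{a_1,\ldots,a_{k-1}\})$ for all $k$, i.e. with pregeometry-independence, giving part 5.

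For part 4 (exchange) suppose $b \in \cl(T \cup \{a\}) \setminus \cl(T)$. Using part 3 replace $T$ by an independent sequence $c_1,\ldots,c_r$ with the same closure and set $w = c_1 \vee \cdots \vee c_r$. Since $\cl(T\cup\{a\}) \ne \cl(T)$ we have $a \notin \cl(T)$, so by part 2 both $c_1,\ldots,c_r,a$ and $c_1,\ldots,c_r,b$ are independent, i.e. $a \wedge w = b \wedge w = \bot$, while $b \in \cl(\{c_1,\ldots,c_r,a\})$ gives $b \wedge (w \vee a) > \bot$. Then $c := b \wedge (w \vee a)$ is a quasi-atom lying below $b$ and below $w \vee a$ with $c \wedge w = \bot$, so $w < w \vee c \le (w \vee a) \wedge (w \vee b)$; the modular law gives $(w \vee a) \wedge (w \vee b) = (a \wedge (w \vee b)) \vee w$, forcing $a \wedge (w \vee b) > \bot$, hence $a \in V(w \vee b)$, hence by part 2 $a \in \cl(\{c_1,\ldots,c_r,b\}) = \cl(T \cup \{b\})$; finiteness of $S$ makes finite character automatic, so $\cl$ is a pregeometry. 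I expect the main obstacle to be these two modular-lattice manipulations — \ref{option-c}$\Rightarrow$\ref{option-a} and exchange — where one must shuttle between ``independent in the lattice'' and ``meets $x$ nontrivially'', invoking the quasi-atom replacement lemma and the modular law at exactly the right places; everything else is bookkeeping.
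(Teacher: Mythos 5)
Your proposal is correct, and four of its five parts follow the paper's own proof essentially step for step: closure of $\mathcal{C}=\{V(x)\cap S\}$ under intersections via Lemma~\ref{2-ary-v}, the implication (\ref{option-c})$\implies$(\ref{option-a}) via replacing each $a_i$ by the equivalent quasi-atom $a_i\wedge x$ and invoking Lemma~\ref{respect-equivalence}, the greedy extraction of an independent subsequence, and the prefix induction identifying pregeometry-independence with lattice-independence. The one genuine divergence is part 4. The paper proves exchange in the contrapositive form: if $b\notin\cl\{a_1,\ldots,a_n\}$ and $c\notin\cl\{a_1,\ldots,a_n,b\}$, then (after making the $a_i$ independent) the sequence $\{a_1,\ldots,a_n,b,c\}$ is lattice-independent by part~\ref{tfae}, hence so is $\{a_1,\ldots,a_n,c,b\}$ by Proposition~\ref{perm-invar}, hence $b\notin\cl\{a_1,\ldots,a_n,c\}$ --- so the whole content of exchange is outsourced to the already-proved permutation-invariance of lattice independence. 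You instead prove the positive form directly, via the modular-law computation $(w\vee a)\wedge(w\vee b)=(a\wedge(w\vee b))\vee w>w$ forcing $a\wedge(w\vee b)>\bot$. This is valid (and is essentially the manipulation the paper later packages as Lemma~\ref{3-fold-lemma} when proving modularity of the pregeometry), but it redoes by hand what Proposition~\ref{perm-invar} already gives for free; the trade-off is that your version previews the modularity argument and avoids the contrapositive reformulation.
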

\begin{proof}
  \begin{enumerate}
  \item Let $\mathcal{C}$ be the collection of sets of the form $V(x)
    \cap S$ for $x \in P$.  Then $\mathcal{C}$ is closed under 2-ary
    intersections by Lemma~\ref{2-ary-v}.  It is closed under 0-ary
    intersections because if $\{a_1,\ldots,a_n\}$ is an enumeration of
    $S$, then every $a_i$ lies in $V(a_1 \vee \cdots \vee a_n)$.
    Therefore, $\mathcal{C}$ is the set of closed sets with respect to
    some closure operation on $S$.
  \item The set $V(a_1 \vee \cdots \vee a_n) \cap S$ is closed and
    contains each $a_i$, so it contains the closure of
    $\{a_1,\ldots,a_n\}$.  This shows that
    (\ref{option-a})$\implies$(\ref{option-b}).  Since $a_1, \ldots,
    a_n$ is independent, the sequence $a_1, \ldots, a_n, b$ is
    non-independent if and only if $b \wedge (a_1 \vee \cdots \vee
    a_n) > \bot$, if and only if $b \in V(a_1 \vee \cdots \vee a_n)$.
    Thus (\ref{option-b})$\iff$(\ref{option-c}).  Finally, suppose
    (\ref{option-c}) holds.  Let $x$ be an element of $P$ such that
    $V(x) \cap S$ equals the closure of $\{a_1,\ldots,a_n\}$.  Let
    $a_i' = a_i \wedge x$.  By definition of $V(x)$ these elements are
    non-zero, and $a_i'$ is a quasi-atom equivalent to $a_i$ by
    Proposition~\ref{quasi-atoms-basics}.  By
    Lemma~\ref{respect-equivalence}, the sequence $a_1', a_2', \ldots,
    a_n', b$ is \emph{not} independent, but $a_1', a_2', \ldots, a_n'$
    \emph{is} independent.  Therefore,
    \begin{equation*}
      \bot < b \wedge (a_1' \vee \cdots \vee a_n') \le b \wedge x
    \end{equation*}
    because $a_i' \le x$.  Then $b \in V(x) \cap S$, so
    (\ref{option-a}) holds.
  \item Let $b_1, \ldots, b_m$ be the subsequence consisting of those
    $a_i$ which do not lie in the closure of the preceding $a_i$'s.
    This subsequence has the same closure as the original sequence.
    The sequence $\{b_1,\ldots,b_j\}$ is independent for $j \le m$, by
    induction on $j$, using the equivalence
    (\ref{option-a})$\iff$(\ref{option-c}) in the previous point.
  \item Let $a_1, \ldots, a_n, b, c$ be elements of $S$.  Suppose that
    $b$ is \emph{not} in the closure of $\{a_1, \ldots, a_n\}$, and
    $c$ is \emph{not} in the closure of $\{a_1,\ldots,a_n,b\}$.  We
    must show that $b$ is \emph{not} in the closure of
    $\{a_1,\ldots,a_n,c\}$.  By the previous point, we may assume that
    the $a_i$'s are independent.  In this case, the equivalence
    (\ref{option-a})$\iff$(\ref{option-c}) in (\ref{tfae}) immediately
    implies that the sequence $\{a_1,\ldots,a_n,b\}$ is independent,
    and then that $\{a_1,\ldots,a_n,b,c\}$ is independent.  By
    permutation invariance, it follows that $\{a_1,\ldots,a_n,c\}$ and
    $\{a_1,\ldots,a_n,c,b\}$ are independent.  By the equivalence,
    this means that $b$ is not in the closure of
    $\{a_1,\ldots,a_n,c\}$.
  \item We proceed by induction on $n$.  For $n = 1$, note that
    $V(\bot) \cap S = \emptyset$ and so every sequence of length 1 is
    independent with respect to the pregeometry (and vacuously
    independent with respect to the lattice).  Suppose $n > 1$.  Let
    $a_1, \ldots, a_{n-1}, a_n$ be a sequence.  We may assume that
    $a_1, \ldots, a_{n-1}, a_n$ is independent with respect to the
    pregeometry or the lattice.  Either way, $a_1, \ldots, a_{n-1}$ is
    independent with respect to the pregeometry or the lattice, hence
    with respect to the lattice or the pregeometry (by induction).
    Then $a_1, \ldots, a_n$ is independent with respect to the
    pregeometry if and only if $a_n$ is not in the closure of $a_1,
    \ldots, a_{n-1}$.  By (\ref{tfae})'s equivalence, this is the same
    as $a_1, \ldots, a_n$ being independent with respect to the
    lattice. \qedhere
  \end{enumerate}
\end{proof}
\begin{corollary}
  \label{the-pregeometry-exists}
  Let $Q$ be the set of quasi-atoms in $(P,\le)$.
  \begin{enumerate}
  \item There is a finitary pregeometry on the set $Q$ of quasi-atoms,
    characterized by the fact that a finite set $\{a_1,\ldots,a_n\}$
    of quasi-atoms is independent with respect to the pregeometry if
    and only if it is independent in the lattice-theoretic sense of
    Definition~\ref{def-of-ind}.
  \item If $x$ is any element of $P$, then the set $V(x) = \{a \in Q ~|~
    a \wedge x > \bot\}$ is a closed subset of $Q$.
  \item Two quasi-atoms $a, b$ are parallel (i.e., have the same
    closure) if and only if they are equivalent in the sense of
    Proposition~\ref{quasi-atoms-basics}.  Consequently, there is an
    induced \emph{geometry} on the equivalence classes of quasi-atoms.
  \end{enumerate}
\end{corollary}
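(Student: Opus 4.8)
The plan is to promote Lemma~\ref{main-lemma-for-pregeometry}, which equips every \emph{finite} subset $S \subseteq Q$ with a pregeometry, to a single finitary pregeometry on all of $Q$, and then read off parts~2 and~3 as easy consequences. Write $\cl_S$ for the closure operator on a finite $S \subseteq Q$ given by Lemma~\ref{main-lemma-for-pregeometry}. The standard recipe for gluing local pregeometries into a global one requires a coherence statement: if $S \subseteq S'$ are finite subsets of $Q$ and $A \subseteq S$, then $\cl_{S'}(A) \cap S = \cl_S(A)$. Granting this, I would define, for arbitrary $A \subseteq Q$,
\[
  \cl(A) := \bigcup \{ \cl_S(A_0) ~:~ A_0 \subseteq A \text{ finite},~ S \supseteq A_0 \text{ finite} \},
\]
which by coherence reduces to $\cl_S(A_0)$ for any single finite $A_0 \subseteq A$ and finite $S \supseteq A_0$, and in particular is well-defined.

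The coherence statement carries the only real content, and it comes straight out of part~2 of Lemma~\ref{main-lemma-for-pregeometry}, whose point is that membership in a closure inside a finite set $S$ is governed by a condition in $P$ that does not mention $S$. Concretely, given $A \subseteq S$, use part~3 of the Lemma to extract from an enumeration of $A$ a subsequence $b_1, \ldots, b_m$ that is independent in the sense of Definition~\ref{def-of-ind} with $\cl_S(\{b_1,\ldots,b_m\}) = \cl_S(A)$. Lattice-independence of $b_1,\ldots,b_m$ is intrinsic, so part~2 of the Lemma applies inside \emph{any} finite set containing them: for $b$ in such a set, $b \in \cl_{(-)}(\{b_1,\ldots,b_m\}) \iff b \in V(b_1 \vee \cdots \vee b_m)$. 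Applying this inside $S$ shows each $a \in A$ lies in $V(b_1 \vee \cdots \vee b_m)$, hence (applying it inside $S'$) lies in $\cl_{S'}(\{b_1,\ldots,b_m\})$; thus $\cl_{S'}(A) = \cl_{S'}(\{b_1,\ldots,b_m\}) = V(b_1 \vee \cdots \vee b_m) \cap S'$. Intersecting with $S$ and invoking the same characterization once more gives $\cl_{S'}(A) \cap S = V(b_1 \vee \cdots \vee b_m) \cap S = \cl_S(\{b_1,\ldots,b_m\}) = \cl_S(A)$.

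With coherence in hand, every pregeometry axiom for $\cl$ reduces to one instance in a single finite subset. Reflexivity, monotonicity and the finitary property are immediate from the definition; idempotence holds because any finite subset of $\cl(A)$ lies in $\cl_S(A_0)$ for a common finite $A_0 \subseteq A$ and finite $S$, and $\cl_S$ is idempotent; the exchange property follows from exchange on finite subsets (Lemma~\ref{main-lemma-for-pregeometry}) after pulling the finitely many relevant witnesses into a single $S$. The characterization of independence of a finite $\{a_1,\ldots,a_n\}$ then transfers from part~5 of the Lemma via coherence. For part~2, observe that $V(x) \cap S$ is a closed set of the pregeometry on $S$ for every finite $S$ (it is of the form produced by part~1 of the Lemma), so $A_0 \subseteq V(x)$ forces $\cl_S(A_0) \subseteq V(x)$, whence $\cl(V(x)) \subseteq V(x)$. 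For part~3, part~2 of the Lemma applied to the one-term independent sequence $a$ (vacuously independent, and $a > \bot$) gives $\cl(\{a\}) = V(a) \cap Q = \{ b \in Q ~:~ a \wedge b > \bot\}$, which is precisely the equivalence class of $a$ under Proposition~\ref{quasi-atoms-basics}.\ref{sym-trans}; so $a$ and $b$ are parallel iff equivalent, and quotienting by parallelism produces the induced geometry.

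The main obstacle is the coherence statement $\cl_{S'}(A) \cap S = \cl_S(A)$; everything downstream is bookkeeping. Even coherence is not hard once one notices that Lemma~\ref{main-lemma-for-pregeometry}.\ref{tfae} has already replaced ``closure inside $S$'' by the $S$-independent condition ``$\in V(b_1 \vee \cdots \vee b_m)$,'' so the only genuine move is the reduction of an arbitrary finite set to a lattice-independent subsequence, again supplied by the Lemma.
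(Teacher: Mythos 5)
Your proposal is correct and follows essentially the same route as the paper: glue the finite pregeometries of Lemma~\ref{main-lemma-for-pregeometry} into a finitary one on $Q$ via a compatibility statement under restriction, then read off parts 2 and 3 from the uniform description of closed sets as $V(x) \cap S$. The only cosmetic difference is that the paper phrases compatibility in terms of closed sets (where it is immediate, since a closed set of $S'$ restricted to $S \subseteq S'$ is again of the form $V(x) \cap S$), whereas you phrase it for closure operators and derive it via independent subsequences; the two formulations are equivalent and yours is just a slightly longer path to the same point.
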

\begin{proof}
  To specify a finitary closure operation on an infinite set $Q$, it
  suffices to give a closure operation on each finite subset $S
  \subseteq Q$, subject to the compatibility requirements that when
  $S' \subseteq S$, the closed sets on $S'$ are exactly the sets of
  the form $V \cap S'$ for $V$ a closed set on $S$.  We clearly have
  this compatibility.  Furthermore, the induced finitary closure
  operation on $Q$ satisfies exchange if and only if the closure
  operation on each finite $S \subseteq Q$ satisfies exchange.  When
  this holds, a finite set $I$ is independent with respect to the
  pregeometry on $Q$ if and only if it is independent with respect to
  the pregeometry on $S$, for any/every finite $S$ containing $I$.
  Moreover, a set $V \subseteq Q$ is closed if and only if $V \cap S$
  is a closed subset of $S$ for every finite $S \subseteq Q$.
  Therefore, the sets $V(x)$ are certainly closed.  For the final
  point, it follows on general pregeometry grounds that two
  non-degenerate elements $a, b$ are parallel if and only if $\{a,b\}$
  is independent.  Because independence in the pregeometry agrees with
  lattice-theoretic independence, we see that $a, b$ are parallel if
  and only if they are equivalent in the sense of
  Proposition~\ref{quasi-atoms-basics}.
\end{proof}
\begin{lemma}
  \label{3-fold-lemma}
  Let $x, a, b$ be three elements of $P$, such that the sets $\{x,
  a\}, \{x, b\}, \{a,b\}$ are independent, but $\{x, a, b\}$ is not
  independent.  Suppose that $a$ is a quasi-atom.  Then there is a
  quasi-atom $w \le x$ such that $\{w, a, b\}$ is not independent (but
  $\{w, a\}, \{w, b\}, \{a, b\}$ are independent).
\end{lemma}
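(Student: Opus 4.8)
The plan is to transport the failure of independence of $\{x,a,b\}$ down into the element $x$ by means of a modular-law isomorphism. First, since the sequence $x,b,a$ is a permutation of $x,a,b$, Proposition~\ref{perm-invar} shows it is not independent either; as $\{x,b\}$ is independent we have $b \wedge x = \bot$, so the only defining inequality that can fail is the second one, giving $a \wedge (x \vee b) > \bot$. Set $c := a \wedge (x \vee b)$, so that $\bot < c \le a$. Observe also that $c \wedge b \le a \wedge b = \bot$, hence $c \not\le b$ and thus $c \vee b > b$.

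Next I would invoke the interval isomorphism coming from $x \wedge b = \bot$,
\[
  \varphi \colon [\bot, x] \stackrel{\sim}{\to} [b, x \vee b], \qquad \varphi(t) = t \vee b, \quad \varphi^{-1}(s) = s \wedge x .
\]
Since $c \le x \vee b$ we have $c \vee b \in [b, x \vee b]$, and I set $w := \varphi^{-1}(c \vee b) = (c \vee b) \wedge x$. Then $w \le x$, and $w > \bot$ because $c \vee b > b$ and $\varphi^{-1}$ is order-preserving; moreover $w \vee b = c \vee b$ by the modular law (using $b \le c \vee b \le x \vee b$). It remains to verify the three conclusions. The pairs are independent: $w \wedge a \le x \wedge a = \bot$, $w \wedge b \le x \wedge b = \bot$, and $\{a,b\}$ is independent by hypothesis. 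The triple is not independent: using $w \vee b = c \vee b$ together with $c \le a$ and $c \le c \vee b$ gives $a \wedge (w \vee b) = a \wedge (c \vee b) \ge c > \bot$, so the sequence $w, b, a$ fails to be independent, hence so does the set $\{w,a,b\}$. Finally $w$ is a quasi-atom: restricting $\varphi$ yields $[\bot, w] \cong [b, c \vee b]$, and the modular law gives a further isomorphism $[b, c \vee b] \cong [c \wedge b, c] = [\bot, c]$ carrying $\bot$ to $\bot$; since $c \le a$ and $a$ is a quasi-atom, $c$ is a quasi-atom by Proposition~\ref{quasi-atoms-basics}, so $[\bot, w]$ is downward directed below its top, i.e.\ $w$ is a quasi-atom.

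The only delicate point is this last step: it is easy to produce a nonzero $w \le x$ non-independent from $a \vee b$ (for instance the preimage of $a \vee (b \wedge (x \vee a))$ under the isomorphism $[\bot,x] \cong [a, x \vee a]$), but arranging that $w$ be a \emph{quasi-atom} is what forces us to start from the element $c = a \wedge (x \vee b) \le a$ rather than from $b \wedge (x \vee a) \le b$, and it is precisely here that the hypothesis on $a$ is used. All the remaining computations are routine instances of the modular identity.
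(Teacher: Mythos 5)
Your proof is correct and is essentially the paper's argument: by modularity $c \vee b = (a \vee b) \wedge (x \vee b)$, so your $w = (c \vee b) \wedge x$ is literally the paper's choice $w = (a \vee b) \wedge x$, and your chain $[\bot,w] \cong [b, c\vee b] \cong [\bot, c]$ is the same chain of interval isomorphisms the paper uses (with $c = a \wedge (x \vee b)$ playing the role of the paper's $a' = a \wedge (b \vee w)$) to transport downward-directedness from below the quasi-atom $a$ to below $w$. The only difference is cosmetic: you establish $c > \bot$ first and deduce $w > \bot$ from it, while the paper gets $w > \bot$ directly and deduces $a' > \bot$.
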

\begin{proof}
  Let $w = (a \vee b) \wedge x$.  As $\{a, b\}$ is independent but
  $\{a, b, x\}$ is not, we must have $w > \bot$.  As $w \le x$ it
  follows that $\{w, b\}$ and $\{w, a\}$ are independent.  From the
  independence of $\{b, w\}$, there is an isomorphism of posets
  \begin{equation*}
    (\bot,w] \cong (b, b \vee w].
  \end{equation*}
  Now $b \vee w \le a \vee b$ because $w \le a \vee b$ (by definition)
  and $b \le a \vee b$.  Therefore $(b, b \vee w] \subseteq [b, a \vee
      b]$.  By the independence of $a$ and $b$, there is a poset
    isomorphism
  \begin{align*}
    [b, a \vee b] & \stackrel{\sim}{\to} [\bot, a] \\
    x & \mapsto x \wedge a.
  \end{align*}
  This isomorphism induces an isomorphism of the subposets
  \begin{equation*}
    (b, b \vee w] \cong (\bot, a \wedge (b \vee w)].
  \end{equation*}
  In particular, there is a chain of poset isomorphisms
  \begin{equation*}
    (\bot,w] \cong (b, b \vee w] \cong (\bot, a \wedge (b \vee w)].
  \end{equation*}
  The left hand side is non-empty, because $w > \bot$.  Therefore, the
  right hand side is non-empty, and so $a' := a \wedge (b \vee w)$ is
  greater than $\bot$.  On the other hand, $a'$ is less than or equal
  to the quasi-atom $a$, so $a'$ is a quasi-atom.  Therefore, the
  right hand side is a downwards directed poset.  The same must hold
  for the left hand side, implying that $w$ is a quasi-atom.  Note
  that $w \wedge (a \vee b) = w > \bot$, so the sequence $\{a, b, w\}$
  is not independent.
\end{proof}

\begin{proposition} \label{prop:is-modular}
  The pregeometry of Corollary~\ref{the-pregeometry-exists} is
  modular.
\end{proposition}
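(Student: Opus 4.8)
The plan is to argue entirely inside the pregeometry $\mathcal{G}$ on the set $Q$ of quasi-atoms given by Corollary~\ref{the-pregeometry-exists}, showing that its flats obey the modular law $\dim(\operatorname{cl}(A \cup B)) + \dim(A \cap B) = \dim(A) + \dim(B)$. Since $\mathcal{G}$ is finitary it suffices to treat flats $A, B$ of finite rank, and since submodularity of matroid rank gives the inequality ``$\le$'' for free, the task is the inequality ``$\ge$''. The first step is to fix the dictionary with $P$: by Lemma~\ref{main-lemma-for-pregeometry} and Corollary~\ref{the-pregeometry-exists}, a finite-rank flat $F$ has a basis $a_1, \dots, a_n \in Q$, and then $F = V(a_1 \vee \cdots \vee a_n)$; so every finite-rank flat is of the form $V(x)$, and if $x = a_1 \vee \cdots \vee a_n$ is such a ``reduced'' representative then for any quasi-atom $q$ with $q \wedge x = \bot$ one has $V(x \vee q) = \operatorname{cl}(V(x) \cup \{q\})$ (apply Lemma~\ref{main-lemma-for-pregeometry}.\ref{tfae} to the independent sequence $a_1, \dots, a_n, q$). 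These reduced representatives must be used throughout, since $V$ is far from injective.

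The substantive ingredient is the following ``a line meets a flat it over-spans by one'' statement, which I extract from Lemma~\ref{3-fold-lemma}: if $U$ is a finite-rank flat and $L$ a rank-$2$ flat (line) with $L \not\subseteq U$ and $\dim(\operatorname{cl}(U \cup L)) = \dim(U) + 1$, then $U \cap L \neq \emptyset$. Indeed, write $L = \operatorname{cl}\{a, b\}$ with $a, b$ distinct points and $a \notin U$ (possible since $L \not\subseteq U$); if $b \in U$ we are already done, so assume $b \notin U$. Pick a reduced $x$ with $U = V(x)$. Then $a \wedge x = \bot$, $b \wedge x = \bot$ and $a \wedge b = \bot$, so $\{x,a\}$, $\{x,b\}$, $\{a,b\}$ are independent. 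Moreover $\dim(\operatorname{cl}(U \cup \{a\})) = \dim(U) + 1 = \dim(\operatorname{cl}(U \cup L))$, so $b \in \operatorname{cl}(U \cup \{a\}) = V(x \vee a)$, i.e. $b \wedge (x \vee a) > \bot$; as $\{x,a\}$ is independent this means $\{x,a,b\}$ is not independent. Since $a$ is a quasi-atom, Lemma~\ref{3-fold-lemma} provides a quasi-atom $w \le x$ with $\{w,a,b\}$ not independent. Then $w \wedge x = w > \bot$, so $w \in V(x) = U$; and $\{w,a,b\}$ not independent with $\{a,b\}$ independent gives $w \in \operatorname{cl}\{a,b\} = L$ by Lemma~\ref{main-lemma-for-pregeometry}.\ref{tfae}. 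Hence $w \in U \cap L$.

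It remains to upgrade ``lines meet flats'' to the full modular law, which is standard pregeometry theory. By a classical characterization (see e.g. \cite{PillayGST}), $\mathcal{G}$ is modular as soon as: for all disjoint finite-rank flats $A, B$ (i.e. $A \cap B = \operatorname{cl}(\emptyset)$) and every point $p \in \operatorname{cl}(A \cup B)$, there exist points $a \in A$, $b \in B$ with $p \in \operatorname{cl}\{a, b\}$. I would prove this by induction on $\dim(A) + \dim(B)$: fix a basis $c_1, \dots, c_k$ of $B$ and a minimal $T \subseteq \{c_1, \dots, c_k\}$ with $p \in \operatorname{cl}(A \cup T)$. If $|T| \le 1$ one concludes directly (using the line-meets-flat statement for the line through $p$ and the unique element of $T$ when $p$ lies outside $A \cup B$, and trivially otherwise). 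If $|T| \ge 2$, pick $c_j \in T$, set $B_0 = \operatorname{cl}(\{c_1, \dots, c_k\} \setminus \{c_j\})$, a hyperplane of $B$ disjoint from $A$; exchange puts $c_j \in \operatorname{cl}(A \cup B_0 \cup \{p\})$, so applying line-meets-flat to the flat $\operatorname{cl}(A \cup B_0)$ and the line $\operatorname{cl}\{c_j, p\}$ yields a point $q \in \operatorname{cl}(A \cup B_0)$ collinear with $c_j$ and $p$; then the inductive hypothesis applied to $(A, B_0)$ routes $q$ through a point of $A$ and a point of $B_0 \subseteq B$, and one further application collapses the resulting three-point span to the required two points.

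Now that Lemma~\ref{3-fold-lemma} is in hand, the remaining difficulty is purely organizational: the non-injectivity of $V$ forces one to track reduced representatives carefully, and the inductive upgrade has several degenerate cases (loops, $p \in A$ or $p \in B$, the flats having too small a rank) each of which must be dispatched. No new idea is needed there; the single point at which something genuinely geometric happens is the appeal to Lemma~\ref{3-fold-lemma}, which is exactly why it was isolated beforehand.
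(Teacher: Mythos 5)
Your proposal is correct and follows essentially the same route as the paper: the single geometric input is Lemma~\ref{3-fold-lemma}, used to produce a quasi-atom $w \le x$ witnessing that a point in $\operatorname{cl}(U \cup \{a\})$ lies on a line through $U$, and the rest is standard pregeometry bookkeeping. The paper packages the key step as ``$a \in \operatorname{cl}\{b,c_1,\dots,c_n\}$ implies $a \in \operatorname{cl}\{b,c'\}$ for some $c' \in \operatorname{cl}\{c_1,\dots,c_n\}$'' and iterates it, whereas you package it as ``a line meets a flat it over-spans by one'' and induct on $\dim A + \dim B$; these are interchangeable, and your reduction to disjoint flats and the degenerate cases you flag all go through.
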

\begin{proof}
  \begin{claim}
    \label{the-previous-claim}
    Let $a, b, c_1, \ldots, c_n$ be elements of $Q$, with $n > 0$.
    Suppose that $a \in \cl\{b, c_1, \ldots, c_n\}$.  Then there is
    $c' \in \cl\{c_1, \ldots, c_n\}$ such that $a \in \cl\{b, c'\}$.
  \end{claim}
  \begin{claimproof}
    We may assume that the $c_i$ are independent, passing to an
    independent subsequence otherwise\footnote{This preserves $n > 0$
      because no quasi-atom is in the closure of the empty set, as
      sequences of length 1 are always independent}.  We may assume
    that $a \notin \cl\{b\}$, or else take $c' = c_1$.  We may assume
    that $a \notin \cl\{c_1,\ldots,c_n\}$, or else take $c' = a$.
    Then $b \notin \cl\{c_1,\ldots,c_n\}$.  It follows that the
    sequences
    \begin{itemize}
    \item $\{a, b\}$
    \item $\{c_1,\ldots,c_n,a\}$
    \item $\{c_1,\ldots,c_n,b\}$
    \end{itemize}
    are independent, but the sequence $\{c_1, \ldots, c_n, a, b\}$ is
    \emph{not} independent.  Let $x = c_1 \vee \cdots \vee c_n$.  Then
    \begin{align*}
      a \wedge b &= \bot \\
      a \wedge x &= \bot \\
      b \wedge x &= \bot
    \end{align*}
    but $\{a, b, x\}$ is \emph{not} independent.  By
    Lemma~\ref{3-fold-lemma}, there is a quasi-atom $w \le x$ such
    that $\{a, w\}, \{b, w\}, \{a, b\}$ are independent but $\{a, b,
    w\}$ is not. Now $w \le x = c_1 \vee \cdots \vee c_n$, so the
    sequence $\{c_1, \ldots, c_n, w\}$ is not independent.  Therefore
    $w \in \cl\{c_1,\ldots,c_n\}$.  The fact that $\{b, w\}$ is
    independent but $\{a, b, w\}$ is not implies that $a \in \cl\{b,
    w\}$.  Take $c' = w$.
  \end{claimproof}
  Given the claim, modularity follows on abstract grounds.  First, one
  upgrades the claim to the following:
  \begin{claim}
    If $X, Y$ are two non-empty closed subsets of $Q$ and $a \in \cl(X
    \cup Y)$, then there is $x \in X$ and $y \in Y$ such that $a \in
    \cl\{x,y\}$.
  \end{claim}
  \begin{claimproof}
    Because closure is finitary, there exist $b_1, \ldots, b_n \in X$
    and $c_1, \ldots, c_m \in Y$ such that $a \in
    \cl\{b_1,\ldots,b_n,c_1,\ldots,c_m\}$.  By repeated applications
    of the previous claim one can find
    \begin{itemize}
    \item $p_1 \in \cl\{b_2, \ldots, b_n, c_1, \ldots, c_m\}$ such
      that $a \in \cl\{b_1,p_1\}$.
    \item $p_2 \in \cl\{b_3, \ldots, b_n, c_1, \ldots, c_m\}$ such
      that $p_1 \in \cl\{b_2,p_2\}$.
    \item \ldots
    \item $p_k \in \cl\{b_{k+1},\ldots,b_n,c_1,\ldots,c_m\}$ such that
      $p_{k-1} \in \cl\{b_k,p_k\}$.
    \item \ldots
    \item $p_n \in \cl\{c_1,\ldots,c_m\}$ such that $p_{n-1} \in
      \cl\{b_n,p_n\}$.
    \end{itemize}
    Then $\cl\{b_1, b_2, \ldots, b_n, p_n\}$ contains $p_{n-1}$, hence
    also $p_{n-2}, p_{n-3}, \ldots, p_2, p_1,$ and $a$.  Thus $a \in
    \cl\{b_1,b_2,\ldots,b_n,p_n\}$.  By one final application of
    Claim~\ref{the-previous-claim}, there is $q \in
    \cl\{b_1,b_2,\ldots,b_n\}$ such that $a \in \cl\{p_n,q\}$.  Take
    $x = q$ and $y = p_n$.
  \end{claimproof}
  Finally, let $V_1, V_2, V_3$ be closed subsets of $Q$ with $V_1
  \subseteq V_2$.  Write $+$ for the lattice join on the lattice of
  closed sets.  We must show one direction of the modular equation:
  \begin{equation*}
    (V_1 + V_3) \cap V_2 \subseteq V_1 + (V_3 \cap V_2)
  \end{equation*}
  as the $\supseteq$ direction holds in any lattice.  If $V_1$ or
  $V_3$ is empty, the equality is clear, so we may assume both are
  non-empty.  Take $x \in (V_1 + V_3) \cap V_2$.  By the previous
  claim, there are $(y,z) \in V_1 \times V_3$ such that $x \in
  \cl\{y,z\}$.  If $x \in \cl\{y\}$, then $x \in V_1$ so certainly $x
  \in V_1 + (V_3 \cap V_2)$.  Otherwise, $x \in \cl\{y,z\} \setminus
  \cl\{y\}$, so $z \in \cl\{x,y\}$.  Now $x \in V_2$ by choice of $x$,
  and $y \in V_1 \subseteq V_2$.  Therefore, $z \in \cl\{x,y\}
  \subseteq V_2$.  The element $z$ is also in $V_3$, so $z \in (V_3
  \cap V_2)$.  Thus $x \in \cl\{y,z\}$ where $y \in V_1$ and $z \in
  V_3 \cap V_2$.  It follows that $x \in V_1 + (V_3 \cap V_2)$,
  completing the proof of modularity.
\end{proof}
In a later paper (\cite{prdf3}), we will see that equivalence classes
of quasi-atoms in $M$ correspond to atoms in the category $\Pro M$
obtained by formally adding filtered infima to $M$.  The category
$\Pro M$ is itself a modular lattice, and the modular pregeometry
constructed in Corollary~\ref{the-pregeometry-exists} and
Proposition~\ref{prop:is-modular} comes from the usual modular
geometry on atoms.  See \S7.1-7.2 of \cite{prdf3} for details.

\begin{speculation}
  Let $P$ be the modular lattice of type-definable subgroups of
  $(K,+)$ in a dp-finite field $K$.  Or let $P$ be an interval inside
  that lattice.  In these cases, the modular geometry on quasi-atoms
  in $P$ should consist of finitely many pieces, each of which is
  canonically a projective space over a division ring.  In particular,
  \begin{itemize}
  \item Exotic non-Desarguesian projective planes shouldn't appear.
  \item Even if a component has rank 1, it should still carry the
    structure of $\Pp^1(D)$ for some division ring $D$.
  \end{itemize}
  These nice properties should hold because of the fact that we can
  embed $P$ into a larger lattice $P_n$ of type-definable subgroups of
  $K^n$.  The existence of the family of $P_n$'s for $n \ge 1$ should
  ensure that all the connected components of the modular geometry can
  be embedded into connected modular geometries of arbitrarily high
  rank, forcing Desargues' theorem to hold.  Similar statements should
  hold in the abstract setting of
  Remark~\ref{abelian-category-motivation}.\footnote{All these claims
    have now been verified in \cite{prdf3} \S2.2 and \S 7.4.}
\end{speculation}

\subsection{Miscellaneous facts}

\begin{proposition} \label{covering-lemma-home}
  Let $(P,\le)$ be a modular lattice with top and bottom elements
  $\top$ and $\bot$.  Suppose that $P$ admits a subadditive rank.
  \begin{enumerate}
  \item The rank of the modular pregeometry on quasi-atoms is at most
    the reduced rank $\redrk(\top/\bot)$.  In particular, it is
    finite.
  \item Every closed set is of the form $V(x)$ for some $x$.
  \item Let $x$ be any element of $P$, and $y_1, \ldots, y_n$ be a
    basis for $V(x)$.  Let $z_1, \ldots, z_m$ be a sequence of
    quasi-atoms.  Then $\{x, z_1, \ldots, z_m\}$ is independent if and
    only if \[\{y_1, \ldots, y_n, z_1, \ldots, z_m\}\] is independent.
  \item \label{covering-lemma} If $x$ is any element of $P$, there is a pregeometry basis of
    the form $y_1, \ldots, y_n, z_1, \ldots, z_m$ where
    \begin{itemize}
    \item Each $y_i \le x$.
    \item The set $\{y_1,\ldots,y_n\}$ is a basis for $V(x)$.
    \item The set $\{x, z_1, \ldots, z_m\}$ is independent.
    \end{itemize}
  \end{enumerate}
\end{proposition}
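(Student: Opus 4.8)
The plan is to go through the four parts in order: (1) and (2) are quick consequences of the pregeometry machinery of Corollary~\ref{the-pregeometry-exists} and Lemma~\ref{main-lemma-for-pregeometry}, (3) is the technical heart, and (4) is assembly. For (1), an independent set $a_1,\ldots,a_k$ of quasi-atoms in the pregeometry is an independent sequence in the lattice sense (Corollary~\ref{the-pregeometry-exists}), and since each $a_i > \bot$, Proposition~\ref{cubes-and-independence} (with bottom element $\bot$) produces a strict $k$-cube in $[\bot,\top] = P$; hence $k \le \redrk(\top/\bot)$, which is finite by Proposition~\ref{redrk-subadd}, and taking the supremum over $k$ gives the bound. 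For (2), by (1) every closed set $C$ has a finite basis $a_1,\ldots,a_k$, which is lattice-independent, and I claim $C = V(a_1 \vee \cdots \vee a_k)$ (interpreting the empty join as $\bot$). For any quasi-atom $b$, apply Lemma~\ref{main-lemma-for-pregeometry}.\ref{tfae} inside the finite set $S = \{a_1,\ldots,a_k,b\}$: since closure is finitary, $b \in C = \cl\{a_1,\ldots,a_k\}$ if and only if $b \in V(a_1 \vee \cdots \vee a_k)$.

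For (3), write $y = y_1 \vee \cdots \vee y_n$ and $w = z_1 \vee \cdots \vee z_m$. Since $\{y_1,\ldots,y_n\}$ is independent, Proposition~\ref{1-collapse} gives that $\{y_1,\ldots,y_n,z_1,\ldots,z_m\}$ is independent if and only if $\{y,z_1,\ldots,z_m\}$ is; and by Proposition~\ref{perm-invar} together with a further application of Proposition~\ref{1-collapse}, each of $\{x,z_1,\ldots,z_m\}$ and $\{y,z_1,\ldots,z_m\}$ is independent exactly when $\{z_1,\ldots,z_m\}$ is independent and, respectively, $w \wedge x = \bot$ or $w \wedge y = \bot$. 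So it remains to show $w \wedge x = \bot$ if and only if $w \wedge y = \bot$. By Lemma~\ref{main-lemma-for-pregeometry}, $V(y) = \cl\{y_1,\ldots,y_n\} = V(x)$ (the last equality because $\{y_i\}$ is a basis for $V(x)$), so it suffices to prove the general fact that $V(u) = V(v)$ implies $w \wedge u = \bot \iff w \wedge v = \bot$: if $w \wedge u > \bot$, then Proposition~\ref{quasi-atoms-basics}.\ref{enough-quasiats} supplies a quasi-atom $a \le w \wedge u$, whence $a \in V(u) = V(v)$ and therefore $\bot < a \wedge v \le w \wedge v$; the reverse implication is symmetric.

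For (4), take a basis $b_1,\ldots,b_n$ of $V(x)$ and set $y_i := b_i \wedge x$. By Proposition~\ref{quasi-atoms-basics}, each $y_i \le x$ is a quasi-atom equivalent to $b_i$, so $\{y_1,\ldots,y_n\}$ is still independent by Lemma~\ref{respect-equivalence} and has closure $\cl\{b_1,\ldots,b_n\} = V(x)$ (since each $y_i$ is parallel to $b_i$), hence is again a basis for $V(x)$ lying below $x$. Extend it to a basis $y_1,\ldots,y_n,z_1,\ldots,z_m$ of the pregeometry on $Q$, which is possible since the rank is finite by (1); the $z_j$ are quasi-atoms and the whole sequence is lattice-independent, so part (3) yields that $\{x,z_1,\ldots,z_m\}$ is independent, and all three conditions hold. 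The only real difficulty is (3): once one realizes that passing from $x$ to $\bigvee_i y_i$ changes nothing relevant because $V(\cdot)$ is unchanged, the rest is bookkeeping with Propositions~\ref{perm-invar} and \ref{1-collapse} together with the ``enough quasi-atoms'' principle of Proposition~\ref{quasi-atoms-basics}.\ref{enough-quasiats}.
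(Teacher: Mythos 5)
Your parts (1), (2), and (4) follow the paper's own argument essentially verbatim. Part (3) is where you diverge, and your route is correct and arguably cleaner. The paper first replaces each $y_i$ by $y_i \wedge x$, proves one direction by monotonicity plus Proposition~\ref{1-collapse}, and proves the converse by a minimal-counterexample argument: it invokes Lemma~\ref{3-fold-lemma} to extract a quasi-atom $w \le x$ witnessing the failure of independence and then derives a contradiction on pregeometry-theoretic grounds. You instead collapse both sides, via Propositions~\ref{perm-invar} and~\ref{1-collapse}, to conditions of the form ``$\{z_1,\ldots,z_m\}$ is independent and $u \wedge (z_1 \vee \cdots \vee z_m) = \bot$'' with $u = x$ or $u = y_1 \vee \cdots \vee y_n$, and then observe that $V(x) = V(y)$ forces $x$ and $y$ to have identical independence behaviour against any element, because any nonzero meet contains a quasi-atom by Proposition~\ref{quasi-atoms-basics}.\ref{enough-quasiats}. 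This sidesteps Lemma~\ref{3-fold-lemma} entirely and requires no preliminary replacement of the $y_i$; the only cost is that your argument leans explicitly on the existence of a subadditive rank (through the ``enough quasi-atoms'' principle), whereas the paper's version of this step would survive in lattices without one --- but since the proposition hypothesizes a subadditive rank, nothing is lost here. The reductions themselves are sound: permutation invariance plus the definition of independence does give the stated equivalences, and your verification that $V(u) = V(v)$ implies $w \wedge u = \bot \iff w \wedge v = \bot$ is complete, including the degenerate case $V(x) = \emptyset$ where the empty join is $\bot$.
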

\begin{proof}
  \begin{enumerate}
  \item If the pregeometry rank is at least $n$, then there is a
    sequence of independent quasi-atoms $a_1, \ldots, a_n$.  Each
    $a_i$ is greater than $\bot$, so this determines a strict $n$-cube
    by Proposition~\ref{cubes-and-independence}.
  \item Let $V$ be a closed set.  Because the pregeometry rank is
    finite, we can find a basis $a_1, \ldots, a_n$ for $V$.  Note that
    $\{a_1,\ldots,a_n\}$ is an independent set.  Let $x = a_1 \vee
    \cdots \vee a_n$.  If $b$ is any element of $Q$, then the
    following statements are equivalent
    \begin{itemize}
    \item $b$ is in $V$.
    \item $b$ is in the closure of $\{a_1,\ldots,a_n\}$.
    \item The set $\{a_1,\ldots,a_n,b\}$ is \emph{not} independent.
    \item $b \wedge x > \bot$.
    \item $b \in V(x)$.
    \end{itemize}
    Therefore $V = V(x)$.
  \item Let $y'_i = y_i \wedge x$.  By definition of $V(x)$, each
    $y'_i > \bot$, and so $y'_i$ is a quasi-atom equivalent to $y_i$.
    By Lemma~\ref{respect-equivalence} we may replace $y_i$ with
    $y'_i$ and assume that each $y_i \le x$.  Let $x' = y_1 \vee
    \cdots \vee y_n \le x$.  If $\{x, z_1, \ldots, z_m\}$ is
    independent, then so is $\{x', z_1, \ldots, z_m\}$.  As the $y_i$
    are independent, it follows by Proposition~\ref{1-collapse} that
    $\{y_1, \ldots, y_n, z_1, \ldots, z_m\}$ is independent.
    Conversely, suppose that $\{y_1,\ldots,y_n,z_1,\ldots,z_m\}$ is
    independent, but $\{x,z_1,\ldots,z_m\}$ is not.  Choose $m$
    minimal such that this occurs.  Let $b = z_1 \vee \cdots \vee
    z_{m-1}$.  Then $(x \vee b) \wedge z_m > \bot$, but
    \begin{align*}
      x \wedge b &= \bot \\
      b \wedge z_m &= \bot \\
      x \wedge z_m &= \bot
    \end{align*}
    respectively: by choice of $m$; because $\vec{z}$ is independent;
    and because $z_m$ is independent from $y_1,\ldots,y_n$ hence not
    in $\cl\{y_1,\ldots,y_n\} = V(x)$.  By Lemma~\ref{3-fold-lemma}
    there is a quasi-atom $w \le x$ such that $\{w, b, z_m\}$ is not
    independent.  But $w \in V(x)$, so $w \in \cl\{y_1,\ldots,y_n\}$.
    The fact that $\{y_1,\ldots,y_n,z_1,\ldots,z_{m-1},z_m\}$ is
    independent then implies on pregeometry-theoretic grounds that
    $\{w,z_1,\ldots,z_{m-1},z_m\}$ is independent, which implies on
    lattice-theoretic grounds that $\{w, b, z_m\}$ is independent, a
    contradiction.
  \item Take $\{y_1, \ldots, y_n\}$ a basis for $V(x)$ and extend it
    to a basis $\{y_1,\ldots,y_n,z_1,\ldots,z_m\}$ for the entire
    pregeometry.  As in the proof of the previous point, we may
    replace $y_i$ with $y_i \wedge x$, and thus assume that $y_i \le
    x$.  By the previous point, independence of
    $\{y_1,\ldots,y_n,z_1,\ldots,z_m\}$ implies independence of
    $\{x,z_1,\ldots,z_m\}$. \qedhere
  \end{enumerate}
\end{proof}

\begin{remark} \label{bound-on-independence}
  Let $(P,\le)$ be a modular lattice with bottom element $\bot$ and
  reduced rank $n < \infty$.  The pregeometry on quasi-atoms has rank
  at most $n$.  If $a_1, \ldots, a_m$ is a basis, then the following
  facts hold:
  \begin{enumerate}
  \item Any independent sequence $\{b_1, \ldots, b_\ell\}$ of
    non-$\bot$ elements has size $\ell \le m$.
  \item If $\ell = m$ then each $b_i$ is a quasi-atom.
  \end{enumerate}
\end{remark}
\begin{proof}
  \begin{enumerate}
  \item For each $b_i$, we may find a quasi-atom $b'_i \le b_i$, by Proposition~\ref{quasi-atoms-basics}.\ref{enough-quasiats}.  The
    sequence $b'_1, \ldots, b'_\ell$ is pregeometry-independent, so
    $\ell \le m$.
  \item Suppose $\ell = m$.  If, say, $b_\ell$ is not a quasi-atom, we
    can find $x, y \in (\bot,b_\ell]$ such that $x \wedge y = \bot$.
      Replacing $b_\ell$ with $x \vee y \le b_\ell$, and then applying
      Proposition~\ref{1-collapse}, we see that $b_1, \ldots, b_{\ell
        - 1}, x, y$ is an independent sequence contradicting part 1. \qedhere
  \end{enumerate}
\end{proof}

\begin{definition}
  If $(P,\le)$ is a modular lattice and $a \ge b$ are elements, we
  define $\botrk(a/b)$ to be the supremum of $n$ such that there is a
  strict $n$-cube in $[b,a]$ with bottom $b$.
\end{definition}
Equivalently (by Proposition~\ref{cubes-and-independence}),
$\botrk(a/b)$ is the supremum over $n$ such that there exist $c_1,
\ldots, c_n \in (b,a]$ relatively independent over $b$.  The rank
  $\botrk(-)$ is not a subadditive rank in the sense of
  Definition~\ref{def-of-subadd-rank}.
\begin{remark} \label{botrks}
  If $a \ge b$, then
  \begin{enumerate}
  \item \label{botrk-vs-redrk} $\botrk(a/b) \le \redrk(a/b)$
  \item If $\redrk(a/b) < \infty$, there exists $b' \in [b,a]$ such
    that $\botrk(a/b') = \redrk(a/b')$.  Indeed, take a strict
    $n$-cube in $[b,a]$ for maximal $n$ and let $b'$ be the bottom of
    the cube.
  \item If $\redrk(a/b) < \infty$, then $\botrk(a/b)$ is the rank of
    the pregeometry of quasi-atoms in $[b,a]$, by
    Remark~\ref{bound-on-independence}.
  \end{enumerate}
\end{remark}
\begin{lemma} \label{botrk-superadd}
  For any $a, b$
  \begin{equation*}
    \botrk(a \vee b / a \wedge b) \ge \botrk(a/a \wedge b) + \botrk(b/a \wedge b).
  \end{equation*}
  Also, $\botrk(a \vee b/a) = \botrk(b/a \wedge b)$.
\end{lemma}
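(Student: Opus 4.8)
The plan is to prove the inequality by gluing together a cube witnessing $\botrk(a/a\wedge b)$ and a cube witnessing $\botrk(b/a\wedge b)$ into a single cube of the sum of the two sizes sitting over $a\wedge b$, using the product-of-intervals embedding of Lemma~\ref{subs-and-products}; and to prove the equation by transporting cubes through the modular-law poset isomorphism $[a\wedge b,b]\cong[a,a\vee b]$.

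For the inequality, let $n$ and $m$ be any finite integers with $n\le\botrk(a/a\wedge b)$ and $m\le\botrk(b/a\wedge b)$. By definition of $\botrk$ there is a strict $n$-cube $\{A_S\}_{S\subseteq[n]}$ contained in $[a\wedge b,a]$ with bottom $A_\emptyset=a\wedge b$, and a strict $m$-cube $\{B_T\}_{T\subseteq[m]}$ contained in $[a\wedge b,b]$ with bottom $B_\emptyset=a\wedge b$; in particular every $A_S\in[a\wedge b,a]$ and every $B_T\in[a\wedge b,b]$. I would then consider the composite of the map $\Pow([n])\times\Pow([m])\to[a\wedge b,a]\times[a\wedge b,b]$, $(S,T)\mapsto(A_S,B_T)$, with the injective lattice homomorphism $[a\wedge b,a]\times[a\wedge b,b]\to[a\wedge b,a\vee b]$, $(x,y)\mapsto x\vee y$, provided by Lemma~\ref{subs-and-products}. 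The first map is the product of the two injective lattice homomorphisms defining the cubes, hence an injective lattice homomorphism; so the composite is an injective lattice homomorphism, and since $\Pow([n])\times\Pow([m])\cong\Pow([n+m])$ it exhibits a strict $(n+m)$-cube in $[a\wedge b,a\vee b]$. Its bottom is the image of $(\emptyset,\emptyset)$, namely $A_\emptyset\vee B_\emptyset=a\wedge b$. Hence $\botrk(a\vee b/a\wedge b)\ge n+m$, and letting $n$ and $m$ grow to $\botrk(a/a\wedge b)$ and $\botrk(b/a\wedge b)$ (with the convention that a sum involving $\infty$ is $\infty$) gives the claimed bound.

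For the equation, the modular law gives a poset isomorphism $[a\wedge b,b]\to[a,a\vee b]$, $x\mapsto x\vee a$, with inverse $y\mapsto y\wedge b$; a poset isomorphism between lattices is automatically a lattice isomorphism, and it carries the bottom $a\wedge b$ of $[a\wedge b,b]$ to $(a\wedge b)\vee a=a$, the bottom of $[a,a\vee b]$. Post-composing a cube with this isomorphism therefore sets up a bijection between strict $k$-cubes in $[a\wedge b,b]$ with bottom $a\wedge b$ and strict $k$-cubes in $[a,a\vee b]$ with bottom $a$, whence $\botrk(b/a\wedge b)=\botrk(a\vee b/a)$. The argument is essentially bookkeeping; the only points requiring care are verifying that the two arrows in the composite genuinely are injective lattice homomorphisms with the stated domains and codomains (the second being exactly the embedding of Lemma~\ref{subs-and-products}), and handling the case where one of the ranks $\botrk(a/a\wedge b)$, $\botrk(b/a\wedge b)$ is infinite.
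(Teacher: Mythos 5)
Your proof is correct and is essentially the argument in the paper: the paper concatenates the two independent sequences over $a\wedge b$ (using Proposition~\ref{1-collapse}), which is exactly the cube you build via the product embedding $(x,y)\mapsto x\vee y$ of Lemma~\ref{subs-and-products}, translated through the cube/independent-sequence correspondence of Proposition~\ref{cubes-and-independence}; the ``also'' clause is handled identically via the modular isomorphism $[a\wedge b,b]\cong[a,a\vee b]$. Your explicit check that the bottom of the glued cube is $a\wedge b$ is the one detail worth making explicit, and you do.
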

\begin{proof}
  If $c_1, \ldots, c_n$ is a strict independent sequence in $[a \wedge
    b, a]$ and $d_1, \ldots, d_m$ is a strict independent sequence in
  $[a \wedge b, b]$, then $c_1, \ldots, c_n, d_1, \ldots, d_m$ is a
  strict independent sequence in $[a \wedge b, a \vee b]$ by a couple
  applications of Proposition~\ref{1-collapse}.  The ``also'' clause
  is unrelated and follows immediately from the isomorphism of
  lattices between $[a, a \vee b]$ and $[a \wedge b, b]$.
\end{proof}

\section{Invariant valuation rings} \label{sec:valuations}
Let $(\Mm,+,\cdot,\ldots)$ be a monster-model finite dp-rank expansion
of a field.  Assume that $\Mm$ is not of finite Morley rank.  Fix a
small model $M_0$ large enough for Corollary~\ref{big-enough-model} to
apply.  Thus, for any type-definable $M_0$-linear subspace $J \le
\Mm^n$, we have $J = J^{00}$.

Let $\mathcal{P}_n$ be the poset of type-definable $M_0$-linear
subspaces of $\Mm^n$, let $\mathcal{P} = \mathcal{P}_1$, and let
$\mathcal{P}^+$ be the poset of non-zero elements of $\mathcal{P}$.

We collect the basic facts about these posets in the following
proposition:
\begin{proposition}
  \label{those-posets}
  ~
  \begin{enumerate}
  \item For each $n$, $\mathcal{P}_n$ is a bounded lattice.
  \item \label{p-nontriviality} For any small model $M \supseteq M_0$,
    the group $I_M$ is an element of $\mathcal{P}$.  In particular,
    $\mathcal{P}$ contains an element other than $\bot = 0$ and $\top
    = \Mm$.
  \item If $J \in \mathcal{P}$ is non-zero, every definable set $D$
    containing $J$ is heavy.
  \item \label{infinitesimals-below} If $J \in \mathcal{P}^+$ is
    $M$-definable for some small model $M \supseteq M_0$, then $J
    \supseteq I_M$.
  \item If $J \in \mathcal{P}_n$, then $J = J^{00}$.
  \item \label{nonzero-intersect} $\mathcal{P}^+$ is a sublattice of
    $\mathcal{P}$, i.e., it is closed under intersection.  Thus,
    $\mathcal{P}^+$ is a bounded-above lattice.
  \item \label{p-plus-vs-p} $\mathcal{P}$ has reduced rank $r$ for
    some $0 < r \le \dpr(\Mm)$.  The reduced rank of $\mathcal{P}^+$
    is also $r$, and the reduced rank of $\mathcal{P}^n$ is $rn$.
  \end{enumerate}
\end{proposition}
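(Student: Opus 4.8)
The plan is to take the seven points in the order (1), (5), (2), (3), (4), (6), (7), since all the later points lean on earlier ones; only (3) is not a formality. For (1): $\mathcal P_n$ has bottom $0$ and top $\Mm^n$, the meet of two type-definable $M_0$-linear subspaces is their intersection, and the join is $A+B=\{x:\exists a\,(a\in A\wedge x-a\in B)\}$, which is type-definable because a projection of a type-definable set is type-definable (by compactness), is evidently $M_0$-linear, and is the least type-definable $M_0$-linear subspace containing $A$ and $B$. Point (5) is the ``more generally'' clause of Corollary~\ref{big-enough-model}. For (2): $I_M$ is an additive subgroup type-definable over $M$ (Theorem~\ref{inf-add}); it is $M_0$-linear since $I_M\cdot M\subseteq I_M$ (Remark~\ref{basic-infs}), so $I_M\in\mathcal P$; it is nonzero because $\Mm$ is infinite, and it is a proper subgroup because $I_M\cap M=\{0\}$ while $M\neq\{0\}$ (both again from Remark~\ref{basic-infs}, the latter using that $\Mm$ is not of finite Morley rank).

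The crux is (3). For nonzero $J\in\mathcal P$ and definable $D\supseteq J$, I would show $\dpr(D)=\dpr(\Mm)$ and conclude that $D$ is heavy by Lemma~\ref{full-heavy}. Fix $b\in J\setminus\{0\}$; since $J$ is $M_0$-linear, $M_0\cdot b\subseteq J\subseteq D$, so the definable set $D/b:=\{x:xb\in D\}$ contains the submodel $M_0$. The key observation is that \emph{any} definable set $E$ containing an elementary submodel $M_0\preceq\Mm$ has $\dpr(E)=\dpr(\Mm)=:n$: the statement ``$\dpr\ge n$'' unwinds, for each column length $\ell$, into a single first-order sentence asserting existence of a depth-$n$ ict-configuration, so it holds in $M_0$; hence for every $\ell$ one finds formulas $\phi_1,\dots,\phi_n$ and parameters $b^t_j\in M_0$ realizing such a configuration, and by $M_0\preceq\Mm$ every pattern type $\bigwedge_t\phi_t(x,b^t_{\eta(t)})\wedge\bigwedge_{j\neq\eta(t)}\neg\phi_t(x,b^t_j)$ has a witness in $M_0\subseteq E$; by compactness this produces a depth-$n$ ict-pattern consistent with ``$x\in E$''. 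Applying this to $E=D/b$, and using that multiplication by $b\in\Mm^\times$ preserves dp-rank, gives $\dpr(D)=n$.

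Point (4) is then immediate: if $J\in\mathcal P^+$ is type-definable over a small model $M\supseteq M_0$, then by (3) every $M$-definable superset of $J$ is heavy, so $J\supseteq I_M$ by Corollary~\ref{minimal-heavy-subgroup}. For (6): given $A,B\in\mathcal P^+$, type-definable over small models $M_A,M_B\supseteq M_0$ respectively, choose a small model $M\supseteq M_A\cup M_B$; by (4) and Remark~\ref{change-of-M}, $I_M\subseteq I_{M_A}\subseteq A$ and $I_M\subseteq I_{M_B}\subseteq B$, so $A\cap B\supseteq I_M\neq 0$ and hence $A\cap B\in\mathcal P^+$; thus $\mathcal P^+$ is closed under meet and, with top $\Mm$, is a bounded-above lattice.

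For (7): since $\mathcal P=\mathcal P_1$, Proposition~\ref{reduced-rank-vs-dp-rk} gives $\redrk(\mathcal P)\le\dpr(\Mm)$, while $\redrk(\mathcal P)\ge 1$ because $0<I_M$ is a strict $1$-cube (point (2)); call this value $r$. For $\redrk(\mathcal P^+)=r$: ``$\le$'' is sublattice monotonicity (Lemma~\ref{subs-and-products}), and for ``$\ge$'' I would take a strict $r$-cube in $\mathcal P$, given by an independent sequence $a_1,\dots,a_r$ of nonzero elements; letting $M_1\supseteq M_0$ define $a_1$, point (4) gives $0\neq I_{M_1}\subseteq a_1$, and enlarging to a small model $M$ containing some nonzero $\varepsilon\in I_{M_1}$ yields $0\neq I_M\subsetneq a_1$; a short modularity check (the $I_M$'s are absorbed into $a_1$ in the relevant joins) shows $a_1,\ I_M\vee a_2,\dots,\ I_M\vee a_r$ is a strict independent sequence over $I_M$, i.e.\ a strict $r$-cube inside $[I_M,\Mm]\subseteq\mathcal P^+$. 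And $\redrk(\mathcal P^n)=rn$ via Lemma~\ref{subs-and-products}: the iterated product contains a strict $rn$-cube, and conversely, projecting a strict cube of $\mathcal P\times\cdots\times\mathcal P$ onto the $n$ factors (using Lemma~\ref{down-smashing}) and partitioning its coordinates according to which factor strictly separates each coordinate bounds its size by $rn$. The only step I expect to require real thought is the dp-rank observation underlying (3); everything else is assembly from the lattice and infinitesimal machinery already developed.
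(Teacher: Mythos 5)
Most of your assembly matches the paper's, and where you diverge the alternatives are sound. For part (3) the paper does something slicker: scale $J$ so that $1\in J$, hence $M_0=M_0\cdot 1\subseteq J\subseteq D$, and then quote Lemma~\ref{something-similar-2} with $Z=\Mm$, $W=D$ to get heaviness directly. Your route --- showing that a definable set containing an elementary submodel has full dp-rank (a correct compactness argument on finite ict-configurations) and then invoking Lemma~\ref{full-heavy} --- is also valid and arguably more self-contained, at the cost of redoing a rank computation the external-definability lemma already packages. For $\redrk(\mathcal{P}^+)=r$ the paper argues by contradiction (a strict $r$-cube outside $\mathcal{P}^+$ must have bottom $\bot$, forcing $r\le\botrk(\mathcal{P})\le 1$ by part (6)), whereas you explicitly relocate a strict $r$-cube to have bottom $I_M$; your modularity check $(I_M\vee a_k)\wedge(a_1\vee\cdots\vee a_{k-1})=I_M\vee(a_k\wedge(a_1\vee\cdots\vee a_{k-1}))=I_M$ is correct, as is the device of enlarging $M$ past a nonzero infinitesimal to guarantee $I_M\subsetneq a_1$.

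The genuine problem is the last clause of (7). Despite the typo ``$\mathcal{P}^n$'' in the statement, the object intended is $\mathcal{P}_n$, the lattice of type-definable $M_0$-linear subspaces of $\Mm^n$ --- this is what the paper's own proof computes with ($\redrk(\Mm^n/0)$ via the coordinate subspaces $J_i$), and it is what is needed downstream (the definition of \emph{special} and Lemma~\ref{special-lemma-1} require $\redrk(\Mm^n/J)\le rn$ for \emph{arbitrary} $J\in\mathcal{P}_n$). Your upper-bound argument treats $\mathcal{P}^n$ as the $n$-fold product lattice and projects a strict cube onto the factors; that works for the literal product, but $\mathcal{P}_n$ strictly contains the product (it has diagonal subgroups), and the coordinate projections $\Mm^n\to\Mm$ preserve joins but not meets (while intersecting with coordinate axes preserves meets but not joins), so neither Lemma~\ref{down-smashing} nor your partition-by-separating-factor argument applies to a general strict cube in $\mathcal{P}_n$. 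The correct route is the paper's: first note $\redrk$ is finite on $\mathcal{P}_n$ (the proof of Proposition~\ref{reduced-rank-vs-dp-rk} goes through for $\Mm^n$ using the ``more generally'' clause of Corollary~\ref{big-enough-model}), so $\redrk$ is a subadditive rank there by Proposition~\ref{redrk-subadd}; then apply Remark~\ref{subadditive-rk-in-cubes} to the independent decomposition $\Mm^n=J_1\vee\cdots\vee J_n$ with $J_i=0^{\oplus(i-1)}\oplus\Mm\oplus 0^{\oplus(n-i)}$ and $[0,J_i]\cong\mathcal{P}$, getting $\redrk(\Mm^n/0)=\sum_i\redrk(J_i/0)=rn$ as an equality, not just a lower bound.
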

\begin{proof}
  \begin{enumerate}
  \item Clear---the lattice operations are given by
    \begin{align*}
      G \vee H &= G + H \\
      G \wedge H &= G \cap H \\
      \bot &= 0 \\
      \top &= \Mm^n.
    \end{align*}
  \item By Remark~\ref{basic-infs} and Theorem~\ref{inf-add}, $I_M$ is
    a non-zero $M_0$-linear subspace of $\Mm$, distinct from $0$ and
    $\Mm$.
  \item Replacing $J$ with $a \cdot J$ for some $a \in \Mm^\times$, we
    may assume $1 \in J$.  Then $M_0 = M_0 \cdot 1 \subseteq J$ by
    $M_0$-linearity.  Let $D$ be any definable set containing $J$.  By
    Lemma~\ref{something-similar-2} with $Z = \Mm$ and $W = D$, the
    set $D$ is heavy.
  \item Corollary~\ref{minimal-heavy-subgroup}.
  \item By choice of $M_0$.
  \item Let $J_1, J_2$ be two non-zero elements of $\mathcal{P}$.  Let
    $M$ be a small model containing $M_0$, over which both $J_1$ and
    $J_2$ are type-definable.  Then $J_1 \cap J_2 \ge I_M > \bot$.
    Therefore $\mathcal{P}^+$ is closed under intersection.
  \item Proposition~\ref{reduced-rank-vs-dp-rk} gives the bound $r \le
    n$.  Then
    \begin{equation*}
      0 < \redrk(\mathcal{P}^+) \le \redrk(\mathcal{P}) = r \le n,
    \end{equation*}
    where the left inequality is sharp because $\mathcal{P}$ has at
    least three elements by part \ref{p-nontriviality}.  If $r >
    \redrk(\mathcal{P}^+)$, there is a strict $r$-cube in
    $\mathcal{P}$ which does not lie in $\mathcal{P}^+$.  The bottom
    of this cube must be $\bot$, the only element of $\mathcal{P}
    \setminus \mathcal{P}^+$.  Then $r \le \botrk(\mathcal{P})$.
    However, part \ref{nonzero-intersect} says $\botrk(\mathcal{P})
    \le 1$, so $r \le 1 \le \redrk(\mathcal{P}^+)$, a contradiction.
    Therefore $\redrk(\mathcal{P}^+) = r = \redrk(\mathcal{P})$.
    Finally, in $\mathcal{P}_n$, if we let $J_i = 0^{\oplus (i - 1)}
    \oplus \Mm \oplus 0^{\oplus (n - i)}$ for $i = 1, \ldots, n$, then
    the sequence $J_1, \ldots, J_n$ is independent and $J_1 \vee
    \cdots \vee J_n = \Mm^n$.  Thus
    \begin{equation*}
      \redrk(\Mm^n/0) = \sum_{i = 1}^n \redrk(J_i/0)
    \end{equation*}
    by Remark~\ref{subadditive-rk-in-cubes}.  However, $\redrk(J_i/0)
    = r$ because of the isomorphism of lattices
    \begin{align*}
      \mathcal{P} &\to [0,J_i] \\
      X &\mapsto 0^{\oplus(i-1)} \oplus X \oplus 0^{\oplus(n-i)}. \qedhere
    \end{align*}
  \end{enumerate}
\end{proof}

In what follows, we will let $r$ be $\redrk(\Mm/0)$.
\begin{remark}
  If $r = 1$, then $\mathcal{P}$ is totally ordered and we can reuse
  the arguments for dp-minimal fields to immediately see that $I_M$ is
  a valuation ideal.  Usually we are not so lucky.
\end{remark}

\subsection{Special groups} \label{sec:special}
\begin{definition}
  An element $J \in \mathcal{P}_n$ is \emph{special} if
  $\botrk(\Mm^n/J) = \redrk(\Mm^n/J) = rn$.
\end{definition}
Equivalently, $J \in \mathcal{P}_n$ is special if $\botrk(\Mm^n/J) \ge
rn$.  This follows by Proposition~\ref{those-posets}.\ref{p-plus-vs-p} and Remark~\ref{botrks}.\ref{botrk-vs-redrk}.
\begin{proposition}
  \label{special-proposition}
  ~
  \begin{enumerate}
  \item There is at least one non-zero special $J \in \mathcal{P} =
    \mathcal{P}_1$.
  \item \label{guards} Let $J \in \mathcal{P}$ be special.  Let $A_1,
    \ldots, A_r$ be a basis of quasi-atoms over $J$.  Let $G \in
    \mathcal{P}$ be arbitrary.  If $G \cap A_i \not\subseteq J$ for
    each $i$, then $G \supseteq J$.
  \item \label{guard-application} If $J \in \mathcal{P}$ is special
    and nonzero and type-definable over a small model $M \supseteq
    M_0$, then
    \begin{equation*}
      I_M \cdot J \subseteq I_M \subseteq J
    \end{equation*}
  \item \label{oplus} If $I \in \mathcal{P}_n$ and $J \in
    \mathcal{P}_m$ are special, then $I \oplus J \in
    \mathcal{P}_{n+m}$ is special.
  \item \label{special-scale} If $I \in \mathcal{P}_n$ is special and
    $\alpha \in \Mm^\times$, then $\alpha \cdot I$ is special.
  \end{enumerate}
\end{proposition}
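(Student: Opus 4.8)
The plan is to treat the five parts separately: (1) and the two closure properties (\ref{oplus}), (\ref{special-scale}) are soft, (\ref{guard-application}) is deduced from the guard property (\ref{guards}), and the real work is (\ref{guards}). For (1): by Proposition~\ref{those-posets}.\ref{p-plus-vs-p} the lattice $\mathcal{P}^+$ has reduced rank $r$, so it contains a strict $r$-cube; since $\mathcal{P}^+$ is a sublattice of $\mathcal{P}$ (Proposition~\ref{those-posets}.\ref{nonzero-intersect}), the bottom $J$ of that cube is a \emph{nonzero} element of $\mathcal{P}$ with $\botrk(\Mm/J) \ge r$. On the other hand $\botrk(\Mm/J) \le \redrk(\Mm/J) \le \redrk(\Mm/0) = r$ by Remark~\ref{botrks}.\ref{botrk-vs-redrk} and monotonicity of the subadditive rank $\redrk$, so $\botrk(\Mm/J) = \redrk(\Mm/J) = r$ and $J$ is special.

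For the main part (\ref{guards}): set $\bar G = G \vee J$. The modular law (using $J \le A_i$) gives $\bar G \wedge A_i = (G \wedge A_i) \vee J$, which strictly contains $J$ by hypothesis, so each $\bar G \wedge A_i$ is a quasi-atom of $[J,\Mm]$ equivalent to $A_i$ (Proposition~\ref{quasi-atoms-basics}). Hence the closed set $V(\bar G)$ of quasi-atoms of $[J,\Mm]$ meeting $\bar G$ contains a basis of the pregeometry and is therefore all of $Q$; since $J$ is special this pregeometry has rank $r = \botrk(\Mm/J)$ (Remark~\ref{botrks}), and applying the covering lemma (Proposition~\ref{covering-lemma-home}.\ref{covering-lemma}) to $\bar G$ inside $[J,\Mm]$ produces a pregeometry basis lying entirely below $\bar G$, so $\botrk(\bar G/J) = r = \redrk(\bar G/J)$ too. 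It remains to conclude $G \wedge J = J$, i.e.\ $J \le G$. The structural input I would exploit is that $C_i := \bar G \wedge A_i$ is a quasi-atom of $[J,\Mm]$ for which $J$ and $G \wedge A_i = C_i \wedge G$ are complements inside $[G \wedge J, C_i]$ (they meet to $G\wedge J$ and join to $C_i$), together with pairwise independence of the $C_i$ over $J$ and the equality $\botrk = \redrk$ built into specialness; arguing by induction on $\redrk(\bar G/G) = \redrk(J/G\wedge J)$, or by an iterated application of Lemma~\ref{3-fold-lemma} in the spirit of the modularity proof of Proposition~\ref{prop:is-modular}, one should show that if $J \not\le G$ then some direction $A_i$ is in fact independent from $G$ over $J$, contradicting $G \wedge A_i \not\le J$. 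This last extraction is the \emph{main obstacle}: it is exactly where specialness (not merely $\redrk(\Mm/J) = r$) must be used, and it is why the ``$00$'' in the meet had to be cleared away by passing to $\mathcal{P}$ of $M_0$-linear subspaces, since the argument is a genuine modular-lattice computation with honest intersections.

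Given (\ref{guards}), part (\ref{guard-application}) proceeds as follows. The inclusion $I_M \subseteq J$ is Proposition~\ref{those-posets}.\ref{infinitesimals-below}, as $J$ is nonzero and type-definable over $M \supseteq M_0$. For $I_M \cdot J \subseteq I_M$ it suffices to show $j I_M \subseteq I_M$ for each $j \in J \setminus \{0\}$, and since $I_M \subseteq J$ this follows once $J \subseteq j^{-1}I_M$. I would apply (\ref{guards}) with $G = j^{-1}I_M$ (which lies in $\mathcal{P}$, being $M_0$-linear and type-definable over a small set) and a basis $A_1,\ldots,A_r$ of quasi-atoms over $J$; the only thing to check is $j^{-1}I_M \wedge A_i \not\le J$, equivalently $I_M \wedge jA_i \not\le jJ$, which I would verify by taking the $A_i$ inside $\mathcal{P}^+$ and $M$-type-definable and using Corollary~\ref{minimal-heavy-subgroup} and Remark~\ref{basic-infs} to see that $I_M$ cannot be trapped in a proper $M$-definable direction. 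This is a subsidiary point, not the main difficulty.

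Finally, (\ref{special-scale}) and (\ref{oplus}) are formal. For (\ref{special-scale}), the map $x \mapsto \alpha x$ is a lattice automorphism of $\mathcal{P}_n$ fixing $\bot$ and $\top$ (it preserves containment, type-definability over small sets, and $M_0$-linearity), hence preserves $\botrk$ and $\redrk$ of every interval, so $\botrk(\Mm^n/\alpha I) = \botrk(\Mm^n/I) = rn$. For (\ref{oplus}), work in $\mathcal{P}_{n+m}$ with $a = \Mm^n \oplus J$ and $b = I \oplus \Mm^m$, so that $a \wedge b = I \oplus J$ and $a \vee b = \Mm^{n+m}$; the lattice isomorphisms $[I,\Mm^n] \cong [I \oplus J,\, \Mm^n \oplus J]$ and $[J,\Mm^m] \cong [I \oplus J,\, I \oplus \Mm^m]$ together with superadditivity of $\botrk$ (Lemma~\ref{botrk-superadd}) give $\botrk(\Mm^{n+m}/I\oplus J) \ge \botrk(\Mm^n/I) + \botrk(\Mm^m/J) = rn + rm$, while $\botrk(\Mm^{n+m}/I\oplus J) \le \redrk(\Mm^{n+m}/I\oplus J) \le \redrk(\Mm^{n+m}/0) = r(n+m)$ by Proposition~\ref{those-posets}.\ref{p-plus-vs-p}; so equality holds throughout and $I \oplus J$ is special.
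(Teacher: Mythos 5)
Parts (1), (\ref{oplus}), and (\ref{special-scale}) of your proposal are correct and essentially identical to the paper's arguments (strict $r$-cube in $\mathcal{P}^+$; the interval isomorphisms plus Lemma~\ref{botrk-superadd} and the upper bound $\redrk \le r(n+m)$; multiplication by $\alpha$ as a lattice automorphism). The other two parts have genuine gaps.

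For (\ref{guards}) you correctly reach the point where $(G+J)\cap A_i \supsetneq J$ for each $i$, hence $\botrk((G+J)/J) \ge r$, but the step you yourself flag as ``the main obstacle'' is precisely where your proof stops, and the machinery you gesture at (induction on $\redrk(J/G\wedge J)$, iterating Lemma~\ref{3-fold-lemma}) is not needed. The paper closes the argument with a two-line squeeze from the subadditive-rank axioms: by the diamond identity for the pair $(G,J)$,
\begin{align*}
  r \;\ge\; \redrk\bigl((G+J)/(G\cap J)\bigr) \;&=\; \redrk\bigl(G/(G\cap J)\bigr) + \redrk\bigl(J/(G\cap J)\bigr) \\
  &=\; \redrk\bigl((G+J)/J\bigr) + \redrk\bigl(J/(G\cap J)\bigr) \;\ge\; r + \redrk\bigl(J/(G\cap J)\bigr),
\end{align*}
where the outer bound is $\redrk(\Mm/0)=r$. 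Hence $\redrk(J/(G\cap J))=0$, so $J = G\cap J \subseteq G$. Specialness enters only through $\botrk((G+J)/J)\ge r$; no further ``extraction of an independent direction'' is required.

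For (\ref{guard-application}) your reduction is to a false statement. You fix $j \in J\setminus\{0\}$ and propose to prove $J \subseteq j^{-1}I_M$, i.e.\ $jJ \subseteq I_M$. After rescaling (which preserves specialness by (\ref{special-scale})) we may assume $1 \in J$, hence $M_0 \subseteq J$; taking $j=1$, your condition reads $J \subseteq I_M$, which fails because $I_M \cap M_0 = \{0\}$ while $M_0 \subseteq J$ --- even though the desired conclusion $jI_M \subseteq I_M$ is trivially true for $j=1$. The quantification must go the other way: fix a nonzero $\varepsilon \in I_{M'}$, where $M' \supseteq M$ is a small model containing chosen witnesses $a_i \in A_i \setminus J$. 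Then $M' \subseteq \varepsilon^{-1}I_{M'}$ by Remark~\ref{basic-infs}.\ref{infinitesimal-times-standard}, so $a_i \in (\varepsilon^{-1}I_{M'}) \cap A_i \setminus J$, and (\ref{guards}) applies with $G = \varepsilon^{-1}I_{M'}$ to give $\varepsilon \cdot J \subseteq I_{M'}$, whence $I_{M'}\cdot J \subseteq I_{M'}$. One then descends from $M'$ back to $M$ by compactness together with definability of heaviness in families. This enlarge-then-descend maneuver is what your sketch omits, and it cannot be replaced by applying (\ref{guards}) to $j^{-1}I_M$.
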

\begin{proof}
  \begin{enumerate}
  \item By Proposition~\ref{those-posets}.\ref{p-plus-vs-p} the
    reduced rank of $\mathcal{P}^+$ is exactly $r$, so we can find a
    strict $r$-cube in $\mathcal{P}^+$.  The base of such a cube is a
    non-zero special element of $\mathcal{P}$.
  \item For $i = 1, \ldots, r$ take $a_i \in (G \cap A_i) \setminus
    J$.  Then for each $i$, we have
    \begin{align*}
      & a_i \in G \cap A_i \subseteq (G + J) \cap A_i \supseteq J \cap A_i = J \\
      & a_i \notin J \\
      (\implies) & (G + J) \cap A_i \supsetneq J.
    \end{align*}
    Consequently,
    \begin{equation*}
      A_1 \cap (G + J), A_2 \cap (G + J), \ldots, A_r \cap (G + J)
    \end{equation*}
    is a strict independent sequence over $J$.  It follows that
    \begin{equation*}
      \redrk(G/(G \cap J)) = \redrk((G+J)/J) \ge \botrk((G+J)/J) \ge r.
    \end{equation*}
    On the other hand
    \begin{equation*}
      \redrk(G/(G \cap J)) + \redrk(J/(G \cap J)) = \redrk((G+J)/(G \cap J)) \le r,
    \end{equation*}
    and so $\redrk(J/(G \cap J)) = 0$.  This forces $J = G \cap J$, so
    $J \subseteq G$.
  \item The inclusion $I_M \subseteq J$ is
    Proposition~\ref{those-posets}.\ref{infinitesimals-below}. Let
    $A_1, \ldots, A_r$ be a basis of quasi-atoms over $J$ as in part
    \ref{guards}.  For each $A_i$ choose an element $a_i \in
    A_i\setminus J$.  Let $M'$ be a small model containing $M$ and the
    $a_i$'s.  We first claim that $I_{M'} \cdot J \subseteq I_{M'}$.
    Let $\varepsilon$ be a non-zero element of $I_{M'}$.  As $I_{M'}$ is
    closed under multiplication by $(M')^\times$, we have $M'
    \subseteq \varepsilon^{-1} \cdot I_{M'}$.  In particular, $a_i \in
    \varepsilon^{-1} \cdot I_{M'}$ for each $i$.  Then
    \begin{equation*}
      (\varepsilon^{-1} \cdot I_{M'}) \cap A_i \not \subseteq J
    \end{equation*}
    for each $i$, so by part \ref{guards} we have
    \begin{equation*}
      \varepsilon^{-1} \cdot I_{M'} \supseteq J.
    \end{equation*}
    In other words, $\varepsilon \cdot J \subseteq I_{M'}$.  As
    $\varepsilon$ was an arbitrary non-zero element of $I_{M'}$, it
    follows that $I_{M'} \cdot J \subseteq I_{M'}$.  Now suppose that
    $D$ is an $M$-definable basic neighborhood.  Then $D$ is an
    $M'$-definable basic neighborhood.  By the above and compactness,
    there is an $M'$-definable basic neighborhood $X \mininf X$ and a
    definable set $D_2 \supseteq J$ such that $(X \mininf X) \cdot D_2
    \subseteq D$.  Furthermore, $D_2$ can be taken to be
    $M$-definable, because $J$ is a directed intersection of
    $M$-definable sets.  Having done this, we can then pull the
    parameters defining $X$ into $M$, and assume that $X$ is
    $M$-definable.  (This uses the fact that heaviness is definable in
    families).  Then we have an $M$-definable basic neighborhood $X
    \mininf X$ and an $M$-definable set $D_2 \supseteq J$ such that
    $(X \mininf X) \cdot D_2 \subseteq D$.  As $D$ was arbitrary, it
    follows that
    \begin{equation*}
      I_M \cdot J \subseteq I_M.
    \end{equation*}
  \item The interval $[I,\Mm^n]$ in $\mathcal{P}_n$ is isomorphic to
    $[I \oplus J, \Mm^n \oplus J]$ in $\mathcal{P}_{n+m}$, so
    \begin{align*}
      rn = \botrk(\Mm^n/I) &= \botrk((\Mm^n \oplus J)/(I \oplus J)) \\
      rm = \botrk(\Mm^m/J) &= \botrk((I \oplus \Mm^m)/(I \oplus J)),
    \end{align*}
    where the second line is true for similar reasons.  By
    Lemma~\ref{botrk-superadd},
    \begin{equation*}
      \botrk((\Mm^n \oplus \Mm^m)/(I \oplus J)) \ge rn + rm.
    \end{equation*}
    On the other hand
    \begin{equation*}
      \botrk((\Mm^n \oplus \Mm^m)/(I \oplus J)) \le \redrk((\Mm^n
      \oplus \Mm^m)/(I \oplus J)) \le r(n+m)
    \end{equation*}
    so equality holds and $I \oplus J$ is special.
  \item For any $\alpha \in \Mm^\times$, the map $X \mapsto \alpha
    \cdot X$ is an automorphism of $\mathcal{P}_n$. \qedhere
  \end{enumerate}
\end{proof}
\begin{corollary} \label{ring-topology}
  For any model $M$, $I_M \cdot I_M \subseteq I_M$.  The topology in
  Remark~\ref{exists-topology} is a ring topology, not just a group
  topology.
\end{corollary}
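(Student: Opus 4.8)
The plan is to reduce the statement to the single assertion that for every small model $M$ and every $M$-definable basic neighborhood $U$ there is an $M$-definable basic neighborhood $V$ with $V \cdot V \subseteq U$, and then to prove that assertion using the special-group machinery of Proposition~\ref{special-proposition} together with the parameter-descent trick from Lemma~\ref{pull-down}. Granting the assertion, the inclusion $I_M \cdot I_M \subseteq I_M$ is immediate: for $\varepsilon_1, \varepsilon_2 \in I_M$ and any $M$-definable basic neighborhood $U$, pick $V$ with $V \cdot V \subseteq U$; since $I_M \subseteq V$ we get $\varepsilon_1 \varepsilon_2 \in V \cdot V \subseteq U$, and intersecting over all such $U$ gives $\varepsilon_1 \varepsilon_2 \in I_M$. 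The same assertion, combined with the continuity of the scalings $x \mapsto \alpha x$ already recorded in Remark~\ref{exists-topology} and the expansion $(a+x)(b+y) = ab + ay + xb + xy$, upgrades the group topology of Remark~\ref{exists-topology} to a ring topology.

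To prove the assertion I would fix $M$ and $U$, take a non-zero special $J \in \mathcal{P}$ from the first part of Proposition~\ref{special-proposition}, and choose a small model $M' \supseteq M \cup M_0$ over which $J$ is type-definable (possible since $J$ is type-definable over some small set). Then $M' \supseteq M_0$, so Proposition~\ref{special-proposition}.\ref{guard-application} yields
\begin{equation*}
  I_{M'} \cdot J \subseteq I_{M'} \subseteq J .
\end{equation*}
Since $I_{M'} \subseteq J$, this gives $I_{M'} \cdot I_{M'} \subseteq I_{M'} \cdot J \subseteq I_{M'}$; and $I_{M'} \subseteq I_M \subseteq U$ by Remark~\ref{change-of-M} and the fact that $I_M$ is contained in every $M$-definable basic neighborhood. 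Hence the $M'$-definable partial type asserting $x \in I_{M'}$, $y \in I_{M'}$, $x \cdot y \notin U$ is inconsistent, and since $I_{M'}$ is the directed intersection of $M'$-definable basic neighborhoods (Proposition~\ref{basic-nbhds}.\ref{filtered-isect}), compactness produces a single $M'$-definable basic neighborhood $V'$ with $V' \cdot V' \subseteq U$.

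The last step is to descend the parameters of $V'$ into $M$, exactly as in Lemma~\ref{pull-down}. Write $V' = \phi(\Mm;c) \mininf \phi(\Mm;c)$ with $c \in \dcl(M')$ and $\phi(\Mm;c)$ heavy. The conditions on a parameter $c'$ that $\phi(\Mm;c')$ be heavy and that $(\phi(\Mm;c') \mininf \phi(\Mm;c')) \cdot (\phi(\Mm;c') \mininf \phi(\Mm;c')) \subseteq U$ are both $M$-formulas in $c'$, using definability of heaviness in families (Theorem~\ref{heavy-light}), definability of $\mininf$ (Remark~\ref{mininf-props}), and $M$-definability of $U$. They hold of $c$, hence of some $c_0 \in M$, and then $V := \phi(\Mm;c_0) \mininf \phi(\Mm;c_0)$ is an $M$-definable basic neighborhood with $V \cdot V \subseteq U$, finishing the assertion. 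The only substantive ingredient is Proposition~\ref{special-proposition}.\ref{guard-application}; the remaining obstacle, such as it is, is the routine bookkeeping of the descent, which simply mirrors the subtraction case already handled in Lemma~\ref{pull-down}.
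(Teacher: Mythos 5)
Your proposal is correct and follows essentially the same route as the paper: the key input is Proposition~\ref{special-proposition}.\ref{guard-application}, which over a large enough model $M' \supseteq M_0$ gives $I_{M'} \cdot I_{M'} \subseteq I_{M'} \cdot J \subseteq I_{M'}$ for a non-zero special $J$, followed by the compactness-plus-definability descent of parameters back to $M$ exactly as in Lemma~\ref{pull-down}. You have merely written out in full the steps the paper compresses into ``then we can shrink $M$ using the technique of the proof of Proposition~\ref{special-proposition}.\ref{guard-application}.''
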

\begin{proof}
  First suppose $M \supseteq M_0$.  Let $J$ be a non-zero special
  element of $\mathcal{P}_1$.  Then $I_M \subseteq J$, and so
  \begin{equation*}
    I_M \cdot I_M \subseteq I_M \cdot J \subseteq I_M
  \end{equation*}
  by Proposition~\ref{special-proposition}.\ref{guard-application}.
  Then we can shrink $M$ using the technique of the proof of
  Proposition~\ref{special-proposition}.\ref{guard-application}.
\end{proof}
With much more work, one can show that the canonical topology is a
field topology.  See Corollary~5.15 in \cite{prdf2}.
\begin{speculation}
  Say that $J \in \mathcal{P}_1$ is \emph{bounded} if $J \le J'$ for
  some special $J'$.  Based on the argument in
  Proposition~\ref{special-proposition}.\ref{guards}-\ref{guard-application},
  it seems that $J$ is bounded if and only if $\alpha \cdot J
  \subseteq I_M$ for some $\alpha \in \Mm^\times$ and some small model
  $M$.  Bounded elements should form a sublattice of
  $\mathcal{P}_1$.\footnote{These ideas have been developed in \S8 of
    \cite{prdf2}.}
\end{speculation}

\begin{definition}
  Let $I \in \mathcal{P}_n$ be special and $D \in \mathcal{P}_n$ be
  arbitrary.  Then $D$ \emph{dominates} $I$ if $D \ge I$ and
  $\botrk(D/I) = nr$.
\end{definition}
\begin{lemma}
  \label{characterization-of-domination}
  Let $J \in \mathcal{P}_n$ be special, let $A_1, \ldots, A_{nr}$ be a
  basis of quasi-atoms in $[J,\Mm^n]$, and $D$ be arbitrary.  Then $D$
  dominates $J$ if and only if $D \cap A_i \supsetneq J$ for each $i$.
  In particular, this condition doesn't depend on the choice of the
  basis $\{A_1,\ldots,A_{nr}\}$.
\end{lemma}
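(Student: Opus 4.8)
The plan is to work inside the modular lattice $[J,\Mm^n]$, whose bottom element is $J$. Since $J$ is special, $\botrk(\Mm^n/J) = \redrk(\Mm^n/J) = nr < \infty$, so by Remark~\ref{botrks} the pregeometry of quasi-atoms of $[J,\Mm^n]$ has rank exactly $nr$; this is what makes the hypothesis that $A_1,\ldots,A_{nr}$ is a basis of quasi-atoms meaningful, and it lets us apply Remark~\ref{bound-on-independence} to $[J,\Mm^n]$ with $m = nr$. Note also that the ``in particular'' clause is then automatic, since domination of $J$ by $D$ is defined with no reference to a basis.

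For the ``if'' direction, suppose $D \cap A_i \supsetneq J$ for every $i$. The first step is to deduce $D \supseteq J$, which is needed because domination requires $D \ge J$: this is Proposition~\ref{special-proposition}.\ref{guards} with $\mathcal{P}_1$ replaced by $\mathcal{P}_n$ and $r$ by $nr$, and its short proof carries over verbatim using $\redrk(\mathcal{P}_n) = nr$ from Proposition~\ref{those-posets}.\ref{p-plus-vs-p}. Once $D \ge J$, note that for each $i$ the element $A_i \wedge D$ satisfies $J < A_i \wedge D \le A_i$, hence is a quasi-atom of $[J,\Mm^n]$ equivalent to $A_i$ by Proposition~\ref{quasi-atoms-basics}. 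Iterating Lemma~\ref{respect-equivalence} along the independent-over-$J$ sequence $A_1,\ldots,A_{nr}$ shows $A_1 \wedge D,\ldots,A_{nr} \wedge D$ is again independent over $J$, and strict since each term is $\supsetneq J$; as all these elements lie in $[J,D]$, Proposition~\ref{cubes-and-independence} gives $\botrk(D/J) \ge nr$. Since $[J,D] \subseteq [J,\Mm^n]$ we also have $\botrk(D/J) \le \redrk(D/J) \le \redrk(\Mm^n/J) = nr$ by Remark~\ref{botrks} and Lemma~\ref{subs-and-products}.\ref{subs2}, so equality holds and $D$ dominates $J$.

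For the ``only if'' direction, suppose $D \ge J$ and $\botrk(D/J) = nr$. By Proposition~\ref{cubes-and-independence} there are $B_1,\ldots,B_{nr}$, each in $(J,D]$, forming a strict sequence independent over $J$. Applying Remark~\ref{bound-on-independence} inside $[J,\Mm^n]$ --- of reduced rank $nr$, with basis $A_1,\ldots,A_{nr}$ of size $nr$ --- to this independent sequence of $nr$ elements strictly above $J$, we find that each $B_i$ is a quasi-atom. These $nr$ independent quasi-atoms therefore span the rank-$nr$ pregeometry, so $\cl\{B_1,\ldots,B_{nr}\}$ is all of $Q$. By Corollary~\ref{the-pregeometry-exists} the set $V(B_1 \vee \cdots \vee B_{nr})$ is closed and contains each $B_i$, hence contains $\cl\{B_1,\ldots,B_{nr}\} = Q$, and in particular contains each $A_i$. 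Thus $A_i \wedge (B_1 \vee \cdots \vee B_{nr}) > J$, and since $B_1 \vee \cdots \vee B_{nr} \le D$ we conclude $A_i \wedge D > J$, i.e.\ $D \cap A_i \supsetneq J$.

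The computations are routine; the one point needing genuine attention is confirming that Proposition~\ref{special-proposition}.\ref{guards} transfers without change from $\mathcal{P}_1$ to $\mathcal{P}_n$ (alternatively, one could have stated and proved it at the level of $\mathcal{P}_n$ from the start), together with keeping the three ranks $\botrk(-/-)$, $\redrk(-/-)$, and pregeometry rank on $[J,\Mm^n]$ straight --- specialness of $J$ forces them all to agree.
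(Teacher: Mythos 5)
Your proof is correct and follows essentially the same route as the paper's: intersecting the $A_i$ with $D$ to obtain a new basis of quasi-atoms lying inside $[J,D]$ for the ``if'' direction, and observing that a maximal independent family in $[J,D]$ consists of quasi-atoms spanning the pregeometry over $J$ for the ``only if'' direction. One small simplification: in the ``if'' direction you do not need the $\mathcal{P}_n$-version of Proposition~\ref{special-proposition}.\ref{guards} to get $D \supseteq J$, since the hypothesis $D \cap A_1 \supsetneq J$ already gives $J \subseteq D \cap A_1 \subseteq D$ directly.
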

\begin{proof}
  Suppose $D$ dominates $J$.  Let $B_1, \ldots, B_{nr}$ be a basis of
  quasi-atoms in $[J,D]$.  The $B_i$ are independent quasi-atoms in
  the larger interval $[J,\Mm^n]$, so $\{B_1,\ldots,B_{nr}\}$ is
  another basis of quasi-atoms in $[J,\Mm^n]$.  Therefore, for every
  $i$ the sequence $\{B_1,\ldots,B_{nr},A_i\}$ is \emph{not}
  independent over $J$.  Consequently
  \begin{equation*}
    D \cap A_i \supseteq (B_1 + \cdots + B_{nr}) \cap A_i \supsetneq
    J.
  \end{equation*}
  Conversely, suppose $D \cap A_i \supsetneq J$ for each $i$.  Then
  certainly $D \supseteq J$, and it remains to show $\botrk(D/J) \ge
  nr$.  Let $A'_i := D \cap A_i$.  Then each $A'_i$ is a quasi-atom
  over $J$, equivalent to $A_i$, and so $\{A'_1,\ldots,A'_{nr}\}$ is
  another basis of quasi-atoms over $J$.  As each $A'_i$ lies in
  $[J,D]$, it follows that $\botrk(D/J) \ge nr$.
\end{proof}

\begin{lemma}
  \label{special-lemma-1}
  Let $I \in \mathcal{P}_n$ be special, and $V$ be a $k$-dimensional
  $\Mm$-linear subspace of $\Mm^n$.  Then
  \begin{align*}
    \botrk((V + I)/I) &= \redrk((V + I)/I) = kr \\
    \botrk(V/(V \cap I)) &= \redrk(V/(V \cap I)) = kr \\
    \botrk(\Mm^n/(V + I)) &= \redrk(\Mm^n/(V + I)) = (n-k)r.
  \end{align*}
  Moreover, there exist $A_1, \ldots, A_{kr}, B_1, \ldots, B_{(n-k)r}
  \in \mathcal{P}_n$ such that the following conditions hold:
  \begin{enumerate} 
  \item The set $\{A_1, \ldots, A_{kr}, B_1, \ldots, B_{(n-k)r}\}$ is
    a basis of quasi-atoms in $[I, \Mm^n]$.
  \item Let $\tilde{A}_i = A_i \cap V$.  Then $\{\tilde{A}_1, \ldots,
    \tilde{A}_{kr}\}$ is a basis of quasi-atoms in $[V \cap I, V]$.
  \item Let $\tilde{B}_i = B_i + V$.  Then $\{\tilde{B}_1, \ldots,
    \tilde{B}_{(n-k)r}\}$ is a basis of quasi-atoms in $[V + I,
      \Mm^n]$.
  \end{enumerate}
  Given a $D \in \mathcal{P}_n$ dominating $I$, we may choose the
  $A_i$ and $B_i$ to lie in $[I,D]$.
\end{lemma}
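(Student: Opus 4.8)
The plan is to pin down all six ranks in the three displayed lines at once, using that the reduced ranks come with cheap upper bounds (from sublattice monotonicity and the $\Mm$-linear splitting of $V$), the bottom-ranks come with lower bounds extracted from the specialness of $I$ via the covering lemma, and an arithmetic identity then forces everything sharp. First I would compute the reduced ranks of the ambient intervals. Writing $V = L_1 \oplus \cdots \oplus L_k$ as a direct sum of $\Mm$-lines, the $L_i$ are independent over $0$ in $\mathcal{P}_n$ with join $V$, and each $[0, L_i]$ is isomorphic as a lattice to $\mathcal{P}$, so $\redrk(L_i/0) = r$ by Proposition~\ref{those-posets}.\ref{p-plus-vs-p}, whence $\redrk(V/0) = kr$ by Remark~\ref{subadditive-rk-in-cubes}; extending to an $\Mm$-basis of $\Mm^n$ and using the modular isomorphism $[V, \Mm^n] \cong [0, V']$ for a linear complement $V'$, likewise $\redrk(\Mm^n/V) = (n-k)r$. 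Since $[V \cap I, V] \subseteq [0, V]$ and $[V + I, \Mm^n] \subseteq [V, \Mm^n]$ as sublattices, Lemma~\ref{subs-and-products}.\ref{subs2} together with the modular isomorphism $[V \cap I, V] \cong [I, V+I]$ (given by $x \mapsto x + I$, inverse $y \mapsto y \cap V$) yields $\redrk((V+I)/I) = \redrk(V/(V \cap I)) \le kr$ and $\redrk(\Mm^n/(V+I)) \le (n-k)r$.

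Next I would apply Proposition~\ref{covering-lemma-home}.\ref{covering-lemma} inside the modular lattice $[I, \Mm^n]$, which carries the subadditive rank $\redrk$ (finite by the above), to the element $V+I$. This produces a basis of quasi-atoms $A_1, \dots, A_p, B_1, \dots, B_q$ for the pregeometry of $[I, \Mm^n]$ with each $A_i \le V+I$, with $\{A_1, \dots, A_p\}$ a basis for the closed set $V(V+I)$ of quasi-atoms meeting $V+I$, and with $\{V+I, B_1, \dots, B_q\}$ independent over $I$. Specialness of $I$ gives $p + q = \botrk(\Mm^n/I) = rn$ by Remark~\ref{botrks}. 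Here $p = \botrk((V+I)/I)$, the $A_i$ forming a maximal independent set of quasi-atoms inside $[I, V+I]$. On the other hand each $B_j \not\le V+I$ (else $B_j = B_j \wedge (V+I) = I$, impossible for a quasi-atom), so $B_j \vee (V+I) > V+I$, and one application of the modular law (of the shape $(x \vee c) \wedge (y \vee c) = (x \wedge (y \vee c)) \vee c$, valid since $c \le y \vee c$) shows that $B_1 \vee (V+I), \dots, B_q \vee (V+I)$ is independent over $c = V+I$, so $\botrk(\Mm^n/(V+I)) \ge q$. Since $\botrk \le \redrk$ throughout, the previous paragraph forces $p \le kr$ and $q \le (n-k)r$, while $p + q = rn = kr + (n-k)r$; hence $p = kr$, $q = (n-k)r$, and all the inequalities are equalities. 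These are exactly the first and third displayed lines, and the line for $V/(V \cap I)$ follows from the modular isomorphism $[V \cap I, V] \cong [I, V+I]$ together with the ``also'' clause of Lemma~\ref{botrk-superadd}.

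For the ``moreover'' clause, the $kr$ independent quasi-atoms $A_1, \dots, A_{kr}$ form a basis of quasi-atoms of the interval $[I, V+I]$ (a pregeometry of rank $kr$), so transporting along $[V \cap I, V] \cong [I, V+I]$ shows $\tilde{A}_i = A_i \cap V$ is a basis of quasi-atoms of $[V \cap I, V]$, which is (2); the elements $\tilde{B}_i = B_i + V = B_i \vee (V+I)$ are, by the computation above, $(n-k)r$ independent elements lying strictly above $V+I$ in a pregeometry of rank $(n-k)r$, so Remark~\ref{bound-on-independence} makes them quasi-atoms and a basis of $[V+I, \Mm^n]$, which is (3); and (1) is precisely the output of the covering lemma. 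Finally, if $D$ dominates $I$, then Lemma~\ref{characterization-of-domination} applied to the basis $\{A_i\} \cup \{B_j\}$ gives $D \cap A_i \supsetneq I$ and $D \cap B_j \supsetneq I$ for all $i, j$; replacing each $A_i$ by $A_i \cap D$ and each $B_j$ by $B_j \cap D$ — which are quasi-atoms equivalent to the originals by Proposition~\ref{quasi-atoms-basics} — and invoking Lemma~\ref{respect-equivalence} to keep the relevant independences, one rechecks (1)--(3) (using $A_i \cap D \le A_i \le V+I$ and $B_j \cap D \not\le V+I$), and now every $A_i$ and $B_j$ lies in $[I, D]$.

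The main obstacle is exactly this simultaneous squeeze: none of the six ranks can be evaluated in isolation, since the reduced ranks only yield upper bounds and the bottom-ranks only lower bounds, so the whole argument hinges on the covering lemma producing a basis of size $p + q$ and on the identity $p + q = rn = kr + (n-k)r$ matching the two sides. The only genuinely computational points are the independence over $V+I$ of the $B_j \vee (V+I)$, which reduces to the single modular-law identity above, and the routine bookkeeping needed for the domination refinement.
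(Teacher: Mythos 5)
Your proof is correct and follows essentially the same route as the paper's: reduced-rank upper bounds of $kr$ and $(n-k)r$ extracted from the $\Mm$-linear splitting of $\Mm^n$, the covering lemma (Proposition~\ref{covering-lemma-home}.\ref{covering-lemma}) applied to $V+I$ inside $[I,\Mm^n]$, and the count $p+q = rn$ from specialness forcing every intermediate inequality to be an equality. The only cosmetic differences are that the paper derives $m \ge kr$ via Remark~\ref{subadditive-rk-in-cubes} rather than bounding $q$ from above by $\botrk(\Mm^n/(V+I))$, and establishes quasi-atomicity of the $\tilde{B}_i$ by transporting the $B_i$ along the isomorphism $[I,Q] \cong [V+I, V+I+Q]$ rather than invoking Remark~\ref{bound-on-independence}.
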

\begin{proof}
  Let $W$ be a complementary $(n-k)$-dimensional $\Mm$-linear
  subspace, so that $V + W = \Mm^n$.  Let $V' = V + I$ and $W' = W+I$.
  Then
  \begin{align*}
    nr = \redrk(\Mm^n/I) & \le \redrk(\Mm^n/V') + \redrk(V'/I) \\
    & = \redrk((V'+W')/V') + \redrk(V'/I) \\
    & = \redrk(W'/(W' \cap V')) + \redrk(V'/I) \\
    & \le \redrk(W'/I) + \redrk(V'/I) \\
    & = \redrk(W/(W \cap I)) + \redrk(V/(V \cap I)) \\
    & \le \redrk(W/0) + \redrk(V/0).
  \end{align*}
  Now any $\Mm$-linear isomorphism $\phi : \Mm^k \stackrel{\sim}{\to}
  V$ induces an isomorphism of posets from $\mathcal{P}_k$ to $[0,V]
  \subseteq \mathcal{P}_n$, so
  \begin{align*}
    \redrk(V/0) &= \redrk(\mathcal{P}_k) = kr \\
    \redrk(W/0) &= \redrk(\mathcal{P}_{n-k}) = (n-k)r,
  \end{align*}
  where the second line follows similarly.  Therefore the inequalities
  above are all equalities, and
  \begin{align*}
    \redrk((V+I)/I) &= \redrk(V/(V \cap I)) = kr \\
    \redrk(\Mm^n/(V+I)) &= \redrk(\Mm^n/V') = (n-k)r.
  \end{align*}
  By Proposition~\ref{covering-lemma-home}.\ref{covering-lemma}, there
  is a basis $\{A_1, \ldots, A_m, B_1, \ldots, B_{nr-m}\}$ of
  independent quasi-atoms in $[I, \Mm^n]$ such that
  \begin{itemize}
  \item Each $A_i \subseteq V+I$.
  \item The sequence $(V+I), B_1, \ldots, B_{nr-m}$ is independent
    over $I$.
  \end{itemize}
  Given a bound $D$ dominating $I$, we may replace each $A_i$ with
  $A_i \cap D$ and $B_i$ with $B_i \cap D$, and assume henceforth that
  $A_i, B_i \subseteq D$.  By Remark~\ref{subadditive-rk-in-cubes},
  \begin{align*}
    nr = \redrk(\Mm^n/I) & \ge \redrk((V+I)/I) + \redrk(B_1/I) + \cdots
    + \redrk(B_{nr-m}/I) \\ & = kr + \redrk(B_1/I) + \cdots + \redrk(B_{nr-m}/I).
  \end{align*}
  Each $B_i$ is strictly greater than $I$, so
  \begin{equation*}
    nr \ge kr + nr - m,
  \end{equation*}
  and thus $m \ge kr$.  On the other hand, the set $\{A_1, \ldots,
  A_m\}$ is a set of independent quasi-atoms in $[I, V + I]$, so
  \begin{equation*}
    m \le \botrk((V+I)/I) \le \redrk((V+I)/I) = kr.
  \end{equation*}
  Thus equality holds, $m = kr$, and the set $\{A_1, \ldots, A_m\}$ is
  a basis of quasi-atoms in $[I, V + I]$.  Applying the isomorphism
  \begin{align*}
    [I, V + I] & \stackrel{\sim}{\to} [V \cap I, V] \\
    X & \mapsto X \cap V
  \end{align*}
  shows that the $\tilde{A}_i$ form a basis of quasi-atoms in $[V \cap
    I, V]$.  Next, let $Q = B_1 \vee \cdots \vee B_{(n-k)r}$.  (Note
  that $nr - m = (n-k)r$.)  The fact that $(V+I), B_1, \ldots,
  B_{(n-k)r}$ is independent over $I$ implies that $(V+I) \cap Q = I$.
  Therefore, there is an isomorphism
  \begin{align*}
    [I,Q] &\stackrel{\sim}{\to} [V+I,V+I+Q] \\
    X & \mapsto X + (V + I) = X + V.
  \end{align*}
  The elements $\{B_1, \ldots, B_{(n-k)r}\}$ are independent
  quasi-atoms in $[I,Q]$, and therefore the $\tilde{B}_i$ are a set of
  independent quasi-atoms in $[V+I,V+I+Q]$ or even in $[V+I,\Mm^n]$.
  It follows that
  \begin{equation*}
    (n-k)r \le \botrk(\Mm^n/(V+I)) \le \redrk(\Mm^n/(V+I)) = (n-k)r,
  \end{equation*}
  so equality holds and the $\tilde{B}_i$ are a basis of quasi-atoms
  in $[V+I,\Mm^n]$.
\end{proof}

\begin{speculation} \label{r-fold-specs}
  Fix a special element $I$ of $\mathcal{P}_1$.  For every $n$, $I^n$
  is a special element of $\mathcal{P}_n$.  Let $\mathcal{G}_n$ be the
  modular geometry associated to the pregeometry of quasi-atoms over
  $I^n$ in $\mathcal{P}_n$.  By utilizing the embeddings between the
  $\mathcal{G}_n$, one should be able to prove the following: there is
  an object $T$ in a semi-simple abelian category $\mathcal{C}$ such
  that $T$ has length $r$, and for every $n$ the geometry
  $\mathcal{G}_n$ is the geometry of atoms in the lattice of
  subobjects of $T^n$.  The category $\mathcal{C}$ should be enriched
  over $M_0$-vector spaces.  For every $n$ there is a map $f_n$ from
  the lattice of $\Mm$-linear subspaces of $\Mm^n$ to the lattice of
  subobjects of $T^n$, defined in such a way that $f_n(V)$ corresponds
  to the closed set cut out by $V + I^n$ in the pregeometry of
  quasi-atoms in $[I^n,\Mm^n]$.  These maps $f_n$ should have the
  following properties:
  \begin{itemize}
  \item $V \le W$ should imply $f_n(V) \le f_n(W)$.
  \item The length of $f_n(V)$ should be $r$ times the dimension of
    $V$, essentially by Lemma~\ref{special-lemma-1}.  In particular,
    $f_1(0) = 0$ and $f_1(\Mm) = T$.
  \item For each $n$, the map $f_n$ should be $GL_n(M_0)$-equivariant.
  \item $f_n(V) \oplus f_m(W)$ should equal $f_{n+m}(V \oplus W)$.
  \end{itemize}
  When $r = 1$, this should secretly amount to a valuation on $\Mm$.
  Indeed, $T$ is then a simple object so we may assume that
  $\mathcal{C}$ is the category of finite-dimensional $D$-modules for
  some skew field $D$ extending $M_0$, and $T$ is just $D^1$.  Then
  the above configuration should necessarily come from a map $\Mm \to
  k \to D$ where $\Mm \to k$ is a valuation/place, and $k \to D$ is an
  inclusion.  The hope was that similar configurations for $r > 1$
  should come from multi-valued fields, but several things go wrong.
  \begin{enumerate}
  \item The idea is to recover a valuation on $\Mm$ where the residue
    of $a$ is $\alpha \in End(T)$ if
    \begin{equation*}
      f_2(\{(x,ax) ~|~ x \in \Mm\}) = \{(x, \alpha x) ~|~ x \in T\}.
    \end{equation*}
    This line of thinking is what underlies the ring $R_J$ and ideal
    $I_J$ in Proposition~\ref{rings-and-ideals} below--$R_J$ is the
    set of $a$ such that the left hand side is the graph of any
    endomorphism of $T$ and $I_J$ is the set of $a$ such that the left
    hand side is the graph of $0$.
  \item Ideally, we would get $R_J$ equal to a Bezout domain with
    finitely many maximal ideals.  This amounts to the requirement
    that for any $\alpha \in \Mm$ and any distinct $q_0, \ldots, q_n
    \in M_0$, at least one of $1/(\alpha - q_i)$ lies in $R_J$.
  \item However, this fails when the system of maps looks like
    \begin{equation*}
      \left(V \le (\Qq(\sqrt{2}))^n\right) \mapsto (V + V^\dag, V \cap V^\dag),
    \end{equation*}
    where the right hand side should be thought of as an element of
    $FinVec(\Qq) \times FinVec(\Qq)$, and $V^\dag$ is the image of $V$
    under the non-trivial element of $\Aut(\Qq(\sqrt{2})/\Qq)$.
  \item There is a certain way to mutate $f$ that improves the
    situation, namely
    \begin{equation*}
      f'_n(V) := f_{2n}(\{(\vec{x},a \cdot \vec{x}) ~|~ \vec{x} \in V\})
    \end{equation*}
    for well-chosen $a$, e.g., $a = \sqrt{2}$.  This mutation is what
    underlies Lemma~\ref{twists} below.
  \item Originally, I had hoped that finitely many mutations would
    reduce to the ``good'' Bezout case, but there are genuine
    type-definable $J$ in dp-finite fields for which this fails.  For
    example, if $\Gamma$ is the group $\Qq(1/3)$ and $K$ is
    $\Cc((\Gamma))$, we can make $K$ have dp-rank 2 by adding a
    predicate for the subfield $\Cc((2 \cdot \Gamma))$.  Any Hahn
    series $x \in K$ can be decomposed as $x_{odd} + x_{even}$ where
    $x_{odd}$ has the terms with odd exponents and $x_{even}$ has the
    terms with even exponents.  The decomposition is definable.
    Taking $J$ to be the set of $x$ such that $\val(x_{even}) > 0$ and
    $\val(x_{odd}) \ge 1$ (or something similar), one arrives at a
    situation where $R_J$ and $I_J$ cannot be fixed via any finite
    sequence of mutations.
  \item Nevertheless, one can still recover a good Bezout domain in
    the limit, which is Theorem~\ref{bezout-theorem}.  Unfortunately,
    the resulting Bezout domain is not closely tied to the
    infinitesimals---not enough to run the henselianity machine from
    the dp-minimal case.
  \end{enumerate}
  For details on the above construction, see \cite{prdf3}.
\end{speculation}
We now return to actual proofs.
\begin{lemma}
  \label{special-lemma-2}
  Let $I, J \in \mathcal{P}_n$ be special.  Then $I + J$ and $I \cap
  J$ are special.  Furthermore, there exists
  \begin{itemize}
  \item a basis of quasi-atoms $\hat{A}_1, \ldots, \hat{A}_n$ in $[I
    \cap J, \Mm^n]$,
  \item a basis of quasi-atoms $\hat{B}_1, \ldots, \hat{B}_n$ in $[I
    + J, \Mm^n]$, and
  \item a basis of quasi-atoms $A_1, \ldots, A_n, B_1, \ldots, B_n$
    in $[I \oplus J, \Mm^{2n}]$
  \end{itemize}
  related as follows:
  \begin{align*}
    \hat{A}_i & = \{ \vec{x} \in \Mm^n ~|~ (\vec{x},\vec{x}) \in A_i\}
    \\
    \hat{B}_i & = \{ \vec{x} - \vec{y} ~|~ (\vec{x},\vec{y}) \in B_i\}.
  \end{align*}
  Given $D \in \mathcal{P}_{2n}$ dominating $I \oplus J$, we may
  choose the $A_i$ and $B_i$ to lie in $[I \oplus J, D]$.
\end{lemma}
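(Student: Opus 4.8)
The plan is to reduce everything to a single application of Lemma~\ref{special-lemma-1} inside $\mathcal{P}_{2n}$, using the diagonal subspace of $\Mm^{2n}$.

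First, by Proposition~\ref{special-proposition}.\ref{oplus}, $I \oplus J \in \mathcal{P}_{2n}$ is special. Let $\Delta = \{(\vec{x},\vec{x}) ~|~ \vec{x} \in \Mm^n\} \subseteq \Mm^{2n}$ be the diagonal, an $n$-dimensional $\Mm$-linear subspace. Apply Lemma~\ref{special-lemma-1} with ambient space $\Mm^{2n}$, special group $I \oplus J$, and the $n$-dimensional subspace $V = \Delta$ (so in that lemma's notation the ambient dimension is $2n$, the parameter $k$ is $n$, and the codimensional contribution $(n-k)r$ is again $nr$). This produces a basis of quasi-atoms $A_1,\ldots,A_{nr},B_1,\ldots,B_{nr}$ in $[I \oplus J, \Mm^{2n}]$ such that $\tilde{A}_i := A_i \cap \Delta$ is a basis of quasi-atoms in $[\Delta \cap (I \oplus J), \Delta]$ and $\tilde{B}_i := B_i + \Delta$ is a basis of quasi-atoms in $[\Delta + (I \oplus J), \Mm^{2n}]$. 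If a $D \in \mathcal{P}_{2n}$ dominating $I \oplus J$ is given, the final clause of Lemma~\ref{special-lemma-1} lets us take all $A_i, B_i \in [I \oplus J, D]$, which is exactly the last assertion of the present lemma.

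Next I would transport the $\tilde{A}_i$ and $\tilde{B}_i$ along two linear identifications. The isomorphism $\phi : \Mm^n \to \Delta$, $\vec{x} \mapsto (\vec{x},\vec{x})$, induces a poset isomorphism of $\mathcal{P}_n$ onto $[0,\Delta] \subseteq \mathcal{P}_{2n}$ carrying $I \cap J$ to $\Delta \cap (I \oplus J)$ (since $(\vec{x},\vec{x}) \in I \oplus J$ iff $\vec{x} \in I \cap J$), hence restricting to a poset isomorphism $[I \cap J, \Mm^n] \cong [\Delta \cap (I \oplus J), \Delta]$; pulling the $\tilde{A}_i$ back gives a basis of quasi-atoms $\hat{A}_i = \{\vec{x} \in \Mm^n ~|~ (\vec{x},\vec{x}) \in A_i\}$ in $[I \cap J, \Mm^n]$. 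Dually, the surjection $\psi : \Mm^{2n} \to \Mm^n$, $(\vec{x},\vec{y}) \mapsto \vec{x} - \vec{y}$, has kernel $\Delta$, and $Z \mapsto \psi(Z)$ is a poset isomorphism from $\{Z \in \mathcal{P}_{2n} ~|~ Z \supseteq \Delta\}$ onto $\mathcal{P}_n$; using the linear section $s(\vec{y}) = (\vec{y},0)$ one has $\psi(Z) = s^{-1}(Z)$ for such $Z$, so images of type-definable $M_0$-linear subspaces are again type-definable $M_0$-linear subspaces. This isomorphism carries $\Delta + (I \oplus J)$ to $\{\vec{x} - \vec{y} ~|~ \vec{x} \in I,~ \vec{y} \in J\} = I + J$, hence restricts to $[\Delta + (I \oplus J), \Mm^{2n}] \cong [I + J, \Mm^n]$, and pushing the $\tilde{B}_i$ forward gives a basis of quasi-atoms $\hat{B}_i = \{\vec{x} - \vec{y} ~|~ (\vec{x},\vec{y}) \in B_i\}$ in $[I + J, \Mm^n]$, with the stated relations to the $A_i, B_i$.

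Specialness of $I \cap J$ and $I + J$ then comes for free: the $\hat{A}_i$ are $nr$ quasi-atoms, independent over $I \cap J$ in $[I \cap J, \Mm^n]$, so $\botrk(\Mm^n/(I \cap J)) \ge nr$, while $\botrk(\Mm^n/(I \cap J)) \le \redrk(\Mm^n/(I \cap J)) \le \redrk(\mathcal{P}_n) = nr$ by Remark~\ref{botrks}.\ref{botrk-vs-redrk} and Proposition~\ref{those-posets}.\ref{p-plus-vs-p}; hence both equal $nr$ and $I \cap J$ is special, and the argument for $I + J$ via the $\hat{B}_i$ is identical. The one step that needs genuine care, rather than bookkeeping, is checking that $\phi$ and $\psi$ really do induce the claimed poset isomorphisms of the \emph{lattices of type-definable $M_0$-linear subspaces} — in particular that $\Delta + (I \oplus J)$ corresponds to $I + J$ and that $M_0$-linearity and type-definability over small sets survive — but this reduces, via the section $s$, to rewriting images as preimages, so it is routine linear algebra for subgroups containing a definably complemented subspace.
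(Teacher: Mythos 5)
Your proposal is correct and follows essentially the same route as the paper: apply Lemma~\ref{special-lemma-1} in $\mathcal{P}_{2n}$ to the special group $I \oplus J$ and the diagonal subspace, then transport the resulting bases of quasi-atoms back to $\mathcal{P}_n$ via the isomorphisms $[0,\Delta] \cong \mathcal{P}_n$ and $[\Delta,\Mm^{2n}] \cong \mathcal{P}_n$ (your $\psi$ is exactly the paper's $\nabla^{-1}$, and your check that $\psi(Z)=s^{-1}(Z)$ for $Z \supseteq \Delta$ is the right way to see that the second identification preserves type-definability). Deducing specialness of $I\cap J$ and $I+J$ from the transported $nr$ independent quasi-atoms plus the bound $\redrk(\mathcal{P}_n)=nr$ matches the paper's argument.
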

\begin{proof}
  For any $J \in \mathcal{P}_n$, define
  \begin{align*}
    \Delta(C) &= \{(\vec{x},\vec{x}) ~|~ \vec{x} \in C\} \in
    \mathcal{P}_{2n} \\
    \nabla(C) &= \{(\vec{x}, \vec{x} + \vec{y}) ~|~ \vec{x} \in \Mm^n, ~
    \vec{y} \in C\} \in \mathcal{P}_{2n}.
  \end{align*}
  Let $V = \Delta(\Mm^n) = \nabla(0)$.  The maps $\Delta(-),
  \nabla(-)$ yield isomorphisms
  \begin{align*}
    \Delta : \mathcal{P}_n &\stackrel{\sim}{\to} [0,V] \subseteq
    \mathcal{P}_{2n} \\
    \nabla : \mathcal{P}_n &\stackrel{\sim}{\to} [V,\Mm^{2n}]
    \subseteq \mathcal{P}_{2n}.
  \end{align*}
  Indeed, the inverses are given by
  \begin{align*}
    \Delta^{-1} : [0, V] & \stackrel{\sim}{\to} [0, \Mm^n] \\
    C & \mapsto \{\vec{x} ~|~ (\vec{x},\vec{x}) \in C\} \\
    \nabla^{-1} : [V, \Mm^{2n}] & \stackrel{\sim}{\to} [0, \Mm^n] \\
    C & \mapsto \{\vec{x} - \vec{y} ~|~ (\vec{x},\vec{y}) \in C\}.
  \end{align*}
  Note that $\Delta^{-1}(V \cap (I \oplus J)) = I \cap J$ and
  $\nabla^{-1}(V + (I \oplus J)) = I + J$.  Therefore, $\Delta^{-1}$
  and $\nabla^{-1}$ restrict to isomorphisms
  \begin{align*}
    \Delta^{-1} : [V \cap (I \oplus J), V] & \stackrel{\sim}{\to} [I
      \cap J, \Mm^n] \\
    \nabla^{-1} : [V + (I \oplus J), \Mm^{2n}] & \stackrel{\sim}{\to}
          [I + J, \Mm^n].
  \end{align*}
  It follows that
  \begin{align*}
    \botrk(\Mm^n/(I \cap J)) &= \botrk(V/(V \cap (I \oplus J))) =
    \botrk((V + (I \oplus J))/V) \\
    \botrk(\Mm^n/(I + J)) &= \botrk(\Mm^{2n}/(V + (I \oplus J))/V).
  \end{align*}
  Now $I \oplus J$ is special in $\mathcal{P}_{2n}$ by
  Proposition~\ref{special-proposition}.\ref{oplus}, and $V$ is an
  $n$-dimensional $\Mm$-linear subspace of $\Mm^{2n}$, so by
  Lemma~\ref{special-lemma-1},
  \begin{align*}
    \botrk(\Mm^n/(I \cap J)) &= \botrk((V + (I \oplus J))/V) = rn
    \\
    \botrk(\Mm^n/(I + J)) &= \botrk(\Mm^{2n}/(V + (I \oplus J))) = rn.
  \end{align*}
  Therefore $I \cap J$ and $I + J$ are special.  Furthermore, by
  Lemma~\ref{special-lemma-1} there exists a basis
  $\{A_1,\ldots,A_{rn},B_1,\ldots,B_{rn}\}$ of quasi-atoms over $I
  \oplus J$ such that
  \begin{itemize}
  \item The elements $\tilde{A}_i := A_i \cap V$ form a basis of
    quasi-atoms in $[V \cap (I \oplus J), V]$.
  \item The elements $\tilde{B}_i := B_i + V$ form a basis of
    quasi-atoms in $[V + (I \oplus J), \Mm^{2n}]$.
  \end{itemize}
  (Additionally, the $A_i$ and $B_i$ can be chosen below any given $D$
  dominating $I \oplus J$.)  Applying $\Delta^{-1}$ and $\nabla^{-1}$
  we see that the elements
  \begin{align*}
    \hat{A}_i = \Delta^{-1}(A_i \cap V) & = \{\vec{x} ~|~
    (\vec{x},\vec{x}) \in A_i \cap V\} \\
    & = \{\vec{x} ~|~ (\vec{x},\vec{x}) \in A_i\} \\
    \hat{B}_i = \nabla^{-1}(A_i + V) & = \{\vec{x} - \vec{y} ~|~
    (\vec{x},\vec{y}) \in B_i + V\} \\
    & = \{\vec{x} - \vec{y} ~|~ (\vec{x},\vec{y}) \in B_i\}
  \end{align*}
  form bases of quasi-atoms for $[I \cap J, \Mm^n]$ and $[I + J,
    \Mm^n]$, respectively.
\end{proof}
\begin{question}
  By Lemma~\ref{special-lemma-2} special elements of $\mathcal{P}_n$
  form a sublattice.  Can this be proven directly (lattice
  theoretically) within $\mathcal{P}_n$ without using the larger
  lattice $\mathcal{P}_{2n}$?
\end{question}

\subsection{The associated rings and ideals}
\begin{definition}
  Let $J \in \mathcal{P}_n$ be special, and $a \in \Mm^\times$.  Say
  that $a$ \emph{contracts} $J$ if $a = 0$ or $J$ dominates $a \cdot
  J$ (i.e., $\botrk(J/a \cdot J) = nr$).
\end{definition}
Note that when $a \ne 0$, $J$ dominates $a \cdot J$ if and only if
$a^{-1} \cdot J$ dominates $J$.
\begin{lemma}
  \label{contraction}
  ~
  \begin{enumerate}
  \item \label{c-1} Let $A_1, \ldots, A_{nr}$ be a basis of
    quasi-atoms over $J \in \mathcal{P}_n$ and $a$ be an element of
    $\Mm$.  If $a \cdot A_i \subseteq J$ for all $i$, then $a$
    contracts $J$.  Conversely, suppose $a$ contracts $J$.  Then there
    exists $A'_i \in (J,A_i]$ such that $A'_1, \ldots, A'_{nr}$ is a
      basis of quasi-atoms over $J$ and $a \cdot A'_i \subseteq J$ for
      each $i$.
  \item \label{c-2} If $a$ contracts $J$ and $b \in \Mm^\times$, then
    $a$ contracts $b \cdot J$.
  \item \label{c-3} If $a, b$ contract $J$ then $a + b$ contracts $J$.
  \item \label{c-4} If $a$ contracts $J$ and $b \cdot J \subseteq J$,
    then $a \cdot b$ contracts $J$.
  \item If $a$ contracts both $I \in \mathcal{P}_n$ and $J \in
    \mathcal{P}_m$, then $a$ contracts $I \oplus J \in
    \mathcal{P}_{n+m}$.
  \item \label{c-6} If $a$ contracts $I, J \in \mathcal{P}_n$, then
    $a$ contracts $I \cap J$ and $I + J$.
  \end{enumerate}
\end{lemma}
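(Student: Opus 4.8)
The plan is to prove part~\ref{c-1} and then bootstrap the remaining parts from it, using the results of \S\ref{sec:special}. Throughout, the case $a = 0$ is trivial (by definition $0$ contracts every special group), so assume $a \neq 0$; the point of departure is the remark following the definition of contraction, that for $a \neq 0$ the element $a$ contracts $J$ if and only if $a^{-1}\cdot J$ dominates $J$. For the forward direction of part~\ref{c-1}: if $a\cdot A_i \subseteq J$ for all $i$, then $J \subseteq A_i$ gives $a\cdot J \subseteq a\cdot A_i \subseteq J$ and $A_i \subseteq a^{-1}\cdot J$, so the $A_i$ form an independent family of $nr$ quasi-atoms over $J$ inside $[J, a^{-1}J]$; hence $\botrk(a^{-1}J/J) \geq nr$, and since $J \subseteq a^{-1}J \subseteq \Mm^n$ with $J$ special we get $\redrk(a^{-1}J/J) \leq \redrk(\Mm^n/J) = nr$, forcing $\botrk(a^{-1}J/J) = nr$ and domination. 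For the converse, suppose $a$ contracts $J$; applying Lemma~\ref{characterization-of-domination} to the domination of $J$ by $D := a^{-1}J$ yields $D \cap A_i \supsetneq J$ for each $i$. Set $A_i' := D \cap A_i \in (J, A_i]$; each $A_i'$ is a quasi-atom over $J$ by Proposition~\ref{quasi-atoms-basics} and is equivalent over $J$ to $A_i$, so by Lemma~\ref{respect-equivalence} (one index at a time) $A_1', \ldots, A_{nr}'$ is still independent over $J$, hence a basis of quasi-atoms over $J$ (it is $nr$ independent quasi-atoms in a pregeometry of rank $nr$, by Remark~\ref{botrks}), and $A_i' \subseteq a^{-1}J$ gives $a\cdot A_i' \subseteq J$.

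Part~\ref{c-2} is immediate by transporting the contraction of $J$ along the lattice automorphism $X \mapsto b\cdot X$ of $\mathcal{P}_n$. For part~\ref{c-3}: fix a basis over $J$, apply the converse of part~\ref{c-1} to $a$, then apply it again (with the new basis) to $b$, obtaining a basis $A_i''$ with $a A_i'' \subseteq J$ and $b A_i'' \subseteq J$; then $(a+b)A_i'' \subseteq J + J = J$, and part~\ref{c-1} concludes. For part~\ref{c-4}: take a basis $A_i'$ over $J$ with $a A_i' \subseteq J$; then $(ab)A_i' = b(aA_i') \subseteq bJ \subseteq J$, and part~\ref{c-1} applies.

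For the fifth statement, that $a$ contracts $I\oplus J$: contraction of $I$ and $J$ gives $aI \subseteq I$ and $aJ \subseteq J$, so $a(I\oplus J) = aI\oplus aJ$, $(I\oplus aJ)\vee(aI\oplus J) = I\oplus J$, and $(I\oplus aJ)\wedge(aI\oplus J) = aI\oplus aJ$; Lemma~\ref{botrk-superadd} then gives $\botrk((I\oplus J)/(aI\oplus aJ)) \geq \botrk((I\oplus aJ)/(aI\oplus aJ)) + \botrk((aI\oplus J)/(aI\oplus aJ))$, and the two summands equal $\botrk(I/aI) = nr$ and $\botrk(J/aJ) = mr$ via the lattice isomorphisms $[aI\oplus aJ, I\oplus aJ] \cong [aI, I]$ and $[aI\oplus aJ, aI\oplus J] \cong [aJ, J]$; since $I\oplus J$ and $a(I\oplus J)$ are special (Proposition~\ref{special-proposition}), $\redrk((I\oplus J)/a(I\oplus J)) \leq (n+m)r$, so equality holds and $a$ contracts $I\oplus J$. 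Now for part~\ref{c-6}: by the previous statement $a$ contracts $I\oplus J \in \mathcal{P}_{2n}$, so $D := a^{-1}(I\oplus J)$ dominates $I\oplus J$; feeding this $D$ into Lemma~\ref{special-lemma-2} produces a basis of quasi-atoms $A_1, \ldots, A_{nr}$ over $I\cap J$, with $\hat{A}_i = \{\vec{x} : (\vec{x},\vec{x}) \in A_i\}$, and a basis $B_1, \ldots, B_{nr}$ over $I+J$, with $\hat{B}_i = \{\vec{x} - \vec{y} : (\vec{x},\vec{y}) \in B_i\}$, all the $A_i, B_i$ lying below $D$. Since $A_i \subseteq a^{-1}(I\oplus J)$, any $(\vec{x},\vec{x}) \in A_i$ satisfies $(a\vec{x}, a\vec{x}) \in I\oplus J$, i.e. $a\vec{x} \in I\cap J$, so $a\cdot\hat{A}_i \subseteq I\cap J$ and part~\ref{c-1} shows $a$ contracts $I\cap J$; likewise any $(\vec{x},\vec{y}) \in B_i$ gives $a\vec{x} \in I$ and $a\vec{y} \in J$, hence $a(\vec{x}-\vec{y}) \in I+J$, so $a\cdot\hat{B}_i \subseteq I+J$ and $a$ contracts $I+J$.

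The crux is the converse half of part~\ref{c-1}: it is what makes contraction behave like an ideal (parts~\ref{c-3} and~\ref{c-4}) and what drives the transfer in part~\ref{c-6}, and it relies on Lemma~\ref{characterization-of-domination} together with the fact that a domination witness can be trimmed quasi-atom by quasi-atom so as to lie below $a^{-1}J$. The fifth statement is the fussiest bookkeeping, but becomes routine once the relevant interval isomorphisms inside $\mathcal{P}_{2n}$ are made explicit.
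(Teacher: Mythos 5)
Your proof is correct, and its overall architecture matches the paper's: establish part~\ref{c-1} via Lemma~\ref{characterization-of-domination} (trimming each $A_i$ to $A_i'=(a^{-1}J)\cap A_i$ and invoking Lemma~\ref{respect-equivalence}), bootstrap parts~\ref{c-3} and~\ref{c-4} from it, and obtain part~\ref{c-6} by feeding $D=a^{-1}(I\oplus J)$ into Lemma~\ref{special-lemma-2} and pushing the $A_i,B_i$ down through $\Delta^{-1}$ and $\nabla^{-1}$. The only genuine divergence is in the fifth part: the paper exhibits the explicit family $A_1\oplus J,\ldots,A_{rn}\oplus J, I\oplus B_1,\ldots,I\oplus B_{rm}$ as a basis of quasi-atoms over $I\oplus J$ and collapses it with $a$, whereas you instead apply Lemma~\ref{botrk-superadd} to $I\oplus aJ$ and $aI\oplus J$ and transfer ranks along the interval isomorphisms $[aI\oplus aJ, I\oplus aJ]\cong[aI,I]$ and $[aI\oplus aJ, aI\oplus J]\cong[aJ,J]$, then cap $\botrk$ by $\redrk\le(n+m)r$ using that $a(I\oplus J)$ is special. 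Your route buys a small economy --- it avoids verifying that $A_i\oplus J$ is a quasi-atom over $I\oplus J$ --- at the cost of checking the two interval isomorphisms; both arguments are sound and of comparable length. (Minor remark: in part~\ref{c-4} you also route through part~\ref{c-1} where the paper uses the one-line monotonicity $\botrk(J/abJ)\ge\botrk(bJ/abJ)=\botrk(J/aJ)$, but this is cosmetic.)
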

\begin{proof}
  \begin{enumerate}
  \item First suppose $a \cdot A_i \subseteq J$.  If $a = 0$ then $a$
    contracts $J$ by definition, so suppose $a \ne 0$.  Then
    \begin{equation*}
      (a^{-1} \cdot J) \cap A_i = A_i \supsetneq J
    \end{equation*}
    for each $i$, so by Lemma~\ref{characterization-of-domination} the
    group $a^{-1} \cdot J$ dominates $J$, or equivalently, $J$
    dominates $a \cdot J$.  Thus $a$ contracts $J$.  Conversely,
    suppose that $a$ contracts $J$.  If $a = 0$ then $a \cdot A_i
    \subseteq J$ so we may take $A'_i = A_i$.  Otherwise, note that
    $a^{-1} \cdot J$ dominates $J$, so by
    Lemma~\ref{characterization-of-domination},
    \begin{equation*}
      A'_i := (a^{-1} \cdot J) \cap A_i \supsetneq J.
    \end{equation*}
    Then $A'_i$ is a quasi-atom over $J$, equivalent to $A_i$, so
    $\{A'_1, \ldots, A'_{nr}\}$ is another basis of quasi-atoms over
    $J$.  Furthermore $A'_i \subseteq a^{-1} \cdot J$, so $a \cdot
    A'_i \subseteq J$.
  \item Multiplication by $b$ induces an automorphism of
    $\mathcal{P}_n$ sending the interval $[a \cdot J, J]$ to $[a \cdot
    (b \cdot J), b \cdot J]$, so $\botrk(J/a \cdot J) = \botrk(b \cdot
    J / (ab) \cdot J)$.
  \item Take a basis of quasi-atoms $A_1, \ldots, A_{rn}$ over $J$.
    By part 1, we may shrink the $A_i$ and assume that $a \cdot A_i
    \subseteq J$.  Shrinking again, we may assume $b \cdot A_i
    \subseteq J$.  Then
    \begin{equation*}
      (a + b) \cdot A_i \subseteq a \cdot A_i + b \cdot A_i \subseteq J + J = J
    \end{equation*}
    so by part 1, $a+b$ contracts $J$.
  \item Suppose $a$ contracts $J$ and $b \cdot J \subseteq J$.  Then
    \begin{equation*}
      \botrk(J/a \cdot b \cdot J) \ge \botrk(b \cdot J/ a \cdot b
      \cdot J) = \botrk(J/a \cdot J) = nr.
    \end{equation*}
  \item Let $A_1, \ldots, A_{rn}$ be a basis of quasi-atoms in
    $[I,\Mm^n]$, and $B_1, \ldots, B_{rm}$ be a basis of quasi-atoms
    in $[J,\Mm^m]$.  Shrinking the $A_i$ and $B_i$, we may assume $a
    \cdot A_i \subseteq I$ and $a \cdot B_i \subseteq J$.  Note that
    the sequence
    \begin{equation*}
      A_1 \oplus J, A_2 \oplus J, \ldots, A_{rn} \oplus J, I \oplus
      B_1, I \oplus B_2, \ldots, I \oplus B_{rm}
    \end{equation*}
    is a basis of quasi-atoms in $[I \oplus J, \Mm^{n+m}]$.
    Multiplication by $a$ collapses each of these quasi-atoms into $I
    \oplus J$ (using the fact that $a \cdot I \subseteq I$ and $a
    \cdot J \subseteq J$).  Therefore $a$ contracts $I \oplus J$.
  \item We may assume $a \ne 0$.  By the previous point, $a^{-1} \cdot
    (I \oplus J)$ dominates $I \oplus J$.  By
    Lemma~\ref{special-lemma-2}, $I + J$ and $I \cap J$ are special.
    Moreover, there is a basis of quasi-atoms $A_1, \ldots, A_n, B_1,
    \ldots, B_n$ in $[I \oplus J, \Mm^{2n}]$ such that for
    \begin{align*}
      \hat{A}_i &= \{ \vec{x} \in \Mm^n ~|~ (\vec{x},\vec{x}) \in A_i\} \\
      \hat{B}_i &= \{ \vec{x} - \vec{y} ~|~ (\vec{x},\vec{y}) \in B_i\}
    \end{align*}
    the set $\{\hat{A}_1, \ldots, \hat{A}_n\}$ is a basis of
    quasi-atoms over $I \cap J$ and the set $\{\hat{B}_1, \ldots,
    \hat{B}_n\}$ is a basis of quasi-atoms over $I + J$.  Furthermore
    Lemma~\ref{special-lemma-2} ensures that the $A_i$ and $B_i$ can
    be chosen in $[I \oplus J, a^{-1} \cdot (I \oplus J)]$.  Thus $a
    \cdot A_i \subseteq I \oplus J$ and $a \cdot B_i \subseteq I
    \oplus J$.  Then
    \begin{equation*}
      \vec{x} \in \hat{A}_i \iff (\vec{x},\vec{x}) \in A_i \implies (a
      \cdot \vec{x}, a \cdot \vec{x}) \in I \oplus J \iff a \cdot
      \vec{x} \in I \cap J,
    \end{equation*}
    so $a \cdot \hat{A}_i \subseteq I \cap J$.  As the $\hat{A}_i$
    form a basis of quasi-atoms over $I \cap J$, it follows that $a$
    contracts $I \cap J$.  Similarly,
    \begin{equation*}
      (\vec{x},\vec{y}) \in B_i \implies (a \cdot \vec{x}, a \cdot
      \vec{y}) \in I \oplus J \implies a \cdot (\vec{x} - \vec{y}) \in
      I + J
    \end{equation*}
    so $a \cdot \hat{B}_i \subseteq I + J$.  Thus $a$ contracts $I +
    J$. \qedhere
  \end{enumerate}
\end{proof}
\begin{proposition}
  \label{rings-and-ideals}
  For any special $J \in \mathcal{P} = \mathcal{P}_1$, let $R_J$ be
  the set of $a \in \Mm$ such that $a \cdot J \subseteq J$, and let
  $I_J$ be the set of $a \in \Mm$ that contract $J$.
  \begin{enumerate}
  \item $R_J$ is a subring of $\Mm$, containing $M_0$.
  \item $I_J$ is an ideal in $R_J$.
  \item \label{r-scale} If $b \in \Mm^\times$ then $R_J = R_{b \cdot
    J}$ and $I_J = I_{b \cdot J}$.
  \item If $J$ is type-definable over $M \supseteq M_0$, then $R_J$
    and $I_J$ are $M$-invariant.
  \item \label{ij-im} If $J$ is non-zero and type-definable over $M
    \supseteq M_0$ then $I_M \subseteq I_J$.
  \item \label{cap-not-vee} If $J_1$ and $J_2$ are special, then
    \begin{align*}
      R_{J_1} \cap R_{J_2} & \subseteq R_{J_1 \cap J_2} \\
      I_{J_1} \cap I_{J_2} & \subseteq I_{J_1 \cap J_2}
    \end{align*}
  \item \label{jacobson} $(1 + I_J) \subseteq R_J^\times$.
    Consequently, $I_J$ lies inside the Jacobson radical of $R_J$.
  \end{enumerate}
\end{proposition}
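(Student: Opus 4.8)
The plan is to treat parts (1)--(4) and (6) as formal consequences of the definitions together with Lemma~\ref{contraction}, and to put the real work into (5) and (7). For (1): $M_0\subseteq R_J$ since $J$ is $M_0$-linear (so $a\cdot J=J$ for $a\in M_0^\times$, and $0\cdot J=0\subseteq J$), and $R_J$ is closed under $+$ and $\cdot$ because $(a+b)J\subseteq aJ+bJ\subseteq J$ and $(ab)J=a(bJ)\subseteq aJ\subseteq J$. For (2): if $a$ contracts $J$ then by the definition of domination $J\ge aJ$, i.e.\ $aJ\subseteq J$, so $I_J\subseteq R_J$; additive closure of $I_J$ is Lemma~\ref{contraction}.\ref{c-3}, and $R_J\cdot I_J\subseteq I_J$ is Lemma~\ref{contraction}.\ref{c-4} plus commutativity. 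For (3): $aJ\subseteq J\iff a(bJ)\subseteq bJ$ gives $R_J=R_{bJ}$, and $I_J=I_{bJ}$ is Lemma~\ref{contraction}.\ref{c-2}. For (4): any $\sigma\in\Aut(\Mm/M)$ fixes $J$ setwise and induces a lattice automorphism of $\mathcal{P}_1$ fixing $J$ and sending $aJ\mapsto\sigma(a)J$; as both ``$aJ\subseteq J$'' and ``$\botrk(J/aJ)=r$'' are preserved, $R_J$ and $I_J$ are fixed setwise. For (6): $a(J_1\cap J_2)\subseteq aJ_1\cap aJ_2$ gives the ring inclusion; $J_1\cap J_2$ is special by Lemma~\ref{special-lemma-2}, and the ideal inclusion is Lemma~\ref{contraction}.\ref{c-6}.

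For (5) I would first reduce to the case $\varepsilon\in I_{M'}$ for a conveniently large model $M'\supseteq M$: given $\varepsilon\in I_M$, take $\varepsilon'$ with $\tp(\varepsilon'/M')$ an heir of $\tp(\varepsilon/M)$, so $\varepsilon'$ is $M'$-infinitesimal by Lemma~\ref{heirs} and $\varepsilon'\equiv_M\varepsilon$; since $I_J$ is $M$-invariant by (4), it suffices to show $\varepsilon'\in I_J$. Choose $M'$ large enough to define a basis of quasi-atoms $A_1,\dots,A_r$ over $J$ in $[J,\Mm]$ and to contain elements $a_i\in A_i\setminus J$. Then, as in the proof of Proposition~\ref{special-proposition}.\ref{guard-application}, $I_{M'}\cdot M'\subseteq I_{M'}$ (Remark~\ref{basic-infs}.\ref{infinitesimal-times-standard}) and $I_{M'}\subseteq J$ (Proposition~\ref{those-posets}.\ref{infinitesimals-below}), so $\varepsilon'a_i\in I_{M'}\subseteq J$; hence $a_i\in(\varepsilon'^{-1}J)\cap A_i$ with $a_i\notin J$, so $(\varepsilon'^{-1}J)\cap A_i\supsetneq J$ for every $i$, and $\varepsilon'^{-1}J$ dominates $J$ by Lemma~\ref{characterization-of-domination}, i.e.\ $\varepsilon'$ contracts $J$.

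For (7) the only substantive point is that $u:=1+\varepsilon$ (with $\varepsilon\in I_J$) has its inverse in $R_J$; that $u\in R_J$ is immediate from (1) and (2), and we may assume $\varepsilon\ne 0$. The idea is to pass to $\mathcal{P}_2$: by part (5) of Lemma~\ref{contraction}, $\varepsilon$ contracts $J\oplus J\in\mathcal{P}_2$, so by Lemma~\ref{contraction}.\ref{c-1} there is a basis of quasi-atoms $A_1,\dots,A_{2r}$ over $J\oplus J$ in $[J\oplus J,\Mm^2]$ with $\varepsilon\cdot A_i\subseteq J\oplus J$ for each $i$. Then $u\cdot A_i=A_i+\varepsilon\cdot A_i=A_i$, because $\varepsilon\cdot A_i\subseteq J\oplus J\subseteq A_i$ and $0\in\varepsilon\cdot A_i$; hence also $u^{-1}\cdot A_i=A_i$. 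Multiplication by $u^{-1}$ is a lattice automorphism of $\mathcal{P}_2$, hence commutes with arbitrary intersections, and since $r\ge 1$ forces $2r\ge 2$, independence over $J\oplus J$ together with $\wedge=\cap$ in $\mathcal{P}_2$ gives $\bigcap_i A_i=J\oplus J$; therefore
\[
  u^{-1}(J\oplus J)=\bigcap_i u^{-1}A_i=\bigcap_i A_i=J\oplus J .
\]
Since $u^{-1}(J\oplus J)=(u^{-1}J)\oplus(u^{-1}J)$, we get $u^{-1}J=J$, so $u^{-1}\in R_J$ and $u\in R_J^\times$. Finally, $1+I_J\subseteq R_J^\times$ implies $I_J$ lies in the Jacobson radical of $R_J$ by the usual criterion: for $x\in I_J$ and $a\in R_J$ we have $ax\in I_J$, so $1+ax\in R_J^\times$.

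The main obstacle is this invertibility step in (7): parts (1)--(4) and (6) are bookkeeping and a direct reuse of Lemma~\ref{contraction}, but getting $u^{-1}\in R_J$ forces the detour through $\mathcal{P}_2$ (in $\mathcal{P}_1$ a basis of quasi-atoms over $J$ need not intersect to $J$ when $r=1$). Part (5) is the other place where genuine input enters, namely the theory of infinitesimals and heir extensions from Sections \ref{sec:infinitesimals} and \ref{sec:heavy-light}.
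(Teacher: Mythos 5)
Parts (1)--(4) and (6) are fine and agree with the paper (which dismisses them as straightforward consequences of Lemma~\ref{contraction}). Your part (5) is correct but takes a genuinely different route: you lift a single $\varepsilon\in I_M$ to an heir $\varepsilon'$ over the enlarged model $M'$, show $\varepsilon'$ contracts $J$, and pull back by $M$-invariance of $I_J$; the paper instead proves $I_{M'}\subseteq I_J$, picks $\varepsilon\in I_{M'}$ avoiding every light $M'$-definable set, and recovers $I_M\subseteq Y-Y\subseteq I_J-I_J=I_J$ where $Y$ is the locus of $\tp(\varepsilon/M)$. Your version is arguably cleaner. One elision: from $a_i\in(\varepsilon'^{-1}\cdot J)\cap A_i$ with $a_i\notin J$ you only get $(\varepsilon'^{-1}\cdot J)\cap A_i\not\subseteq J$; to upgrade this to $\supsetneq J$ you must first apply Proposition~\ref{special-proposition}.\ref{guards} to conclude $\varepsilon'^{-1}\cdot J\supseteq J$.

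Part (7) contains a genuine error. The identity $u\cdot A_i=A_i+\varepsilon\cdot A_i$ is false: $u\cdot A_i=\{x+\varepsilon x : x\in A_i\}$ is the image of the diagonal under $(x,y)\mapsto x+\varepsilon y$, whereas $A_i+\varepsilon\cdot A_i=\{x+\varepsilon y : x,y\in A_i\}$ is the image of the whole product, so all you get is $u\cdot A_i\subseteq A_i$. That inclusion yields $A_i\subseteq u^{-1}\cdot A_i$, which points the wrong way; your chain then only gives $u^{-1}\cdot(J\oplus J)\supseteq J\oplus J$, i.e., $u\in R_{J\oplus J}$, which you already knew from (1) and (2). (You also never rule out $\varepsilon=-1$ before inverting $u$; this needs the observation that $1\notin I_J$ because $\botrk(J/J)=0<r$, whence $-1\notin I_J$ since $I_J$ is an ideal.) The paper's actual argument stays in $\mathcal{P}_1$ and proves the claim that $\varepsilon\in I_J$ implies $\varepsilon/(1+\varepsilon)\in I_J$: choose the basis $A_1,\ldots,A_r$ with $\varepsilon\cdot A_i\subseteq J$ and $a_i\in A_i\setminus J$; then $(1+\varepsilon)\cdot a_i\in A_i\setminus J$, so for $\beta=(1+\varepsilon)/\varepsilon$ the group $\beta\cdot J$ meets each $A_i$ outside $J$; Proposition~\ref{special-proposition}.\ref{guards} and Lemma~\ref{characterization-of-domination} then show $\beta\cdot J$ dominates $J$, so $\beta^{-1}=\varepsilon/(1+\varepsilon)\in I_J$, and finally $1/(1+\varepsilon)=1-\varepsilon/(1+\varepsilon)\in 1+I_J\subseteq R_J$. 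You should replace the $\mathcal{P}_2$ detour with an argument of this shape.
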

\begin{proof}
  \begin{enumerate}
  \item Straightforward.
  \item The set $I_J$ is a subset of $R_J$.  The fact that $I_J \lhd
    R_J$ is exactly Lemma~\ref{contraction}.\ref{c-3}-\ref{c-4}.
  \item For $I_J$ this is Lemma~\ref{contraction}.\ref{c-2}.  For
    $R_J$ this is clear:
    \begin{equation*}
      a \cdot J \subseteq J \implies (ab) \cdot J \subseteq b \cdot J.
    \end{equation*}
  \item The definitions are $\Aut(\Mm/M)$-invariant.
  \item Let $A_1, \ldots, A_r$ be a basis of quasi-atoms over $J$.
    For each $i$ let $a_i$ be an element of $A_i\setminus J$.  Let
    $M'$ be a small model containing $M$ and the $a_i$'s.
    \begin{claim}
      Any $\varepsilon \in I_{M'}$ contracts $J$.
    \end{claim}
    \begin{claimproof}
      We may assume $\varepsilon \ne 0$.  Let $D = \varepsilon^{-1}
      \cdot I_{M'}$.  By
      Remark~\ref{basic-infs}.\ref{infinitesimal-times-standard}, $a_i
      \cdot \varepsilon \in I_{M'}$, or equivalently $a_i \in D$.
      Thus
      \begin{equation*}
        D \cap A_i \not \subseteq J
      \end{equation*}
      By Proposition~\ref{special-proposition}.\ref{guards}, $D
      \supseteq J$.  Then
      \begin{equation*}
        D \cap A_i \supsetneq J
      \end{equation*}
      so by Lemma~\ref{characterization-of-domination}, $D$ dominates
      $J$.  By
      Proposition~\ref{those-posets}.\ref{infinitesimals-below},
      $\varepsilon^{-1} \cdot J \supseteq \varepsilon^{-1} \cdot I_{M'} =
      D$.  Thus $\varepsilon^{-1} \cdot J$ dominates $J$.
    \end{claimproof}
    Let $\varepsilon$ be a realization of the partial type over $M'$
    asserting that $\varepsilon \in I_{M'}$ and $\varepsilon \notin X$ for
    any light $M'$-definable set $X$.  This type is consistent because
    $M'$-definable basic neighborhoods are heavy
    (Proposition~\ref{basic-nbhds}.\ref{heavy-nbhd}) and no heavy set
    is contained in a finite union of light sets
    (Theorem~\ref{heavy-light}).  Then $\varepsilon \in I_{M'} \subseteq
    I_J$.  As $I_J$ is $M$-invariant, every realization of
    $\tp(\varepsilon/M)$ is in $I_J$.  Let $Y$ be the $M$-definable set
    of realizations of $\tp(\varepsilon/M)$.  For any $M$-definable $X
    \supseteq Y$ we have
    \begin{equation*}
      I_M \subseteq X \mininf X \subseteq X - X.
    \end{equation*}
    Therefore $I_M \subseteq Y- Y$.  But $Y - Y \subseteq I_J - I_J = I_J$.
  \item If $a \in R_{J_1}$ and $a \in R_{J_2}$, then
    \begin{equation*}
      a \cdot (J_1 \cap J_2) = (a \cdot J_1) \cap (a \cdot J_2)
      \subseteq J_1 \cap J_2
    \end{equation*}
    so $a \in R_{J_1 \cap J_2}$.  The inclusion $I_{J_1} \cap I_{J_2}
    \subseteq I_{J_1 \cap J_2}$ is Lemma~\ref{contraction}.\ref{c-6}.
  \item First note that $1$ does not contract $J$.  Indeed,
    $\botrk(J/J) = 0 \ne r$.  Thus $1 \notin I_J$.  As $I_J$ is an
    ideal, it follows that $-1 \notin I_J$.
    \begin{claim}
      If $\varepsilon \in I_J$ then $\varepsilon/(1 + \varepsilon) \in I_J$.
    \end{claim}
    \begin{claimproof}
      We may assume $\varepsilon \ne 0$.  Using
      Lemma~\ref{contraction}.\ref{c-1} choose a basis
      $\{A_1,\ldots,A_r\}$ of quasi-atoms over $J$ such that $\varepsilon
      \cdot A_i \subseteq J$.  For each $i$ choose $a_i \in A_i
      \setminus J$.  Then $\varepsilon \cdot a_i \in J$, so $(1 +
      \varepsilon) \cdot a_i \in A_i \setminus J$.  Let $\beta = (1 +
      \varepsilon)/\varepsilon$.  Then
      \begin{align*}
        \beta \cdot (\varepsilon \cdot a_i) & \in \beta \cdot J \\
         (\beta \cdot \varepsilon) \cdot a_i = (1 + \varepsilon) \cdot a_i &
        \in A_i \setminus J.
      \end{align*}
      In particular
      \begin{equation*}
        (\beta \cdot J) \cap A_i \not \subseteq J,
      \end{equation*}
      for every $i$, so $\beta \cdot J \supseteq J$ by
      Proposition~\ref{special-proposition}.\ref{guards}.  Then
      $(\beta \cdot J) \cap A_i \supsetneq J$ for every $i$, so $\beta
      \cdot J$ dominates $J$ by
      Lemma~\ref{characterization-of-domination}.  This means that
      $\beta^{-1} = \varepsilon/(1 + \varepsilon)$ lies in $I_J$.
    \end{claimproof}
    Now if $\varepsilon \in I_J$, then
    \begin{equation*}
      \frac{1}{1 + \varepsilon} = 1 - \frac{\varepsilon}{1 + \varepsilon} \in 1
      + I_J \subseteq R_J. \qedhere
    \end{equation*}
  \end{enumerate}
\end{proof}
\begin{remark}
  Proposition~\ref{rings-and-ideals}.\ref{cap-not-vee} also holds for
  $R_{J_1 + J_2}$ and $I_{J_1 + J_2}$.
\end{remark}
\begin{speculation}
  In Proposition~\ref{rings-and-ideals}.\ref{ij-im}, not only is $I_M$
  a subset of $I_J$, it is a sub\emph{ideal} in the ring $R_J$.  One
  can probably prove this by first increasing $M$ to contain a
  non-zero element $j_0$ of $J$.  Then for any $\varepsilon \in I_M$
  and $a \in R_J$, we have
  \begin{equation*}
    \varepsilon \cdot a \cdot j_0 \in I_M \cdot R_J \cdot J \subseteq I_M \cdot J \subseteq I_M,
  \end{equation*}
  so $\varepsilon \cdot a \in j_0^{-1} I_M = I_M$.  Thus $I_M \cdot R_J
  \subseteq I_M$.  Then one can probably shrink $M$ back to the
  original model by the usual methods.

  Now, the important thing is that if $R_J$ were already a good Bezout
  domain, then there would be some valuations $\val_i : \Mm \to
  \Gamma_i$ and cuts $\Xi_i$ in $\Gamma_i$ such that
  \begin{equation*}
    I_M = \{x \in \Mm ~|~ \val_1(x) > \Xi_1 \wedge \val_2(x) > \Xi_2 \wedge \cdots\}.
  \end{equation*}
  By replacing $\Xi_i$ with $2 \Xi_i$ or $\Xi_i/2$ we could obtain
  alternate $M$-invariant subgroups smaller than $I_M$, contradicting
  the analogue of Corollary~\ref{minimal-heavy-subgroup} for invariant
  (not type-definable) subgroups.  Unless, of course, the $\Xi_i$ lie
  on the edges of convex subgroups.  Then, by coarsening, one could
  essentially arrive at a situation where $I_M$ is the Jacobson
  radical of an $M$-invariant good Bezout domain.  This would allow
  some of the henselianity arguments used in the dp-minimal case to
  directly generalize, though there are some additional complications
  in characteristic 0.\footnote{The details are worked out in
    \cite{prdf2}.}
\end{speculation}

\subsection{Mutation and the limiting ring} \label{sec:mutate}
The next two lemmas provide a way to ``mutate'' a special group $J$
and obtain a better special group $J'$ for which $R_{J'}$ is closer
than $R_J$ to being a good Bezout domain.
\begin{lemma}
  \label{twists}
  Let $J \in \mathcal{P}$ be special and non-zero.  Let $a_1, \ldots,
  a_n$ be elements of $\Mm^\times$.  Let $J' = J \cap a_1 \cdot J \cap
  a_2 \cdot J \cap \cdots \cap a_n \cdot J$.  Then $J'$ is special and
  non-zero, $R_J \subseteq R_{J'}$, and $I_J \subseteq I_{J'}$.
\end{lemma}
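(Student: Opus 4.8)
The statement combines three sub-claims: $J'$ is special and non-zero, $R_J \subseteq R_{J'}$, and $I_J \subseteq I_{J'}$. I would handle the non-degeneracy of $J'$ first, then specialness, and then the two inclusions, which turn out to follow more or less formally from the lattice-theoretic machinery already in place. By induction on $n$ it suffices to treat the case $n = 1$, i.e., $J' = J \cap (a \cdot J)$ for a single $a \in \Mm^\times$; iterating this gives the general statement (noting that each intermediate intersection is again special and non-zero, so the inductive hypothesis applies). So the plan is really to prove: if $J$ is special and non-zero and $a \in \Mm^\times$, then $J'' := J \cap (a \cdot J)$ is special and non-zero, $R_J \subseteq R_{J''}$, and $I_J \subseteq I_{J''}$.

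First, non-degeneracy. Both $J$ and $a \cdot J$ lie in $\mathcal{P}^+$ (the latter because multiplication by $a$ is an automorphism of $\mathcal{P}$, by Proposition~\ref{those-posets}), and $\mathcal{P}^+$ is closed under intersection by Proposition~\ref{those-posets}.\ref{nonzero-intersect}; hence $J'' = J \wedge (a\cdot J)$ is non-zero. Next, specialness. Here the key input is Lemma~\ref{special-lemma-2}: since $J$ is special and $a \cdot J$ is special (by Proposition~\ref{special-proposition}.\ref{special-scale}, multiplication by $a$ preserves specialness), the meet $J \cap (a \cdot J)$ is special. For the general $n$ case one just iterates: $J \cap a_1 J$ is special, then intersecting with $a_2 J$ (itself special) stays special, and so on; each step is an instance of Lemma~\ref{special-lemma-2}.

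Now the inclusions. For $R_J \subseteq R_{J''}$: if $b \cdot J \subseteq J$ then also $b \cdot (a \cdot J) = a \cdot (b \cdot J) \subseteq a \cdot J$, so $b \cdot (J \cap aJ) = (b \cdot J) \cap (b \cdot aJ) \subseteq J \cap aJ$; that is, $b \in R_{J''}$. (This is exactly the calculation underlying Proposition~\ref{rings-and-ideals}.\ref{cap-not-vee}.) For $I_J \subseteq I_{J''}$: if $b$ contracts $J$, then $b$ also contracts $a \cdot J$ by Lemma~\ref{contraction}.\ref{c-2}; then $b$ contracts $J \cap (a\cdot J)$ by Lemma~\ref{contraction}.\ref{c-6}. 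So $b \in I_{J''}$. Iterating over $a_1, \ldots, a_n$ gives $R_J \subseteq R_{J'}$ and $I_J \subseteq I_{J'}$.

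\textbf{Where the difficulty lies.} Essentially all the real work has already been done in Lemma~\ref{special-lemma-2} and Lemma~\ref{contraction}; the present lemma is a bookkeeping consequence. If anything, the only point requiring a moment's care is confirming that the inductive step is legitimate — i.e., that after forming $J \cap a_1 J \cap \cdots \cap a_k J$ we still have a \emph{special, non-zero} element to which Lemma~\ref{special-lemma-2} and Proposition~\ref{special-proposition}.\ref{special-scale} can be reapplied — but this is immediate from what was just proved at each stage. One could alternatively skip the induction and invoke Lemma~\ref{special-lemma-2} together with the observation (from its proof, via $\Delta/\nabla$) that finite meets of special elements are special; the single-step presentation above seems cleanest.
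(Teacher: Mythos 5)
Your proof is correct and follows essentially the same route as the paper: specialness of each $a_i\cdot J$ via Proposition~\ref{special-proposition}.\ref{special-scale}, specialness of the intersection via Lemma~\ref{special-lemma-2}, non-vanishing via Proposition~\ref{those-posets}.\ref{nonzero-intersect}, and the two inclusions via Proposition~\ref{rings-and-ideals}.\ref{r-scale} combined with (the content of) Proposition~\ref{rings-and-ideals}.\ref{cap-not-vee}, iterated. The only cosmetic difference is that you package the iteration as an explicit induction on $n$ and unwind the $R$-inclusion computation by hand, while the paper simply cites the relevant statements and says "iterated application."
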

\begin{proof}
  By Proposition~\ref{special-proposition}.\ref{special-scale}, each
  $a_i \cdot J$ is special, so the intersection $J'$ is special by
  Lemma~\ref{special-lemma-2}.  It is nonzero by
  Proposition~\ref{those-posets}.\ref{nonzero-intersect}.  By
  Proposition~\ref{rings-and-ideals}.\ref{r-scale} we have $R_J =
  R_{a_i \cdot J}$ and $I_J = I_{a_i \cdot J}$ for each $i$.  Then the
  inclusions $R_J \subseteq R_{J'}$ and $I_J \subseteq I_{J'}$ follow
  by an interated application of
  Proposition~\ref{rings-and-ideals}.\ref{cap-not-vee}.
\end{proof}

Recall that $r$ is the reduced rank of $\mathcal{P}$.
\begin{lemma}
  \label{vandermonde}
  Let $J \in \mathcal{P}$ be special and non-zero.  Let $\alpha \in
  \Mm^\times$ be arbitrary.  Let $J' = J \cap (\alpha \cdot J) \cap
  \cdots \cap (\alpha^{r-1} \cdot J)$.  Let $q_0, q_1, \ldots, q_r$ be
  $r+1$ distinct elements of $M_0$.  Then there is at least one $i$
  such that $\alpha \ne q_i$ and
  \begin{equation*}
    \frac{1}{\alpha - q_i} \in R_{J'}.
  \end{equation*}
\end{lemma}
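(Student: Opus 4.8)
The plan is to argue by contradiction, extracting a strict $(r+1)$-cube in the lattice $\mathcal{P}$ below a suitable special group, which then contradicts the fact that $\mathcal{P}$ has reduced rank $r$. First dispose of the degenerate case $\alpha \in M_0$: then $\alpha^k J = J$ by $M_0$-linearity, so $J' = J$, and for each $i$ with $q_i \ne \alpha$ we have $\alpha - q_i \in M_0^\times$, whence $\tfrac{1}{\alpha - q_i} \in M_0 \subseteq R_{J'}$ by Proposition~\ref{rings-and-ideals}; at least $r \ge 1$ of the $q_i$ differ from $\alpha$, so we are done. Hence assume $\alpha \notin M_0$, so that $q_i \ne \alpha$ for all $i$ automatically, and it suffices to find one $i$ with $(\alpha - q_i)^{-1} J' \subseteq J'$. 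Throughout, write $B_i := (\alpha - q_i)^{-1} J'$.

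Two preliminary facts: $J'$ is special and non-zero, being a finite intersection of the special groups $\alpha^k J$ (Lemma~\ref{special-lemma-2} and Proposition~\ref{those-posets}.\ref{nonzero-intersect}), and each $B_i$ is special by Proposition~\ref{special-proposition}.\ref{special-scale}. Next comes an elementary identity that needs no Vandermonde input: for $i \ne j$, subtracting $(\alpha - q_i)y$ and $(\alpha - q_j)y$ and using $q_i - q_j \in M_0^\times$ together with $M_0$-linearity of $J'$ shows $B_i \cap B_j = B_i \cap J'$. Consequently all the groups $B_i \cap J'$ coincide with a single element $C := J' \cap \bigcap_{i=0}^r B_i$, and $C \subseteq B_i$ for every $i$. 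Since $C$ is a finite intersection of special groups it is special (Lemma~\ref{special-lemma-2}), so $\redrk(\Mm/C) = r$.

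Now suppose for contradiction that $(\alpha - q_i)^{-1} J' \not\subseteq J'$ for every $i \in \{0,\dots,r\}$, i.e. $B_i \supsetneq C$ for each $i$. The goal is to show that $B_0, \dots, B_r$ is an independent sequence over $C$, that is $B_k \cap (B_0 + \cdots + B_{k-1}) = C$ for $1 \le k \le r$. Granting this, Proposition~\ref{cubes-and-independence} gives a strict $(r+1)$-cube in $[C,\Mm]$ with bottom $C$ (strictness because each $B_i \supsetneq C$), so $\redrk(\Mm/C) \ge r+1$, contradicting $C$ special. Proving this independence is the main obstacle, and it is exactly where the precise shape $J' = \bigcap_{k=0}^{r-1}\alpha^k J$ — with exactly $r$ powers of $\alpha$ — is essential.

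For the independence step, fix $k \le r$, take $z \in B_k \cap (B_0 + \cdots + B_{k-1})$, write $z = \sum_{i<k} z_i$ with $z_i \in B_i$, and set $v_i := \alpha^{-(r-1)}z_i$ and $v := \sum_{i<k} v_i = \alpha^{-(r-1)}z$. Unwinding $(\alpha - q_i)z_i \in J' = \bigcap_{l=0}^{r-1}\alpha^l J$ produces the "eigenvalue" congruences $\alpha^m v_i \equiv q_i^m v_i \pmod J$ for $m = 0,1,\dots,r$, and likewise $\alpha^m v \equiv q_k^m v \pmod J$ for $m = 0,\dots,r$ from $z \in B_k$. Comparing the two expressions for $\alpha^m v = \sum_{i<k}\alpha^m v_i$ modulo $J$ yields $\sum_{i<k}(q_i^m - q_k^m) v_i \in J$ for $m = 1,\dots,r$. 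I would then check that the $r \times k$ matrix $(q_i^m - q_k^m)_{1\le m\le r,\ 0\le i<k}$ has full column rank $k$: a nontrivial column relation would yield a nonzero vector annihilated by the invertible Vandermonde matrix $(q_i^m)_{0\le m,i\le k}$ attached to the distinct $q_0,\dots,q_k$, which is absurd. Inverting this matrix on the left over $M_0$ forces $v_i \in J$ for every $i < k$; then $\alpha^m v_i \equiv q_i^m v_i \equiv 0 \pmod J$ for $m = 0,\dots,r$, so $\alpha^{-l}z = \sum_{i<k}\alpha^{r-1-l}v_i \in J$ for $l = 0,\dots,r-1$, i.e. $z \in J'$ and hence $z \in J' \cap B_k = C$. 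This establishes the independence, completing the contradiction and the proof. (The inclusions $R_J \subseteq R_{J'}$ and $I_J \subseteq I_{J'}$ needed elsewhere are already supplied by Lemma~\ref{twists}, applied with $a_l = \alpha^l$.)
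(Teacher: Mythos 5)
Your argument is correct and is essentially the paper's own proof: your groups $B_i=(\alpha-q_i)^{-1}J'$ and their common intersection $C$ are exactly $\alpha^{r-1}\cdot G_i$ and $\alpha^{r-1}\cdot H$ for the groups $G_i=\{x \mid (\alpha-q_i)^jx\in J \text{ for } 1\le j\le r\}$ and $H=\bigcap_{j=0}^r\alpha^{-j}J$ that the paper works with, and both proofs hinge on the same two ingredients --- invertibility over $M_0$ of the Vandermonde matrix in the distinct $q_0,\dots,q_r$ to separate these $r+1$ groups, and the bound $\redrk(\mathcal{P})=r$ to force one of them to collapse onto the base. The only difference is packaging: you express the separation as lattice independence of $B_0,\dots,B_r$ over $C$ and contradict the rank bound via a strict $(r+1)$-cube, whereas the paper expresses it as injectivity of the sum map $(G_0/H)\times\cdots\times(G_r/H)\to\Mm/H$ and concludes by additivity of reduced rank.
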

\begin{proof}
  For each $0 \le i \le r$ let
  \begin{align*}
    \alpha_i & := \alpha - q_i \\
    G_i & := \{x \in \Mm ~|~ \alpha_ix \in J \wedge \alpha_i^2 x \in
    J\wedge \cdots \wedge \alpha_i^rx \in J\} \\
    H_i & := J \cap G_i = \{x \in \Mm ~|~ x \in J \wedge \alpha_ix
    \in J \wedge \cdots \wedge \alpha_i^rx \in J\}.
  \end{align*}
  Also let
  \begin{align*}
    H = \{x \in \Mm ~|~ x \in J \wedge \alpha x \in J \wedge \cdots
    \wedge \alpha^r x \in J\}.
  \end{align*}
  \begin{claim} \label{claim-hi}
    $H_i = H$ for any $i$.
  \end{claim}
  \begin{claimproof}
    Note $\alpha = \alpha_i + q_i$.  If $x \in H_i$ then
    \begin{equation*}
      \alpha^n x = (\alpha_i + q_i)^n x = \sum_{k = 0}^n \binom{n}{k}
      \alpha_i^k q_i^{n-k} x \in J
    \end{equation*}
    for $0 \le n \le r$, because $\alpha_i^k x \in J$, $q_i^{n-k} \in
    M_0$, and $J$ is an $M_0$-vector space.  Thus $H_i \subseteq H$;
    the reverse inclusion follows by symmetry.
  \end{claimproof}
  Because the $q_i$ are distinct, the $(r+1) \times (r+1)$ Vandermonde
  matrix built from the $q_i$ is invertible.  Let $f : \Mm^{r+1} \to
  \Mm^{r+1}$ be the $\Mm$-linear map sending $(1,q_i,\ldots,q_i^r)$ to
  the $i$th basis vector.  Let $g : \Mm \to \Mm^{r+1}$ be the map
  \begin{equation*}
    g(x) = (x, \alpha x, \ldots, \alpha^r x).
  \end{equation*}
  \begin{claim}
    \label{vandermonde-key}
    The composition
    \begin{equation*}
      \Mm \stackrel{g}{\to} \Mm^{r+1} \stackrel{f}{\to} \Mm^{r+1}
      \twoheadrightarrow (\Mm/J)^{r+1}
    \end{equation*}
    has kernel $H$, and maps $G_i$ into $0^i \oplus (\Mm/J) \oplus
    0^{r-i}$.
  \end{claim}
  \begin{claimproof}
    The invertible matrix defining $f$ has coefficients in $M_0$, and
    $J$ is closed under multiplication by $M_0$, so $f$ maps $J^{r+1}$
    isomorphically to $J^{r+1}$.  Therefore,
    \begin{equation*}
      f(g(x)) \in J^{r+1} \iff g(x) \in J^{r+1} \iff x \in H,
    \end{equation*}
    where the second $\iff$ is the definition of $H$.  Now suppose $x
    \in G_i$.  Then $g(x) - (x, q_ix, \ldots, q_i^r x) \in J^{r+1}$.
    Indeed, for any $0 \le n \le r$ we have
    \begin{equation*}
      \alpha^n x = (\alpha_i + q_i)^n x = q_i^n x + \sum_{k = 1}^n \binom{n}{k} q_i^{n-k} (\alpha_i^k x),
    \end{equation*}
    and the sum is an element of $J$ by definition of $G_i$.  As $f$
    preserves $J^{r+1}$, it follows that
    \begin{equation*}
      f(g(x)) \equiv f(x, q_ix, \ldots, q_i^r x) = x \cdot e_i \pmod{
      J^{r+1}},
    \end{equation*}
    where $e_i$ is the $i$th basis vector.
  \end{claimproof}
  \begin{claim}
    \label{ult}
    If $(x_0, x_1, \ldots, x_r) \in G_0 \times \cdots \times G_r$ has
    $x_0 + \cdots + x_r \in H$, then each $x_i \in H$.
  \end{claim}
  \begin{claimproof}
    For $0 \le i \le r$ let $p_i : \Mm^{r+1} \to \Mm/J$ be the
    composition of the $i$th projection and the quotient map $\Mm \to
    \Mm/J$.  Claim~\ref{vandermonde-key} implies that
    \begin{align*}
      x \in H & \implies p_i(f(g(x))) = 0 \\
      x \in G_j &\implies p_i(f(g(x))) = 0 \qquad \text{if } i \ne j.
    \end{align*}
    Thus
    \begin{equation*}
      0 = p_i(f(g(x_0 + \cdots + x_r))) = p_i(f(g(x_i))).
    \end{equation*}
    As $p_j(f(g(x_i))) = 0$ for $j \ne i$, it follows that
    $p_j(f(g(x_i))) = 0$ for all $j$.  In other words, $f(g(x_i)) \in
    J^{r+1}$.  By Claim~\ref{vandermonde-key}, $x_i \in H$.
  \end{claimproof}
  Now Claim~\ref{ult} implies that the map
  \begin{align*}
    (G_0/H) \times \cdots \times (G_r/H) & \to \Mm/H \\
    (x_0,\ldots,x_r) & \mapsto x_0 + \cdots + x_r
  \end{align*}
  is injective.  The image is $D/H$ for some type-definable $D \in
  \mathcal{P}$, namely $D = G_0 + \cdots + G_r$.  Then the interval
  $[H^{r+1},G_0 \oplus \cdots \oplus G_r]$ in $\mathcal{P}_{r+1}$ is
  isomorphic to the interval $[H,D]$ in $\mathcal{P}_1$.  Thus
  \begin{equation*}
    r \ge \redrk(D/H) = \redrk(G_0/H) + \cdots + \redrk(G_r/H).
  \end{equation*}
  Therefore $G_i = H = H_i$ for at least one $i$.  By definition of
  $G_i$ and $H_i$, this means that
  \begin{equation}
    \alpha_i x \in J \wedge \cdots \wedge \alpha_i^r x \in J \implies
    x \in J \label{callback}
  \end{equation}
  for any $x \in \Mm$.  As $J \ne 0$, this implies $\alpha_i \ne 0$.
  Then (\ref{callback}) can be rephrased as
  \begin{equation}
    \alpha_i^{-1} \cdot J \cap \cdots \cap \alpha_i^{-r} \cdot J
    \subseteq J. \label{callback2}
  \end{equation}
  Define
  \begin{align*}
    J'' & := J \cap \alpha_i^{-1} J \cap \cdots \cap \alpha_i^{-(r-1)} J \\
    & = J \cap \alpha^{-1} J \cap \cdots \cap \alpha^{-(r-1)} J,
  \end{align*}
  where the second equality follows by the proof of
  Claim~\ref{claim-hi}.  By (\ref{callback2}),
  \begin{equation*}
    \alpha_i^{-1} \cdot J'' = \alpha_i^{-1} J \cap \cdots \cap
    \alpha_i^{-r} \stackrel{!}{\subseteq} J \cap \alpha_i^{-1} J \cap \cdots \cap \alpha_i^{-(r-1)} J =
    J''.
  \end{equation*}
  Therefore $\alpha_i^{-1} \in R_{J''}$.  But
  \begin{equation*}
    J' = J \cap \cdots \cap \alpha^{r-1} J = \alpha^{r-1} \cdot (J
    \cap \cdots \cap \alpha^{-(r-1)} J) = \alpha^{r-1} J''.
  \end{equation*}
  Thus, by Proposition~\ref{rings-and-ideals}.\ref{r-scale}
  \begin{equation*}
    \alpha_i^{-1} \in R_{J''} = R_{J'}. \qedhere
  \end{equation*}
\end{proof}

\begin{theorem}
  \label{bezout-theorem}
  Let $J \in \mathcal{P}_1$ be special, non-zero, and type-definable
  over $M \supseteq M_0$.  Then there is an $M$-invariant ring
  $R^\infty_J$ and ideal $I^\infty_J \lhd R^\infty_J$ satisfying the
  following properties:
  \begin{itemize}
  \item $R^\infty_J$ and $I^\infty_J$ are $M$-invariant.
  \item $(1 + I^\infty_J) \subseteq (R^\infty_J)^\times$, so
    $I^\infty_J$ is a subideal of the Jacobson radical of
    $R^\infty_J$.
  \item The $M$-infinitesimals $I_M$ are a subgroup of $I^\infty_J$
    (and therefore of the Jacobson radical).
  \item $M_0 \subseteq R^\infty_J$.
  \item $R^\infty_J$ is a Bezout domain with at most $r$ maximal
    ideals.
  \item The field of fractions of $R^\infty_J$ is $\Mm$.
  \end{itemize}
\end{theorem}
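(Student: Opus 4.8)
The plan is to build $R^\infty_J$ as a directed union of the rings $R_{J'}$ attached to the ``mutations'' $J'$ of $J$ from Lemma~\ref{twists}. For each finite set $S \subseteq \Mm^\times$ with $1 \in S$, set $J_S := \bigcap_{s \in S} s\cdot J$; by Lemma~\ref{twists} each $J_S$ is special and non-zero, with $R_J \subseteq R_{J_S}$ and $I_J \subseteq I_{J_S}$. The key point is that $\{R_{J_S}\}_S$ and $\{I_{J_S}\}_S$ are upward directed: given $S,S'$ put $S'' := S\cdot S'$ (still finite, containing $1$, $S$, and $S'$), and note $J_{S''} = \bigcap_{s\in S} s\cdot J_{S'} = \bigcap_{s'\in S'} s'\cdot J_S$ by commutativity of $\Mm$; applying Lemma~\ref{twists} with base group $J_{S'}$ (resp. $J_S$) yields $R_{J_{S'}}, R_{J_S} \subseteq R_{J_{S''}}$, and likewise for the ideals. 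I would then define $R^\infty_J := \bigcup_S R_{J_S}$ and $I^\infty_J := \bigcup_S I_{J_S}$; these are automatically a subring of $\Mm$ (hence a domain) and an ideal $I^\infty_J \lhd R^\infty_J$.

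Most of the listed properties then follow from Proposition~\ref{rings-and-ideals} and the construction: $M_0 \subseteq R_J \subseteq R^\infty_J$; $M$-invariance holds because any $\sigma \in \Aut(\Mm/M)$ fixes $J$ and hence permutes the $J_S$ via $\sigma(J_S) = J_{\sigma(S)}$, so it permutes the $R_{J_S}$ and the $I_{J_S}$; we have $(1 + I^\infty_J) \subseteq (R^\infty_J)^\times$ because each $\varepsilon \in I^\infty_J$ lies in some $I_{J_S}$, where $1+\varepsilon \in R_{J_S}^\times \subseteq (R^\infty_J)^\times$ by Proposition~\ref{rings-and-ideals}.\ref{jacobson}, whence $I^\infty_J$ sits inside the Jacobson radical by the standard argument (a non-unit of $I^\infty_J$ in a maximal ideal $\mathfrak m$ would force $1 \in \mathfrak m$); and $I_M \subseteq I_J \subseteq I^\infty_J$, with $I_M$ a subgroup of $(\Mm,+)$ by Theorem~\ref{inf-add}.

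The real content is the Bezout statement, and I would derive it from a covering property. Applying Lemma~\ref{vandermonde} to $J_{\{1,\alpha,\ldots,\alpha^{r-1}\}}$ (whose ring lies in $R^\infty_J$), together with the trivial case $\alpha \in M_0$, gives: for every $\alpha \in \Mm$, all but at most $r$ of the (infinitely many) $q \in M_0$ satisfy $(\alpha - q)^{-1} \in R^\infty_J$. From this, I would argue purely ring-theoretically. First, $\Frac(R^\infty_J) = \Mm$: for any $\alpha$, pick a good $q$, so $\alpha - q$ is the reciprocal of an element of $R^\infty_J$ and $\alpha = (\alpha - q) + q \in \Frac(R^\infty_J)$. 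Next, at most $r$ maximal ideals: were $\mathfrak m_0,\dots,\mathfrak m_r$ distinct maximal ideals, choose distinct $q_0,\dots,q_r \in M_0$ and, by CRT, $a \in R^\infty_J$ with $a \equiv q_i \pmod{\mathfrak m_i}$; then each $a - q_i \in \mathfrak m_i$ is a non-unit, so $(a-q_i)^{-1} \notin R^\infty_J$ for all $i$, contradicting the covering property. Finally, $R^\infty_J$ is Prüfer: for a maximal ideal $\mathfrak m$ and $\alpha \in \Mm^\times$ with $\alpha^{-1} \notin (R^\infty_J)_{\mathfrak m}$, choose $q \in M_0^\times$ with $(\alpha^{-1} - q)^{-1} \in R^\infty_J$; this is a non-unit of $(R^\infty_J)_\mathfrak m$, so $1 + q(\alpha^{-1}-q)^{-1} \in (R^\infty_J)_\mathfrak m^\times$, and the identity $\alpha = (\alpha^{-1}-q)^{-1}\bigl/\bigl(1 + q(\alpha^{-1}-q)^{-1}\bigr)$ shows $\alpha \in (R^\infty_J)_\mathfrak m$, so $(R^\infty_J)_\mathfrak m$ is a valuation ring. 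A semilocal Prüfer domain is Bezout, since finitely generated ideals are invertible and $\Pic$ of a semilocal ring vanishes, so every finitely generated ideal is principal. The main obstacle is the ring-theoretic endgame: isolating the covering property in exactly the right form, and recognizing that it is precisely the \emph{directedness} of the union $R^\infty_J$ that lets one apply that property uniformly in $\alpha$. The lattice-theoretic input — the number $r = \redrk(\mathcal{P})$ — enters only through Lemma~\ref{vandermonde}.
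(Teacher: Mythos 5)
Your construction is exactly the paper's: the same directed system indexed by finite multiplicative sets $S \ni 1$, the same use of Lemma~\ref{twists} for directedness, the same verifications of invariance, of $M_0 \subseteq R_J \subseteq R^\infty_J$, of $I_M \subseteq I_J \subseteq I^\infty_J$, and of $(1+I^\infty_J) \subseteq (R^\infty_J)^\times$ via Proposition~\ref{rings-and-ideals}.\ref{jacobson}, and the same key covering property extracted from Lemma~\ref{vandermonde} applied to $J^{\{1,\alpha,\dots,\alpha^{r-1}\}}$. The arguments for $\Frac(R^\infty_J)=\Mm$ and for the bound of $r$ on the number of maximal ideals also coincide with the paper's. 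The only divergence is the final derivation of the Bezout property: the paper argues directly that for nonzero $a,b$ one can choose $q \in M_0$ with $b/(a-qb) \in R^\infty_J$, whence $(a,b)=(a-qb)$ is principal; you instead first establish the bound on maximal ideals, then show each localization $(R^\infty_J)_{\mathfrak m}$ is a valuation ring (your identity $\alpha = (\alpha^{-1}-q)^{-1}/(1+q(\alpha^{-1}-q)^{-1})$ checks out), and conclude via the standard fact that a semilocal Pr\"ufer domain is Bezout because its Picard group vanishes. Both endgames are valid; the paper's is more elementary and self-contained, while yours makes the valuation-theoretic structure (which the next subsection extracts anyway via Remark~\ref{bezout-basics}) visible one step earlier at the cost of quoting the Pr\"ufer/Picard machinery.
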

\begin{proof}
  Let $P$ be the set of finite $S \subseteq \Mm^\times$ such that $1
  \in S$.  Then $P$ is a commutative monoid with respect to the
  product $S \cdot S' = \{x \cdot y ~|~ x \in S, y \in S'\}$.  For any
  $S \in P$ and $G \in \mathcal{P}_1$, define
  \begin{equation*}
    G^S := \bigcap_{s \in S} s \cdot G.
  \end{equation*}
  Note that $(G^S)^{S'} = G^{S \cdot S'}$.  If $G$ is special and
  non-zero then by Lemma~\ref{twists} $G^S$ is special and non-zero,
  and there are inclusions $R_G \subseteq R_{G^S}$ and $I_G \subseteq
  I_{G^S}$.  Define sets
  \begin{align*}
    R_J^\infty & := \bigcup_{S \in P} R_{J^S} \\
    I_J^\infty & := \bigcup_{S \in P} I_{J^S}.
  \end{align*}
  These sets are clearly $M$-invariant.  Moreover, the unions are
  directed: given any $S$ and $S'$ we have
  \begin{align*}
    R_{J^S} \cup R_{J^{S'}} &\subseteq R_{J^{S \cdot S'}} \\
    I_{J^S} \cup I_{J^{S'}} &\subseteq I_{J^{S \cdot S'}}.
  \end{align*}
  Therefore $R^\infty_J$ is a ring and $I^\infty_J$ is an ideal.  The
  fact that $(1 + I^\infty_J) \subseteq (R^\infty_J)^\times$ also
  follows (using Proposition~\ref{rings-and-ideals}.\ref{jacobson}).
  Taking $S = \{1\}$, we see that $I_J \subseteq I^\infty_J$.
  Proposition~\ref{rings-and-ideals}.\ref{ij-im} says $I_M \subseteq
  I_J$, so $I_M \subseteq I^\infty_J$ as desired.  Similarly, $M_0
  \subseteq R_J \subseteq R^\infty_J$.
  \begin{claim} \label{key-to-bezout}
    If $q_0, q_1, \ldots, q_r$ are distinct elements of $M_0$ and
    $\alpha \in \Mm^\times$, then at least one of $1/(\alpha - q_i)$
    is in $R^\infty_J$.
  \end{claim}
  \begin{claimproof}
    By Lemma~\ref{vandermonde}, at least one of $1/(\alpha - q_i)$
    lies in $R_{J^S}$ for $S = \{1, \alpha, \ldots, \alpha^{r-1}\}$.
  \end{claimproof}
  It follows formally that $R^\infty_J$ is a Bezout domain with no
  more than $r$ maximal ideals.  Let $a, b$ be two elements of
  $R^\infty_J$.  We claim that the ideal $(a,b)$ is principal.  This
  is clear if $a = 0$ or $b = 0$.  Otherwise, let $\alpha = a/b$.  As
  $M_0$ is infinite, Claim~\ref{key-to-bezout} implies that
  \begin{equation*}
    \frac{b}{a - q b} = \frac{1}{\frac{a}{b} - q} \in R^\infty_J
  \end{equation*}
  for some $q \in M_0$.  Then the principal ideal $(a - qb) \lhd
  R^\infty_J$ contains $b$, hence $qb$ and thus $a$.  Therefore $(a -
  qb) = (a, b)$.

  Next, we show that $R^\infty_J$ has at most $r$ maximal ideals.
  Suppose for the sake of contradiction that there were distinct
  maximal ideals $\mathfrak{m}_0, \ldots, \mathfrak{m}_r$ in
  $R^\infty_J$.  As $R^\infty_J$ is an $M_0$-algebra, each quotient
  $R^\infty_J/\mathfrak{m}_i$ is a field extending $M_0$.  Take
  distinct $q_0, \ldots, q_r \in M_0$, and find an element $x \in
  R^\infty_J$ such that $x \equiv q_i \pmod{\mathfrak{m}_i}$ for each
  $i$, by the Chinese remainder theorem.  Then $x - q_i \in
  \mathfrak{m}_i \subseteq R^\infty_J \setminus (R^\infty_J)^\times$
  for each $i$.  So $1/(x - q_i)$ does not lie in $R^\infty_J$ for any
  $0 \le i \le r$, contrary to Claim~\ref{key-to-bezout}.

  Lastly, note that if $x$ is any element of $\Mm^\times$, then $1/(x
  - q) \in R^\infty_J$ for some $q \in M_0$, $q \ne x$.  As $q \in M_0
  \subseteq R^\infty_J$, the field of fractions of $R^\infty_J$
  contains $x$.  So the field of fractions must be all of $\Mm$.
\end{proof}

\subsection{From Bezout domains to valuation rings}
We double-check some basic facts about Bezout domains.
\begin{remark}
  \label{bezout-basics}
  Let $R$ be a Bezout domain.
  \begin{enumerate}
  \item For each maximal ideal $\mathfrak{m}$, the localization
    $R_{\mathfrak{m}}$ is a valuation ring on the field of fractions
    of $R$.
  \item $R$ is the intersection of the valuation rings
    $R_{\mathfrak{m}}$.
  \end{enumerate}
\end{remark}
\begin{proof}
  \begin{enumerate}
  \item Given non-zero $a, b \in R$, we must show that $a/b$ or $b/a$
    lies in $R_{\mathfrak{m}}$.  If $c$ is such that $(c) = (a,b)$,
    then we may replace $a$ and $b$ with $a/c$ and $b/c$, and assume
    that $(a,b) = (1) = R$.  Both of $a$ and $b$ cannot lie in
    $\mathfrak{m}$, and so either $a/b$ or $b/a$ is in
    $R_{\mathfrak{m}}$.
  \item Given $a \in \bigcap_{\mathfrak{m}} R_{\mathfrak{m}}$, we will
    show $a \in R$.  Let $I = \{x \in R ~|~ ax \in R\}$.  This is an
    ideal in $R$.  If $I$ is proper, then $I \le \mathfrak{m}$ for
    some $\mathfrak{m}$.  As $a \in R_{\mathfrak{m}}$ we can write $a
    = b/s$ where $b \in R$ and $s \in R \setminus \mathfrak{m}$.  Then
    $s \in R$ and $as = b \in R$, so $s \in I \le \mathfrak{m}$, a
    contradiction.  Therefore $I$ is an improper ideal, so $1 \in I$,
    and $a = a \cdot 1 \in R$. \qedhere
  \end{enumerate}
\end{proof}

\begin{theorem}
  \label{any-valuation-at-all}
  Let $\Mm$ be a sufficiently saturated dp-finite field, possibly with
  extra structure.  Suppose $\Mm$ is not of finite Morley rank.  Then
  there is a small set $A \subseteq \Mm$ and a non-trivial
  $A$-invariant valuation ring.
\end{theorem}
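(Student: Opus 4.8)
The plan is to apply Theorem~\ref{bezout-theorem} to a non-zero special group and then peel off a single valuation ring from the resulting Bezout domain. Concretely, I would first invoke the first part of Proposition~\ref{special-proposition} to obtain a non-zero special element $J \in \mathcal{P} = \mathcal{P}_1$, and pick a small model $M \supseteq M_0$ over which $J$ is type-definable. Theorem~\ref{bezout-theorem} then hands us an $M$-invariant subring $R := R^\infty_J \subseteq \Mm$ and an $M$-invariant ideal $I := I^\infty_J$ of $R$, where $R$ is a Bezout domain with fraction field $\Mm$ and at most $r$ maximal ideals, $M_0 \subseteq R$, $I_M \subseteq I$, and $1 + I \subseteq R^\times$. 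I would then check that $R \subsetneq \Mm$: since $\Mm$ is not of finite Morley rank it is in particular infinite, so $I_M \neq 0$ by Remark~\ref{basic-infs}.\ref{non-trivial} and hence $I \neq 0$; but $1 \notin I$, since $0 \in 1 + I \subseteq R^\times$ is absurd. A domain carrying a non-zero proper ideal is not a field, so $R \neq \Mm$.

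Next I would apply Remark~\ref{bezout-basics}. Let $\mathfrak{m}_1, \dots, \mathfrak{m}_k$ (with $1 \leq k \leq r$) be the maximal ideals of $R$; then each localization $\mathcal{O}_i := R_{\mathfrak{m}_i}$ is a valuation ring on $\Mm$ and $R = \bigcap_{i=1}^{k} \mathcal{O}_i$. Because $R \neq \Mm$, at least one $\mathcal{O}_{i_0}$ is a proper subring of $\Mm$, i.e. a non-trivial valuation ring. The last point is to arrange invariance over a small set, since $\Aut(\Mm/M)$ may well permute the finitely many maximal ideals and so need not fix $\mathcal{O}_{i_0}$. The fix is the standard finite-orbit trick: the $\mathfrak{m}_i$ are pairwise comaximal, so by the Chinese remainder theorem I can choose $a_i \in R$ with $a_i \in \mathfrak{m}_i$ and $a_i \notin \mathfrak{m}_j$ for $j \neq i$, which makes $\mathfrak{m}_i$ the unique maximal ideal of $R$ containing $a_i$. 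Setting $A = M \cup \{a_1, \dots, a_k\}$, any $\sigma \in \Aut(\Mm/A)$ fixes $R$ setwise (by $M$-invariance), permutes the $\mathfrak{m}_i$, and fixes each $a_i$; uniqueness then forces $\sigma(\mathfrak{m}_i) = \mathfrak{m}_i$ for all $i$, whence $\sigma$ fixes each $\mathcal{O}_i = \{b/s : b \in R,\ s \in R \setminus \mathfrak{m}_i\}$ setwise. Thus $\mathcal{O}_{i_0}$ is a non-trivial $A$-invariant valuation ring, as required.

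I do not expect a serious obstacle at this stage: the genuinely hard work is all packaged inside Theorem~\ref{bezout-theorem} (and hence in the mutation lemmas of \S\ref{sec:mutate} and the lattice theory of \S\ref{sec:modular-lats}), while what remains above is routine commutative algebra together with the finite-orbit bookkeeping. The only step that deserves an explicit sentence of justification is the strictness $R \subsetneq \Mm$, which, as indicated, reduces to the non-triviality of $I_M$ (equivalently, $\Mm$ being infinite) combined with $1 + I \subseteq R^\times$.
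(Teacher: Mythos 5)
Your proposal is correct and follows essentially the same route as the paper: extract a non-zero special $J$, apply Theorem~\ref{bezout-theorem}, localize the Bezout domain at its maximal ideals via Remark~\ref{bezout-basics}, observe that non-triviality of at least one localization follows from the Jacobson radical containing $I_M \neq 0$, and then use the Chinese remainder theorem to name finitely many parameters that pin down the maximal ideals under $\Aut(\Mm/A)$. The only cosmetic difference is that the paper names a single element $a$ distinguishing the chosen maximal ideal, whereas you name one element per maximal ideal; both versions of the finite-orbit trick work.
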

\begin{proof}
  Take $M_0$ as usual in this section.  By
  Proposition~\ref{special-proposition} there is a non-zero special $J
  \in \mathcal{P}_1$.  The group $J$ is type-definable over some small
  $M \supseteq M_0$.  Let $R$ be the $R^\infty_J$ of
  Theorem~\ref{bezout-theorem}.  Then $R$ is an $M$-invariant Bezout
  domain with at most $r$ maximal ideals, the Jacobson radical of $R$
  is non-zero (because it contains $I_M$), and $\Frac(R) = \Mm$.  Let
  $\mathfrak{m}_1, \ldots, \mathfrak{m}_k$ enumerate the maximal
  ideals of $R$.  Let $\mathcal{O}_i$ be the localization
  $R_{\mathfrak{m}_i}$.  By Remark~\ref{bezout-basics}, each
  $\mathcal{O}_i$ is a valuation ring on $\Mm$, and
  \begin{equation*}
    R = \mathcal{O}_1 \cap \cdots \cap \mathcal{O}_k.
  \end{equation*}
  At least one $\mathcal{O}_i$ is non-trivial; otherwise $R = \Mm$ and
  has Jacobson radical 0.\footnote{Tracing through the proof, here is
    what explicitly happens. If $\varepsilon \in I_M$ then
    $-1/\varepsilon$ cannot be in $R^\infty_J$, or else $\varepsilon
    \in I_M \subseteq I_J \subseteq I_{J^S} \lhd R_{J^S}$ and
    $-1/\varepsilon \in R_{J^S}$ for large enough $S$, so that $-1 \in
    I_{J^S}$, contradicting
    Proposition~\ref{rings-and-ideals}.\ref{jacobson}.}  Without loss
  of generality $\mathcal{O}_1$ is non-trivial.  By the Chinese
  remainder theorem, choose $a \in R$ such that $a \equiv 1 \pmod{
  \mathfrak{m}_1}$ and $a \equiv 0 \pmod{\mathfrak{m}_i}$ for $i \ne
  1$.  We claim that $\mathcal{O}_1$ is $\Aut(\Mm/aM)$-invariant.  If
  $\sigma \in \Aut(\Mm/aM)$, then $\sigma \in \Aut(\Mm/M)$ so $\sigma$
  preserves $R$ setwise.  It therefore permutes the finite set of
  maximal ideals.  As $\mathfrak{m}_1$ is the unique maximal ideal not
  containing $a$, it must be preserved (setwise).  Therefore $\sigma$
  preserves the localization $\mathcal{O}_1$ setwise.
\end{proof}
\begin{remark}
  Stable fields do not admit non-trivial invariant valuation rings
  (\cite{prdf2}, Lemma~2.1).  Consequently,
  Theorem~\ref{any-valuation-at-all} can be used to give an extremely
  roundabout proof of Halevi and Palac\'in's theorem that stable
  dp-finite fields have finite Morley rank (\cite{Palacin},
  Proposition~7.2).
\end{remark}

\section{Shelah conjecture and classification} \label{sec:the-end}
\begin{proposition} \label{proto-shelah}
  Let $K$ be a sufficiently saturated dp-finite field of positive
  characteristic.  Then one of the following holds:
  \begin{itemize}
  \item $K$ has finite Morley rank (and is therefore finite or
    algebraically closed).
  \item $K$ admits a non-trivial henselian valuation.
  \end{itemize}
\end{proposition}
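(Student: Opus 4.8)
The plan is to derive the dichotomy directly from the two theorems already proven in \S\ref{sec:valuations} and \S\ref{sec:further-hensel}, which between them do all the work; at the level of this proposition there is essentially nothing left but to assemble them. First I would note that ``sufficiently saturated dp-finite field of characteristic $p>0$'' is in particular a monster-model NIP field of positive characteristic, so the hypotheses of Theorem~\ref{invariant-valuations-are-enough} and of Theorem~\ref{any-valuation-at-all} are both available once we are in the relevant case.

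The argument then splits on whether $K$ has finite Morley rank. If it does, then $K$ is $\omega$-stable, hence by Macintyre's theorem it is finite or algebraically closed, and we are in the first alternative. If $K$ does \emph{not} have finite Morley rank, then Theorem~\ref{any-valuation-at-all} applies: there is a small set $A \subseteq K$ and a non-trivial $A$-invariant valuation ring on $K$. In particular, at least one non-trivial invariant valuation ring exists, so Theorem~\ref{invariant-valuations-are-enough} applies and $K$ admits a non-trivial henselian valuation. That is the second alternative, completing the proof.

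The ``main obstacle'' has, of course, already been dealt with in the body of the paper, so here I am only flagging where the content really lives. The non-trivial input on the valuation-construction side is the whole apparatus of \S\ref{sec:valuations} feeding into Theorem~\ref{any-valuation-at-all} (special groups, the associated ring $R^\infty_J$, Theorem~\ref{bezout-theorem}, and Remark~\ref{bezout-basics}); the non-trivial input on the henselianity side is Theorem~\ref{invariant-valuations-are-enough}, which itself passes through Proposition~\ref{upgrade-to-span} (coarsening so that $I_M$ spans the valuation ring), Corollary~\ref{triply-connected} (the identity $I_M = I_M^{000}$), and Proposition~\ref{invariant-henselian} (the Artin--Schreier / strong-approximation argument of Lemma~\ref{invariant-hensel-key}), together with the positive-characteristic hypothesis that forces $K$ to be Artin--Schreier closed. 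Given all of this, the proof of the present proposition is a two-line citation.
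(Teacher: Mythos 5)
Your proposal is correct and is exactly the paper's argument: the paper's proof of this proposition is a one-line citation of Theorem~\ref{invariant-valuations-are-enough} and Theorem~\ref{any-valuation-at-all}, assembled by the same case split on finite Morley rank that you describe.
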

\begin{proof}
  This is Theorem~\ref{invariant-valuations-are-enough} and
  \ref{any-valuation-at-all}.
\end{proof}

\begin{lemma} \label{o-infty}
  Let $K$ be a sufficiently saturated dp-finite field of positive
  characteristic.  Assume $K$ is infinite.  Let $\mathcal{O}_\infty$
  be the intersection of all $K$-definable valuation rings on $K$.
  Then $\mathcal{O}_\infty$ is a henselian valuation ring on $K$ whose
  residue field is algebraically closed.
\end{lemma}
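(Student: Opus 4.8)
The plan is to isolate the finite–Morley–rank case and then exploit that, in positive characteristic, all $K$-definable valuation rings are henselian and pairwise comparable. If $K$ has finite Morley rank then, being infinite, it is algebraically closed (the parenthetical in Proposition~\ref{proto-shelah}), so by quantifier elimination in ACF the only $K$-definable valuation ring is $K$ itself; hence $\mathcal{O}_\infty=K$ is a trivial henselian valuation ring with algebraically closed residue field $K$, and we are done. So assume $K$ is not of finite Morley rank. I would first record that $K$ is not separably closed (a separably closed dp-finite field of positive characteristic is a stable field and, if not algebraically closed, not of finite Morley rank, so by Theorem~\ref{any-valuation-at-all} it would carry a nontrivial invariant valuation ring, which stable fields do not), and that therefore, using Proposition~\ref{proto-shelah} together with the known definability results for henselian valuations of positive-characteristic fields that are not separably closed, $K$ carries a nontrivial $\emptyset$-definable henselian valuation; in particular $\mathcal{O}_\infty\subsetneq K$.

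Let $\mathcal{S}$ be the set of all $K$-definable valuation rings. For $\mathcal{O}_1,\mathcal{O}_2\in\mathcal{S}$ the structure $(K,\mathcal{O}_1,\mathcal{O}_2)$ is still dp-finite (two already-definable predicates have been named) and of positive characteristic, so Lemma~\ref{hensel-key} makes $\mathcal{O}_1$ and $\mathcal{O}_2$ comparable. Thus $\mathcal{S}$ is a chain, and an elementary check shows the intersection of a chain of valuation rings of $K$ is again a valuation ring of $K$, so $\mathcal{O}_\infty$ is a valuation ring. By Theorem~\ref{henselianity-conjecture} every member of $\mathcal{S}$ is henselian. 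To deduce that $\mathcal{O}_\infty$ is henselian I would use the characterization ``henselian $\iff$ unique prolongation to $K^{\mathrm{alg}}$'': given two prolongations $\mathcal{P}_1,\mathcal{P}_2$ of $\mathcal{O}_\infty$ to $K^{\mathrm{alg}}$, the standard order-preserving bijection between coarsenings of a valuation and coarsenings of its restriction along an algebraic extension identifies, for each $\mathcal{O}\in\mathcal{S}$, the unique coarsening of $\mathcal{P}_j$ restricting to $\mathcal{O}$ on $K$ with the unique prolongation of the henselian valuation $\mathcal{O}$; hence $\mathcal{P}_1,\mathcal{P}_2$ both lie inside the valuation ring $\mathcal{R}:=\bigcap_{\mathcal{O}\in\mathcal{S}}(\text{prolongation of }\mathcal{O})$, which prolongs $\mathcal{O}_\infty$, and one further application of the coarsening bijection forces $\mathcal{P}_1=\mathcal{R}=\mathcal{P}_2$.

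For the residue field $\bar k:=\mathcal{O}_\infty/\mathfrak{m}_\infty$: since $K$ is Artin-Schreier closed (NIP of positive characteristic, by Kaplan-Scanlon-Wagner) and $\mathcal{O}_\infty$ is henselian, $\bar k$ is Artin-Schreier closed, hence infinite; it is also (type-)interpretable in $K$, hence dp-finite. If $\bar k$ were not algebraically closed, then $\bar k$ — infinite, dp-finite, positive characteristic, and not of finite Morley rank — would, by Proposition~\ref{proto-shelah} and the definability results above, carry a nontrivial \emph{definable} valuation ring $\bar{\mathcal{O}}$; composing $\mathcal{O}_\infty$ with $\bar{\mathcal{O}}$ produces a $K$-definable valuation ring strictly contained in $\mathcal{O}_\infty$, contradicting $\mathcal{O}_\infty=\bigcap\mathcal{S}$. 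Hence $\bar k$ is algebraically closed.

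The step I expect to be the main obstacle is the interface with definability of henselian valuations: passing from ``admits a nontrivial henselian valuation'' to ``admits a nontrivial \emph{definable} henselian valuation'' in characteristic $p$, and — in the residue-field step — ensuring that $\bar k$, a priori only a hyperdefinable field unless $\mathcal{O}_\infty$ is itself definable, may legitimately be fed back into the dichotomy. A clean way to remove the last wrinkle is to first show the chain $\mathcal{S}$ has finite length (a dp-finite field has only finitely many definable convex subgroups of the value group of a given definable valuation), so that $\mathcal{O}_\infty$ is a finite intersection, hence $K$-definable, and $\bar k$ is genuinely interpretable; then $\bar{\mathcal{O}}$ composes with $\mathcal{O}_\infty$ to an honest $K$-definable valuation ring and the contradiction goes through directly.
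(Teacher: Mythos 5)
Your skeleton is the right one, and several steps are fully correct: Lemma~\ref{hensel-key} does make the $K$-definable valuation rings a chain $\mathcal{S}$, the intersection of a chain of valuation rings is a valuation ring, and your prolongation argument showing that a decreasing intersection of henselian valuation rings is henselian is sound. The finite-Morley-rank case is also fine, though the appeal to quantifier elimination in ACF tacitly assumes $K$ is a pure field; a descending-chain argument on the definable additive subgroups $\mathfrak{m}\supsetneq a\mathfrak{m}\supsetneq a^2\mathfrak{m}\supsetneq\cdots$ handles extra structure. The reliance on external definability results for henselian valuations is shared with the proof being cited, so I do not count it as a gap by itself.

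The genuine gap is in the residue-field step, exactly where you suspected, and your proposed repair does not close it. To feed $\bar k$ into Proposition~\ref{proto-shelah} and then pull $\bar{\mathcal{O}}$ back to a \emph{definable} valuation ring on $K$, you need $\mathcal{O}_\infty$ itself to be definable, so that $\bar k$ is honestly interpretable (hence dp-finite) and the composite is definable. Your argument for this is that the value group of each definable valuation has only finitely many definable convex subgroups, hence $\mathcal{S}$ is finite. But that fact (even granting it) only bounds, for each fixed $\mathcal{O}\in\mathcal{S}$, the number of members of $\mathcal{S}$ that are \emph{coarser} than $\mathcal{O}$; it is entirely consistent with $\mathcal{S}$ being an infinite strictly descending chain with no minimum, in which case $\mathcal{O}_\infty$ is only type-definable and $\bar k$ is only a hyperimaginary quotient with no a priori dp-rank. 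Finiteness of $\mathcal{S}$ is known only a posteriori, from the classification this lemma is meant to establish. The repair that works is to run your composition argument on the residue fields $k_{\mathcal{O}}$ for $\mathcal{O}\in\mathcal{S}$, which \emph{are} interpretable, infinite, dp-finite of characteristic $p$: if some $k_{\mathcal{O}}$ is separably closed, it is algebraically closed (by your own observation about separably closed dp-finite fields), and then $\bar k$, being the residue field of a valuation on an algebraically closed field, is algebraically closed; in the remaining case every member of $\mathcal{S}$ has non-separably-closed residue field, and one must deploy the $q$-henselianity and canonical-henselian-valuation machinery of Jahnke--Koenigsmann on $K$ and its finite extensions to produce a definable valuation refining all of $\mathcal{S}$ whenever $\bar k$ fails to be separably closed. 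That last case is the real content hiding behind the citation to the dp-minimal argument, and your proposal does not recover it.
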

\begin{proof}
  The proof for dp-minimal fields (Theorem 9.5.7 in \cite{myself})
  goes through without changes, using Lemma~\ref{hensel-key},
  Theorem~\ref{henselianity-conjecture}, and
  Proposition~\ref{proto-shelah}.  Additionally, we must rule out the
  possibility that the residue field is real closed or finite.  The
  first cannot happen because we are in positive characteristic.  The
  second cannot happen because $K$ is Artin-Schreier closed, a
  property which transfers to the residue field.
\end{proof}

\begin{corollary} \label{cor-infty}
  Let $K$ be a sufficiently saturated infinite dp-finite field of
  positive characteristic.  If every definable valuation on $K$ is
  trivial, then $K$ is algebraically closed.
\end{corollary}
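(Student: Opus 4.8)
The plan is to deduce this immediately from Lemma~\ref{o-infty}. First I would observe that the hypothesis ``every definable valuation on $K$ is trivial'' means precisely that the only $K$-definable valuation ring on $K$ is $K$ itself (the trivial valuation ring, whose maximal ideal is $(0)$). Consequently, in the definition of $\mathcal{O}_\infty$ as the intersection of all $K$-definable valuation rings on $K$, every set in the intersection equals $K$, so $\mathcal{O}_\infty = K$.

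Next I would invoke Lemma~\ref{o-infty}, which applies because $K$ is a sufficiently saturated infinite dp-finite field of positive characteristic: it tells us that $\mathcal{O}_\infty$ is a henselian valuation ring on $K$ whose residue field is algebraically closed. But the residue field of $\mathcal{O}_\infty = K$ is just $K/(0) \cong K$. Therefore $K$ is algebraically closed, as desired.

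There is essentially no obstacle here beyond bookkeeping: the entire content has been pushed into Lemma~\ref{o-infty} (and, through it, into Theorem~\ref{henselianity-conjecture}, Lemma~\ref{hensel-key}, and Proposition~\ref{proto-shelah}). The only point worth stating carefully is the identification of ``all definable valuations trivial'' with ``$\mathcal{O}_\infty = K$,'' after which the residue field computation finishes the argument in one line.
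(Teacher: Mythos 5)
Your proof is correct and is exactly the intended argument: the paper leaves this corollary without an explicit proof precisely because it follows from Lemma~\ref{o-infty} in the way you describe, by noting that $\mathcal{O}_\infty = K$ under the hypothesis and that the residue field of the trivial valuation is $K$ itself.
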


\begin{corollary} \label{shelah-conjecture}
  Let $K$ be a dp-finite field of positive characteristic.  Then one
  of the following holds:
  \begin{itemize}
  \item $K$ is finite.
  \item $K$ is algebraically closed.
  \item $K$ admits a non-trivial definable henselian valuation.
  \end{itemize}
\end{corollary}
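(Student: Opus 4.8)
The plan is to deduce the statement for an arbitrary dp-finite field $K$ of characteristic $p$ from the saturated case already handled by Corollary~\ref{cor-infty}. If $K$ is finite we are in the first case, so assume $K$ is infinite. If $K$ is algebraically closed we are in the second case, so assume it is not; it then suffices to produce a non-trivial \emph{definable} valuation ring on $K$, since any such $\mathcal{O}$ is automatically henselian by Theorem~\ref{henselianity-conjecture} (the structure $(K,\mathcal{O})$ is interdefinable with $K$, hence dp-finite, hence NIP).

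The key observation is that ``$K$ carries a non-trivial definable valuation ring'' is an elementary property. For each formula $\phi(x;\bar y)$, the condition ``$\phi(K;\bar b)$ is a non-trivial valuation ring on $K$'' is expressed by a first-order formula $\psi_\phi(\bar y)$: that $\phi(K;\bar b)$ is a subring with fraction field $K$, and the condition $\forall x \ne 0\,(x \in \mathcal{O} \vee x^{-1} \in \mathcal{O})$, are first-order for fixed $\phi$, and non-triviality is just $\exists x\,(x \notin \mathcal{O})$. Hence $K$ carries a non-trivial definable valuation ring if and only if $K \models \exists \bar y\,\psi_\phi(\bar y)$ for some $\phi$, a condition that passes between $K$ and any elementarily equivalent field.

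So I would pass to a sufficiently saturated elementary extension $K^* \succeq K$. Then $K^*$ is an infinite dp-finite field of characteristic $p$ (dp-rank is an invariant of the elementary class), and $K^*$ is not algebraically closed because algebraic closedness is an elementary class. By Corollary~\ref{cor-infty} in contrapositive form, $K^*$ admits a non-trivial definable valuation ring; by the previous paragraph so does $K$, and by Theorem~\ref{henselianity-conjecture} that valuation is henselian, putting $K$ in the third case.

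I expect no real obstacle here beyond bookkeeping: all the substance — the construction of invariant valuation rings, the finite-Morley-rank dichotomy, henselianity in positive characteristic, and the passage to $\mathcal{O}_\infty$ — is already done in Theorems~\ref{invariant-valuations-are-enough}, \ref{any-valuation-at-all}, \ref{henselianity-conjecture} and Lemma~\ref{o-infty}. The only point meriting a moment's care is the elementarity of ``has a non-trivial definable valuation'', which I would spell out as above; with that in hand the descent from $K^*$ to $K$ is immediate. (One could instead invoke Proposition~\ref{proto-shelah} in $K^*$, but the valuation it produces need not be definable, so routing through Corollary~\ref{cor-infty} is cleaner.)
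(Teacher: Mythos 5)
Your proposal is correct and is essentially the paper's own argument: pass to a sufficiently saturated $K^* \succeq K$, apply Corollary~\ref{cor-infty} to get a non-trivial definable valuation there, transfer it back to $K$ using the first-order expressibility of ``$\phi(x;\bar b)$ cuts out a non-trivial valuation ring,'' and invoke Theorem~\ref{henselianity-conjecture} for henselianity. The only cosmetic difference is that the paper phrases the descent as pulling the parameter into $\dcl(K)$ rather than as an elementary-equivalence transfer, but these are the same observation.
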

\begin{proof}
  Suppose $K$ is neither finite nor algebraically closed.  Let $K'
  \succeq K$ be a sufficiently saturated elementary extension.  Then
  $K'$ is neither finite nor algebraically closed.  By
  Corollary~\ref{cor-infty} there is a non-trivial definable valuation
  $\mathcal{O} = \phi(K',a)$ on $K'$.  The statement that $\phi(x;a)$
  cuts out a valuation ring is expressed by a 0-definable condition on
  $a$, so we can take $a \in \dcl(K)$.  Then $\phi(K,a)$ is a
  non-trivial valuation ring on $K$, henselian by
  Theorem~\ref{henselianity-conjecture}.
\end{proof}
So the Shelah conjecture holds for dp-finite fields of positive
characteristic.

By Proposition~3.9, Remark~3.10, and Theorem~3.11 in
\cite{halevi-hasson-jahnke}, this implies the following classification
of dp-finite fields of positive characteristic: up to elementary
equivalence, they are exactly the Hahn series fields $\mathbb{F}_p((\Gamma))$
where $\Gamma$ is a dp-finite $p$-divisible group.  Dp-finite ordered
abelian groups have been algebraically characterized and are the same
thing as strongly dependent ordered abelian groups
(\cite{sd-groups-dolich-goodrick}, \cite{sd-groups-farre},
\cite{sd-groups-halevi-hasson}).

\begin{acknowledgment}
  The author would like to thank
\begin{itemize}
\item Meng Chen, for hosting the author at Fudan University, where
  this research was carried out.
\item Jan Dobrowolski, who provided some helpful references.
\item John Goodrick, for suggesting that the ideal of finite sets
  could be replaced with other ideals in the construction of
  infinitesimals.
\item Franziska Jahnke, for convincing the author that the proof in
  \S\ref{sec:henselianity} was valid.
\item Shichang Song, who invited the author to give a talk on
  dp-minimal fields.  The ideas for
  \S\ref{sec:broad-narrow}-\ref{sec:heavy-light} arose while preparing
  that talk.
\item The UCLA model theorists, who read parts of this paper and
  pointed out typos.
\end{itemize}
{\tiny This material is based upon work supported by the National Science
Foundation under Award No. DMS-1803120.  Any opinions, findings, and
conclusions or recommendations expressed in this material are those of
the author and do not necessarily reflect the views of the National
Science Foundation.}
\end{acknowledgment}

\bibliographystyle{plain} \bibliography{mybib}{}

\end{document}